\newcommand\thmsname{\protect\theoremname}
\newcommand\nm@thmtype{theorem}
\theoremstyle{plain}
\newenvironment{namedthm}[1][Undefined Theorem Name]{
  \ifx{#1}{Undefined Theorem Name}\renewcommand\nm@thmtype{theorem*}
  \else\renewcommand\thmsname{#1}\renewcommand\nm@thmtype{namedtheorem}
  \fi
  \begin{\nm@thmtype}}
  {\end{\nm@thmtype}}
\theoremstyle{plain}
\newtheorem{thm}{\protect\theoremname}[section]
\theoremstyle{remark}
\newtheorem{rem}[thm]{\protect\remarkname}
\theoremstyle{plain}
\newtheorem{lem}[thm]{\protect\lemmaname}
\theoremstyle{plain}
\newtheorem{prop}[thm]{\protect\propositionname}
\theoremstyle{definition}
\newtheorem{defn}[thm]{\protect\definitionname}
\theoremstyle{remark}
\newtheorem{notation}[thm]{\protect\notationname}
\theoremstyle{definition}
\newtheorem{example}[thm]{\protect\examplename}
\theoremstyle{plain}
\newtheorem{cor}[thm]{\protect\corollaryname}
\theoremstyle{plain}
\newtheorem{assumption}[thm]{\protect\assumptionname}
\newcommand{\lyxaddress}[1]{
	\par {\raggedright #1
	\vspace{1.4em}
	\noindent\par}
}
\providecommand{\assumptionname}{Assumption}
\providecommand{\corollaryname}{Corollary}
\providecommand{\definitionname}{Definition}
\providecommand{\examplename}{Example}
\providecommand{\lemmaname}{Lemma}
\providecommand{\notationname}{Notation}
\providecommand{\propositionname}{Proposition}
\providecommand{\remarkname}{Remark}
\providecommand{\theoremname}{Theorem}
\begin{document}
\title{\textbf{\textsc{Generalized Long-Moody functors}}}
\author{\textsc{\textcolor{black}{Arthur Soulié}}}
\maketitle
\begin{abstract}
In this paper, we generalize the principle of the Long-Moody construction
for representations of braid groups to other groups, such as mapping
class groups of surfaces. Namely, we introduce endofunctors over a
functor category that encodes representations of a family of groups.
They are called Long-Moody functors and provide new representations.
In this context, notions of polynomial functors are defined and play
an important role in the study of homological stability. We prove
that, under additional assumptions, a Long-Moody functor increases
the \textit{very strong} and \textit{weak} polynomial degrees of functors
by one.
\end{abstract}
{\let\thefootnote\relax\footnotetext{{This work was partially supported by the ANR Project {\em ChroK}, {\tt ANR-16-CE40-0003} and by a JSPS Postdoctoral Fellowship Short Term.\\2010 \textit{Mathematics Subject Classification}: 
 18D10, 18A25, 20C07, 20C99, 20J99, 20F36, 20F38, 57M07, 57N05.\\Keywords: Long-Moody construction, Functor categories, Mapping class groups, Polynomial functors.\\E-mail address: artsou@hotmail.fr}}

\section*{Introduction}

In $1994$, as a result of a collaboration with Moody, Long \cite{Long1}
gave a method to construct a new linear representation of the braid
group on $n$ strands $\mathbf{B}_{n}$ from a representation of $\mathbf{B}_{n+1}$.
The underlying framework of this construction, called the \textit{Long-Moody
construction}, naturally arises in many situations in connection with
topology: the first aim of this paper is to extend this construction
to these settings.

Namely, for a family of groups $\{G_{n}\}_{n\in\mathbb{N}}$ equipped
with injections $\gamma_{n}:G_{n}\hookrightarrow G_{n+1}$, we give
a method to construct a representation of $G_{n}$ from a representation
of $G_{n+1}$, which generalizes the underlying idea of the original
Long-Moody construction. This machinery is based on the following
ingredients: a family of groups $\{H_{n}\}_{n\in\mathbb{N}}$, an
action $\mathcal{A}_{n}:G_{n}\to\textrm{Aut}(H_{n})$ and a group
morphism $\chi_{n}:H_{n}\rightarrow G_{n+1}$ for all natural numbers
$n$. For instance, we can consider the following situation:

\begin{itemize}
\item Let $\mathbf{\varSigma}_{g,1}$ be a smooth connected compact surface
with genus $g$ and one boundary component. We consider the surfaces
$\{\mathbf{\varSigma}_{g,1}^{n}\}_{n\in\mathbb{N}}$ obtained from
$\mathbf{\varSigma}_{g,1}$ by removing $n$ points from its interior.
We take the family of groups $\{G_{n}\}_{n\in\mathbb{N}}$ to be the
family of \textit{mapping class groups} $\{\mathbf{\Gamma}_{g,1}^{n}\}_{n\in\mathbb{N}}$,
that is the group of isotopy classes of homeomorphisms fixing the
punctures setwise and restricting to the identity on the boundary
component.
\item We define the injection $\gamma_{n}:\mathbf{\Gamma}_{g,1}^{n}\hookrightarrow\mathbf{\Gamma}_{g,1}^{n+1}$
by extending an element $\varphi\in\mathbf{\Gamma}_{g,1}^{n}$ to
a mapping class of $\mathbf{\varSigma}_{g,1}^{n+1}$ by the identity
on the complement $\mathbf{\varSigma}_{0,1}^{1}$ of $\mathbf{\varSigma}_{g,1}^{n}\hookrightarrow\mathbf{\varSigma}_{g,1}^{n+1}$.
\item We take the family of groups $\{H_{n}\}_{n\in\mathbb{N}}$ to be the
fundamental groups of the surfaces $\{\pi_{1}(\mathbf{\varSigma}_{g,1}^{n},p)\}_{n\in\mathbb{N}}$,
where $p$ is a point in the boundary component which lies on the
part of the boundary of each $\mathbf{\varSigma}_{g,1}^{n}$ that
is common to all of the surfaces $\{\mathbf{\varSigma}_{g,1}^{n}\}_{n\in\mathbb{N}}$
(and thus $p$ is independent of $n$); see Figure \ref{fig:Generators-and-paths}.
\item We consider the natural action $\mathcal{A}_{n}$ of $\mathbf{\Gamma}_{g,1}^{n}$
on $\pi_{1}(\mathbf{\varSigma}_{g,1}^{n},p)$.
\item We consider a morphism $\chi_{n,1}:\pi_{1}(\varSigma_{g,1}^{n},p)\rightarrow\mathbf{\Gamma}_{g,1}^{n+1}$
induced by the \textit{point pushing map} as it called in Farb and
Margalit \cite[Section 4.2.1]{farb2011primer}; it is explicitly defined
by the formula (\ref{eq:push_formula}).
\end{itemize}
The input of the Long-Moody construction is a representation $\rho_{n+1}:G_{n+1}\rightarrow GL_{R}(M_{n+1})$,
for $R$ a commutative ring and $M_{n+1}$ an $R$-module. Then we
note that $M_{n+1}$ is endowed with an $H_{n}$-module structure
via the composition $\rho_{n+1}\circ\chi_{n}:H_{n}\rightarrow G_{n+1}\rightarrow GL_{R}(M_{n+1})$.
We denote by $\mathcal{I}_{R\left[H_{n}\right]}$ the augmentation
ideal of the group $H_{n}$ -- which has a canonical $R[H_{n}]$-module
structure. The key idea of the Long-Moody construction is to give
the tensor product
\[
\mathcal{I}_{R[H_{n}]}\underset{R[H_{n}]}{\varotimes}M_{n+1}
\]
a $G_{n}$-module structure as follows. The action $\mathcal{A}_{n}:G_{n}\to\textrm{Aut}(H_{n})$
linearly extends to an action on $\mathcal{I}_{R[H_{n}]}$ and $M_{n+1}$
is a $G_{n}$-module structure via the restriction of the morphism
$\rho_{n+1}$ along $\gamma_{n}:G_{n}\hookrightarrow G_{n+1}$. Then,
for all $g\in G_{n}$, $x\in\mathcal{I}_{R[H_{n}]}$ and $m\in M_{n+1}$,
we define a diagonal action of $G_{n}$ on the above tensor product
by
\[
g\cdot(x\underset{R[H_{n}]}{\otimes}m)=\mathcal{A}_{n}(g)(x)\underset{R[H_{n}]}{\varotimes}\rho_{n+1}(\gamma_{n}(g))(m).
\]
 For this action to be well-defined, it has to be compatible with
the $H_{n}$-module structures on both sides of the tensor product:
this requires technical compatibilities between the morphisms $\mathcal{A}_{n}$,
$\chi_{n}$ and $\gamma_{n}$. This is the delicate point of the construction.
However these compatibilities can easily be described by using categorical
tools, in particular the \textit{Grothendieck construction}, as follows.

We consider the  groupoid \textit{$\mathcal{G}$} with objects indexed
by the natural numbers (and denoted by $\underline{n}$) and with
the groups $\{G_{n}\}_{n\in\mathbb{N}}$ as automorphism groups. We
denote by $\mathfrak{Gr}$ the category of groups and $\intop^{\mathcal{G}}F$
is the Grothendieck construction on a functor $F:\mathcal{G}\rightarrow\mathfrak{Gr}$;
see Section \ref{sec:Recollections-on-Quillen's}. The cornerstone
to define the Long-Moody construction is to assume that the families
of morphisms $\{\mathcal{A}_{n}\}_{n\in\mathbb{N}}$ and $\{\chi_{n}\}_{n\in\mathbb{N}}$
assemble to define functors $\mathcal{A}:\mathcal{G}\rightarrow\mathfrak{Gr}$
such that $\mathcal{A}(\underline{n})=H_{n}$ and $\chi:\intop^{\mathcal{G}}\mathcal{A}\rightarrow\mathcal{G}$.
In addition, we require that the following diagram be commutative:
\[
\xymatrix{\mathcal{G}\,\ar@{^{(}->}[r]^{i}\ar@{->}[dr]_{\gamma} & \intop^{\mathcal{G}}\mathcal{A}\ar@{->}[d]^{\chi}\\
 & \mathcal{G},
}
\]
where $i$ is the evident section of the projection functor induced
by the Grothendieck construction and $\gamma$ is the functor defined
by $\underline{n}\mapsto\underline{n+1}$ and induced by the canonical
injections $\{\gamma_{n}:G_{n}\hookrightarrow G_{n+1}\}_{n\in\mathbb{N}}$.
The Grothendieck construction encodes semidirect product structures,
so this condition actually reflects the factorization of the injections
$\gamma_{n}$ through $H_{n}\rtimes_{\mathcal{A}_{n}}G_{n}$; see
Lemma \ref{lem:equivcond:coherenceconditionsigmanan}.

The necessary coherence conditions on the group morphisms $\{\chi_{n}\}_{n\in\mathbb{N}}$
are restrictive. However, the trivial morphisms $\{\chi_{n,tr}:H_{n}\rightarrow G_{n+1}\}_{n\in\mathbb{N}}$
always satisfy the necessary technical condition, and there are many
situations where non-trivial $\{\chi_{n}\}_{n\in\mathbb{N}}$ arise
naturally, in particular for some families of groups in connection
with topology. For example, the above non-trivial morphisms $\{\chi_{n,1}:\pi_{1}(\varSigma_{g,1}^{n},p)\rightarrow\mathbf{\Gamma}_{g,1}^{n+1}\}_{n\in\mathbb{N}}$
satisfy the appropriate conditions; see Lemma \ref{lem:sigman1satisfiesfirtcond}.

Furthermore, instead of considering these constructions for only one
group $G_{n}$, a natural question is how to extend these constructions
to \textit{families of representations} of $\text{\{}G_{n}\}_{n\in\mathbb{N}}$.
Denoting by $\mathbf{Fct}(\mathfrak{C},R\textrm{-}\mathfrak{Mod})$
the category of functors from a small category $\mathfrak{C}$ to
the category of $R$-modules $R\textrm{-}\mathfrak{Mod}$, an object
$M$ of $\mathbf{Fct}(\mathcal{G},R\textrm{-}\mathfrak{Mod})$ is
a collection of linear representations $\text{\{}\rho_{n}:G_{n}\rightarrow GL_{R}(M_{n})\}_{n\in\mathbb{N}}$.
For instance, the groupoid $\mathcal{G}$ for the family of mapping
class groups $\{\mathbf{\Gamma}_{g,1}^{n}\}_{n\in\mathbb{N}}$ is
introduced in Section \ref{subsec:The-monoidal-groupoid} and we denote
it by $\mathfrak{M}_{2}^{+,g}$. The condition for the functor $M$
to encode a family of representations consists of requiring that there
exist maps $m_{n}:GL_{R}(M_{n})\rightarrow GL_{R}(M_{n+1})$ for all
natural numbers $n$ such that:
\begin{equation}
m_{n}\circ\rho_{n}(g)=\rho_{n+1}(\gamma_{n}(g))\label{eq:}
\end{equation}
for all $g\in G_{n}$. Then, we say that the representations $\{M_{n}\}_{n\in\mathbb{N}}$
form a \textit{family} of linear representations of the groups $\{G_{n}\}_{n\in\mathbb{N}}$.
This notion has been described as a \textit{consistent sequence of
representations }in Church, Ellenberg and Farb \cite{ChurchEllenbergFarbstabilityFI}.
However, the extra information (\ref{eq:}) on $M$ is not encoded
by the fact that $M$ is an object of $\mathbf{Fct}(\mathcal{G},R\textrm{-}\mathfrak{Mod})$.
Quillen's bracket construction (see Grayson \cite[p.219]{graysonQuillen})
defines a new category $\mathfrak{U}\mathcal{G}$ with enough additional
morphisms to resolve this failure. In particular, the groupoid $\mathcal{G}$
is its maximal subgroupoid, $\mathfrak{U}\mathcal{G}$ contains extra
morphisms from each object $\underline{n}$ to the object $\underline{n+1}$
allowing to encode the compatibility condition (\ref{eq:}) and $\gamma$
canonically extends to $\mathfrak{U}\mathcal{G}$. Actually the category
$\mathfrak{U}\mathcal{G}$ forces some additional relations on the
morphisms which are not used to describe the above compatibilities.
For example, the family of symplectic representations of the mapping
class groups defines an object of $\mathbf{Fct}(\mathfrak{U}\mathfrak{M}_{2}^{+,g},R\textrm{-}\mathfrak{Mod})$.
Also the category $\mathfrak{U}\mathcal{G}$ is fundamental since
it provides a natural setting to study coefficient systems for homological
stability; see Wahl and Randal-Williams \cite[Sections 1 and 4]{WahlRandal-Williams}.

Therefore, our goal is to define the Long-Moody construction as an
endofunctor of the functor category $\mathbf{Fct}(\mathfrak{U}\mathcal{G},R\textrm{-}\mathfrak{Mod})$.
We thus deal with an extension problem: we require the functors $\mathcal{A}$
and $\chi$ to respectively extend along the inclusions $\mathcal{G}\hookrightarrow\mathfrak{U}\mathcal{G}$
and $\intop^{\mathcal{G}}\mathcal{A}\rightarrow\intop^{\mathcal{\mathfrak{U}G}}\mathcal{A}$
so that the following diagram is commutative
\[
\xymatrix{\mathfrak{U}\mathcal{G}\,\ar@{^{(}->}[r]^{i}\ar@{->}[dr]_{\gamma} & \intop^{\mathfrak{U}\mathcal{G}}\mathcal{A}\ar@{->}[d]^{\chi}\\
 & \mathfrak{U}\mathcal{G}.
}
\]
Under these assumptions, we prove:
\begin{namedthm}[\textit{Theorem }$\boldsymbol{A}$ (Definition \ref{Thm:LMFunctor})]
 There is a right-exact functor $\mathbf{LM}_{\{\mathcal{G},\chi\}}:\mathbf{Fct}(\mathfrak{U}\mathcal{G},R\textrm{-}\mathfrak{Mod})\rightarrow\mathbf{Fct}(\mathfrak{U}\mathcal{G},R\textrm{-}\mathfrak{Mod})$,
called the Long-Moody functor associated to $\{\mathcal{G},\chi\}$,
that assigns to all objects $F$ of $\mathbf{Fct}(\mathfrak{U}\mathcal{G},R\textrm{-}\mathfrak{Mod})$
and objects $\underline{n}$ of $\mathfrak{U}\mathcal{G}$ the $R$-module
\begin{equation}
\mathbf{LM}_{\{\mathcal{G},\chi\}}(F)(\underline{n})=\mathcal{I}_{R[H_{n}]}\underset{R[H_{n}]}{\otimes}F(\underline{n+1})\label{eq:def_LM_1}
\end{equation}
and to all elements $g$ of $G_{n}$ the morphism $\mathbf{LM}_{\{\mathcal{G},\chi\}}(F)(g)=\mathcal{A}_{n}(g)\otimes_{R[H_{n}]}F(\gamma_{n}(g))$.
\end{namedthm}
In particular, if we take the groups $\{G_{n}\}_{n\in\mathbb{N}}$
to be the family of braid groups $\{\mathbf{B}_{n}\}_{n\in\mathbb{N}}$
and the groups $\{H_{n}\}_{n\in\mathbb{N}}$ to be the family of free
groups $\{\mathbf{F}_{n}\}_{n\in\mathbb{N}}$, Theorem $A$ recovers
a previous result of the author \cite[Theorem A]{soulieLMBilan}.
However, the present framework improves the previous one since it
allows us to define and recover more representations for the braid
groups; see Example \ref{exa:recoverCallegaro} for instance. Additionally,
the families of mapping class groups of surfaces also fit into this
framework and are studied in Section \ref{subsec:Examples}. Further
families of groups also fit into the present framework such as mapping
class groups of compact connected oriented $3$-manifolds with boundary,
loop braid groups or automorphism groups of free products of groups,
although they are not presented in this paper. Also, the new abstract
approach to the Long-Moody functors presented in this paper indicates
that these are particular examples of more general \textit{tensorial
constructions} introduced in Section \ref{subsec:Framework-of-the}:
further generalizations and wider applications should emerge from
these last constructions. They are however not addressed in the present
paper.

In addition, we can relate a Long-Moody functor $\mathbf{LM}_{\{\mathcal{G},\chi\}}$
to the first homology groups of the family of groups $\{H_{n}\}_{n\in\mathbb{N}}$.
We recall that $\mathcal{A}$ denotes a functor $\mathfrak{U}\mathcal{G}\rightarrow\mathfrak{Gr}$
such that $\mathcal{A}(\underline{n})=H_{n}$ for all $n$. We denote
by $H_{1}(\mathcal{A};R)$ the composition $H_{1}(-;R)\circ\mathcal{A}$,
where $H_{1}(-;R)$ denotes the first homology group functor. Then,
denoting by $R:\mathfrak{U}\mathcal{G}\rightarrow R\textrm{-}\mathfrak{Mod}$
the constant functor at $R$, the functor $\mathbf{LM}_{\{\mathcal{G},\chi\}}(R)$
is equivalent to the composition $H_{1}(\mathcal{A};R)$; see Proposition
\ref{prop:casesigmatrivial}. For instance, for any functor $\chi$
defining a Long-Moody functor for the category $\mathfrak{M}_{2}^{+,g}$
associated to the mapping class groups $\{\mathbf{\Gamma}_{g,1}^{n}\}_{n\in\mathbb{N}}$,
the functor $\mathbf{LM}_{\{\mathfrak{M}_{2}^{+,g},\chi\}}(R)$ thus
encodes the family of symplectic representations. Moreover, considering
the family of trivial morphisms $\{\chi_{n,tr}:H_{n}\rightarrow G_{n+1}\}_{n\in\mathbb{N}}$,
we also prove that for all objects $F$ of $\mathbf{Fct}(\mathfrak{U}\mathcal{G},R\textrm{-}\mathfrak{Mod})$,
there is a natural equivalence for the associated Long-Moody functor
(see Proposition \ref{prop:casesigmatrivial}):
\begin{equation}
\mathbf{LM}_{\{\mathcal{G},\chi_{tr}\}}(F)\cong H_{1}(H_{n};R)\underset{R}{\otimes}F(1+-).\label{eq:introtrivial}
\end{equation}
Hence $\mathbf{LM}_{\{\mathcal{G},\chi_{tr}\}}$ is defined as the
tensor product of $F$ with the functor $H_{1}(\mathcal{A};R)$. Nevertheless,
the equivalence (\ref{eq:introtrivial}) does not hold generally speaking
for an endofunctor $\mathbf{LM}_{\{\mathcal{G},\chi\}}$ where $\chi$
is not a functor induced by the morphisms $\{\chi_{n,tr}\}_{n\in\mathbb{N}}$.
Indeed, for each $n\in\mathbb{N}$, the $G_{n}$-module $\mathbf{LM}_{\{\mathcal{G},\chi\}}(F)(\underline{n})$
is defined by the twisted tensor product (\ref{eq:def_LM_1}), which
is not isomorphic in general to $H_{1}(\mathcal{A};R)\otimes_{R}F(\underline{n+1})$
if $F(\underline{n+1})$ is not a trivial $H_{n}$-module. Also, for
each $n\in\mathbb{N}$, the twisted first homology group $H_{1}(H_{n};F(\underline{n+1}))$
is canonically a submodule $\mathbf{LM}_{\{\mathcal{G},\chi\}}(F)(\underline{n})$;
see the $4$-term short exact sequence (\ref{eq:4_term_sequence_LM}).
For instance, let $\mathbf{LM}_{\{\mathfrak{M}_{2}^{+,g},\chi_{1}\}}$
be the Long-Moody functor defined by the above non-trivial morphisms
$\{\chi_{n,1}:\pi_{1}(\varSigma_{g,1}^{n},p)\rightarrow\mathbf{\Gamma}_{g,1}^{n+1}\}_{n\in\mathbb{N}}$.
The application of this functor provides linear representations of
the family of groups $\{\mathbf{\Gamma}_{g,1}^{n}\}_{n\in\mathbb{N}}$
which, as far as the author knows, are unknown in the literature;
see Section \ref{subsec:Applications}. In the same vein, new interesting
linear representations for the braid groups on orientable surfaces
$\{\mathbf{B}_{n}(\varSigma_{g,1})\}_{n\in\mathbb{N}}$ and for the
mapping class groups $\{\mathbf{\Gamma}_{n,1}\}_{n\in\mathbb{N}}$
are defined by the applications of some appropriate Long-Moody functors;
see Sections \ref{subsec:Applications} and \ref{par:Orientable-surfaces}.\\

Furthermore, among the objects in the category $\mathbf{Fct}(\mathfrak{U}\mathcal{G},R\textrm{-}\mathfrak{Mod})$
of particular importance are the \textit{strong }and\textit{ very
strong polynomial functors}. The first notions of polynomial functors
date back to Eilenberg and Mac Lane \cite{EilenbergMacLane} for functors
on module categories. Polynomial functors have been the object of
an intensive study because of their applications in algebraic topology
(see Henn, Lannes and Schwartz \cite{HennLannesSchwartz}), representation
theory (see Kuhn \cite{Kuhn}) and group cohomology (see Franjou,
Friedlander, Scorichenko and Suslin \cite{FranjouFriedlanderScorichenkoSuslin}).
Also this notion has progressively been extended to deal with a more
general framework than module categories; see Pirashvili \cite{Pirashvili}
for instance. Djament and Vespa recently introduced in \cite{DV3}
strong polynomial functors for symmetric monoidal categories in which
the monoidal unit is initial. This new definition encompasses all
the previous ones. This definition is extended to pre-braided monoidal
categories in which the monoidal unit is initial in \cite[Section 3]{soulieLMBilan}.
The notion of a very strong polynomial functor in this context is
introduced in \cite[Section 3]{soulieLMBilan}; it is equivalent to
that of coefficient systems of finite degree of Wahl and Randal-Williams
\cite[Section 4.4]{WahlRandal-Williams}. These notions of strong
and very strong polynomial functors extend to the more general context
of the present paper; see Section \ref{subsec:Prerequisite-on-strong}.

One reason for our interest in very strong polynomial functors is
their homological stability properties: in \cite{WahlRandal-Williams},
Randal-Williams and Wahl prove homological stability results for certain
families of groups $\{G_{n}\}_{n\in\mathbb{N}}$ with twisted coefficients
given by very strong polynomial objects of $\mathbf{Fct}(\mathfrak{U}\mathcal{G},\mathbb{Z}\textrm{-}\mathfrak{Mod})$.
Their results hold for surface braid groups and mapping class groups
of orientable and non-orientable surfaces. The representation theory
of these groups is complicated and an active research topic; see Birman
and Brendle's survey \cite[Section 4.6]{BirmanBrendlesurvey} or Margalit's
expository paper \cite{Margalit}. A fortiori, the very strong polynomial
functors associated with these groups are not well-understood.

In addition, we are interested in \textit{weak} polynomial functors,
a notion introduced by Djament and Vespa \cite[Section 3.1]{DV3}
for symmetric monoidal categories. We prove that this concept extends
to the present framework in Section \ref{subsec:Weak-polynomiality}.
A first matter of interest in this last notion is that it is more
appropriate for understanding the stable behaviour of a given functor:
it reflects more accurately than the strong polynomiality the behaviour
of functors for large values. Also weak polynomial functors of degree
less or equal to some $d\in\mathbb{N}$ form a category $\mathcal{P}ol_{d}(\mathfrak{U}\mathcal{G})$
that is \textit{localizing}, which allows one to define quotient categories
$\mathcal{P}ol_{d+1}(\mathfrak{U}\mathcal{G})/\mathcal{P}ol_{d}(\mathfrak{U}\mathcal{G})$.
These quotients provide an organizing tool for families of representations
of the groups $\{G_{n}\}_{n\in\mathbb{N}}$; see Section \ref{subsec:Weak-polynomial-functors-1}.

We then investigate the effects of Long-Moody functors on polynomial
functors. This analysis requires to make further assumptions on the
parameters $\mathcal{A}$ and $\chi$, which are detailed in Assumption
\ref{assu:decomposeAfreeproduct}. Namely we assume that there exist
two groups $H_{0}$ and $H$ such that $\mathcal{A}(\underline{n})=H^{*n}*H_{0}$
for all objects $\text{\ensuremath{\underline{n}}}$ of $\mathcal{G}$,
that $\mathcal{A}$ satisfies some compatibility conditions with respect
to the morphisms and braiding of $\mathfrak{U}\mathcal{G}$ and that
a coherence equality between the group morphisms $\{\chi_{n}:H_{n}\rightarrow G_{n+1}\}_{n\in\mathbb{N}}$
and the braiding of $\mathfrak{U}\mathcal{G}$ is checked. Assumption
\ref{assu:decomposeAfreeproduct} is a technical (but quite natural)
hypothesis, which is satisfied in many of the examples of interest,
such as mapping class groups of surfaces or surface braid groups;
see Section \ref{sec:Examples-and-applications}. Then we prove:
\begin{namedthm}[\textit{Theorem $\boldsymbol{B}$ }(Theorems \ref{Thm:emairesult}
and \ref{thm:ResultWeakpoly})]
 Under the hypotheses of Theorem $A$ and Assumption \ref{assu:decomposeAfreeproduct}
and assuming that the groups $\{H_{n}\}_{n\in\mathbb{N}}$ are free,
the Long-Moody functor $\mathbf{LM}_{\{\mathcal{G},\chi\}}$ increases
by one both the very strong and the weak polynomial degrees.
\end{namedthm}
The proof of this result relies on keystone relations for the action
of the \textit{difference} and \textit{evanescence} functors on Long-Moody
functors. For the family of braid groups $\{\mathbf{B}_{n}\}_{n\in\mathbb{N}}$,
Theorem $B$ recovers in particular the previous result of the author
for braid groups \cite[Theorem B]{soulieLMBilan} and gives the new
result for the weak polynomial degree.

Therefore the Long-Moody functors provide new families of (very) strong
polynomial and weak polynomial functors of \textit{$\mathbf{Fct}(\mathfrak{U}\mathcal{G},R\textrm{-}\mathfrak{Mod})$}
in any degree. This result allows one to gain a better understanding
of polynomial functors for mapping class groups and extends the scope
of twisted homological stability to more sophisticated sequences of
representations. It is a precise measure of the fact that the Long-Moody
functors produce more complicated (hence interesting) representations
of families of groups, but not overly complicated since polynomiality
is preserved. These methods also introduce new tools to clarify the
structures of weak polynomial functors in this context; see Proposition
\ref{prop:classificationweakpoly}.

\paragraph{Outline.}

The paper is organized as follows. In Section \ref{sec:Recollections-on-Quillen's},
we recall Quillen's bracket construction, pre-braided monoidal categories
and the Grothendieck construction. In Section \ref{sec:The-generalized-Long-Moody},
after setting up the general framework of the families of groups,
we define the generalized Long-Moody functors and give some of their
properties. Section \ref{subsec:Examples} is devoted to the application
of Long-Moody functors to the mapping class groups of surfaces and
surface braid groups (recovering in particular the case of classical
braid groups). Section \ref{sec:Strong-and-weak} introduces the notions
of strong, very strong and weak polynomial functors in the present
framework. In Section \ref{sec:Behaviour-of-the}, we consider the
effect of Long-Moody functors on strong and weak polynomial functors.
Finally, in Section \ref{sec:Examples-and-applications}, we explain
the applications of the effect of Long-Moody functors on polynomiality,
in particular their interest for homological stability results.

\paragraph{General notation.}

We fix a commutative (associative, unital, non-trivial) ring $R$
throughout this paper. We denote by $R\textrm{-}\mathfrak{Mod}$ the
category of $R$-modules. We denote by $\mathfrak{Gr}$ the category
of groups and by $*$ the coproduct in this category. For $G$ a group,
we denote its unit element by $e_{G}$.

Let $\mathfrak{Cat}$ denote the category of  small categories. Let
$\mathfrak{C}$ be an object of $\mathfrak{Cat}$. We use the abbreviation
$\textrm{Obj}(\mathfrak{C})$ to denote the set of objects of $\mathfrak{C}$.
If there exists an initial object $\textrm{Ø}$ in the category $\mathfrak{C}$,
then we denote by $\iota_{A}:\textrm{Ø}\rightarrow A$ the unique
morphism from $\textrm{Ø}$ to $A$. If $t$ is a terminal object
in the category $\mathfrak{C}$, then we denote by $t_{A}:A\rightarrow t$
the unique morphism from $A$ to $t$. The maximal subgroupoid $\mathscr{G}\mathfrak{r}(\mathfrak{C})$
is the subcategory of $\mathfrak{C}$ which has the same objects as
$\mathfrak{C}$ and whose morphisms are the isomorphisms of $\mathfrak{C}$.
We denote by $\mathscr{G}\mathfrak{r}:\mathfrak{Cat}\rightarrow\mathfrak{Cat}$
the functor which associates to a category its maximal subgroupoid.
For $\mathfrak{D}$ a category and $\mathfrak{C}$ a small category,
we denote by $\mathbf{Fct}(\mathfrak{C},\mathfrak{D})$ the category
of functors from $\mathfrak{C}$ to $\mathfrak{D}$. We take the convention
that the set of natural numbers $\mathbb{N}$ is the set of nonnegative
integers $\{0,1,2,\ldots\text{\}}$.

\paragraph{Acknowledgements.}

The author wishes to thank most sincerely Christine Vespa, Geoffrey
Powell, Tara Brendle, Antoine Touzé and Nathalie Wahl for their careful
reading, comments, suggestions and expert advice. He would also especially
like to thank Aurélien Djament, Nariya Kawazumi, Gwénaël Massuyeau,
Martin Palmer and Oscar Randal-Williams for the attention they have
paid to this work and helpful discussions. Additionally, he would
like to thank the anonymous referees for their reading and work on
this paper.

\tableofcontents{}

\section{Preliminaries on some categorical tools\label{sec:Recollections-on-Quillen's}}

The aim of this section is to introduce the categorical framework
necessary for our study. In particular, we recall notions and properties
of Quillen's bracket construction introduced in \cite[p.219]{graysonQuillen}
and pre-braided monoidal categories, based on \cite[Section 1]{WahlRandal-Williams}
to which we refer the reader for further details. Also, we recall
a construction for functors from a small category to the category
of small categories, called the \textit{Grothendieck construction}.

Beforehand, we take this opportunity to recall some terminology about
 monoidal categories, referring to \cite{MacLane1} for an introduction
to (braided) strict monoidal categories. A strict monoidal category
will be denoted by $(\mathfrak{C},\natural,0)$, where $\mathfrak{C}$
is a category, $\natural$ is the monoidal product and $0$ is the
monoidal unit. If it is braided, then we denote its braiding by $b_{A,B}^{\mathfrak{C}}:A\natural B\overset{\sim}{\rightarrow}B\natural A$
for all objects $A$ and $B$ of $\mathfrak{C}$. \textbf{We fix a
strict monoidal groupoid $(\mathfrak{G},\natural,0)$ throughout this
section.}

\paragraph{Quillen's bracket construction.}

The following definition is a particular case of a more general construction
of \cite{graysonQuillen} and we refer the reader to \cite[Section 1.1]{WahlRandal-Williams}
for further details.\textit{ Quillen's bracket construction} on the
groupoid $\mathfrak{G}$, denoted by $\mathfrak{UG}$, is the category
with the same objects as $\mathfrak{G}$ and for morphisms
\[
\textrm{Hom}_{\mathfrak{UG}}(A,B)=\textrm{Colim}_{\mathfrak{G}}[\textrm{Hom}_{\mathfrak{G}}(-\natural A,B)]
\]
for $A$ and $B$ two objects of $\mathfrak{G}$. Thus, a morphism
from $A$ to $B$ in the category $\mathfrak{UG}$ is an equivalence
class of pairs $(X,f)$, where $X$ is an object of $\mathfrak{G}$
and $f:X\natural A\rightarrow B$ is a morphism of $\mathfrak{G}$;
we denote this by $[X,f]:A\rightarrow B$: two morphisms $[X,f]$
and $[X',f']$ of $\textrm{Hom}_{\mathfrak{UG}}(A,B)$ are equivalent
if there exists an isomorphism $\psi\in\textrm{Hom}_{\mathfrak{G}}(X,X')$
such that
\begin{equation}
f'\circ(\psi\natural id_{A})=f.\label{eq:equivalence relation}
\end{equation}
For all objects $X$ of $\mathfrak{UG}$, the identity morphism in
the category $\mathfrak{UG}$ is given by $[0,id_{X}]:X\rightarrow X$.
Then, the composition in the category $\mathfrak{UG}$ is defined
by $[Y,g]\circ[X,f]=[Y\natural X,g\circ(id_{Y}\natural f)]$ for $[X,f]:A\rightarrow B$
and $[Y,g]:B\rightarrow C$ two morphisms in the category $\mathfrak{UG}$.
In particular, we note that the unit $0$ of the monoidal structure
is an initial object in the category $\mathfrak{U\mathfrak{G}}$.

There is a relationship between the automorphisms of the groupoid
$\mathfrak{G}$ and those of its associated Quillen's bracket construction
$\mathfrak{U}\mathfrak{G}$. We recall that the strict monoidal groupoid
$(\mathfrak{G},\natural,0)$ is said to have \textit{no zero divisors}
if, for all objects $A$ and $B$ of $\mathfrak{G}$, $A\natural B\cong0$
if and only if $A\cong B\cong0$. In particular, if the strict monoidal
groupoid $(\mathfrak{G},\natural,0)$ has no zero divisors and $\textrm{Aut}_{\mathfrak{G}}(0)=\{id_{0}\}$,
then \cite[Proposition 1.7]{WahlRandal-Williams} proves that the
canonical functor $\mathfrak{c}_{\mathfrak{U}\mathfrak{G}}:\mathfrak{G}\hookrightarrow\mathfrak{UG}$
defined as the identity on objects and by $\mathfrak{c}_{\mathfrak{U}\mathfrak{G}}(\phi)=[0,\phi]$
for all $\phi\in\textrm{Aut}_{\mathfrak{G}}(X)$ is fully faithful.
The maximal subgroupoid $\mathscr{G}\mathfrak{r}(\mathfrak{U\mathfrak{G}})$
of $\mathfrak{U\mathfrak{G}}$ then corresponds to $\mathfrak{G}$
in this situation.\textbf{ Henceforth in this section, we assume that
the strict monoidal groupoid $(\mathfrak{G},\natural,0)$ has no zero
divisors and that $\textrm{Aut}_{\mathfrak{G}}(0)=\{id_{0}\}$.}
\begin{rem}
Let $X$ be an object of $\mathfrak{G}$ and $\phi\in\textrm{Aut}_{\mathfrak{G}}(X)$.
Then, as an element of $\textrm{Hom}_{\mathfrak{UG}}(X,X)$, we abuse
the notation and write $\phi$ for $[0,\phi]$. We also note from
the definition of Quillen's bracket construction that if $(\mathfrak{G},\natural,0)$
is locally small, then so is $\mathfrak{UG}$.
\end{rem}

A natural question is to wonder when an object of $\mathbf{Fct}(\mathfrak{G},\mathscr{C})$
extends to an object of $\mathbf{Fct}(\mathfrak{U}\mathfrak{G},\mathscr{C})$
for a given category $\mathscr{C}$, which is the aim of the following
lemma. Analogous statements can be found in \cite[Proposition 2.4]{WahlRandal-Williams}
and \cite[Lemma 1.12]{soulieLMBilan} (for the category $\mathfrak{U}\boldsymbol{\beta}$
for this last reference).
\begin{lem}
\label{lem:criterionfamilymorphismsfunctor}Let $\mathscr{C}$ be
a category and $F$ an object of $\mathbf{Fct}(\mathfrak{G},\mathscr{C})$.
Assume that there exist a morphism $\eta_{Q,P}:F(P)\rightarrow F(Q\natural P)$
for each pair $(P,Q)$ of objects of $\mathfrak{G}$, such that for
all $A,X,Y\in\textrm{Obj}(\mathfrak{G})$:
\begin{equation}
\eta_{Y,X\natural A}\circ\eta_{X,A}=\eta_{Y\natural X,A}\label{eq:criterion}
\end{equation}
and $\eta_{0,B}=id_{F(B)}$ for all $B\in\textrm{Obj}(\mathfrak{G})$.
Then, the assignments $F([X,\gamma])=F(\gamma)\circ\eta_{X,A}$ to
all morphisms $[X,\gamma]:A\rightarrow X\natural A$ of $\mathfrak{U}\mathfrak{G}$
extend the functor $F:\mathfrak{G}\rightarrow\mathscr{C}$ to a functor
$F:\mathfrak{U}\mathfrak{G}\rightarrow\mathscr{C}$ if and only if
for all $A,X\in\textrm{Obj}(\mathfrak{G})$, for all $\gamma''\in\textrm{Aut}_{\mathfrak{G}}(A)$
and all $\gamma'\in\textrm{Aut}_{\mathfrak{G}}(X)$:
\begin{equation}
F([X,id_{X\natural A}])\circ F(\gamma'')=F(\gamma'\natural\gamma'')\circ F([X,id_{X\natural A}]).\label{eq:criterion'}
\end{equation}
\end{lem}

\begin{proof}
Assume that relation (\ref{eq:criterion'}) is satisfied. Our assignments
imply that $F([0,id_{A}])=id_{F(A)}$ for all objects $A$. First,
let us prove that our assignment conforms with the defining equivalence
relation (\ref{eq:equivalence relation}) of $\mathfrak{UG}$. Let
$A,X\in\textrm{Obj}(\mathfrak{G})$. Let $\gamma,\gamma'\in\textrm{Aut}_{\mathfrak{G}}(X\natural A)$
such that there exists $\psi\in\textrm{Aut}_{\mathfrak{G}}(X)$ such
that $\gamma'\circ(\psi\natural id_{A})=\gamma$. According to the
relation (\ref{eq:criterion'}) and since $F$ is a functor over $\mathfrak{G}$,
we deduce that $F([X,\gamma])=F(\gamma')\circ F([X,id_{X\natural A}])\circ F(id_{A})=F([X,\gamma'])$.
Now, let us check the composition axiom. Let $A,X,Y\in\textrm{Obj}(\mathfrak{G})$,
let $[X,\gamma]\in\textrm{Hom}_{\mathfrak{UG}}(A,X\natural A)$ and
$[Y,\gamma']\in\textrm{Hom}_{\mathfrak{UG}}(X\natural A,Y\natural X\natural A)$.
We deduce from relation (\ref{eq:criterion'}) that $F([Y,\gamma'])\circ F([X,\gamma])=F(\gamma')\circ(F(id_{Y}\natural\gamma)\circ F([Y,id_{Y\natural X\natural A}]))\circ F([X,id_{X\natural A}])$.
So, it follows from relation (\ref{eq:criterion}) that 
\[
F([Y,\gamma'])\circ F([X,\gamma])=F(\gamma'\circ(id_{Y}\natural\gamma))\circ F([Y\natural X,id_{Y\natural X\natural A}])=F([Y,\gamma']\circ[X,\gamma]).
\]

Conversely, assume that the functor $F:\mathfrak{U}\mathfrak{G}\rightarrow\mathscr{C}$
is well-defined. In particular, the composition axiom in $\mathfrak{U}\mathfrak{G}$
is satisfied and implies that for all $A,X\in\textrm{Obj}(\mathfrak{G})$,
for all $\gamma\in\textrm{Aut}_{\mathfrak{G}}(A)$, $F([X,id_{X\natural A}])\circ F(\gamma)=F([X,id_{X}\natural\gamma])$.
So it follows from the defining equivalence relation (\ref{eq:equivalence relation})
of $\mathfrak{UG}$ that relation (\ref{eq:criterion'}) is satisfied.
\end{proof}

\paragraph{Pre-braided monoidal categories.}

If the strict monoidal groupoid $(\mathfrak{G},\natural,0)$ is braided,
Quillen's bracket construction $\mathfrak{UG}$ inherits a strict
monoidal structure; see Proposition \ref{prop:Quillen'sconstructionprebraided}.
However, the braiding $b_{-,-}^{\mathfrak{G}}$ does not extend in
general to $\mathfrak{UG}$.

First, we recall the notion of a pre-braided monoidal category introduced
by Randal-Williams and Wahl in \cite[Definition 1.5]{WahlRandal-Williams},
generalizing that of a strict braided monoidal category. For $(\mathfrak{C},\natural,0)$
a strict monoidal category such that the unit $0$ is initial, we
say that $(\mathfrak{C},\natural,0)$ is \textit{pre-braided} if the
maximal subgroupoid $\mathscr{G}\mathfrak{r}(\mathfrak{C})$ is a
braided monoidal category -- where the monoidal structure is induced
by that of $(\mathfrak{C},\natural,0)$ -- and the braiding associated
with the maximal subgroupoid $b_{A,B}^{\mathscr{G}\mathfrak{r}(\mathfrak{C})}:A\natural B\longrightarrow B\natural A$
satisfies
\begin{equation}
b_{A,B}^{\mathscr{G}\mathfrak{r}(\mathfrak{C})}\circ(id_{A}\natural\iota_{B})=\iota_{B}\natural id_{A}:A\longrightarrow B\natural A\label{eq:defbraid}
\end{equation}
for all objects $A$ and $B$ of $\mathfrak{C}$ (recall that $\iota_{B}:0\rightarrow B$
denotes the unique morphism from $0$ to $B$).
\begin{rem}
A braided monoidal category is automatically pre-braided. However,
a pre-braided monoidal category is not necessarily braided; see \cite[Remark 5.24]{WahlRandal-Williams}
or \cite[Remark 1.15]{soulieLMBilan}. Namely, for a pre-braided monoidal
category, the opposite of condition (\ref{eq:defbraid}), that is
$b_{A,B}^{\mathfrak{C}}\circ(\iota_{B}\natural id_{A})=id_{A}\natural\iota_{B}$,
does not hold generally speaking, whereas this is a necessary property
for a braided monoidal category.
\end{rem}

Finally, let us give the following key property when Quillen's bracket
construction is applied on a strict braided monoidal groupoid $(\mathfrak{G},\natural,0,b_{-,-}^{\mathfrak{G}})$.
\begin{prop}
\cite[Proposition 1.8]{WahlRandal-Williams}\label{prop:Quillen'sconstructionprebraided}
If the groupoid $(\mathfrak{G},\natural,0)$ is braided, then the
category $(\mathfrak{UG},\natural,0)$ is pre-braided monoidal. Namely,
the monoidal structure on the category $(\mathfrak{UG},\natural,0)$
is defined on objects by that of $(\mathfrak{G},\natural,0)$ and
on morphisms by letting
\begin{equation}
[X,f]\natural[Y,g]=[X\natural Y,(f\natural g)\circ(id_{X}\natural(b_{A,Y}^{\mathfrak{G}})^{-1}\natural id_{C})]\label{eq:def_monoidal_UG}
\end{equation}
for $[X,f]\in\textrm{Hom}_{\mathfrak{UG}}(A,B)$ and $[Y,g]\in\textrm{Hom}_{\mathfrak{UG}}(C,D)$.
In particular, the canonical functor $\mathfrak{c}_{\mathfrak{U}\mathfrak{G}}:\mathfrak{G}\hookrightarrow\mathfrak{UG}$
is monoidal.
\end{prop}

\paragraph{The Grothendieck construction.}

We present here the Grothendieck construction for a functor from a
small category to the category of small categories. We refer the reader
to \cite[Chapter 1, Section 5]{MacLaneMoerdijk} for further details.

Let $\mathfrak{C}$ be a \textit{small} category and $\mathcal{A}:\mathfrak{C}\rightarrow\mathfrak{Cat}$
a functor. The \textit{Grothendieck construction} for $\mathcal{A}$,
also known as the category of elements of $\mathcal{A}$ and denoted
by $\intop^{\mathfrak{C}}\mathcal{A}$, is defined as follows. The
objects are pairs $(x,c)$ where $c\in\textrm{Obj}(\mathfrak{C})$
and $x\in\textrm{Obj}(\mathcal{A}(c))$, and a morphism from $(x,c)$
to $(x',c')$ in $\intop^{\mathfrak{C}}\mathcal{A}$ is a pair $(\alpha,f)$
where $f\in\textrm{Hom}_{\mathfrak{C}}(c,c')$ and $\alpha\in\textrm{Hom}_{\mathcal{A}(c')}(\mathcal{A}(f)(x),x')$.
For $(\alpha,f)\in\textrm{Hom}_{\intop^{\mathfrak{C}}\mathcal{A}}((x_{1},c_{1}),(x_{2},c_{2}))$
and $(\beta,g)\in\textrm{Hom}_{\intop^{\mathfrak{C}}\mathcal{A}}((x_{2},c_{2}),(x_{3},c_{3}))$,
the composition in the category $\intop^{\mathfrak{C}}\mathcal{A}$
is defined by
\[
(\beta,g)\circ(\alpha,f)=(\beta\circ\mathcal{A}(g)(\alpha),g\circ f).
\]
There is a canonical projection functor $\intop^{\mathfrak{C}}\mathcal{A}\rightarrow\mathfrak{C}$,
given by sending an object $(x,c)$ to $c$.

Recall that $\mathfrak{Gr}$ denotes the category of groups. For a
functor $\mathcal{A}:\mathfrak{C}\rightarrow\mathfrak{Gr}$, the associated
Grothendieck construction $\intop^{\mathfrak{C}}\mathcal{A}$ is then
defined by considering a group $G$ as a category with one object
denoted by $\cdot_{G}$. Denoting by $0$ the functor $\mathfrak{C}\rightarrow\mathfrak{Gr}$
sending all $c\in\textrm{Obj}(\mathfrak{C})$ to the trivial group
$0_{\mathfrak{Gr}}$, there exists a unique natural transformation
$0\rightarrow\mathcal{A}$. Applying the Grothendieck construction,
this induces a section $\mathfrak{C}=\intop^{\mathfrak{C}}0\hookrightarrow\intop^{\mathfrak{C}}\mathcal{A}$
to the projection functor $\intop^{\mathfrak{C}}\mathcal{A}\rightarrow\mathfrak{C}$.
We denote this section functor by $\mathfrak{s}_{\mathcal{A}}$.

With the category of groups as the target category of $\mathcal{A}$,
the associated Grothendieck construction actually encodes a semidirect
product structure:
\begin{lem}
\label{exa:casegroupcategory} For a functor $\mathcal{A}:\mathfrak{C}\rightarrow\mathfrak{Gr}$
where $\mathfrak{C}$ is a small groupoid, then $\intop^{\mathfrak{C}}\mathcal{A}$
is a groupoid where the automorphism group of any object $c$ is the
semidirect product $\mathcal{A}(c)\rtimes\textrm{Aut}_{\mathfrak{C}}(c)$
induced by $\mathcal{A}$.
\end{lem}

\begin{proof}
Considering such a functor $\mathcal{A}$ is equivalent to defining
a group homomorphism $\mathcal{A}_{c}:\textrm{Aut}_{\mathfrak{C}}(c)\rightarrow\textrm{Aut}_{\mathfrak{Gr}}(\mathcal{A}(c))$
for each object $c$. It follows from the definition of the Grothendieck
construction that all the morphisms of $\intop^{\mathfrak{C}}\mathcal{A}$
are invertible, and that there is a clear group isomorphism $\textrm{Aut}_{\intop^{\mathfrak{C}}\mathcal{A}}((\cdot_{\mathcal{A}(c)},c))\cong\mathcal{A}(c)\rtimes_{\mathcal{A}_{c}}\textrm{Aut}_{\mathfrak{C}}(c)$,
the composition in the category $\intop^{\mathfrak{C}}\mathcal{A}$
following exactly the same rule as the group operation of a semidirect
product.
\end{proof}

\section{The generalized Long-Moody functors\label{sec:The-generalized-Long-Moody}}

In this section, we introduce the notion of Long-Moody functors for
an abstract family of groups, inspired by the Long-Moody construction
for braid groups of \cite[Theorem 2.1]{Long1}. First, we introduce
a general construction in Section \ref{subsec:Framework-of-the},
called \textit{the tensorial construction}, using tensor product of
functors and the required tools for our study. We then define \textit{the
generalized Long-Moody functors} and establish some of their first
properties in Section \ref{subsec:Functoriality-of-the}. In addition
to recovering all the results of \cite[Section 2]{soulieLMBilan},
we give a new approach to the tools and conditions previously considered
in \cite{soulieLMBilan}, allowing a deeper understanding of these
constructions and a wider application.

\subsection{A general construction\label{subsec:Framework-of-the}}

In this first subsection, we present a general construction based
on a tensor product for functor categories. The generalized Long-Moody
functors introduced in Section \ref{subsec:Functoriality-of-the}
are particular cases of this construction. We refer the reader to
\cite[Section VII.3]{MacLane1} for the notions of monoid objects
and modules in a monoidal category, which will be used in this section.

\paragraph{Tensor product over a monoid functors.}

First, let us introduce the notion of tensor product over a monoid
functor. \textbf{We fix a small category $\mathfrak{C}$ throughout
Section \ref{subsec:Framework-of-the}.}

Let $\otimes_{R}$ be the pointwise tensor product in the functor
category $\mathbf{Fct}(\mathfrak{C},R\textrm{-}\mathfrak{Mod})$ and
let $R$ denote the constant functor at $R$. These endow $\mathbf{Fct}(\mathfrak{C},R\textrm{-}\mathfrak{Mod})$
with a strict monoidal structure $(\mathbf{Fct}\left(\mathfrak{C},R\textrm{-}\mathfrak{Mod}\right),\otimes_{R},R)$.
Let $\mathcal{M}$ be a monoid object in $\mathbf{Fct}(\mathfrak{C},R\textrm{-}\mathfrak{Mod})$.
We denote by $\mathcal{M}\textrm{-}\mathfrak{Mod}$ and $\mathfrak{Mod}\textrm{-}\mathcal{M}$
the categories of left and right modules in $\mathbf{Fct}(\mathfrak{C},R\textrm{-}\mathfrak{Mod})$
over $\mathcal{M}$ respectively. The \textit{tensor product over
$\mathcal{M}$ functor} $-\otimes_{\mathcal{M}}-:\mathfrak{Mod}\textrm{-}\mathcal{M}\times\mathcal{M}\textrm{-}\mathfrak{Mod}\rightarrow\mathbf{Fct}(\mathfrak{C},R\textrm{-}\mathfrak{Mod})$
is defined by:

$\bullet$ Objects: for $F\in\textrm{Obj}(\mathfrak{Mod}\textrm{-}\mathcal{M})$
and $G\in\textrm{Obj}(\mathcal{M}\textrm{-}\mathfrak{Mod})$, denoting
$\rho_{F}$ and $\lambda_{G}$ the natural transformation actions
of $\mathcal{M}$ on $F$ and $G$ respectively, $F\otimes_{\mathcal{M}}G:\mathfrak{C}\rightarrow R\textrm{-}\mathfrak{Mod}$
is the coequalizer of the natural transformations $\rho_{F}\otimes_{R}id_{G}$
and $id_{F}\otimes_{R}\lambda_{G}$.

$\bullet$ Morphisms: let $F_{1}$ and $F_{2}$ be two objects of
$\mathfrak{Mod}\textrm{-}\mathcal{M}$ and $f:F_{1}\rightarrow F_{2}$
be a natural transformation in $\mathfrak{Mod}\textrm{-}\mathcal{M}$;
let $G_{1}$ and $G_{2}$ be two objects of $\mathcal{M}\textrm{-}\mathfrak{Mod}$
and $g:G_{1}\rightarrow G_{2}$ be a natural transformation in $\mathcal{M}\textrm{-}\mathfrak{Mod}$.
We define $f\otimes_{\mathcal{M}}g:F_{1}\otimes_{\mathcal{M}}G_{1}\rightarrow F_{2}\otimes_{\mathcal{M}}G_{2}$
to be the unique morphism induced from $f\otimes_{R}g:F_{1}\otimes_{R}G_{1}\rightarrow F_{2}\otimes_{R}G_{2}$
by the universal property of the coequalizer $F_{1}\otimes_{\mathcal{M}}G_{1}$.

The functor $-\otimes_{\mathcal{M}}-$ is called \textit{the tensor
product functor over $\mathcal{M}$}. In particular, fixing an object
$F$ of $\mathfrak{Mod}\textrm{-}\mathcal{M}$ defines a functor $F\otimes_{\mathcal{M}}-:\mathcal{M}\textrm{-}\mathfrak{Mod}\rightarrow\mathbf{Fct}(\mathfrak{C},R\textrm{-}\mathfrak{Mod})$.

\paragraph{Group algebra and augmentation ideal functors.}

We recall that $\mathfrak{Gr}$ denotes the category of groups.\textbf{
From now on, we fix a functor $\mathcal{A}:\mathfrak{C}\rightarrow\mathfrak{Gr}$
for the remainder of Section \ref{subsec:Framework-of-the}.}

Let $R\textrm{-}\mathfrak{Alg}$ be the category of unital $R$-algebras.
For all objects $G$ of $\mathfrak{Gr}$, the group rings $R[G]$
and augmentation ideals $\mathcal{I}_{R[G]}$ respectively assemble
to define the group algebra functor $R[-]:\mathfrak{Gr}\rightarrow R\textrm{-}\mathfrak{Alg}$
and the augmentation ideal functor $\mathcal{I}_{R[-]}:\mathfrak{Gr}\rightarrow R\textrm{-}\mathfrak{Mod}$.
Let $R[\mathcal{A}]$ be the composition functor $R[-]\circ\mathcal{A}:\mathfrak{C}\rightarrow R\textrm{-}\mathfrak{Alg}$,
called the group algebra functor induced by $\mathcal{A}$. Similarly,
let $\mathcal{I}_{R[\mathcal{A}]}$ be the composition functor $\mathcal{I}_{R[-]}\circ\mathcal{A}:\mathfrak{C}\rightarrow R\textrm{-}\mathfrak{Mod}$,
called the augmentation ideal functor induced by $\mathcal{A}$. The
unital $R$-algebra structures of $R[\mathcal{A}(c)]$ for all $c\in\textrm{Obj}(\mathfrak{C})$
induce an associative unital monoid object structure on $R[\mathcal{A}]$
with respect to the monoidal structure $(\mathbf{Fct}(\mathfrak{C},R\textrm{-}\mathfrak{Mod}),\otimes_{R},R)$.
Therefore:
\begin{lem}
\label{lem:The-augmentation-ideal_right_A}The augmentation ideal
functor $\mathcal{I}_{R[\mathcal{A}]}$ is a right $R[\mathcal{A}]$-module.
\end{lem}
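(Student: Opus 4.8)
The claim is that the augmentation ideal functor $\mathcal{I}_{\mathcal{A}}$ is a right $R\left[\mathcal{A}\right]$-module in $\mathbf{Fct}\left(\mathfrak{C},R\textrm{-}\mathfrak{Mod}\right)$. The plan is to unwind what a right module over the monoid object $R\left[\mathcal{A}\right]$ means pointwise: it is a natural transformation $\rho:\mathcal{I}_{\mathcal{A}}\underset{R}{\otimes}R\left[\mathcal{A}\right]\rightarrow\mathcal{I}_{\mathcal{A}}$ satisfying the usual associativity and unit axioms with respect to the monoid structure on $R\left[\mathcal{A}\right]$. Since $\underset{R}{\otimes}$ is the pointwise tensor product, it suffices to produce, for every object $c$ of $\mathfrak{C}$, a right $R\left[\mathcal{A}\left(c\right)\right]$-module structure on $\mathcal{I}_{R\left[\mathcal{A}\left(c\right)\right]}$, and then to check that these assemble naturally in $c$.

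First I would record the elementary algebraic fact underlying everything: for any group $G$, the augmentation ideal $\mathcal{I}_{R\left[G\right]}=\ker\left(R\left[G\right]\rightarrow R\right)$ is a two-sided ideal of the ring $R\left[G\right]$, hence in particular a right $R\left[G\right]$-submodule of $R\left[G\right]$. This gives the pointwise module structure $\rho_{c}:\mathcal{I}_{R\left[\mathcal{A}\left(c\right)\right]}\underset{R}{\otimes}R\left[\mathcal{A}\left(c\right)\right]\rightarrow\mathcal{I}_{R\left[\mathcal{A}\left(c\right)\right]}$ by restriction of the multiplication of $R\left[\mathcal{A}\left(c\right)\right]$; the associativity and unit axioms are immediate from associativity and unitality of the algebra $R\left[\mathcal{A}\left(c\right)\right]$.

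Next I would verify naturality of $\rho=\left\{\rho_{c}\right\}_{c}$, i.e. that for every morphism $f:c\rightarrow c'$ of $\mathfrak{C}$ the square relating $\rho_{c}$ and $\rho_{c'}$ along $\mathcal{I}_{\mathcal{A}}\left(f\right)$ and $R\left[\mathcal{A}\right]\left(f\right)$ commutes. This follows because $R\left[-\right]:\mathfrak{Gr}\rightarrow R\textrm{-}\mathfrak{Alg}$ is a functor into algebras, so $R\left[\mathcal{A}\left(f\right)\right]$ is an algebra homomorphism, hence compatible with multiplication; restricting to the augmentation ideals (which are preserved since any algebra map induced by a group homomorphism commutes with the augmentation) the multiplication squares remain commutative. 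Thus $\rho$ is a morphism in $\mathbf{Fct}\left(\mathfrak{C},R\textrm{-}\mathfrak{Mod}\right)$, and since the monoid structure on $R\left[\mathcal{A}\right]$ is likewise defined pointwise by the algebra multiplications, the module axioms for $\rho$ hold pointwise and therefore hold as identities of natural transformations.

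I do not anticipate a genuine obstacle here; the only point requiring minor care is bookkeeping — making sure that "module over a monoid object in a pointwise monoidal functor category" really does reduce to a pointwise family of module structures that is natural, and that the augmentation map $R\left[-\right]\rightarrow R$ is itself natural so that $\mathcal{I}_{R\left[-\right]}$ is a well-defined subfunctor and the restricted multiplication is well-defined. Once that reduction is in place the verification is routine, being nothing more than the statement that $\mathcal{I}_{R\left[G\right]}$ is a right ideal of $R\left[G\right]$, naturally in $G$.
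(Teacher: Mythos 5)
Your proposal is correct and follows essentially the same route as the paper: the module structure is induced pointwise by the right $R\left[\mathcal{A}\left(c\right)\right]$-module structure of each augmentation ideal $\mathcal{I}_{R\left[\mathcal{A}\left(c\right)\right]}$, with the associativity and unit axioms checked pointwise. You merely spell out the naturality check (via functoriality of $R\left[-\right]$ into algebras and preservation of augmentation ideals) that the paper leaves implicit.
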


\begin{proof}
The natural transformation $\mathcal{I}_{R[\mathcal{A}]}\otimes_{R}R[\mathcal{A}]\rightarrow\mathcal{I}_{R[\mathcal{A}]}$
is induced by the right $R[\mathcal{A}(c)]$-module structure of the
augmentation ideal $\mathcal{I}_{R[\mathcal{A}(c)]}$ for each object
$c$ of $\mathfrak{C}$, the associativity and unit axioms of a module
over a monoid object being straightforward to check.
\end{proof}

\paragraph{The tensorial construction.}

We present now a general construction for functor categories, using
a tensor product functor. First of all, we have the following key
property:
\begin{prop}
\label{prop:keypropertygroth}The precomposition by the section $\mathfrak{s}_{\mathcal{A}}:\mathfrak{C}\hookrightarrow\intop^{\mathfrak{C}}\mathcal{A}$
induces an equivalence of categories $\mathfrak{s}_{\mathcal{A}}^{*}:\mathbf{Fct}(\intop^{\mathfrak{C}}\mathcal{A},R\textrm{-}\mathfrak{Mod})\cong R[\mathcal{A}]\textrm{-}\mathfrak{Mod}.$
\end{prop}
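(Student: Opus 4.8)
The plan is to construct a quasi-inverse to $\mathfrak{s}_{\mathcal{A}}^{*}$ by hand and check the two composites are naturally isomorphic to the identities. First I would unwind what a functor $\Phi:\intop^{\mathfrak{C}}\mathcal{A}\to R\textrm{-}\mathfrak{Mod}$ amounts to. Using the description of morphisms in Definition \ref{def:Grothendieckconstruction}, a morphism $(x,c)\to(x',c')$ is a pair $(\alpha,f)$ with $f\in Hom_{\mathfrak{C}}(c,c')$ and $\alpha\in Hom_{\mathcal{A}(c')}(\mathcal{A}(f)(x),x')$; but since $\mathcal{A}(c')$ is a one-object groupoid, the object-component $x$ carries no information and $\alpha$ ranges over all of $\mathcal{A}(c')$. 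Thus $\Phi$ is equivalent to the data of: an $R$-module $\Phi(c)$ for each object $c$ (there is only one object over $c$), together with, for each $f:c\to c'$, a map $\Phi(c)\to\Phi(c')$ indexed also by elements of $\mathcal{A}(c')$, subject to the composition law $(\beta,g)\circ(\alpha,f)=(\beta\cdot\mathcal{A}(g)(\alpha),\,g\circ f)$. Precomposing with $\mathfrak{s}_{\mathcal{A}}$ just remembers the underlying $\mathfrak{C}$-functor obtained by taking $\alpha=e$ everywhere; the extra data is exactly an action of $R[\mathcal{A}(c)]$ on $\Phi(c)$ compatible with the maps $\Phi(f)$, i.e.\ a natural transformation $R[\mathcal{A}]\underset{R}{\otimes}\Phi\to\Phi$ satisfying the monoid-module axioms.

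Concretely, the functor $\mathfrak{s}_{\mathcal{A}}^{*}$ sends $\Phi$ to the $\mathfrak{C}$-functor $c\mapsto\Phi(c)$, $f\mapsto\Phi(e,f)$, which I equip with the $R[\mathcal{A}]$-module structure whose structure map at $c$ is $r\cdot\xi\mapsto\Phi(r,id_c)(\xi)$ (extended $R$-linearly), the action being natural in $c$ because of the composition law: $\Phi(e,f)\circ\Phi(\alpha,id_c)=\Phi((e,f)\circ(\alpha,id_c))=\Phi(\mathcal{A}(f)(\alpha),f)=\Phi(\mathcal{A}(f)(\alpha),id_{c'})\circ\Phi(e,f)$, which is precisely the $R[\mathcal{A}]$-linearity condition. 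Conversely, given an object $M$ of $R[\mathcal{A}]\textrm{-}\mathfrak{Mod}$, with underlying $\mathfrak{C}$-functor still denoted $M$ and action maps $\lambda_c:R[\mathcal{A}(c)]\underset{R}{\otimes}M(c)\to M(c)$, I define $\Psi(M):\intop^{\mathfrak{C}}\mathcal{A}\to R\textrm{-}\mathfrak{Mod}$ on objects by $\Psi(M)(x,c)=M(c)$ and on a morphism $(\alpha,f):(x,c)\to(x',c')$ by the composite $M(c)\xrightarrow{M(f)}M(c')\xrightarrow{\lambda_{c'}(\alpha\otimes-)}M(c')$. One then checks $\Psi(M)$ is a functor: identities are preserved since $\alpha=e$ and $\lambda$ is unital, and the composition law $(\beta,g)\circ(\alpha,f)=(\beta\,\mathcal{A}(g)(\alpha),g\circ f)$ translates under $\Psi(M)$ into the identity $\lambda_{c_3}(\beta\otimes-)\circ M(g)\circ\lambda_{c_2}(\alpha\otimes-)\circ M(f) = \lambda_{c_3}(\beta\,\mathcal{A}(g)(\alpha)\otimes-)\circ M(g\circ f)$, which follows from naturality of $\lambda$ (to slide $M(g)$ past $\lambda_{c_2}(\alpha\otimes-)$, producing $\lambda_{c_3}(\mathcal{A}(g)(\alpha)\otimes-)\circ M(g)$) and associativity/multiplicativity of the module action. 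Functoriality of $\Psi$ in $M$ is routine.

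Finally I would exhibit the natural isomorphisms $\mathfrak{s}_{\mathcal{A}}^{*}\circ\Psi\cong id$ and $\Psi\circ\mathfrak{s}_{\mathcal{A}}^{*}\cong id$. For the first: $\mathfrak{s}_{\mathcal{A}}^{*}\Psi(M)$ has underlying functor $c\mapsto M(c)$ with $f\mapsto\Psi(M)(e,f)=M(f)$, and its reconstructed action is $r\mapsto\lambda_c(r\otimes-)$, so it equals $M$ on the nose (or at worst by the identity natural isomorphism). For the second: given $\Phi$, the functor $\Psi(\mathfrak{s}_{\mathcal{A}}^{*}\Phi)$ sends $(x,c)\mapsto\Phi(x,c)$ (canonically identified across the unique objects over $c$) and $(\alpha,f)\mapsto\Phi(\alpha,id_{c'})\circ\Phi(e,f)=\Phi(\alpha,f)$, recovering $\Phi$; one packages these identifications into a natural isomorphism, checking naturality squares commute by the computation just displayed. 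I expect the main obstacle to be purely bookkeeping: being careful that the object-components $x\in\mathcal{A}(c)$ in the Grothendieck construction genuinely carry no data (so that "the" functor $\Psi(M)$ is well defined on objects, not just up to choice) and that all the $\lambda$-naturality manipulations in the verification of functoriality of $\Psi(M)$ are assembled in the right order; there is no conceptual difficulty, only the risk of a sign- or side-convention slip between left and right module actions and between $\mathcal{A}(f)$ acting on $\alpha$ versus on the module.
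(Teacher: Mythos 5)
Your proposal is correct and follows essentially the same route as the paper's proof: it extracts the $R\left[\mathcal{A}\right]$-module structure on $\mathfrak{s}_{\mathcal{A}}^{*}\left(\Phi\right)$ from the morphisms $\left(\alpha,id_{c}\right)$ via the factorization $\left(\alpha,f\right)=\left(\alpha,id_{c'}\right)\circ\left(e,f\right)$, and builds the quasi-inverse by extending a module $M$ to $\intop^{\mathfrak{C}}\mathcal{A}$ using its action maps $\lambda_{c}$. The only difference is that you spell out the functoriality of $\Psi\left(M\right)$ and the two composite identifications in more detail than the paper does, which is harmless.
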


\begin{proof}
Let $F$ be an object of $\mathbf{Fct}(\intop^{\mathfrak{C}}\mathcal{A},R\textrm{-}\mathfrak{Mod})$.
We consider the functor $\mathfrak{s}_{\mathcal{A}}^{*}(F):\mathfrak{C}\to R\textrm{-}\mathfrak{Mod}$.
We recall that $e_{G}$ denotes the unit element of a group $G$.
For $c$ and $c'$ two objects of $\mathfrak{C}$, a morphism from
$(\cdot_{\mathcal{A}(c)},c)$ to $(\cdot_{\mathcal{A}(c')},c')$ in
$\intop^{\mathfrak{C}}\mathcal{A}$ is of the form $(x,\varphi)=(x,id_{c'})\circ(e_{\mathcal{A}(c')},\varphi)$
where $x\in\mathcal{A}(c')$ and $\varphi\in\textrm{Hom}_{\mathfrak{C}}(c,c')$.
We define a left $R\left[\mathcal{A}\right]$-module structure natural
transformation $\lambda_{\mathfrak{s}_{\mathcal{A}}^{*}(F)}:R[\mathcal{A}]\otimes_{R}\mathfrak{s}_{\mathcal{A}}^{*}(F)\rightarrow\mathfrak{s}_{\mathcal{A}}^{*}(F)$
as follows. For each $c\in\textrm{Obj}(\mathfrak{C})$, we define
the morphism $\lambda_{\mathfrak{s}_{\mathcal{A}}^{*}(F)(c)}:R[\mathcal{A}(c)]\otimes_{R}\mathfrak{s}_{\mathcal{A}}^{*}(F)(c)\rightarrow\mathfrak{s}_{\mathcal{A}}^{*}(F)(c)$
by
\[
\lambda_{\mathfrak{s}_{\mathcal{A}}^{*}(F)}(y\otimes_{R}v)=F(y,id_{c})(v)
\]
for all $y\in\mathcal{A}(c)$ and $v\in F(\mathfrak{s}_{\mathcal{A}}(c))$.
We recall from the composition rule in $\intop^{\mathfrak{C}}\mathcal{A}$
that $(e_{\mathcal{A}(c')},\varphi)\circ(y,id_{c})=(\mathcal{A}(\varphi)(y),\varphi)$.
Then, since $F$ is a functor over the category $\intop^{\mathfrak{C}}\mathcal{A}$,
we deduce that for each $\varphi\in\textrm{Hom}_{\mathfrak{C}}(c,c')$:
\[
(\lambda_{\mathfrak{s}_{\mathcal{A}}^{*}(F)(c')}\circ(R[\mathcal{A}(\varphi)]\otimes_{R}\mathfrak{s}_{\mathcal{A}}^{*}(F)(\varphi))(y\underset{R}{\otimes}v)=F((\mathcal{A}(\varphi)(y),\varphi))(v)=(\mathfrak{s}_{\mathcal{A}}^{*}(F)(\varphi)\circ\lambda_{\mathfrak{s}_{\mathcal{A}}^{*}(F)(c)})(y\underset{R}{\otimes}v).
\]
Hence, the morphisms $\lambda_{\mathfrak{s}_{\mathcal{A}}^{*}(F)(c)}$
define a natural transformation and the axioms of an action are easily
checked. The naturality with respect to $F$ follows straightforwardly
from these assignments.

Conversely, let $G$ be a left $R[\mathcal{A}]$-module and we denote
by $\lambda_{G}:R[\mathcal{A}]\otimes_{R}G\rightarrow G$ its associated
left module natural transformation. We extend $G$ to a functor $\widehat{G}$
with $\intop^{\mathfrak{C}}\mathcal{A}$ as source category as follows.
For each $c\in\textrm{Obj}(\mathfrak{C})$, $\widehat{G}$ sends the
object $(\cdot_{\mathcal{A}(c)},c)$ to $G(c)$. Then, for all $(x,\varphi)$
where $x\in\mathcal{A}(c')$ and $\varphi\in\textrm{Hom}_{\mathfrak{C}}(c,c')$,
we define $\widehat{G}((x,\varphi))$ to be the composition $G(c)\to R[A(c')]\otimes_{R}G(c')\to G(c')$
defined by $\lambda_{G(c')}\circ(x\otimes_{R}G(\varphi))$. Considering
another $(x',\varphi')$ where $x'\in\mathcal{A}(c'')$ and $\varphi'\in\textrm{Hom}_{\mathfrak{C}}(c',c'')$,
since $\lambda_{G}$ is a natural transformation, we compute that
$G(\varphi')\circ\lambda_{G(c')}\circ(x\otimes_{R}G(\varphi))=\lambda_{G(c'')}\circ(\mathcal{A}(\varphi)(x)\otimes_{R}G(\varphi'\circ\varphi))$
and deduce that
\[
\widehat{G}((x',\varphi'))\circ\widehat{G}((x,\varphi))=\lambda_{G(c'')}\circ(x'\mathcal{A}(\varphi)(x)\otimes_{R}G(\varphi'\circ\varphi))=\widehat{G}((x',\varphi')\circ(x,\varphi)).
\]
Therefore the assignment for $\widehat{G}$ satisfies the composition
axiom on $\intop^{\mathfrak{C}}\mathcal{A}$ and the identity axiom
is easily checked. Also, the naturality with respect to $G$ follows
from the naturality with respect to $\lambda_{G}$ of a functor between
two $R[\mathcal{A}]$-modules. Then, it follows from these definitions
that $\widehat{-}$ defines a functor from $R[\mathcal{A}]\textrm{-}\mathfrak{Mod}$
to $\mathbf{Fct}(\intop^{\mathfrak{C}}\mathcal{A},R\textrm{-}\mathfrak{Mod})$
and that is the inverse of $\mathfrak{s}_{\mathcal{A}}^{*}$.
\end{proof}
Now, we can introduce the construction:
\begin{defn}
\label{def:twistingbytheaugmentationidealfunctorconstruction} Let
$\chi:\intop^{\mathfrak{C}}\mathcal{A}\rightarrow\mathfrak{D}$ be
a functor where $\mathfrak{D}$ is another small category and we denote
by $\chi^{*}$ the precomposition functor induced by $\chi$. We define
$\mathfrak{T}_{\mathcal{A},\chi}$ to be the composition:
\[
\xymatrix{\mathbf{Fct}(\mathfrak{D},R\textrm{-}\mathfrak{Mod})\ar@{->}[rr]^{\,\,\,\,\,\,\,\,\mathfrak{s}_{\mathcal{A}}^{*}\circ\chi^{*}} &  & R[\mathcal{A}]\textrm{-}\mathfrak{Mod}\ar@{->}[rr]^{\mathcal{I}_{R[\mathcal{A}]}\otimes_{R[\mathcal{A}]}-\,\,\,\,\,\,\,\,\,\,} &  & \mathbf{Fct}(\mathfrak{C},R\textrm{-}\mathfrak{Mod}).}
\]
The functor $\mathfrak{T}_{\mathcal{A},\chi}$ is called \textit{the
tensorial construction} by the functor $\mathcal{A}$ along $\chi$.
\end{defn}

We take this opportunity to spell out how a morphism in $\mathfrak{C}$
acts on $\mathfrak{T}_{\mathcal{A},\chi}$. The following result is
a straightforward consequence of the definition of $\mathfrak{T}_{\mathcal{A},\chi}$:
\begin{lem}
\label{lem:explicit_def_T}We take up the notations of Definition
\ref{def:twistingbytheaugmentationidealfunctorconstruction}. Let
$F$ be an object of $\mathbf{Fct}(\mathfrak{D},R\textrm{-}\mathfrak{Mod})$
and $\varphi\in\textrm{Hom}_{\mathfrak{C}}(c,c')$. Then $\mathfrak{T}_{\mathcal{A},\chi}(F)(\varphi)$
is the morphism from $\mathcal{I}_{R[\mathcal{A}(c)]}\otimes_{R[\mathcal{A}(c)]}F((\chi\circ\mathfrak{s}_{\mathcal{A}})(c))$
to $\mathcal{I}_{R[\mathcal{A}(c')]}\otimes_{R[\mathcal{A}(c')]}F((\chi\circ\mathfrak{s}_{\mathcal{A}})(c'))$
defined by
\[
\mathfrak{T}_{\mathcal{A},\chi}(F)(\varphi)(i\underset{R[\mathcal{A}(c)]}{\varotimes}v)=\mathcal{I}_{R[\mathcal{A}]}(\varphi)(i)\underset{R[\mathcal{A}(c')]}{\varotimes}F((\chi\circ\mathfrak{s}_{\mathcal{A}})(\varphi))(v)
\]
for all $i\in\mathcal{I}_{R[\mathcal{A}(c)]}$ and $v\in F((\chi\circ\mathfrak{s}_{\mathcal{A}})(c))$.

Also, let $G$ be another objects of $\mathbf{Fct}(\mathfrak{D},R\textrm{-}\mathfrak{Mod})$
and $\eta:F\rightarrow G$ be a natural transformation. Then, the
natural transformation $\mathfrak{T}_{\mathcal{A},\chi}(\eta):\mathfrak{T}_{\mathcal{A},\chi}(F)\rightarrow\mathfrak{T}_{\mathcal{A},\chi}(G)$
is given for all $c\in\textrm{Obj}(\mathfrak{C})$ by assigning the
morphism $\mathfrak{T}_{\mathcal{A},\chi}(\eta_{d})_{c}$ to be the
tensor product morphism $id_{\mathcal{I}_{R[\mathcal{A}(c)]}}\otimes_{R[\mathcal{A}(c)]}\eta_{(\chi\circ\mathfrak{s}_{\mathcal{A}})(c)}$.
\end{lem}

Let us give some immediate properties of a tensorial construction.
\begin{prop}
\label{prop:exactnessLM}The tensorial construction $\mathfrak{T}_{\mathcal{A},\chi}$
is additive, right-exact and commutes with all colimits.
\end{prop}

\begin{proof}
Let $0:\mathfrak{E}\rightarrow R\textrm{-}\mathfrak{Mod}$ denote
the null functor for a small category $\mathfrak{E}$. It follows
from the definition that $\mathfrak{T}_{\mathcal{A},\chi}\left(0\right)=0$,
and so additivity follows from the commutation with all colimits.

Since the precomposition functors $\chi^{*}$ and $\mathfrak{s}_{\mathcal{A}}^{*}$
are exact, it is enough to prove the results for the functor $\mathcal{I}_{\mathcal{A}}\otimes_{R[\mathcal{A}]}-$.
As a consequence of the properties of the tensor product of modules,
the functor $\mathcal{I}_{R[\mathcal{A}(c)]}\otimes_{R[\mathcal{A}(c)]}-:R[\mathcal{A}(c)]\textrm{-}\mathfrak{Mod}\rightarrow R\textrm{-}\mathfrak{Mod}$
is additive, right-exact and commutes with all colimits for all $c\in\textrm{Obj}(\mathfrak{C})$.
Then the naturality of each of these properties with respect to the
morphisms of $\mathfrak{C}$ is a formal consequence of the definition
of $\mathcal{I}_{\mathcal{A}}\otimes_{R[\mathcal{A}]}-$.
\end{proof}

\subsection{The Long-Moody functors\label{subsec:Functoriality-of-the}}

Using the tensorial construction of Section \ref{subsec:Framework-of-the},
we introduce here the generalized Long-Moody functors, inspired from
the Long-Moody construction \cite{Long1}. While the original construction
was associated with braid groups, the following framework encompasses
a much broader class of groups; see Section \ref{subsec:Examples}. 

\paragraph{Categorical framework.}

First, we require the following categorical framework to define generalized
Long-Moody functors. Let $(\mathcal{G}',\natural,0_{\mathcal{G}'},b_{-,-}^{\mathcal{G}'})$
be a locally small strict braided monoidal groupoid with no zero divisors
and such that $\textrm{Aut}_{\mathcal{G}'}(0_{\mathcal{G}'})=\{id_{0_{\mathcal{G}'}}\}$.
We recall from Proposition \ref{prop:Quillen'sconstructionprebraided}
that Quillen's bracket construction $(\mathfrak{U}\mathcal{G}',\natural,0_{\mathcal{G}'})$
is a locally small pre-braided monoidal category such that the unit
$0_{\mathcal{G}'}$ is an initial object. Let $\underline{0}$ and
$1$ be two objects of $\mathcal{G}'$, such that $1$ is not isomorphic
to $0_{\mathcal{G}'}$.
\begin{notation}
For all natural numbers $n$, we denote the object $1^{\natural n}\natural\underline{0}$
of $\mathcal{G}'$ by $\underline{n}$ and the object $1^{\natural n}$
of $\mathcal{G}'$ by $n$. Note that $m\natural\underline{n}=\underline{m+n}$
for all natural numbers $m$ and $n$.
\end{notation}

\begin{defn}
\label{def:UGfullsbcatUG'} Let $\mathcal{G}$ be the small full subgroupoid
of $\mathcal{G}'$ on the objects $\{\underline{n}\}_{n\in\mathbb{N}}$.
We denote by $G_{n}$ the automorphism group $\textrm{Aut}_{\mathcal{G}}(\underline{n})$
for all $n$. For convenience, we denote the small full subcategory
of Quillen's bracket construction $\mathfrak{U}\mathcal{G}'$ on the
objects $\{\underline{n}\}_{n\in\mathbb{N}}$ by $\mathfrak{U}\mathcal{G}$.
Let $1\natural-:\mathfrak{U}\mathcal{G}\rightarrow\mathfrak{U}\mathcal{G}$
be the functor defined by $(1\natural-)(\underline{n})=1\natural\underline{n}$
for all $\underline{n}\in\textrm{Obj}(\mathcal{G})$ and $(1\natural-)([n'-n,g])=id_{1}\natural[n'-n,g]$
for all morphism $[n'-n,g]$ of $\mathfrak{U}\mathcal{G}$.
\end{defn}

\textbf{Warning:} the category $\mathfrak{U}\mathcal{G}$ is not in
general Quillen's bracket construction introduced in Section \ref{sec:Recollections-on-Quillen's}
applied to $\mathcal{G}$, but Quillen's bracket construction applied
to the ambient groupoid $\mathcal{G}'$. We choose this convention
to avoid too heavy notations. Note that $\mathfrak{U}\mathcal{G}$
is Quillen's bracket construction on $\mathcal{G}$ if and only if
$0_{\mathcal{G}'}=\underline{0}$. The present framework allows one
to handle families of groups such as mapping class groups of surfaces;
see Sections \ref{subsec:Modifyingpunctures0} and \ref{subsec:Modifyingthegenus}.

\textbf{We fix a braided monoidal groupoid $\mathcal{G}'$, the small
full subgroupoid $\mathcal{G}$ of Definition \ref{def:UGfullsbcatUG'}
and a functor $\mathcal{A}:\mathfrak{U}\mathcal{G}\rightarrow\mathfrak{Gr}$
for the remainder of Section \ref{subsec:Functoriality-of-the}.}

\subsubsection{Definition of the Long-Moody functors\label{subsec:Definition-of-theLM}}

The idea to define a Long-Moody functor is to use the tensorial construction
by the functor $\mathcal{A}$ along some $\chi$, such that the composition
the section $\mathfrak{s}_{\mathcal{A}}$ is the functor $1\natural-$.
We have the choice to consider $\mathcal{A}$ over $\mathcal{G}$
or $\mathfrak{U}\mathcal{G}$. Actually, this choice leads to an extension
problem: the restriction along the canonical functor $\mathfrak{c}_{\mathfrak{U}\mathcal{G}}:\mathcal{G}\hookrightarrow\mathfrak{U}\mathcal{G}$
defines the Grothendieck construction $\intop^{\mathcal{G}}\mathcal{A}$
together with an inclusion functor $\intop^{\mathcal{G}}\mathcal{A}\hookrightarrow\intop^{\mathfrak{U}\mathcal{G}}\mathcal{A}$,
so that the following diagram is commutative
\[
\xymatrix{\mathcal{G}\ar@{^{(}->}[r]\ar@{^{(}->}[d] & \mathfrak{U}\mathcal{G}\ar@{^{(}->}[d]\\
\intop^{\mathcal{G}}\mathcal{A}\ar@{^{(}->}[r] & \intop^{\mathfrak{U}\mathcal{G}}\mathcal{A}.
}
\]
This is not a pushout of categories: given $\mathcal{G}\hookrightarrow\intop^{\mathcal{G}}\mathcal{A}\rightarrow\mathfrak{E}$
a functor for $\mathfrak{E}$ a small category, we deal with the extension
problem
\[
\xymatrix{\mathcal{G}\ar@{^{(}->}[rr]\ar@{^{(}->}[d] &  & \mathfrak{U}\mathcal{G}\ar@{^{(}->}[d]\ar@{->}[ddr]\\
\intop^{\mathcal{G}}\mathcal{A}\ar@{^{(}->}[rr]\ar@{->}[drrr] &  & \intop^{\mathfrak{U}\mathcal{G}}\mathcal{A}\ar@{.>}[dr]\\
 &  &  & \mathfrak{E}.
}
\]
In the present situation, $\mathfrak{E}$ is taken to be $\mathfrak{U}\mathcal{G}$
and the composites $\mathcal{G}\rightarrow\intop^{\mathcal{G}}\mathcal{A}\rightarrow\mathfrak{E}$
and $\mathfrak{U}\mathcal{G}\rightarrow\intop^{\mathfrak{U}\mathcal{G}}\mathcal{A}\rightarrow\mathfrak{E}$
to be $1\natural-$. This motivates the following:
\begin{defn}
\label{def:longmoodysystem} The setting $\{\mathcal{A},\mathcal{G},\mathcal{G}',\chi\}$
is a \textit{Long-Moody system} if $\chi:\intop^{\mathcal{G}}\mathcal{A}\rightarrow\mathcal{G}$
is a functor such that the following diagram is commutative:
\begin{equation}
\xymatrix{\mathcal{G}\,\ar@{^{(}->}[r]^{\mathfrak{s}_{\mathcal{A}}}\ar@{->}[dr]_{1\natural-} & \intop^{\mathcal{G}}\mathcal{A}\ar@{->}[d]^{\chi}\\
 & \mathcal{G}.
}
\label{cond:coherenceconditionsigmanan}
\end{equation}
The functor $\mathcal{A}$ equipped with such functor $\chi$ is said
to define a Long-Moody system, denoted by $\{\mathcal{A},\mathcal{G},\mathcal{G}',\chi\}$.
If $\chi$ extends along the inclusion $\intop^{\mathcal{G}}\mathcal{A}\hookrightarrow\intop^{\mathfrak{U}\mathcal{G}}\mathcal{A}$
to define a functor $\chi:\intop^{\mathfrak{U}\mathcal{G}}\mathcal{A}\rightarrow\mathfrak{U}\mathcal{G}$,
such that the following diagram is commutative:
\begin{equation}
\xymatrix{\mathfrak{U}\mathcal{G}\,\ar@{^{(}->}[r]^{\mathfrak{s}_{\mathcal{A}}}\ar@{->}[dr]_{1\natural-} & \intop^{\mathfrak{U}\mathcal{G}}\mathcal{A}\ar@{->}[d]^{\chi}\\
 & \mathfrak{U}\mathcal{G},
}
\label{cond:coherenceconditionsigmanan2}
\end{equation}
then the Long-Moody system $\{\mathcal{A},\mathcal{G},\mathcal{G}',\chi\}$
is then said to be \textit{coherent}.
\end{defn}

Then, we can introduce the main concept of Section \ref{sec:The-generalized-Long-Moody}:
\begin{defn}
\textit{\label{Thm:LMFunctor}} The Long-Moody functor associated
with the Long-Moody system (respectively coherent Long-Moody system)
$\{\mathcal{A},\mathcal{G},\mathcal{G}',\chi\}$, denoted by $\mathbf{LM}_{\{\mathcal{A},\mathcal{G},\mathcal{G}',\chi\}}$
(respectively $\mathbf{LM}_{\{\mathcal{A},\mathcal{G},\mathcal{G}',\chi\}}^{\mathfrak{U}}$),
is the  tensorial construction $\mathfrak{T}_{\mathcal{A},\chi}$
by the functors $\mathcal{A}:\mathfrak{U}\mathcal{G}\rightarrow\mathfrak{Gr}$
along $\chi:\intop^{\mathcal{G}}\mathcal{A}\rightarrow\mathcal{G}$
(respectively along $\chi:\intop^{\mathfrak{U}\mathcal{G}}\mathcal{A}\rightarrow\mathfrak{U}\mathcal{G}$)
of Definition \ref{def:twistingbytheaugmentationidealfunctorconstruction}.
\end{defn}

\begin{notation}
When there is no ambiguity, once the Long-Moody system $\{\mathcal{A},\mathcal{G},\mathcal{G}',\chi\}$
is fixed, we omit it from the notation. If the Long-Moody system $\{\mathcal{A},\mathcal{G},\mathcal{G}',\chi\}$
is coherent, we omit it the $\mathfrak{U}$ from the notation if there
is no risk of confusion. In this case, we denote both the endofunctor
of $\mathbf{Fct}(\mathcal{G},R\textrm{-}\mathfrak{Mod})$ and the
one of $\mathbf{Fct}(\mathfrak{U}\mathcal{G},R\textrm{-}\mathfrak{Mod})$
by $\mathbf{LM}$ for simplicity. Indeed, since the clear diagram
relating them -- induced by the precomposition functor $\mathfrak{c}_{\mathfrak{U}\mathfrak{G}}^{*}:\mathbf{Fct}(\mathfrak{U}\mathcal{G},R\textrm{-}\mathfrak{Mod})\rightarrow\mathbf{Fct}(\mathcal{G},R\textrm{-}\mathfrak{Mod})$
-- is commutative.
\end{notation}

For sake of clarity, let us spell out the explicit definition of the
Long-Moody functor associated to a coherent system $\{\mathcal{A},\mathcal{G},\mathcal{G}',\chi\}$.
For an object $F$ if $\mathbf{Fct}(\mathfrak{U}\mathcal{G},R\textrm{-}\mathfrak{Mod})$,
for all objects $\underline{n}$ of $\mathcal{G}$ we have
\[
\mathbf{LM}_{\{\mathcal{A},\mathcal{G},\mathcal{G}',\chi\}}(F)(\underline{n})=\mathcal{I}_{R[\mathcal{A}(\underline{n})]}\underset{R[\mathcal{A}(\underline{n})]}{\otimes}F(id_{1}\natural\underline{n}).
\]
Also, the actions of $\mathbf{LM}_{\{\mathcal{A},\mathcal{G},\mathcal{G}',\chi\}}(F)$
on morphisms of $\mathbf{Fct}(\mathfrak{U}\mathcal{G},R\textrm{-}\mathfrak{Mod})$
and of $\mathbf{LM}_{\{\mathcal{A},\mathcal{G},\mathcal{G}',\chi\}}$
on morphisms of $\mathfrak{U}\mathcal{G}$ are induced by Lemma \ref{lem:explicit_def_T}.

Non-trivial coherent Long-Moody systems arise naturally in many situations
in connection with topology; see Section \ref{subsec:Examples}. We
give a first example here:
\begin{example}
Let us fix $(\mathcal{G}',\natural,0_{\mathcal{G}'})=(\mathcal{G},\natural,0_{\mathcal{G}})=(\boldsymbol{\beta},\natural,0)$,
where $\boldsymbol{\beta}$ is the braid groupoid. It has the natural
numbers as its objects the natural numbers and its automorphisms are
the braid groups $\{\mathbf{B}_{n}\}_{n\in\mathbb{N}}$. The strict
monoidal structure $\natural$ is defined by the usual addition for
the objects and laying two braids side by side for the morphisms;
see \cite[Chapter XI, Section 4]{MacLane1} for more details.

Let $\mathbf{F}_{n}$ be the free group of rank $n$. In this case,
the Artin representations $\{\mathcal{A}_{1,n}^{\boldsymbol{\beta}}:\mathbf{B}_{n}\rightarrow\textrm{Aut}(\mathbf{F}_{n})\}_{n\in\mathbb{N}}$,
defined by the action $\mathbf{B}_{n}$ on the fundamental group of
a $n$-punctured disc, assemble to define a functor $\mathcal{A}_{1}^{\boldsymbol{\beta}}:\mathfrak{U}\boldsymbol{\beta}\rightarrow\mathfrak{Gr}$;
see Section \ref{subsec:symplecticfunctor }. Moreover, there exists
a family of non-trivial morphisms $\{\chi_{n,1}:\mathbf{F}_{n}\rightarrow\mathbf{B}_{n+1}\}_{n\in\mathbb{N}}$
(see Definition \ref{def:defsigma1}) such that the morphism given
by the coproduct $\chi_{n,1}*(id_{1}\natural-):\mathbf{F}_{n}*\mathbf{B}_{n}\rightarrow\mathbf{B}_{n+1}$
factors through the canonical surjection to the semidirect product
$\mathbf{F}_{n}\rtimes_{\mathcal{A}_{1,n}^{\boldsymbol{\beta}}}\mathbf{B}_{n}$
and such that the corresponding diagram (\ref{eq:diagramstability})
is commutative; see Section \ref{subsec:Modifyingpunctures}. We thus
define a non-trivial functor $\chi_{1}:\intop^{\mathfrak{U}\boldsymbol{\beta}}\mathcal{A}_{1}^{\boldsymbol{\beta}}\rightarrow\mathfrak{U}\boldsymbol{\beta}$
such that we have a coherent Long-Moody system $\{\mathcal{A}_{1}^{\boldsymbol{\beta}},\boldsymbol{\beta},\boldsymbol{\beta},\chi_{1}\}$.
We refer to Section \ref{subsec:Surface-braid-groups:} for more details.
Hence Definitions \ref{def:longmoodysystem} and \ref{Thm:LMFunctor}
recover their analogues \cite[Definition 2.14 and Theorem 2.19]{soulieLMBilan}.
\end{example}

\subsubsection{Properties of Long-Moody systems\label{subsec:Equivalent-characterisation-of}}

\paragraph{Equivalent characterization of Long-Moody systems.}

We give now an equivalent description of the functor $\chi$ introduced
in Definition \ref{def:longmoodysystem} to define a Long-Moody system.
For all natural numbers $n$, we denote by $\mathcal{A}_{n}:G_{n}\rightarrow\textrm{Aut}_{\mathfrak{Gr}}(\mathcal{A}(\underline{n}))$
the group morphisms induced by the functor $\mathcal{A}$. We note
that the coproduct $\mathcal{A}(\underline{n})*G_{n}$ canonically
surjects onto the semidirect product $\mathcal{A}(\underline{n})\rtimes_{\mathcal{A}_{n}}G_{n}$
and we denote this canonical surjection by $s_{\rtimes}$. Also, considering
the functor $\chi:\intop^{\mathcal{G}}\mathcal{A}\rightarrow\mathcal{G}$
is equivalent to considering a family of group morphisms $\{\chi_{n}^{\rtimes}:\mathcal{A}(\underline{n})\rtimes_{\mathcal{A}_{n}}G_{n}\rightarrow G_{n+1}\}_{n\in\mathbb{N}}$
by Lemma \ref{exa:casegroupcategory}. Hence, we deduce:
\begin{lem}
\label{lem:equivcond:coherenceconditionsigmanan}Given a family of
group morphisms $\{\chi_{n}:\mathcal{A}(\underline{n})\rightarrow G_{n+1}\}_{n\in\mathbb{N}}$
such that the coproduct morphism $\chi_{n}*(id_{1}\natural-):\mathcal{A}(\underline{n})*G_{n}\rightarrow G_{n+1}$
factors across the canonical surjection $s_{\rtimes}$, ie there exist
group morphisms $\{\chi_{n}^{\rtimes}:\mathcal{A}(\underline{n})\rtimes_{\mathcal{A}_{n}}G_{n}\rightarrow G_{n+1}\}_{n\in\mathbb{N}}$
such that the following diagram is commutative
\[
\xymatrix{\mathcal{A}(\underline{n})\ar@{^{(}->}[r]\ar@{->}[dr]_{\chi_{n}} & \mathcal{A}(\underline{n})\rtimes_{\mathcal{A}_{n}}G_{n}\ar@{->}[d]^{\chi_{n}^{\rtimes}} & G_{n}\ar@{_{(}->}[l]\ar@{->}[dl]^{id_{1}\natural-}\\
 & G_{1+n},
}
\]
then there is a functor $\chi:\intop^{\mathcal{G}}\mathcal{A}\rightarrow\mathcal{G}$
such that the diagram (\ref{cond:coherenceconditionsigmanan}) is
commutative. In particular, these conditions hold taking the composition
$\mathcal{A}(\underline{n})\hookrightarrow\mathcal{A}(\underline{n})\rtimes G_{n}\overset{\chi_{n}^{\rtimes}}{\to}G_{n+1}$
for $\chi_{n}$.
\end{lem}

\begin{proof}
The commutation of the diagram (\ref{cond:coherenceconditionsigmanan})
is equivalent to the equality in $G_{n+1}$
\begin{equation}
(id_{1}\natural g)\circ\chi_{n}(h)=\chi_{n}(\mathcal{A}_{n}(g)(h))\circ(id_{1}\natural g)\label{eq::equivcond:coherenceconditionsigmanan}
\end{equation}
for all $g\in G_{n}$ and $h\in\mathcal{A}(\underline{n})$. This
is exactly the definition of the fact that $\chi_{n}*(id_{1}\natural-)$
factors across the semidirect product $\mathcal{A}(\underline{n})\rtimes_{\mathcal{A}_{n}}G_{n}$.
\end{proof}
Furthermore, the following result highlights the underlying subtleties
when extending a Long-Moody system to a coherent one.
\begin{prop}
\label{cond:conditionstability-1}We consider a functor $\chi:\intop^{\mathcal{G}}\mathcal{A}\rightarrow\mathcal{G}$.
If the functor $\chi$ extends to the Grothendieck construction $\intop^{\mathfrak{U}\mathcal{G}}\mathcal{A}$
such that the diagram (\ref{cond:coherenceconditionsigmanan2}) is
commutative, then the extension is unique.

Furthermore, given a family of group morphisms $\{\chi_{n}:\mathcal{A}(\underline{n})\rightarrow G_{n+1}\}_{n\in\mathbb{N}}$
satisfying the conditions of Lemma \ref{lem:equivcond:coherenceconditionsigmanan}
and such that the following diagram is commutative in the category
$\mathfrak{U}\mathcal{G}$
\begin{equation}
\xymatrix{1\natural\underline{n}\ar@{->}[rrr]^{\chi_{n}(h)}\ar@{->}[d]_{id_{1}\natural[n'-n,id_{\underline{n'}}]} &  &  & 1\natural\underline{n}\ar@{->}[d]^{id_{1}\natural[n'-n,id_{\underline{n'}}]}\\
1\natural\underline{n'}\ar@{->}[rrr]_{\chi_{n'}(\mathcal{A}([n'-n,id_{\underline{n'}}])(h))} &  &  & 1\natural\underline{n'},
}
\label{eq:diagramstability}
\end{equation}
for all elements $h\in\mathcal{A}(\underline{n})$, for all natural
numbers $n$ and $n'$ such that $n'\geq n$, then there is a functor
$\chi:\intop^{\mathfrak{U}\mathcal{G}}\mathcal{A}\rightarrow\mathcal{G}$
such that the diagram (\ref{cond:coherenceconditionsigmanan2}) is
commutative. In particular, if the Long-Moody system $\{\mathcal{A},\mathcal{G},\mathcal{G}',\chi\}$
is \textbf{coherent}, these conditions hold taking the composition
$\mathcal{A}(\underline{n})\hookrightarrow\mathcal{A}(\underline{n})\rtimes G_{n}\overset{\chi_{n}^{\rtimes}}{\to}G_{n+1}$
for $\chi_{n}$.
\end{prop}

\begin{proof}
We recall that $e_{\mathcal{A}(\underline{n'})}$ denotes the unit
elements of the groups $\mathcal{A}(\underline{n'})$. It follows
from the definition of the Grothendieck construction $\intop^{\mathfrak{U}\mathcal{G}}\mathcal{A}$
that an extension of $\chi:\intop^{\mathcal{G}}\mathcal{A}\rightarrow\mathcal{G}$
is defined by $\chi((\cdot_{\underline{n}},\underline{n}))=1\natural\underline{n}$
for all $\underline{n}\in\textrm{Obj}(\mathcal{G})$ for objects,
and for morphisms by $\chi((h,id_{\underline{n}}))=\chi_{n}(h)$ for
all $h\in\mathcal{A}(\underline{n})$ and
\[
\chi((e_{\mathcal{A}(\underline{n'})},[n'-n,\varphi]))=id_{1}\natural[n'-n,\varphi]
\]
for all $[n'-n,\varphi]\in\textrm{Hom}_{\mathfrak{U}\mathcal{G}}(\underline{n},\underline{n'})$.
In particular, the uniqueness of the extension follows from these
assignments. Actually, the functor $\chi$ extends to define a coherent
Long-Moody system if and only if these assignments on morphisms satisfy
the composition axiom for a functor. We note that the additional composition
axiom which has to be checked for extending $\chi$ to $\intop^{\mathfrak{U}\mathcal{G}}\mathcal{A}$
is for the morphisms of type $(e_{\mathcal{A}(\underline{n}')},[n'-n,id_{\underline{n'}}])\circ(h,id_{\underline{n}})$.
Namely, $\chi$ extends to $\intop^{\mathfrak{U}\mathcal{G}}\mathcal{A}$
if and only if
\[
\chi((e_{\mathcal{A}(\underline{n'})},[n'-n,id_{\underline{n'}}]))\circ\chi((h,id_{\underline{n}}))=\chi((e_{\mathcal{A}(\underline{n'})},[n'-n,id_{\underline{n'}}])\circ(h,id_{\underline{n}}))
\]
for all natural numbers $n'\geq n$ and $h\in\mathcal{A}(\underline{n})$.
The second statement is then a direct consequence of the assignments
for $\chi$ and of the composition rule in $\intop^{\mathfrak{U}\mathcal{G}}\mathcal{A}$.
\end{proof}
Alternatively, the following proposition gives a \textit{sufficient
criterion} for a Long-Moody system to be coherent. This result is
in practice more convenient to handle and easier to check that the
technical equivalent condition of Proposition \ref{cond:conditionstability-1};
see Section \ref{subsec:Examples}. We denote by $\sigma_{1}$ the
braiding $b_{1,1}^{\mathcal{G}'}\natural id_{\underline{n}}$ of $\mathcal{G}'$
for simplicity; this is consistent with the usual notation for the
Artin generators of braid groups.
\begin{prop}
\label{cond:conditionstability}Given a family of group morphisms
$\{\chi_{n}:\mathcal{A}(\underline{n})\rightarrow G_{n+1}\}_{n\in\mathbb{N}}$
satisfying the conditions of Lemma \ref{lem:equivcond:coherenceconditionsigmanan}
and such that following equality holds in $G_{n+2}$
\begin{eqnarray}
id_{1}\natural\chi_{n}(h) & = & \sigma_{1}\circ\chi_{n+1}(\mathcal{A}([1,id_{\underline{n+1}}])(h))\circ\sigma_{1}^{-1},\label{eq:equiva}
\end{eqnarray}
for all elements $h\in\mathcal{A}(\underline{n})$ and for all $n$,
then there is a functor $\chi:\intop^{\mathfrak{U}\mathcal{G}}\mathcal{A}\rightarrow\mathcal{G}$
defining a \textbf{coherent} Long-Moody system $\{\mathcal{A},\mathcal{G},\mathcal{G}',\chi\}$.

Conversely, if $\textrm{Aut}_{\mathcal{G}'}(1)=\{id_{1}\}$ and considering
a \textbf{coherent} Long-Moody system $\{\mathcal{A},\mathcal{G},\mathcal{G}',\chi\}$,
then the equality (\ref{eq:equiva}) holds for each $n$ taking the
composition $\mathcal{A}(\underline{n})\hookrightarrow\mathcal{A}(\underline{n})\rtimes G_{n}\overset{\chi_{n}^{\rtimes}}{\to}G_{n+1}$
for $\chi_{n}$.
\end{prop}

\begin{proof}
First, note that by definition of the braiding $b_{-,-}^{\mathcal{G}'}$,
we have:
\[
(b_{1,1}^{\mathcal{G}'})^{-1}\natural id_{n}=((b_{1,1}^{\mathcal{G}'})^{-1}\natural id_{n'-1})\circ(id_{1}\natural(b_{1,\left(n'-n\right)-1}^{\mathcal{G}'})^{-1}\natural id_{n}).
\]
Hence, a straightforward recursion proves that the commutation of
the diagram (\ref{eq:diagramstability}) is equivalent to assuming
that the morphisms $\{\chi_{n}\}_{n\in\mathbb{N}}$ satisfy the following
equality, as morphisms in the category $\mathfrak{U}\mathcal{G}$,
for all elements $h\in\mathcal{A}(\underline{n})$:
\[
[1,((b_{1,1}^{\mathcal{G}'})^{-1}\natural id_{\underline{n}})\circ(id_{1}\natural\chi_{n}(h))]=[1,\chi_{n+1}(\mathcal{A}([1,id_{\underline{n+1}}])(h))\circ((b_{1,1}^{\mathcal{G}'})^{-1}\natural id_{\underline{n}})].
\]
Hence, by Proposition \ref{cond:conditionstability-1}, the statements
follow from the equivalence relation (\ref{eq:equivalence relation})
for morphisms in $\mathfrak{U}\mathcal{G}$.
\end{proof}
\begin{rem}
In Section \ref{sec:Behaviour-of-the}, we will have to assume that
the stronger equality (\ref{eq:equiva}) holds; see Assumption \ref{assu:decomposeAfreeproduct}.
\end{rem}

\paragraph{Connection with (twisted) first homology.}

Let \textbf{$\{\mathcal{A},\mathcal{G},\mathcal{G}',\chi\}$} be a
(possibly coherent) Long-Moody system and let $F$ be an object of
$\mathbf{Fct}(\mathcal{G},R\textrm{-}\mathfrak{Mod})$. We fix a natural
number $n$. We recall that the augmentation ideal defines the short
exact sequence of $R[\mathcal{A}(\underline{n})]$-modules $0\to\mathcal{I}_{R[\mathcal{A}(\underline{n})]}\to R[\mathcal{A}(\underline{n})]\to R\to0$.
We consider the long exact sequence obtained by applying the functor
$\textrm{Tor}_{*}^{R[\mathcal{A}(\underline{n})]}(-,F(\underline{n+1}))$
to that short exact sequence. We note that $\textrm{Tor}_{i}^{R[\mathcal{A}(\underline{n})]}(R[\mathcal{A}(\underline{n})],F(\underline{n+1}))=0$
for all $i>0$ since $R[\mathcal{A}(\underline{n})]$ is a free $R[\mathcal{A}(\underline{n})]$-module
and recall that the twisted homology group $H_{i}(\mathcal{A}(\underline{n});F(\underline{n+1}))$
is equal to $\textrm{Tor}_{i}^{R[\mathcal{A}(\underline{n})]}(R,F(\underline{n+1}))$.
Therefore, we obtain the following $4$-term exact sequence of $R$-modules:
\begin{equation}
0\xymatrix{\ar@{->}[r] & H_{1}(\mathcal{A}(\underline{n});F(\underline{n+1}))\ar@{->}[r] & \mathbf{LM}(F)(\underline{n})\ar@{->}[r] & F(\underline{n+1})\ar@{->}[r] & H_{0}(\mathcal{A}(\underline{n});F(\underline{n+1}))\ar@{->}[r] & 0.}
\label{eq:4_term_sequence_LM}
\end{equation}
If $F(\underline{n+1})$ is not a trivial $G_{n}$-module, the first
homology group of the group $\mathcal{A}(\underline{n})$ with twisted
coefficients thus defines a \textit{proper} subrepresentation of the
one encoded by the Long-Moody functor.

If the group $\mathcal{A}(\underline{n})$ is free, the augmentation
ideal $\mathcal{I}_{R[\mathcal{A}(\underline{n})]}$ has the significant
advantage to automatically be a free module equipped with a natural
basis (see \cite[Proposition 6.2.6]{Weibel1} for instance), whereas
the first twisted homology of a free group can be less convenient
to handle. Moreover, the action of the group $G_{n}$ on the Long-Moody
representation has an explicit formula from the definition of the
Long-Moody construction and can therefore be directly fully computed.
On the contrary, understanding the $G_{n}$-action on the first twisted
homology requires much more work generally speaking.

\paragraph{Iteration and triviality.}

Considering the categories $\mathcal{G}'$, $\mathcal{G}$ and a functor
$\mathcal{A}:\mathfrak{U}\mathcal{G}\rightarrow\mathfrak{Gr}$, there
always exists at least one functor $\chi:\intop^{\mathfrak{U}\mathcal{G}}\mathcal{A}\rightarrow\mathfrak{U}\mathcal{G}$
such that the diagram (\ref{cond:coherenceconditionsigmanan2}) is
commutative: the functor $\chi_{tr}:\intop^{\mathcal{G}}\mathcal{A}\rightarrow\mathcal{G}$
induced by the family of morphisms $\{\chi_{n,tr}:\mathcal{A}(\underline{n})\rightarrow0_{\mathfrak{Gr}}\rightarrow G_{n+1}\}_{n\in\mathbb{N}}$
factoring across the trivial group $0_{\mathfrak{Gr}}$ (considered
as a category with one object) always works. This functor $\chi_{tr}$
trivially extends to $\intop^{\mathfrak{U}\mathcal{G}}\mathcal{A}$,
a fortiori defining a coherent Long-Moody system $\{\mathcal{A},\mathcal{G},\mathcal{G}',\chi_{tr}\}$.

We denote by $R:\mathfrak{U}\mathcal{G}\rightarrow R\textrm{-}\mathfrak{Mod}$
the constant functor at $R$. We also recall that the first homology
group $H_{1}(-;R)$ defines a functor from the category of groups
$\mathfrak{Gr}$ to the category $R\textrm{-}\mathfrak{Mod}$ (see
\cite[Section 8]{brown} for example), and we denote the composition
$H_{1}(-;R)\circ\mathcal{A}$ by $H_{1}(\mathcal{A};R)$. The following
result provides a general description of a Long-Moody functor when
the input has a trivial $\mathcal{A}(\underline{n})$-module structure
for all $n$. In particular, this description fully determines the
Long-Moody functor associated to the coherent system $\{\mathcal{A},\mathcal{G},\mathcal{G}',\chi_{tr}\}$.
\begin{prop}
\label{prop:casesigmatrivial} Let \textbf{$\{\mathcal{A},\mathcal{G},\mathcal{G}',\chi\}$}
be a (possibly coherent) Long-Moody system and $F$ be an object of
$\mathbf{Fct}(\mathcal{G},R\textrm{-}\mathfrak{Mod})$ (or possibly
$\mathbf{Fct}(\mathfrak{U}\mathcal{G},R\textrm{-}\mathfrak{Mod})$
if the Long-Moody system is coherent).

If $F(\underline{n+1})$ is a trivial $R[\mathcal{A}(\underline{n})]$-module
for all natural numbers $n$, then $\mathbf{LM}(F)\cong H_{1}(\mathcal{A};R)\otimes_{R}F(1\natural-).$
\end{prop}

\begin{proof}
Let $n$ be a natural number. Since $F(1\natural\underline{n})$ a
trivial $R[\mathcal{A}(\underline{n})]$-module, we obtain that $H_{0}(\mathcal{A}(\underline{n});F(1\natural\underline{n}))\cong F(1\natural\underline{n})$.
Then, we deduce from the exact sequence (\ref{eq:4_term_sequence_LM})
and from the universal coefficient theorem for group homology that
there is an $R$-module isomorphism $\mathcal{I}_{R[\mathcal{A}(\underline{n})]}\otimes_{R[\mathcal{A}(\underline{n})]}F(1\natural\underline{n})\cong H_{1}(\mathcal{A};R)(\underline{n})\otimes_{R}F(1\natural\underline{n})$.
It is straightforward to check that this isomorphism is natural with
respect to the morphisms of $\mathfrak{U}\mathcal{G}$.
\end{proof}

\paragraph{Using free products of groups for $\mathcal{A}$.}

We obtain more properties for the associated Long-Moody functors assuming
that there exist two \textit{free} groups $H_{0}$ and $H$ such that
$\mathcal{A}(\underline{n})=H^{*n}*H_{0}$ for all natural numbers
$n$. First, we recall the following classical result:
\begin{lem}
\label{lem:Swan}Let $G$ be a torsion-free group. The augmentation
ideal $\mathcal{I}_{R[G]}$ is a projective $R[G]$-module if and
only if $G$ is a free group.
\end{lem}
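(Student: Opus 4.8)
The plan is to prove the two implications by entirely different means: the implication ``$G$ free $\Rightarrow\mathcal{I}_{R[G]}$ projective'' is elementary and holds over any $R$, while the converse is the substantive point and I would deduce it from the Stallings--Swan theorem on groups of cohomological dimension one.

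For the forward implication, suppose $G$ is free on a set $S$. First I would reduce to the case $R=\mathbb{Z}$: applying $R\otimes_{\mathbb{Z}}-$ to the short exact sequence of $\mathbb{Z}[G]$-modules
\[
0\longrightarrow\mathcal{I}_{\mathbb{Z}[G]}\longrightarrow\mathbb{Z}[G]\overset{\varepsilon}{\longrightarrow}\mathbb{Z}\longrightarrow0
\]
keeps it exact, since $\mathbb{Z}$ is $\mathbb{Z}$-free and hence $\mathrm{Tor}_{1}^{\mathbb{Z}}(R,\mathbb{Z})=0$; comparing kernels of the augmentations then identifies $\mathcal{I}_{R[G]}$ with $R\otimes_{\mathbb{Z}}\mathcal{I}_{\mathbb{Z}[G]}$ as an $R[G]$-module, so it suffices to show $\mathcal{I}_{\mathbb{Z}[G]}$ is $\mathbb{Z}[G]$-free. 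For that I would use the universal derivation $d\colon G\to\mathcal{I}_{\mathbb{Z}[G]}$, $g\mapsto g-1$, which for every $\mathbb{Z}[G]$-module $M$ induces a natural isomorphism $\mathrm{Hom}_{\mathbb{Z}[G]}(\mathcal{I}_{\mathbb{Z}[G]},M)\cong\mathrm{Der}(G,M)$ onto the group of crossed homomorphisms; since a crossed homomorphism out of a free group is freely and uniquely determined by its values on $S$, one obtains $\mathrm{Der}(G,M)\cong M^{S}\cong\mathrm{Hom}_{\mathbb{Z}[G]}\big(\mathbb{Z}[G]^{(S)},M\big)$ naturally in $M$, whence Yoneda gives $\mathcal{I}_{\mathbb{Z}[G]}\cong\mathbb{Z}[G]^{(S)}$, with basis $\{s-1\mid s\in S\}$ and inverse provided by the Fox derivatives. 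Equivalently, one may note that the Cayley graph of $G$ is the contractible universal cover of $\bigvee_{S}S^{1}$, so its augmented cellular chain complex $0\to\mathbb{Z}[G]^{(S)}\to\mathbb{Z}[G]\to\mathbb{Z}\to0$ exhibits $\mathcal{I}_{\mathbb{Z}[G]}$ as the free module $\mathbb{Z}[G]^{(S)}$. In either case $\mathcal{I}_{R[G]}$ is free, a fortiori projective.

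For the converse, assume $\mathcal{I}_{R[G]}$ is projective over $R[G]$. Then $0\to\mathcal{I}_{R[G]}\to R[G]\to R\to0$ is a length-one projective resolution of the trivial module $R$, so $G$ has cohomological dimension $\le1$ over $R$; specializing to $R=\mathbb{Z}$ gives $\mathrm{cd}_{\mathbb{Z}}G\le1$. At this point I would invoke the theorem of Stallings and Swan that a group of cohomological dimension $\le1$ over $\mathbb{Z}$ is free. Its proof --- Stallings' structure theorem for groups with more than one end, combined with Bass--Serre theory to handle the resulting splittings, together with Swan's additional argument to remove the finite-generation hypothesis --- is substantially deeper than anything else in the statement, and I would cite it rather than reproduce it. This is the main obstacle.

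Finally, I would flag a caveat about generality: over an arbitrary commutative ring the converse can fail, for instance with $R=\mathbb{Q}$ and $G$ a nontrivial finite group, since $\mathbb{Q}[G]$ is semisimple and so $\mathcal{I}_{\mathbb{Q}[G]}$ is projective while $G$ is not free. Thus the ``only if'' implication should be read with $R=\mathbb{Z}$ (or at least subject to some restriction on $R$), whereas the ``if'' implication --- which is the one actually used in the sequel --- is valid over every $R$.
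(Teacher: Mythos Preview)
Your argument follows essentially the same route as the paper's: for the hard direction (projective $\Rightarrow$ free) both of you use the augmentation short exact sequence as a length-one projective resolution of the trivial module and then invoke the Stallings--Swan theorem (the paper cites Swan's 1969 paper directly); for the easy direction (free $\Rightarrow$ projective) the paper simply cites Weibel, whereas you spell out that $\mathcal{I}_{\mathbb{Z}[G]}$ is in fact \emph{free} on $\{s-1:s\in S\}$ and then base-change to $R$. So the proposal is correct and the strategies coincide.

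Your final caveat, however, is not just a side remark but a genuine correction to the statement as written. Over a general commutative ring $R$ the ``only if'' implication fails exactly as you say: for $R=\mathbb{Q}$ and $G$ finite nontrivial, $\mathbb{Q}[G]$ is semisimple, so $\mathcal{I}_{\mathbb{Q}[G]}$ is projective while $G$ is not free. The paper's proof glosses over this by passing from ``homological dimension one'' (implicitly over $R$) to Swan's theorem (which is a statement about $\mathrm{cd}_{\mathbb{Z}}$). You are also right that this does no damage downstream: the only place the lemma is used is to deduce exactness of the Long--Moody functor when the groups $\mathcal{A}(\underline{n})$ are free, and that requires only the implication ``$G$ free $\Rightarrow\mathcal{I}_{R[G]}$ projective'', which is valid over every $R$.
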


\begin{proof}
Let us assume that $\mathcal{I}_{R[G]}$ is a projective $R[G]$-module.
The short exact sequence $0\rightarrow\mathcal{I}_{R[G]}\rightarrow R[G]\rightarrow R\rightarrow0$
is thus a projective resolution of $R$ as an $R[G]$-module. Hence
the cohomological dimension of $G$ is one. Then it follows from \cite[Theorem A]{swan1969groups}
that $G$ is a free group. The converse is a classical result of homological
algebra; see \cite[Corollary 6.2.7]{Weibel1}.
\end{proof}
\begin{cor}
\label{cor:exactnessLM} Let \textbf{$\{\mathcal{A},\mathcal{G},\mathcal{G}',\chi\}$}
be a (possibly coherent) Long-Moody system such that $\mathcal{A}(\underline{n})=H^{*n}*H_{0}$
for all natural numbers $n$, where the groups $H$ and $H_{0}$ are
free. Then the associated functor $\mathbf{LM}$ is exact and commutes
with all finite limits.
\end{cor}

\begin{proof}
Let $n$ be a natural number. Since the augmentation ideal $\mathcal{I}_{R[\mathcal{A}(\underline{n})]}$
is a projective $R[\mathcal{A}(\underline{n})]$-module by Lemma \ref{lem:Swan},
it is a flat $R[\mathcal{A}(\underline{n})]$-module: hence the functor
$\mathcal{I}_{R[\mathcal{A}(\underline{n})]}\otimes_{R[\mathcal{A}(\underline{n})]}-:R\textrm{-}\mathfrak{Mod}\rightarrow R\textrm{-}\mathfrak{Mod}$
is exact. Then the naturality of this exactness property with respect
to the morphisms of $\mathfrak{U\mathcal{G}}$ is a formal consequence
of the definition of the Long-Moody functor; see Theorem \ref{Thm:LMFunctor}.
The commutation result for finite limits is a general property of
exact functors; see \cite[Chapter 8, section 3]{MacLane1} for example.
\end{proof}

\section{Examples\label{subsec:Examples}}

The groups $\left\{ G_{n}\right\} _{n\in\mathbb{N}}$ for which it
is natural to define some generalized Long-Moody functors are mapping
class groups of surfaces. In this section, we present various coherent
Long-Moody systems which are defined for several families of mapping
class groups and surface braid groups. In particular, we recover the
Long-Moody functors for braid groups of \cite{soulieLMBilan} in Section
\ref{subsec:Surface-braid-groups:}, and we introduce new ones thanks
to the more general framework of Section \ref{sec:The-generalized-Long-Moody}.

\subsection{The monoidal groupoid associated with surfaces\label{subsec:The-monoidal-groupoid}}

Let us first introduce a suitable category for our work, inspired
by \cite[Section 5.6]{WahlRandal-Williams}. Namely, the decorated
surfaces groupoid $\mathcal{M}_{2}$ is the locally small groupoid
defined by:\\
\\
$\star$ \textbf{Objects:} decorated surfaces $(S,I,\mathcal{\mathscr{P}})$,
where:
\begin{itemize}
\item $S$ is a smooth connected compact surface with one boundary component
denoted by $\partial_{0}S$;
\item $I:[-1,1]\hookrightarrow\partial_{0}S$ is a parametrized interval
in the boundary and $p=0\in I$ a basepoint; when there is no ambiguity,
we omit the parametrized interval $I$ from the notation;
\item $\mathcal{\mathscr{P}}$ is a finite set of points removed from the
interior of $S$ (in other words with \textit{punctures}); we usually
omit $\mathcal{\mathscr{P}}$ from the notation for convenience or
if it is empty.
\end{itemize}
$\star$ \textbf{Morphisms:} the isotopy classes of homeomorphisms
of $S$ restricting to the identity on a neighborhood of the parametrized
interval $I$ and fixing the punctures setwise, denoted by $\pi_{0}\textrm{Homeo}^{I}(S,\mathscr{P})$.
\begin{rem}
\label{rem:alternative_morph_M_2}We recall that $\pi_{0}\textrm{Homeo}^{I}(S,\mathscr{P})$
is isomorphic to the isotopy classes of homeomorphisms of $S$ restricting
to the identity on the boundary $\partial_{0}S$ and fixing the punctures
setwise, which is the traditional way to define the mapping class
group of $S$. Also, if the surface $S$ is orientable, the orientation
on $S$ is induced by the orientation of $I$: then the elements of
$\pi_{0}\textrm{Homeo}^{I}(S,\mathscr{P})$ automatically preserve
that orientation.

Alternatively, the morphisms of the groupoid $\mathcal{M}_{2}$ may
be described as follows. We denote by $\widehat{S}$ the surface obtained
from $S\in\textrm{Obj}(\mathcal{M}_{2})$ removing an open disc on
a neighbourhood of each punctures, so that we obtain boundary components
instead of punctures; see \cite[Section 5.6.1]{WahlRandal-Williams}.
Let $\textrm{Diff}{}^{\partial_{0}}(\widehat{S})$ be the group of
diffeomorphisms of $\hat{S}$ fixing pointwise the boundary component
$\partial_{0}S$ and moving freely the other boundary components.
We recall that $\pi_{0}\textrm{Homeo}^{I}(S,\mathscr{P})$ is isomorphic
to the group of isotopy classes $\pi_{0}\textrm{Diff}{}^{\partial_{0}}(\widehat{S})$;
see \cite[Section 1.4.2]{farb2011primer} for instance.
\end{rem}

The groupoid $\mathcal{M}_{2}$ has a monoidal structure induced by
gluing; for completeness, the definition is outlined below; we refer
to \cite[Section 5.6.1]{WahlRandal-Williams} for technical details.
Let $(S_{1},I_{1})$ and $(S_{2},I_{2})$ be for two decorated surfaces.
For a parametrized interval $I$, the left half-interval $[-1,0]\hookrightarrow\partial_{0}S$
of $I$ is denoted by $I^{-}$ and the right half-interval $[0,1]\hookrightarrow\partial_{0}S$
of $I$ is denoted by $I^{+}$, defining $I=I^{-}\cup I^{+}$. We
define the boundary connected sum $(S_{1},I_{1})\natural(S_{2},I_{2})$
as $(S_{1}\natural S_{2},I_{1}\natural I_{2})$, where $S_{1}\natural S_{2}$
is the surface obtained from gluing $S_{1}$ and $S_{2}$ along the
half-interval $I_{1}^{+}$ and the half-interval $I_{2}^{-}$, and
$I_{1}\natural I_{2}=I_{1}^{-}\cup I_{2}^{+}$. The homeomorphisms
being the identity on a neighbourhood of the parametrized intervals
$I_{1}$ and $I_{2}$, we canonically extend the homeomorphisms of
$S_{1}$ and $S_{2}$ to $S_{1}\natural S_{2}$. The braiding of the
monoidal structure $b_{(S_{1},I_{1}),(S_{2},I_{2})}^{\mathcal{M}_{2}}$
is the morphism $(S_{1},I_{1})\natural(S_{2},I_{2})\rightarrow(S_{2},I_{2})\natural(S_{1},I_{1})$
given by doing half a Dehn twist in a pair of pants neighbourhood
of $\partial_{0}S_{1}$ and $\partial_{0}S_{2}$; see Figure \ref{fig:tressageM2}.
\begin{notation}
We consider the unit $2$-disc $\mathbb{D}^{2}$. We denote by $D$
the disc $\mathbb{D}^{2}$ with one puncture, we fix a torus with
one boundary component denoted by $T$ and a Möbius band denoted by
$M$. By the classification of surfaces, for $S$ an object of the
groupoid $\mathcal{M}_{2}$, there exist $g,s,h\in\mathbb{N}$ such
that there is an homeomorphism $S\simeq(\natural_{s}D)\natural(\natural_{g}T)\natural(\natural_{h}M)$.
Moreover, if $h=0$, then $g$ and $s$ are unique.
\end{notation}

\begin{figure}
~~~~~~~~~~~~~~~~~~~~~~~\includegraphics[clip,scale=0.4]{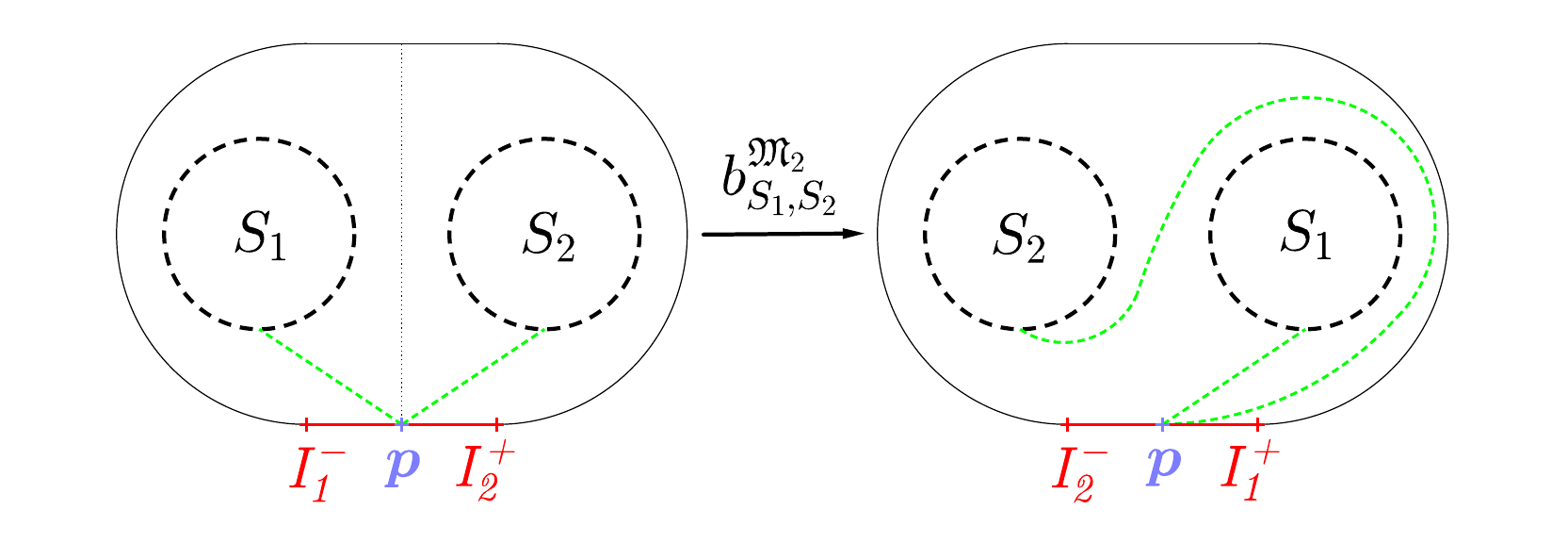}

\caption{\label{fig:tressageM2}Braiding}
\end{figure}

By \cite[Proposition 5.18]{WahlRandal-Williams}, the boundary connected
sum $\natural$ induces a braided monoidal structure on $\mathcal{M}_{2}$.
There are no zero divisors in the category $\mathcal{M}_{2}$ and
$\textrm{Aut}_{\mathcal{M}_{2}}(\mathbb{D}^{2})=\{id_{\mathbb{D}^{2}}\}$
by Alexander's trick. For consistency with respect to the framework
of Section \ref{sec:The-generalized-Long-Moody}, we consider the
following subgroupoid of $\mathcal{M}_{2}$ for the monoidal structure
$\natural$ to become strict: let $\mathfrak{M}_{2}$ be the full
subgroupoid of $\mathcal{M}_{2}$ on the same objects modulo the identification
that $\mathbb{D}^{2}\natural S=S\natural\mathbb{D}^{2}=S$ for any
surface $S$. Hence the groupoid $(\mathfrak{M}_{2},\natural,\mathbb{D}^{2},b_{-,-}^{\mathfrak{M}_{2}})$
is locally small, strict braided monoidal with no zero divisors and
such that $\textrm{Aut}_{\mathfrak{M}_{2}}(\mathbb{D}^{2})=\{id_{\mathbb{D}^{2}}\}$.
We denote by $\mathfrak{U}\mathfrak{M}_{2}$ Quillen's bracket construction
on the groupoid $(\mathfrak{M}_{2},\natural,\mathbb{D}^{2})$; by
Proposition \ref{prop:Quillen'sconstructionprebraided}, we obtain
a strict pre-braided monoidal category $(\mathfrak{U}\mathfrak{M}_{2},\natural,\mathbb{D}^{2})$
where the unit $\mathbb{D}^{2}$ is an initial object. Following the
framework of Section \ref{subsec:Definition-of-theLM}, the groupoid
$\mathfrak{M}_{2}$ plays the role of the category $\mathcal{G}'$.

\subsection{Fundamental group functor\label{subsec:symplecticfunctor }}

As detailed in Definition \ref{def:longmoodysystem}, we have to fix
a functor $\mathcal{A}:\mathfrak{U}\mathcal{G}\rightarrow\mathfrak{Gr}$
in order to define a Long-Moody functor. In this section, we choose
the functor which will play the role of that parameter introduced
in the next section.

We define the functor $\pi_{1}(-,p):(\mathfrak{M}_{2},\natural,\mathbb{D}^{2})\rightarrow\mathfrak{Gr}$
by assigning the fundamental group $\pi_{1}(S,p)$ to each object
$S$ of $\mathfrak{M}_{2}$ and the corresponding natural action,
denoted by $\mathcal{A}_{S}$,  of the mapping class group on $\pi_{1}(S,p)$
for the morphisms of $\mathfrak{M}_{2}$.

Let $\mathfrak{gr}$ be a skeleton of the full subcategory of $\mathfrak{Gr}$
of finitely-generated free groups. The free product $*:\mathfrak{gr}\times\mathfrak{gr}\rightarrow\mathfrak{gr}$
defines a monoidal structure on $\mathfrak{gr}$, with $0$ the unit,
denoted by $(\mathfrak{gr},*,0)$. We recall that the group $\pi_{1}(S,p)$
is a finitely generated free group for any object $S$ of $\mathfrak{M}_{2}$.
We use the category $\mathfrak{gr}$ for the functor $\pi_{1}(-,p)$
to be strict monoidal:
\begin{lem}
\label{lem:symplecticstrongmon}The functor $\pi_{1}(-,p):(\mathfrak{M}_{2},\natural,\mathbb{D}^{2})\rightarrow(\mathfrak{gr},*,0_{\mathfrak{Gr}})$
is strict monoidal.
\end{lem}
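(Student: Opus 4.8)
The plan is to unwind the definition of a strict monoidal functor and verify its three requirements directly: preservation of the monoidal unit, of the monoidal product on objects, and of the monoidal product on morphisms. Because both $\left(\mathfrak{M}_{2},\natural,\varSigma_{0,0,1}^{0}\right)$ (by Proposition~\ref{prop:assumptiongroupoidM2}) and $\left(\mathfrak{gr},*,0_{\mathfrak{Gr}}\right)$ (by Notation~\ref{nota:introgr}, where one implicitly fixes a strictly associative and unital model of the free product) are \emph{strict} monoidal, the associativity and unit constraints on both sides are identities; so once the structure morphisms of $\pi_{1}\left(-,p\right)$ are shown to be identities there are no coherence diagrams left to check.

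First I would dispatch the unit: the object $\varSigma_{0,0,1}^{0}$ is a disc, hence contractible, so $\pi_{1}\left(-,p\right)\left(\varSigma_{0,0,1}^{0}\right)$ is the trivial group, which is precisely the monoidal unit $0_{\mathfrak{Gr}}$ of $\left(\mathfrak{gr},*,0_{\mathfrak{Gr}}\right)$; thus the unit is preserved on the nose. For the product on objects, recall that $\pi_{1}\left(-,p\right)$ was \emph{defined} inductively by $\pi_{1}\left(-,p\right)\left(S\natural S'\right)=\pi_{1}\left(S,p\right)*\pi_{1}\left(S',p\right)$, so the identity $\pi_{1}\left(-,p\right)\left(S\natural S'\right)=\pi_{1}\left(-,p\right)\left(S\right)*\pi_{1}\left(-,p\right)\left(S'\right)$ holds literally and the structure morphism of the monoidal functor is the identity. (Van Kampen's theorem, already used in the construction of the functor, guarantees this agrees with the genuine fundamental group of the glued surface; in particular a free product of finitely generated free groups is again finitely generated free, so the functor does land in $\mathfrak{gr}$.)

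The substantive point is compatibility on morphisms. Given isotopy classes $\varphi$ of homeomorphisms of $S$ and $\psi$ of $S'$ fixing neighbourhoods of the parametrized intervals, their monoidal product $\varphi\natural\psi$ is, by the definition of $\natural$ on $\mathcal{M}_{2}$ (following \cite[Section 5.6.1]{WahlRandal-Williams}), the homeomorphism of $S\natural S'$ agreeing with $\varphi$ on the copy of $S$, with $\psi$ on the copy of $S'$, and equal to the identity on a neighbourhood of the gluing region containing the basepoint $p$. I would then argue that the induced automorphism of $\pi_{1}\left(S\natural S',p\right)=\pi_{1}\left(S,p\right)*\pi_{1}\left(S',p\right)$ restricts to $a_{S}\left(\varphi\right)$ on the first free factor and to $a_{S'}\left(\psi\right)$ on the second: this is exactly the naturality of the Van Kampen isomorphism with respect to a self-homeomorphism preserving the decomposition into $S$ and $S'$ rel the separating interval through $p$. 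Hence $\pi_{1}\left(-,p\right)\left(\varphi\natural\psi\right)=a_{S}\left(\varphi\right)*a_{S'}\left(\psi\right)=\pi_{1}\left(-,p\right)\left(\varphi\right)*\pi_{1}\left(-,p\right)\left(\psi\right)$; the case of arbitrary objects of $\mathfrak{M}_{2}$ follows by induction on the number of $\natural$-factors, the bracketings matching because $\natural$ and $*$ are both strictly associative.

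I expect the only real obstacle to be the bookkeeping in this last step: one must pin down that the basepoint $p\in I$ lies on the interval along which the boundary connected sum is formed, so that $\pi_{1}\left(S,p\right)$, $\pi_{1}\left(S',p\right)$ and $\pi_{1}\left(S\natural S',p\right)$ are all based at (copies of) the same point and the free-product decomposition — together with its naturality under decomposition-preserving homeomorphisms — is literally the one used to define $\pi_{1}\left(-,p\right)$. Once the basepoint conventions of \cite[Section 5.6]{WahlRandal-Williams} are fixed, the verification is routine and, as noted, no monoidal coherence diagrams arise because both structures are strict.
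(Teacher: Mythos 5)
Your proposal is correct and follows essentially the same route as the paper: strictness on objects and the unit holds by the very definition of the assignments, and the substantive point — that $\varphi\natural\psi$ acts on $\pi_{1}\left(S\natural S',p\right)=\pi_{1}\left(S,p\right)*\pi_{1}\left(S',p\right)$ factorwise because each mapping class acts trivially on the other free factor — is exactly the paper's argument, which you merely spell out with the Van Kampen/basepoint bookkeeping made explicit.
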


\begin{proof}
Since the category $\mathfrak{gr}$ is skeletal, it follows from Van
Kampen's theorem that $\pi_{1}(S'\natural S,p)=\pi_{1}(S',p)*\pi_{1}(S,p)$
for any $S,S'\in\textrm{Obj}(\mathfrak{M}_{2})$. Let $\phi$ and
$\phi'$ be elements of $\pi_{0}\textrm{Homeo\ensuremath{^{I}}}(S,\mathscr{P})$
and $\pi_{0}\textrm{Homeo\ensuremath{^{I}}}(S',\mathscr{P})$ respectively.
The isomorphism $id_{S'}\natural\phi$ (respectively $\phi'\natural id_{S}$)
corresponds to extending $\phi$ (resp. $\phi'$) to $S'\natural S$
by the identity on $S'$ (resp. $S$). It is clear that  $id_{S'}\natural\phi$
acts trivially on $\pi_{1}(S',p)$ in $\pi_{1}(S',p)*\pi_{1}(S,p)$
and has the same action as $\phi$ on $\pi_{1}(S,p)$ in $\pi_{1}(S',p)*\pi_{1}(S,p)$.
Similarly, $\phi'\natural id_{S}$ acts trivially on $\pi_{1}(S,p)$
in $\pi_{1}(S',p)*\pi_{1}(S,p)$ and has the same action as $\phi'$
on $\pi_{1}(S',p)$ in $\pi_{1}(S',p)*\pi_{1}(S,p)$. Hence the action
of $\phi'\natural\phi$ on $\pi_{1}(S'\natural S,p)=\pi_{1}(S',p)*\pi_{1}(S,p)$
is the same as $\mathcal{A}_{S'}(\phi')*\mathcal{A}_{S}(\phi)$, which
ends the proof.
\end{proof}
The object $0_{\mathfrak{Gr}}$ is null in the category of groups
$\mathfrak{Gr}$, and we let $\iota_{G}:0_{\mathfrak{Gr}}\rightarrow G$
denote the unique morphism from $0_{\mathfrak{Gr}}$ to the group
$G$. The next proposition tells us that the functor $\pi_{1}(-,p)$
naturally extends to the category $\mathfrak{U}\mathfrak{M}_{2}$.
\begin{prop}
\label{prop:definesymplecticfunctor}The functor $\pi_{1}(-,p)$ of
Lemma \ref{lem:symplecticstrongmon} extends to a functor $\pi_{1}(-,p):\mathfrak{U}\mathfrak{M}_{2}\rightarrow\mathfrak{gr}$
by assigning $\pi_{1}(-,p)([S',id_{S'\natural S}])=\iota_{\pi_{1}(S',p)}*id_{\pi_{1}(S,p)}$
to all $S,S'\in\textrm{Obj}(\mathfrak{M}_{2})$.
\end{prop}

\begin{proof}
Let $S$ and $S'$ be objects of $\mathfrak{M}_{2}$. It follows from
the definitions that relation (\ref{eq:criterion}) of Lemma \ref{lem:criterionfamilymorphismsfunctor}
is automatically satisfied. Let $\varphi\in\pi_{0}\textrm{Homeo\ensuremath{^{I}}}(S,\mathscr{P})$
and $\varphi'\in\pi_{0}\textrm{Homeo\ensuremath{^{I}}}(S',\mathscr{P}')$.
By Lemma \ref{lem:symplecticstrongmon}, we have $\pi_{1}(-,p)(\varphi'\natural\varphi)\circ\pi_{1}(-,p)([S',id_{S'\natural S}])=(\mathcal{A}_{S'}(\varphi')*\mathcal{A}_{S}(\varphi))\circ\pi_{1}(-,p)([S',id_{S'\natural S}])$.
Hence, by definition of the morphism $\iota_{\pi_{1}(S',p)}$, we
obtain that $\pi_{1}(-,p)(\varphi'\natural\varphi)\circ\pi_{1}(-,p)([S',id_{S'\natural S}])$
is equal to $\pi_{1}(-,p)([S',id_{S'\natural S}])\circ\mathcal{A}_{S}(\varphi)$.
Relation (\ref{eq:criterion'}) of Lemma \ref{lem:criterionfamilymorphismsfunctor}
is thus satisfied, which implies the desired result.
\end{proof}
In the constructions of the following sections, we take restrictions
of the functor $\pi_{1}(-,p)$ to some small subcategories of $\mathfrak{U}\mathfrak{M}_{2}$
for the parameter $\mathcal{A}$ to define the Long-Moody systems.
Note that we could make other choices for such parameter: for instance,
in the case of braid groups, Wada introduces a families of morphisms
$\mathbf{B}_{n}\rightarrow\textrm{Aut}(\mathbf{F}_{n})$ which are
not Artin's representation \cite{wada1992group}. We may use such
other choices to construct alternative Long-Moody functors.

\subsection{Modifying the number of punctures\label{subsec:Modifyingpunctures0}}

This section is devoted to the construction of Long-Moody functors
for the mapping class groups of surfaces when the number of punctures
varies (see Section \ref{subsec:Modifyingpunctures}) and surface
braid groups (see Section \ref{subsec:Surface-braid-groups:}). We
will detail the case of orientable surfaces for simplicity, and then
we will see by the end of this section that the non-orientable case
follows mutatis mutandis. Finally, we study the applications of these
Long-Moody functors in Section \ref{subsec:Applications}.

\subsubsection{Mapping class groups\label{subsec:Modifyingpunctures}}

We fix a natural number $g\geq0$. For all natural numbers $n$, we
denote by $\varSigma_{g,1}^{n}$ the surface $D^{\natural n}\natural T^{\natural g}$
(and $\varSigma_{0,1}^{0}:=\mathbb{D}^{2}$ for consistency), and
we denote the corresponding mapping class group by $\mathbf{\Gamma}_{g,1}^{n}$.
Let $\mathfrak{M}_{2}^{+,g}$ be the small full subgroupoid of $\mathfrak{M}_{2}$
on the objects $\{\varSigma_{g,1}^{n}\}_{n\in\mathbb{N}}$. In particular,
in the notations of Section \ref{subsec:Definition-of-theLM}, the
groupoid $\mathfrak{M}_{2}$ plays the role of $\mathcal{G}'$ and
$\mathfrak{M}_{2}^{+,g}$ corresponds to $\mathcal{G}$ where $\underline{0}:=\varSigma_{g,1}^{0}$
and $1:=\varSigma_{0,1}^{1}$.

Let $H$ be the free group $\pi_{1}(\varSigma_{0,1}^{1},p)\cong\mathbb{Z}$,
let $H_{0}$ be the free group $\pi_{1}(\varSigma_{g,1}^{0},p)\cong\mathbf{F}_{2g}$
and thus $H_{n}=H^{*n}*H_{0}=\pi_{1}(\varSigma_{g,1}^{n},p)$ for
all natural numbers $n$. We denote by $\pi_{1}(\varSigma_{g,1}^{-},p):\mathfrak{U}\mathfrak{M}_{2}^{+,g}\rightarrow\mathfrak{Gr}$
the restriction of the functor of Proposition \ref{prop:definesymplecticfunctor}
along the inclusion functor $\mathfrak{U}\mathfrak{M}_{2}^{+,g}\hookrightarrow\mathfrak{U}\mathfrak{M}_{2}$.

Let $\{a_{i},b_{i}\mid i\in\{1,\ldots,g\}\}$ be a direct system of
meridians and longitudes of the surface $\varSigma_{g,1}^{0}$ with
orientations as pictured in Figure \ref{fig:Generators-and-paths}.
For $n$ a natural number and $j\in\{1,\ldots,n\}$, let $p_{j}$
be an oriented closed curve encircling the puncture in the $j$-th
surface $\varSigma_{0,1}^{1}$ in $\varSigma_{0,1}^{1}\natural\cdots\natural\varSigma_{0,1}^{1}\natural\varSigma_{g,1}^{0}$
as in Figure \ref{fig:Generators-and-paths}; we abuse the notation
and also denote by $p_{j}$ the puncture encircled by the curve $p_{j}$.

The generator of the $j$-th copy of $H$ in $H_{n}$ is the homotopy
class of a simple closed curve $\alpha_{p_{j}}$ of $\varSigma_{0,1}^{1}$
in $\varSigma_{g,1}^{n}$ based at $p$ and encircling the curve $p_{j}$.
A generator $h'$ of $H_{0}$ in $H_{n}$ is the homotopy class of
a simple closed curve $\alpha_{h'}$ of $\varSigma_{g,1}^{0}$ in
$\varSigma_{g,1}^{n}$ based at $p$ and encircling on the curves
$\{a_{i},b_{i}\mid i\in\{1,\ldots,g\}\}$. From now on, we fix a choice
of such simple closed curves $\alpha_{h}$ for each standard generator
$h$ of $H_{n}$.\\

\begin{figure}
~~~~~~~~~~~~~~~~~~~~\includegraphics[clip,scale=0.4]{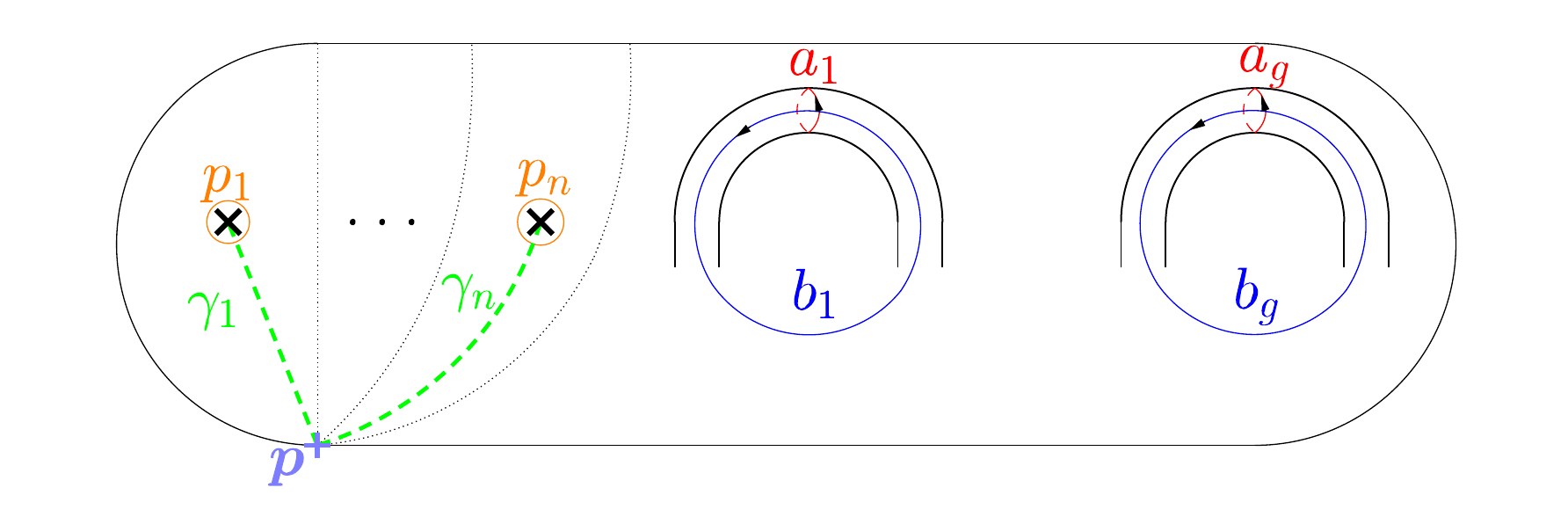}

\caption{\label{fig:Generators-and-paths}Generators and paths}
\end{figure}

To introduce the group morphisms $\{\chi_{n}:\pi_{1}(\varSigma_{g,1}^{n},p)\rightarrow\mathbf{\Gamma}_{g,1}^{1+n}\}_{n\in\mathbb{N}}$
in order to define a Long-Moody functor in this section, we first
need to introduce additional tools and recall some classical facts
about mapping class groups of surfaces.

For the unit disc with one puncture $\varSigma_{0,1}^{1}$, we consider
$x_{1}$ a marked point filling in the puncture $p_{1}$ and denote
by $\varSigma_{0,1}^{[x_{1}]}$ the obtained surface. Let $\gamma_{1}$
be a path in $\varSigma_{0,1}^{[x_{1}]}$ connecting the point $p\in I$
to $x_{1}$. For $n$ a natural number and $j\in\{1,\ldots,n\}$,
we denote by $x_{j}$ the corresponding filling point of the $j$-th
copy of $\varSigma_{0,1}^{1}$ in $\varSigma_{0,1}^{1}\natural\cdots\natural\varSigma_{0,1}^{1}\natural\varSigma_{g,1}^{0}$.
Also, we denote by $\gamma_{j}$ the corresponding path $\gamma_{1}$
of the $j$-th copy of $\varSigma_{0,1}^{1}$  in $\varSigma_{0,1}^{1}\natural\cdots\natural\varSigma_{0,1}^{1}\natural\varSigma_{g,1}^{0}$;
see Figure \ref{fig:Generators-and-paths}. For all natural numbers
$n$, we denote the surface $\varSigma_{0,1}^{[x_{1}]}\natural\varSigma_{g,1}^{n}$
by $\varSigma_{g,1}^{[x_{1}],n}$.

Now, we fix a canonical isomorphism between the group $H_{n}$ and
the fundamental group of $\varSigma_{g,1}^{[x_{1}],n}$ with $x_{1}$
as basepoint.

\begin{figure}
~~~~\includegraphics[clip,scale=0.3]{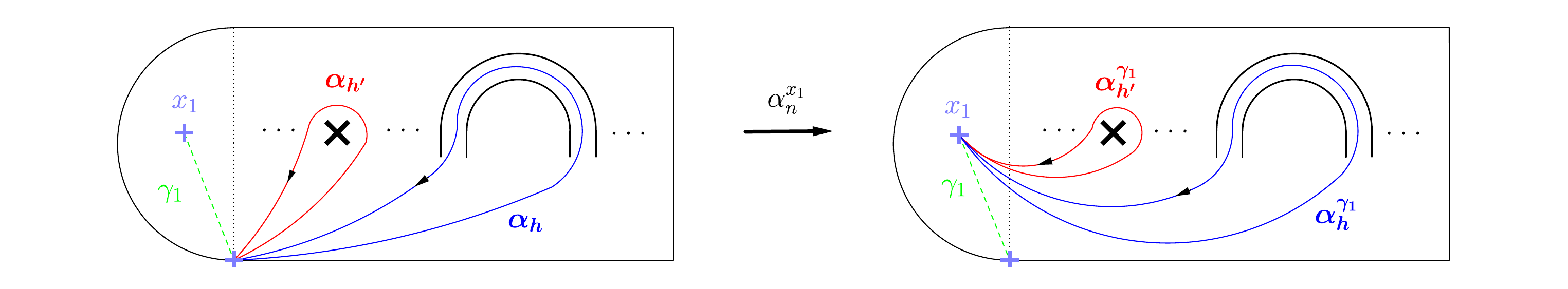}

\caption{\label{fig:alpha^=00007Bgamma=00007D}The simple closed curve $\alpha^{\gamma_{1}}$}
\end{figure}

\begin{defn}
\label{def:defisocano} For a generator $h$ of $H_{n}$ and its representative
simple closed curve $\alpha_{h}$, we consider the path $\gamma_{1}\cdot\alpha_{h}\cdot\gamma_{1}^{-1}$
obtained by changing the basepoint of the curve $\alpha_{h}$ from
$p$ to $x_{1}$ via the path $\gamma_{1}$. This procedure for each
generator $h$ of $H_{n}$ induces an isomorphism $\alpha_{n}^{x_{1}}:H_{n}\overset{\sim}{\rightarrow}\pi_{1}(\varSigma_{g,1}^{[x_{1}],n},x_{1})$,
which is independent of the choice of path $\gamma_{1}$ up to homotopy
of $\varSigma_{0,1}^{[x_{1}]}$. For a generator $h$ of $H_{n}$,
we denote by $\alpha_{h}^{\gamma_{1}}$ a simple closed curve of $\varSigma_{g,1}^{[x_{1}],n}$
based at $x_{1}$, homotopic to $\gamma_{1}\cdot\alpha_{h}\cdot\gamma_{1}^{-1}$
and thus representing the element $\alpha_{n}^{x_{1}}(h)$; see Figure
\ref{fig:alpha^=00007Bgamma=00007D} for an illustration.
\end{defn}

For all natural numbers $n$, let $\mathbf{\Gamma}_{g,1}^{[1],n}$
be the subgroup of the mapping class group $\mathbf{\Gamma}_{g,1}^{1+n}$
where the first puncture $p_{1}$ is sent to itself. We denote by
$\mathscr{E}_{n}:\mathbf{\Gamma}_{g,1}^{[1],n}\hookrightarrow\mathbf{\Gamma}_{g,1}^{1+n}$
the associated canonical embedding. Note that the group $\mathbf{\Gamma}_{g,1}^{[1],n}$
is also clearly isomorphic to the isotopy classes of homeomorphisms
of the surface $\varSigma_{g,1}^{[x_{1}],n}$ restricting to the identity
on the boundary component, fixing the punctures setwise and sending
the marked point $x_{1}$ to itself; see \cite[Section 1.1.1]{farb2011primer}.\\

The central ingredient to define the morphism $\chi_{n}:\pi_{1}(\varSigma_{g,1}^{n},p)\rightarrow\mathbf{\Gamma}_{g,1}^{1+n}$
is the use of the well-known \textit{point pushing map} $\textrm{Push}_{p_{1}}:\pi_{1}(\varSigma_{g,1}^{[x_{1}],n},x_{1})\hookrightarrow\mathbf{\Gamma}_{g,1}^{[1],n}$:
it is basically a reinterpretation of the morphism $\textrm{Push}_{p_{1}}$
in our context, namely replacing the source with $\pi_{1}(\varSigma_{g,1}^{n},p)$
via the isomorphism $\alpha_{n}^{x_{1}}$ of Definition \ref{def:defisocano}.
The formula (\ref{eq:push_formula}) below describes the morphism
$\textrm{Push}_{p_{1}}$ and we refer the reader to \cite[Section 4.2.1]{farb2011primer}
for further details. For a simple loop $\varepsilon$, we denote by
$\tau_{\varepsilon}$ the Dehn twist along it. We consider a generator
$h$ of $H_{n}$ and the associated simple closed curve $\alpha_{h}^{\gamma_{1}}$
of Definition \ref{def:defisocano}. Let $(\alpha_{h}^{\gamma_{1}})^{-}$
and $(\alpha_{h}^{\gamma_{1}})^{+}$ be the isotopy classes of the
simple closed curves in $\varSigma_{g,1}^{[x_{1}],n}$ obtained by
pushing the simple closed curve $\alpha_{h}^{\gamma_{1}}$ off itself
to the left and right respectively. It follows from \cite[Fact 4.7]{farb2011primer}
that:
\begin{equation}
\textrm{Push}_{p_{1}}(\alpha_{n}^{x_{1}}(h))=\tau_{(\alpha_{h}^{\gamma_{1}})^{-}}\circ\tau_{(\alpha_{h}^{\gamma_{1}})^{+}}^{-1}.\label{eq:push_formula}
\end{equation}
Figure \ref{fig:push_map_first} allows us to visualize the effect
of the point-pushing map; see also \cite[Section 4.2.1]{farb2011primer}.
In this figure, $f_{1}$ denotes the generator of $\pi_{1}(\varSigma_{0,1}^{1},p)$
corresponding to the first copy of $\varSigma_{0,1}^{1}$ in $H_{n}$;
note that this $\varSigma_{0,1}^{1}$ is the second copy of $\varSigma_{0,1}^{1}$
in $\varSigma_{g,1}^{1+n}=\varSigma_{0,1}^{1}\natural\varSigma_{0,1}^{1}\natural\varSigma_{g,1}^{n-1}$.
We now can define each morphism $\chi_{n}$ from the morphisms $\alpha_{n}^{x_{1}}$,
$\textrm{Push}_{p_{1}}$ and $\mathscr{E}_{n}$:
\begin{defn}
\label{def:defsigma1}Let $n$ be a natural number. We define the
morphism $\chi_{n,1}:H_{n}\rightarrow\mathbf{\Gamma}_{g,1}^{1+n}$
to be the composition:
\[
\mathscr{E}_{n}\circ\textrm{Push}_{p_{1}}\circ\alpha_{n}^{x_{1}}:H_{n}\overset{\sim}{\rightarrow}\pi_{1}(\varSigma_{g,1}^{[x_{1}],n},x_{1})\hookrightarrow\mathbf{\Gamma}_{g,1}^{[1],n}\hookrightarrow\mathbf{\Gamma}_{g,1}^{1+n}.
\]
In particular, when $g=n=0$, the morphism $\chi_{0,1}:\pi_{1}(\varSigma_{0,1}^{0},p)\cong0_{\mathfrak{Gr}}\rightarrow\mathbf{\Gamma}_{0,1}^{1}\cong0_{\mathfrak{Gr}}$
is trivial. Also, using the notations of Definition \ref{def:defisocano}
and following (\ref{eq:push_formula}), if $2g+n\geq1$ then the morphisms
$\chi_{n,1}$ sends $h$ to $\tau_{(\alpha_{h}^{\gamma_{1}})^{-}}\circ\tau_{(\alpha_{h}^{\gamma_{1}})^{+}}^{-1}$
for each $n$ and each generator $h$ either of $H_{0}$ or of one
of the copies of $H$ in $H_{n}$. Each element of $H_{n}$ is thus
identified with a $1$-strand braid on the surface $\varSigma_{g,1}^{n}$.
For instance, we denote by $\mathbf{PB}_{2}$ the pure braid group
$\mathbf{\Gamma}_{0,1}^{[1],1}$ and by $\sigma_{1}$ the Artin generator
of the braid group on two strands $\mathbf{B}_{2}$ which is identified
with the braiding $b_{\varSigma_{0,1}^{1},\varSigma_{0,1}^{1}}^{\mathcal{M}_{2}}$
as pictured in Figure \ref{fig:tressageM2}. Then, with the orientation
convention of Figure \ref{fig:push_map_first}, the morphism $\chi_{1,1}:\pi_{1}(\varSigma_{0,1}^{1},p)\rightarrow\mathbf{PB}_{2}$
sends the generator $f_{1}$ of $\pi_{1}(\varSigma_{0,1}^{1},p)$
to $\sigma_{1}^{2}$.
\end{defn}

\begin{figure}
~~~~\includegraphics[clip,scale=0.3]{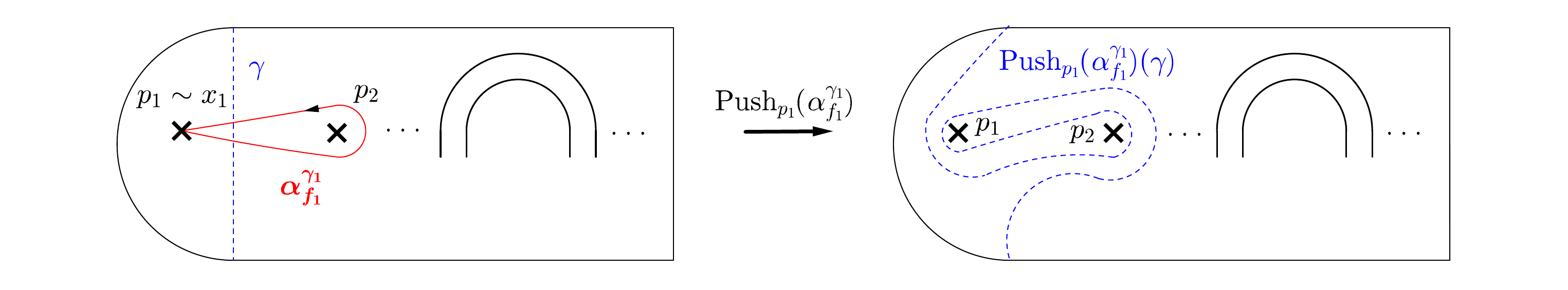}

\caption{\label{fig:push_map_first}Illustration of the point-pushing map $\textrm{Push}_{p_{1}}$}
\end{figure}

In addition, we recall from \cite[Theorem 4.6]{farb2011primer} that
the point pushing map $\textrm{Push}_{p_{1}}$ is part of the following
\textit{Birman short exact sequence} (\ref{eq:Birman}): for all natural
numbers $n$, there is a short exact sequence
\begin{equation}
\xymatrix{1\ar@{->}[r] & \pi_{1}(\varSigma_{g,1}^{[x_{1}],n},x_{1})\ar@{->}[r]^{\,\,\,\,\,\,\,\,\,\,\textrm{Push}_{p_{1}}} & \mathbf{\Gamma}_{g,1}^{[1],n}\ar@{->}[r]^{\textrm{Forget}} & \mathbf{\Gamma}_{g,1}^{n}\ar@{->}[r] & 1.}
\label{eq:Birman}
\end{equation}
Namely, viewing the surface $\varSigma_{g,1}^{[x_{1}],n}$ as the
surface obtained from $\varSigma_{g,1}^{n}=\varSigma_{0,1}^{0}\natural\varSigma_{g,1}^{n}$
by marking a point $x_{1}$ in the interior of $\varSigma_{0,1}^{0}$
and denoting by $\mathscr{P}$ the set $\{p_{2},\ldots,\text{\ensuremath{p_{n+1}}\}}$
of punctures of $\varSigma_{g,1}^{n}$, we recall that (\ref{eq:Birman})
is the long exact sequence in homotopy of the locally trivial fibration
$\textrm{Homeo}^{I}(\varSigma_{g,1}^{n},\mathscr{P})\twoheadrightarrow\varSigma_{g,1}^{n}$
defined by evaluating homeomorphisms at $x_{1}$ in $\varSigma_{g,1}^{n}$;
see \cite[Section 4.2.3]{farb2011primer} for further details. In
particular, the morphism $\textrm{Forget}$ is the map induced in
homotopy by the fiber inclusion defined by forgetting that the point
$x_{1}$ is marked. This short exact sequence has the following stronger
property, which is probably known to the experts.
\begin{lem}
\label{lem:isosemidirectproduct} Let $n$ be a natural number. The
Birman short exact sequence (\ref{eq:Birman}) splits, inducing an
isomorphism $\mathscr{B}_{n}:H_{n}\rtimes_{\mathcal{A}_{\varSigma_{g,1}^{n}}}\mathbf{\Gamma}_{g,1}^{n}\overset{\sim}{\rightarrow}\mathbf{\Gamma}_{g,1}^{[1],n}$.
\end{lem}

\begin{proof}
Identifying $\varSigma_{g,1}^{[x_{1}],n}$ with $\varSigma_{0,1}^{[x_{1}]}\natural\varSigma_{g,1}^{n}$,
the surface $\varSigma_{g,1}^{n}$ can be viewed as a subsurface of
$\varSigma_{g,1}^{[x_{1}],n}$ as the complement of the disc $\varSigma_{0,1}^{[x_{1}]}$
with the marked point $x_{1}$. Extending the elements of $\textrm{Homeo}^{I}(\varSigma_{g,1}^{n},\mathscr{P})$
by the identity on $\varSigma_{0,1}^{[x_{1}]}$ induces in homotopy
the morphism $s_{n}:\mathbf{\Gamma}_{g,1}^{n}\rightarrow\mathbf{\Gamma}_{g,1}^{[1],n}$.
This is known as the stabilization map for the mapping class groups
with respect to the punctures; see \cite{boedigtilman2} for instance.
It is a routine to check from the definitions that $s_{n}$ is a section
of the map $\textrm{Forget}$ (which implies in particular that $s_{n}$
is injective), thus providing a splitting of the exact sequence (\ref{eq:Birman}).
Hence we have an isomorphism $\pi_{1}(\varSigma_{g,1}^{[x_{1}],n},x_{1})\rtimes_{\mathcal{A}_{x_{1}}}\mathbf{\Gamma}_{g,1}^{n}\cong\mathbf{\Gamma}_{g,1}^{[1],n}$,
where $\mathcal{A}_{x_{1}}$ denotes the natural action of $\mathbf{\Gamma}_{g,1}^{n}$
on $\pi_{1}(\varSigma_{g,1}^{[x_{1}],n},x_{1})$. Then, since the
splitting is induced by considering $\varSigma_{g,1}^{n}$ as a subsurface
of $\varSigma_{g,1}^{[x_{1}],n}$, the precomposition by the canonical
isomorphism $\alpha_{n}^{x_{1}}$ of Definition \ref{def:defisocano}
on $\pi_{1}(\varSigma_{g,1}^{[x_{1}],n},x_{1})$ induces the required
isomorphism.
\end{proof}
Therefore all the above ingredients define an appropriate Long-Moody
functor:
\begin{lem}
\label{lem:sigman1satisfiesfirtcond} The setting $\{\pi_{1}(\varSigma_{g,1}^{-},p),\mathfrak{M}_{2}^{+,g},\mathfrak{M}_{2},\chi_{1}\}$
is a Long-Moody system.
\end{lem}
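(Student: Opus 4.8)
The plan is to reduce, via Lemma~\ref{lem:equivcond:coherenceconditionsigmanan}, the statement that $\left\{ \pi_{1}\left(\varSigma_{g,0,1}^{-},p\right),\mathfrak{M}_{2}^{g,0},\mathfrak{M}_{2},\varsigma_{1}\right\}$ is a Long-Moody system to the following concrete assertion: the family $\left\{ \varsigma_{n,1}\colon H_{n}\rightarrow\mathbf{\Gamma}_{g,0,1}^{1+n}\right\} _{n\in\mathbb{N}}$ of Definition~\ref{def:defsigma1} is such that, for all natural numbers $n$, all $\varphi\in\mathbf{\Gamma}_{g,0,1}^{n}$ and all $h\in H_{n}$,
\[
\left(id_{\varSigma_{0,0,1}^{1}}\natural\varphi\right)\circ\varsigma_{n,1}\left(h\right)=\varsigma_{n,1}\left(a_{\varSigma_{g,0,1}^{n}}\left(\varphi\right)\left(h\right)\right)\circ\left(id_{\varSigma_{0,0,1}^{1}}\natural\varphi\right)
\]
holds in $\mathbf{\Gamma}_{g,0,1}^{1+n}$, where $a_{\varSigma_{g,0,1}^{n}}$ denotes the action of the mapping class group on the fundamental group underlying $\pi_{1}\left(\varSigma_{g,0,1}^{-},p\right)$. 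This relation says exactly that $\varsigma_{n,1}*\left(id_{\varSigma_{0,0,1}^{1}}\natural-\right)\colon H_{n}*\mathbf{\Gamma}_{g,0,1}^{n}\rightarrow\mathbf{\Gamma}_{g,0,1}^{1+n}$ factors across the canonical surjection onto $H_{n}\underset{a_{\varSigma_{g,0,1}^{n}}}{\rtimes}\mathbf{\Gamma}_{g,0,1}^{n}$.

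For the degenerate range $2g+n<2$, that is $(g,n)\in\left\{ (0,0),(0,1)\right\}$, the group $\mathbf{\Gamma}_{g,0,1}^{n}$ is trivial (the mapping class group of a disc, respectively of a once-punctured disc fixing the boundary), so the displayed relation is vacuous; one only needs that $\varsigma_{0,1}$ and $\varsigma_{1,1}$ are well-defined group morphisms, which is immediate. So assume $2g+n\geq2$.

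In this range the relation is essentially built into Lemma~\ref{lem:isosemidirectproduct}. That lemma produces the group isomorphism $\mathscr{B}_{n}\colon H_{n}\underset{a_{\varSigma_{g,0,1}^{n}}}{\rtimes}\mathbf{\Gamma}_{g,0,1}^{n}\overset{\cong}{\rightarrow}\mathbf{\Gamma}_{g,0,1}^{\left[1\right],n}$ coming from the Birman exact sequence~(\ref{eq:Birman}) split by $\overline{id_{\varSigma_{0,0,1}^{1}}\natural-}$; composing with the embedding $\mathscr{E}_{n}$ gives a group isomorphism onto a subgroup of $\mathbf{\Gamma}_{g,0,1}^{1+n}$. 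Unwinding the constructions, this composite sends $(h,1)\mapsto\varsigma_{n,1}(h)$ — this is precisely how $\varsigma_{n,1}$ was defined — and $(1,\varphi)\mapsto id_{\varSigma_{0,0,1}^{1}}\natural\varphi$, because the chosen splitting is $\overline{id_{\varSigma_{0,0,1}^{1}}\natural-}$ and $\mathscr{E}_{n}\circ\overline{id_{\varSigma_{0,0,1}^{1}}\natural-}=id_{\varSigma_{0,0,1}^{1}}\natural-$. Applying this group isomorphism to the tautological identity $(1,\varphi)(h,1)(1,\varphi)^{-1}=\bigl(a_{\varSigma_{g,0,1}^{n}}(\varphi)(h),1\bigr)$ in the semidirect product then yields exactly the displayed relation; Lemma~\ref{lem:equivcond:coherenceconditionsigmanan} thus provides the functor $\varsigma_{1}\colon\intop^{\mathcal{G}}\mathcal{A}\rightarrow\mathcal{G}$ making diagram~(\ref{cond:coherenceconditionsigmanan}) commute, which is the definition of a Long-Moody system.

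The only genuinely delicate point is the identification in the previous paragraph: one must match the monoidal stabilization functor $id_{\varSigma_{0,0,1}^{1}}\natural-$ with the Birman splitting, and check that the action on $H_{n}$ carried by $\mathscr{B}_{n}$ is indeed $a_{\varSigma_{g,0,1}^{n}}$ — for which one uses that the basepoint-change isomorphism $\Xi_{n}$ is $\mathbf{\Gamma}_{g,0,1}^{n}$-equivariant, since $\mathbf{\Gamma}_{g,0,1}^{n}$ fixes the marked disc $\varSigma_{0,0,1}^{\left[x_{1}\right]}$ containing the path $\gamma_{1}$. Both facts are already secured by how $\mathscr{B}_{n}$, $\mathscr{E}_{n}$ and $\varsigma_{n,1}$ were set up, so no new geometric input is needed. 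Alternatively, one could argue directly from the explicit description of Remark~\ref{rem:explicitsigma1}, using the change-of-coordinates principle $\varphi\tau_{c}\varphi^{-1}=\tau_{\varphi(c)}$ for Dehn twists together with the facts that $\varphi$ carries $\alpha_{f}$ to a representative of $a_{\varSigma_{g,0,1}^{n}}(\varphi)(f)$ and, fixing $\gamma_{1}$, commutes with the tubular-neighbourhood construction.
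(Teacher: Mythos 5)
Your proposal is correct and follows essentially the same route as the paper: reduce via Lemma \ref{lem:equivcond:coherenceconditionsigmanan} to the factorization of $\varsigma_{n,1}*\left(id_{\varSigma_{0,0,1}^{1}}\natural-\right)$ through the semidirect product, and verify it by observing that $\mathscr{E}_{n}\circ\mathscr{B}_{n}$ restricts to $\varsigma_{n,1}$ on $H_{n}$ and to $id_{\varSigma_{0,0,1}^{1}}\natural-$ on $\mathbf{\Gamma}_{g,0,1}^{n}$ (the latter because the Birman splitting is $\overline{id_{\varSigma_{0,0,1}^{1}}\natural-}$, exactly as in the paper), with the degenerate cases $2g+n<2$ disposed of by triviality of the relevant mapping class groups. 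Your explicit conjugation identity in the semidirect product and the remark on the equivariance of $\Xi_{n}$ merely spell out what the paper leaves to Lemma \ref{lem:isosemidirectproduct}, so no genuinely different argument is involved.
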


\begin{proof}
It is clear from the definitions that the composition $\mathscr{E}_{n}\circ s_{n}:\mathbf{\Gamma}_{g,1}^{n}\hookrightarrow\mathbf{\Gamma}_{g,1}^{[1],n}\hookrightarrow\mathbf{\Gamma}_{g,1}^{1+n}$
is equal to the injection $id_{\varSigma_{0,1}^{1}}\natural-$. Also,
it follows from Lemma \ref{lem:isosemidirectproduct} that the restriction
of the isomorphism $\mathscr{B}_{n}:H_{n}\rtimes_{\mathcal{A}_{\varSigma_{g,1}^{n}}}\mathbf{\Gamma}_{g,1}^{n}\overset{\sim}{\rightarrow}\mathbf{\Gamma}_{g,1}^{[1],n}$
to $H_{n}$ is equal to the composition $\textrm{Push}_{p_{1}}\circ\alpha_{n}^{x_{1}}$.
Hence, the following diagram is commutative:
\[
\xymatrix{H_{n}\ar@{^{(}->}[r]\ar@{->}[dr]_{\chi_{n,1}} & H_{n}\rtimes_{\mathcal{A}_{\varSigma_{g,1}^{n}}}\mathbf{\Gamma}_{g,1}^{n}\ar@{->}[d]^{\mathscr{E}_{n}\circ\mathscr{B}_{n}} & \mathbf{\Gamma}_{g,1}^{n}\ar@{_{(}->}[l]\ar@{->}[dl]^{id_{\varSigma_{0,1}^{1}}\natural-}\\
 & \mathbf{\Gamma}_{g,1}^{1+n}.
}
\]
Hence it follows Lemma \ref{lem:equivcond:coherenceconditionsigmanan}
that the diagram (\ref{cond:coherenceconditionsigmanan}) of Definition
\ref{def:longmoodysystem} is commutative, which ends the proof.
\end{proof}
Moreover, we have the stronger property:
\begin{prop}
\label{lem:sigma1satisfiesCondition}\label{cor:long-moodysystemmarkedpoints}
The setting $\{\pi_{1}(\varSigma_{g,1}^{-},p),\mathfrak{M}_{2}^{+,g},\mathfrak{M}_{2},\chi_{1}\}$
is a coherent Long-Moody system.
\end{prop}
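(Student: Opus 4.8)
The plan is to invoke the criterion of Proposition~\ref{cond:conditionstability}. By Lemma~\ref{lem:sigman1satisfiesfirtcond} the data $\left\{ \pi_{1}\left(\varSigma_{g,0,1}^{-},p\right),\mathfrak{M}_{2}^{+,g},\mathfrak{M}_{2},\varsigma_{1}\right\}$ already form a Long-Moody system, so it remains to check that $\varsigma_{1}$ extends to $\intop^{\mathfrak{U}\mathfrak{M}_{2}^{+,g}}\pi_{1}\left(\varSigma_{g,0,1}^{-},p\right)$, equivalently that diagram~(\ref{eq:diagramstability}) commutes for the morphisms $\left\{\varsigma_{n,1}\right\}_{n\in\mathbb{N}}$. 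Since $Aut_{\mathfrak{U}\mathfrak{M}_{2}}\left(\varSigma_{0,0,1}^{1}\right)=\left\{ id\right\}$ (the once-punctured disc has trivial mapping class group $\mathbf{B}_{1}$, and no nontrivial morphism can be appended to it in Quillen's bracket construction on $\mathfrak{M}_{2}$ since $\mathfrak{M}_{2}$ has no zero divisors), Proposition~\ref{cond:conditionstability} reduces this to the single equality~(\ref{eq:equiva}) in $\mathbf{\Gamma}_{g,0,1}^{n+2}$, for every $n$ and every $h\in H_{n}$. As $\varsigma_{n,1}$ and $\varsigma_{n+1,1}$ are group homomorphisms and $\mathcal{A}\!\left(\left[\varSigma_{0,0,1}^{1},id\right]\right)$ is the free-factor inclusion of Proposition~\ref{prop:definesymplecticfunctor}, both sides of~(\ref{eq:equiva}) are homomorphisms in $h$, so it is enough to verify~(\ref{eq:equiva}) when $h=f$ is one of the standard generators of the free factors $H$ or $H_{0}$ of $H_{n}$.

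For such a generator, I would rewrite~(\ref{eq:equiva}) as the assertion that $\varsigma_{n+1,1}\!\left(\mathcal{A}\!\left(\left[\varSigma_{0,0,1}^{1},id\right]\right)\left(f\right)\right)$ is the conjugate of $id_{\varSigma_{0,0,1}^{1}}\natural\varsigma_{n,1}\left(f\right)$ by the braiding $\psi:=\left(b_{1,1}^{\mathfrak{M}_{2}}\right)^{-1}\natural id_{\varSigma_{g,0,1}^{n}}$, regarded as a homeomorphism of $\varSigma_{0,0,1}^{1}\natural\varSigma_{0,0,1}^{1}\natural\varSigma_{g,0,1}^{n}$. By Remark~\ref{rem:explicitsigma1}, $\varsigma_{n,1}\left(f\right)=\tau_{\left(\alpha_{f}^{\gamma_{1}}\right)^{-}}\circ\tau_{\left(\alpha_{f}^{\gamma_{1}}\right)^{+}}^{-1}$, and likewise for $\varsigma_{n+1,1}$; using the standard equivariance $\psi\circ\tau_{c}\circ\psi^{-1}=\tau_{\psi\left(c\right)}$ of Dehn twists (see \cite[Chapter~3]{farb2011primer}), the claim reduces to the purely topological statement that $\psi$ carries the based curve $\alpha_{f}^{\gamma_{1}}$ together with its two annular pushoffs $\left(\alpha_{f}^{\gamma_{1}}\right)^{\pm}$ — in the configuration defining $id_{\varSigma_{0,0,1}^{1}}\natural\varsigma_{n,1}\left(f\right)$, i.e.\ based at $x_{1}$ in the \emph{second} copy of $\varSigma_{0,0,1}^{1}$ — onto the analogous data defining $\varsigma_{n+1,1}\!\left(\mathcal{A}\!\left(\left[\varSigma_{0,0,1}^{1},id\right]\right)\left(f\right)\right)$, now based at $x_{1}$ in the \emph{first} copy of $\varSigma_{0,0,1}^{1}$. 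The braiding being half a Dehn twist supported in a pair-of-pants neighbourhood of the attaching regions of the first two summands, it fixes $\varSigma_{g,0,1}^{n}$ pointwise — in particular the ``body'' $\alpha_{f}$ of the curve, which lies in $\varSigma_{g,0,1}^{n}$ — interchanges the two punctured discs, and transports the marked point $x_{1}$ and the connecting path $\gamma_{1}$ from the second disc to the first. The verification then amounts to choosing a representative pair-of-pants disjoint from $\varSigma_{g,0,1}^{n}$ and compatible with $\gamma_{1}$, and reading off that the two pushoff components are transported without being interchanged, so that $\left(\alpha_{f}^{\gamma_{1}}\right)^{-}\mapsto\left(\alpha_{f'}^{\gamma_{1}}\right)^{-}$ and $\left(\alpha_{f}^{\gamma_{1}}\right)^{+}\mapsto\left(\alpha_{f'}^{\gamma_{1}}\right)^{+}$ with $f'=\mathcal{A}\!\left(\left[\varSigma_{0,0,1}^{1},id\right]\right)\left(f\right)$.

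Finally I would dispose of the degenerate range $2g+n<2$, i.e.\ $g=0$ with $n\in\left\{0,1\right\}$, where $\varsigma_{n,1}$ is given directly by Definition~\ref{def:defsigma1}: for $n=0$ the group $H_{0}=\pi_{1}\!\left(\varSigma_{0,0,1}^{0},p\right)$ is trivial so the relevant instance of~(\ref{eq:diagramstability}) is vacuous, while for $n=1$ the groups involved are braid groups and the required instance (an identity in $\mathbf{B}_{3}$ relating $\sigma_{2}^{2}$, the image of $\sigma_{1}^{2}$, and the braiding) is the braid-group case already settled in \cite[Proposition~2.10]{soulieLMBilan}, or can be checked by hand from $\varsigma_{1,1}\left(f_{1}\right)=\sigma_{1}^{2}$. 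The main obstacle is the middle step: making the transport argument for $\psi$ rigorous requires care in selecting the supporting pair-of-pants, checking it is disjoint from the genus subsurface and from the curves $\alpha_{f}$, and — crucially — tracking orientations so that the half-twist does not swap the two boundary pushoffs $\left(\alpha_{f}^{\gamma_{1}}\right)^{\pm}$; once~(\ref{eq:equiva}) is secured for all generators, Proposition~\ref{cond:conditionstability} yields coherence immediately.
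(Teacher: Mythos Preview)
Your proposal is correct and follows essentially the same route as the paper. Both reduce via Proposition~\ref{cond:conditionstability} and Lemma~\ref{lem:sigman1satisfiesfirtcond} to the single relation~(\ref{eq:equiva}), check it on free generators $f$ using the Dehn-twist description of $\varsigma_{n,1}$ from Remark~\ref{rem:explicitsigma1}, invoke the conjugation formula $\psi\tau_{c}\psi^{-1}=\tau_{\psi(c)}$, and then verify a curve-transport statement for the half-twist braiding; the degenerate cases $g=0$, $n\le 1$ are handled separately by appeal to the braid-group situation of \cite{soulieLMBilan}.

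The only cosmetic difference is bookkeeping: the paper conjugates by $\sigma_{1}=b_{1,1}^{\mathfrak{M}_{2}}\natural id$ rather than its inverse $\psi$, and phrases the transport as ``$\sigma_{1}$ sends $\gamma_{1}$ to $\gamma_{2}$, hence $\alpha_{f'}^{\gamma_{1}}$ to $\alpha_{f}^{\gamma_{2}}$'', where $\gamma_{2}$ is the fixed path in the second copy of $\varSigma_{0,0,1}^{1}$; this makes the identification $id_{\varSigma_{0,0,1}^{1}}\natural\varsigma_{n,1}(f)=\tau_{(\alpha_{f}^{\gamma_{2}})^{-}}\tau_{(\alpha_{f}^{\gamma_{2}})^{+}}^{-1}$ explicit and sidesteps the need to choose a supporting pair-of-pants. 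Your concern about the half-twist possibly swapping the two pushoffs $(\alpha_{f}^{\gamma_{1}})^{\pm}$ is legitimate and is exactly what the paper's final sentence (citing \cite[Fact~3.6]{farb2011primer}) is implicitly asserting; it holds because $\sigma_{1}$ is orientation-preserving and carries the oriented based curve to the oriented based curve.
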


\begin{proof}
By Proposition \ref{cond:conditionstability} and Lemma \ref{lem:sigman1satisfiesfirtcond},
it is enough to prove that the morphism $\chi_{n,1}$ satisfies the
equality (\ref{eq:equiva}) in the present context and for any natural
number $n$. Let $f$ be a generator either of $H_{0}$ or of one
of the copies of $H$ in $H_{n}$. We recall that the braiding of
$\mathfrak{M}_{2}$ is defined by doing half a Dehn twist in a pair
of pants neighbourhood of the boundaries of two copies of $\varSigma_{0,1}^{1}$
in $\varSigma_{g,1}^{2+n}$. Hence the morphism $b_{\varSigma_{0,1}^{1},\varSigma_{0,1}^{1}}^{\mathfrak{M}_{2}}\natural id_{n}$
is the element $\sigma_{1}\in\mathbf{B}_{2}\leq\mathbf{\Gamma}_{g,1}^{2+n}$
which exchanges the punctures $p_{1}$ and $p_{2}$ in $\varSigma_{g,1}^{2+n}$;
see Figure \ref{fig:tressageM2}. Note that $\pi_{1}(-,p)([\varSigma_{0,1}^{1},id_{\varSigma_{0,1}^{1+n}}])(f)=id_{\pi_{1}(\varSigma_{0,1}^{1},p)}*f$
in $H_{n+1}$, and we denote this element by $f'$ for simplicity.
Hence, following the equality (\ref{eq:equiva}), it is enough to
prove that as elements of $\mathbf{\Gamma}_{g,1}^{2+n}$:
\begin{eqnarray}
\sigma_{1}\circ\chi_{n+1,1}(f')\circ\sigma_{1}^{-1} & = & id_{\varSigma_{0,1}^{1}}\natural\chi_{n,1}(f).\label{eq:equalitylemma2.54}
\end{eqnarray}
If $g=n=0$, then the equality (\ref{eq:equalitylemma2.54}) is trivially
checked since $f$ and $f'$ are trivial. We thus assume that $2g+n\geq1$.
Let $x_{1}$ and $x_{2}$ be marked points filling in the punctures
$p_{1}$ and $p_{2}$ respectively of the two first copies of $\varSigma_{0,1}^{1}$
in $\varSigma_{g,1}^{2+n}=\varSigma_{0,1}^{1}\natural\varSigma_{0,1}^{1}\natural\varSigma_{g,1}^{n}$,
and we denote by $\varSigma_{0,1}^{[x_{1}]}\natural\varSigma_{0,1}^{[x_{2}]}\natural\varSigma_{g,1}^{n}$
the obtained surface. Let $\gamma_{1}$ and $\gamma_{2}$ respectively
be paths in $\varSigma_{0,1}^{[x_{1}]}$ and $\varSigma_{0,1}^{[x_{2}]}$
connecting the point $p\in I$ to $x_{1}$ and $x_{2}$; see Figure
\ref{fig:Generators-and-paths}. Similarly to Definition \ref{def:defisocano},
for each generator $h$ of $H_{n}$ and its representative curve $\alpha_{h}$,
we denote by $\alpha_{h}^{\gamma_{1}}$ (respectively $\alpha_{h}^{\gamma_{2}}$)
a simple closed curve of $\varSigma_{0,1}^{[x_{1}]}\natural\varSigma_{0,1}^{[x_{2}]}\natural\varSigma_{g,1}^{n}$
based at $x_{1}$ (respectively $x_{2}$) obtained after changing
the basepoint of the curve $\alpha_{h}$ from $p$ to $x_{1}$ (respectively
$x_{2}$) via the path $\gamma_{1}$ (respectively $\gamma_{2}$).
Then it follows from (\ref{eq:push_formula}), Definition \ref{def:defsigma1}
and \cite[Fact 3.7]{farb2011primer} that
\[
\sigma_{1}\circ\chi_{n+1,1}(f')\circ\sigma_{1}^{-1}=(\sigma_{1}\circ\tau_{(\alpha_{f'}^{\gamma_{1}})^{-}}\circ\sigma_{1}^{-1})\circ(\sigma_{1}\circ\tau_{(\alpha_{f'}^{\gamma_{1}})^{+}}^{-1}\circ\sigma_{1}^{-1})=\tau_{\sigma_{1}((\alpha_{f'}^{\gamma_{1}})^{-})}\circ\tau_{\sigma_{1}((\alpha_{f'}^{\gamma_{1}})^{+})}^{-1},
\]
and using the shift in the enumeration given by adding a surface $\varSigma_{0,1}^{1}$
on the left that
\[
id_{\varSigma_{0,1}^{1}}\natural\chi_{n,1}(f)=id_{\varSigma_{0,1}^{1}}\natural(\tau_{(\alpha_{f}^{\gamma_{1}})^{-}}\circ\tau_{(\alpha_{f}^{\gamma_{1}})^{+}}^{-1})=\tau_{(\alpha_{f}^{\gamma_{2}})^{-}}\circ\tau_{(\alpha_{f}^{\gamma_{2}})^{+}}^{-1}.
\]
The braiding $\sigma_{1}$ exchanges the two first punctures $p_{1}$
and $p_{2}$ of $\varSigma_{g,1}^{2+n}$ as in Figure \ref{fig:tressageM2}.
In particular, the image of the curve $\alpha_{f'}^{\gamma_{1}}$
by $\sigma_{1}$ is isotopic to $\alpha_{f}^{\gamma_{2}}$. Therefore
$(\alpha_{f}^{\gamma_{2}})^{-}=\sigma_{1}((\alpha_{f'}^{\gamma_{1}})^{-})$
and $(\alpha_{f}^{\gamma_{2}})^{+}=\sigma_{1}((\alpha_{f'}^{\gamma_{1}})^{+})$
as isotopy classes of simple closed curves. Since $\tau_{\gamma}=\tau_{\theta}$
for $\gamma$ and $\theta$ two isotopy classes of simple closed curves
which are equal (see \cite[Fact 3.6]{farb2011primer} for instance),
we deduce that the equality (\ref{eq:equalitylemma2.54}) is satisfied,
which ends the proof.
\end{proof}

\paragraph{Non-orientable surfaces.}

Instead of considering orientable surfaces, we can fix a natural number
$h\geq2$ and consider the non-orientable surfaces $D^{\natural n}\natural M^{\natural h}$
for all natural numbers $n$, that we denote by $\mathscr{N\varSigma}_{h,1}^{n}$.
We denote the corresponding mapping class group by $\boldsymbol{\mathcal{N}}_{h,1}^{n}$.
Let $\mathfrak{M}_{2}^{-,h}$ be the small full subgroupoid of $\mathfrak{M}_{2}$
on these objects. In this case, $H$ is the group $\pi_{1}(\mathscr{N\varSigma}_{0,1}^{1},p)\cong\mathbb{Z}$,
$H_{0}$ is the free group $\pi_{1}(\mathscr{N\varSigma}_{h,1}^{0},p)\cong\mathbf{F}_{h}$
and $H_{n}=H^{*n}*H_{0}=\pi_{1}(\mathscr{N\varSigma}_{h,1}^{n},p)$
for all natural numbers $n$.

Then the above work adapts mutatis mutandis to this situation. Let
$\boldsymbol{\mathcal{N}}_{h,1}^{[1],n}$ be the subgroup of $\boldsymbol{\mathcal{N}}_{h,1}^{1+n}$
of isotopy classes of homeomorphisms of the surface $\mathscr{N\varSigma}_{h,1}^{1+n}$
restricting to the identity on the boundary component, fixing the
first puncture and fixing the other ones setwise. The non-orientable
version of the Birman short exact sequence (\ref{eq:Birman}) is obtained
by the long exact sequence on homotopy associated with the locally
trivial fibration $\textrm{Homeo}^{I}(\mathscr{N\varSigma}_{h,1}^{n},\mathscr{P})\rightarrow\mathscr{N\varSigma}_{h,1}^{n}$,
defined by evaluating homeomorphisms at $x_{1}$ in $\mathscr{N\varSigma}_{h,1}^{n}$.
The associated fiber inclusion induces the map $\boldsymbol{\mathcal{N}}_{h,1}^{[1],n}\twoheadrightarrow\boldsymbol{\mathcal{N}}_{h,1}^{n}$
which forgets that the point $x_{1}$ is marked. Moreover, identifying
$\mathscr{N\varSigma}_{h,1}^{[x_{1}],n}$ with $\varSigma_{0,1}^{[x_{1}]}\natural\mathscr{N\varSigma}_{h,1}^{n}$,
extending the homeomorphisms of $\mathscr{N\varSigma}_{h,1}^{n}$
to $\mathscr{N\varSigma}_{h,1}^{[x_{1}],n}$ by the identity on $\varSigma_{0,1}^{[x_{1}]}$
defines a section $\boldsymbol{\mathcal{N}}_{h,1}^{n}\rightarrow\boldsymbol{\mathcal{N}}_{h,1}^{[1],n}$.
We thus define $\chi_{n,2}:H_{n}\hookrightarrow H_{n}\rtimes\boldsymbol{\mathcal{N}}_{h,1}^{n}\cong\boldsymbol{\mathcal{N}}_{h,1}^{[1],n}\hookrightarrow\boldsymbol{\mathcal{N}}_{h,1}^{1+n}$
for all $n$. The proofs of Lemma \ref{lem:sigman1satisfiesfirtcond}
and Proposition \ref{lem:sigma1satisfiesCondition} repeat mutatis
mutandis and we deduce that:
\begin{prop}
\label{prop:Analoguefornon-orient}The setting $\{\pi_{1}(\mathscr{N\varSigma}_{h,1}^{-},p),\mathfrak{M}_{2}^{-,h},\mathfrak{M}_{2},\chi_{2}\}$
is a coherent Long-Moody system.
\end{prop}

\subsubsection{Surface braid groups\label{subsec:Surface-braid-groups:}}

Drawing on the work of Section \ref{subsec:Modifyingpunctures}, we
can construct Long-Moody functors for the surface braid groups. We
review some basic results about these groups and refer the reader
to \cite{birmanbraids} or \cite{GuaschiJuan-Pineda} for further
details.

Let $(S,I)$ be an object of $\mathfrak{M}_{2}$ where $S$ is a surface
without punctures and $n$ be a natural number. We denote by $\mathfrak{S}_{n}$
the symmetric group on a set of $n$ elements. For a partition $(\lambda_{1},\lambda_{2})$
of $n$ we denote by $\mathfrak{S}_{\lambda_{1},\lambda_{2}}$ the
product $\mathfrak{S}_{\lambda_{1}}\times\mathfrak{S}_{\lambda_{2}}$;
for the partitions $(n,0)$ or $(0,n)$ we simply denote it by $\mathfrak{S}_{n}$.
Let $F_{n}(S)=\{(x_{1},\ldots,x_{n})\in\mathring{S}^{\times n}\mid x_{i}\neq x_{j}\textrm{ if }i\neq j\}$
be the \textit{ordered} configuration spaces of $n$ distinct points
in the interior of $S$. For the partition $(\lambda_{1},\lambda_{2})$
of $n$ , we denote by $C_{(\lambda_{1},\lambda_{2})}(S)$ the quotient
$F_{n}(S)/\mathfrak{S}_{\lambda_{1},\lambda_{2}}$ and its fundamental
group by $\mathbf{B}_{\lambda_{1},\lambda_{2}}(S)$. For the partitions
$(n,0)$ or $(0,n)$, the set $C_{n}(S)$ corresponds to the classical
\textit{unordered} configuration space of $n$ distinct points in
the interior of $S$. The surface braid group $\mathbf{B}_{n}(S)$
is the fundamental group of $\ensuremath{C_{n}(S)}$.

Let $\mathcal{\mathscr{P}}$ be a set of $n$ points removed from
the interior of $S$ and $(\lambda_{1},\lambda_{2})$ be a partition
of $n$. Identifying $S$ with the boundary connected sum $\mathbb{D}^{2}\natural S$,
the surface $S\setminus\mathcal{\mathscr{P}}$ can be viewed as a
subsurface of $S$ obtained by removing $\mathcal{\mathscr{P}}$ in
the interior of $\mathbb{D}^{2}$. We denote by $\pi_{0}\textrm{Homeo}^{I}(S,(\lambda_{1},\lambda_{2}))$
the isotopy classes of homeomorphisms of $S$ restricting to the identity
on a neighborhood of the parametrized interval $I$ and fixing the
punctures of $\lambda_{1}$ and $\lambda_{2}$ setwise.
\begin{prop}
For the partition $(\lambda_{1},\lambda_{2})$ of a natural number
$n$, we have a \textup{short} exact sequence:
\end{prop}

\begin{equation}
\xymatrix{1\ar@{->}[r] & \mathbf{B}_{\lambda_{1},\lambda_{2}}(S)\ar@{->}[r]^{i_{\lambda}^{\mathfrak{b}}\,\,\,\,\,\,\,\,\,\,\,\,\,\,\,\,\,\,\,} & \pi_{0}\mathrm{\textrm{Homeo\ensuremath{^{I}}}}(S,(\lambda_{1},\lambda_{2}))\ar@{->}[r]^{\,\,\,\,\,\,\,\,\,\,\,\,p_{\lambda}^{\mathfrak{b}}} & \pi_{0}\mathrm{\textrm{Homeo\ensuremath{^{I}}}}(S)\ar@{->}[r] & 1}
\label{eq:Birman_2}
\end{equation}

\begin{proof}
We consider the map $\epsilon:\textrm{Homeo\ensuremath{^{I}}}(S)\rightarrow F_{n}(S)/\mathfrak{S}_{\lambda_{1},\lambda_{2}}$
defined by evaluating homeomorphisms at a base configuration in $S\setminus\mathcal{\mathscr{P}}$.
Let $\epsilon_{F}:\textrm{Homeo\ensuremath{^{I}}}(S)\rightarrow F_{n}(S)$
be the map defined by evaluating homeomorphisms at a base configuration
in $S\setminus\mathcal{\mathscr{P}}$. We recall from \cite[Lemma 1.2]{BirmanMCGBraid}
that $\epsilon_{F}$ is a locally trivial fibration, whose fiber identifies
with the group of homeomorphisms of $S$ restricting to the identity
on a neighborhood of the parametrized interval $I$ and fixing each
puncture pointwise denoted by $\textrm{Homeo\ensuremath{^{I}}}(S,[n])$.
The natural action of $\mathfrak{S}_{\lambda_{1},\lambda_{2}}$ on
$F_{n}(S)$ defined by permutation of coordinates is free. Therefore
the associated canonical projection $p:F_{n}(S)\rightarrow F_{n}(S)/\mathfrak{S}_{\lambda_{1},\lambda_{2}}$
is a regular covering space map with deck transformation group $\mathfrak{S}_{\lambda_{1},\lambda_{2}}$;
see \cite[Proposition 1.40]{hatcher2002algebraic} for instance. In
particular, the map $p$ defines a locally trivial fibration with
discrete fiber $\mathfrak{S}_{\lambda_{1},\lambda_{2}}$. Hence the
evaluation map $\epsilon$ is equal to the composition $p\circ\epsilon_{F}$
and it is therefore a locally trivial fibration. Its fiber identifies
with $\textrm{Homeo\ensuremath{^{I}}}(S,(\lambda_{1},\lambda_{2}))$.
We denote by $\widehat{\omega}_{\lambda}:\textrm{Homeo\ensuremath{^{I}}}(S,(\lambda_{1},\lambda_{2}))\rightarrow\textrm{Homeo\ensuremath{^{I}}}(S)$
the associated inclusion. We obtain the exact sequence (\ref{eq:Birman_2})
from the long exact sequence of homotopy groups of this locally trivial
fibration, using the fact that $\pi_{1}(\textrm{Homeo\ensuremath{^{I}}}(S))=0$
by \cite{Hamstrom}.
\end{proof}
In particular, we consider the surface braid group $\mathbf{B}_{n}(S)$
as a normal subgroup of the mapping class group $\pi_{0}\textrm{Homeo\ensuremath{^{I}}}(S,n)$
via (\ref{eq:Birman_2}) for the partition $(0,n)$. We define $\mathfrak{B}_{2}$
to be the subgroupoid of $\mathfrak{M}_{2}$ with the same objects
and with morphisms for each surface $S$ given by the surface braid
group $\mathbf{B}_{n}(S)$. The monoidal structure $(\mathfrak{M}_{2},\natural,0)$
clearly restricts to a braided monoidal structure on the subgroupoid
$\mathfrak{B}_{2}$, denoted in the same way $(\mathfrak{B}_{2},\natural,0)$.
We thus have a canonical inclusion $\mathfrak{U}\mathfrak{B}_{2}\hookrightarrow\mathfrak{U}\mathfrak{M}_{2}$.
We denote by $\pi_{1}(-,p)^{\mathfrak{b}}$ the restriction of the
fundamental group functor $\pi_{1}(-,p):\mathfrak{U}\mathfrak{M}_{2}\rightarrow\mathfrak{gr}$
introduced in Section \ref{subsec:symplecticfunctor } along this
inclusion. In particular, it follows from Lemma \ref{lem:symplecticstrongmon}
that the functor $\pi_{1}(-,p)^{\mathfrak{b}}:(\mathfrak{B}_{2},\natural,\mathbb{D}^{2})\rightarrow(\mathfrak{gr},*,0_{\mathfrak{Gr}})$
is strict monoidal.

We fix a natural number $g\geq0$ throughout Section \ref{subsec:Surface-braid-groups:}
and consider the surface $\varSigma_{g,1}$. Let $\mathfrak{B}_{2}^{g}$
be the small full subgroupoid of $\mathfrak{B}_{2}$ on the objects
$\underline{n}=1^{\natural n}\natural\underline{0}$ where $\underline{0}:=\varSigma_{g,1}^{0}$
and $1:=\varSigma_{0,1}^{1}$. In the notations of Section \ref{subsec:Definition-of-theLM},
the groupoid $\mathfrak{B}_{2}$ plays the role of $\mathcal{G}'$
and $\mathfrak{B}_{2}^{g}$ corresponds to $\mathcal{G}$.

As in Section \ref{subsec:Modifyingpunctures}, let $H$ be the free
group $\pi_{1}(\varSigma_{0,1}^{1},p)\cong\mathbf{F}_{1}$, $H_{0}$
be the free group $\pi_{1}(\varSigma_{g,1}^{0},p)\cong\mathbf{F}_{2g}$
and thus $H_{n}=H^{*n}*H_{0}=\pi_{1}(\varSigma_{g,1}^{n},p)$ for
all natural numbers $n$. Precomposing by $\mathfrak{U}\mathfrak{B}_{2}^{g}\rightarrow\mathfrak{U}\mathfrak{B}_{2}$,
the restriction of the functor $\pi_{1}(-,p)^{\mathfrak{b}}$ to $\mathfrak{U}\mathfrak{B}_{2}^{g}$
sends $\varSigma_{g,1}^{n}$ to $H_{n}$. We denote by $\mathcal{A}_{\varSigma_{g,1}^{n}}^{\mathfrak{b}}:\mathbf{B}_{n}(\varSigma_{g,1})\rightarrow\textrm{Aut}(\pi_{1}(\varSigma_{g,1}^{n},p))$
the associated natural action for natural number $n$: it corresponds
to the restriction of the natural action $\mathcal{A}_{\varSigma_{g,1}^{n}}:\mathbf{\Gamma}_{g,1}^{n}\rightarrow\textrm{Aut}(\pi_{1}(\varSigma_{g,1}^{n},p))$
to the normal subgroup $\mathbf{B}_{n}(\varSigma_{g,1})\hookrightarrow\mathbf{\Gamma}_{g,1}^{n}$.

For sake of coherence with the notation of Section \ref{subsec:Modifyingpunctures},
we henceforth denote the group $\mathbf{B}_{1,n}(\varSigma_{g,1})$
by $\mathbf{B}_{[1],n}(\varSigma_{g,1})$ since it is the kernel of
the morphism $\mathbf{\Gamma}_{g,1}^{[1],n}\twoheadrightarrow\mathbf{\Gamma}_{g,1}$
by (\ref{eq:Birman_2}). The following lemma is the key to define
a Long-Moody system in the present situation.
\begin{lem}
\label{lem:splitting_surface_braids}For all $n\in\mathbb{N}$, there
is an isomorphism $\mathscr{B}_{n}^{\mathfrak{b}}:H_{n}\rtimes_{\mathcal{A}_{\varSigma_{g,1}^{n}}^{\mathfrak{b}}}\mathbf{B}_{n}(\varSigma_{g,1})\overset{\sim}{\rightarrow}\mathbf{B}_{[1],n}(\varSigma_{g,1})$.
In particular, there is a canonical injection $\textrm{Push}_{p_{1}}^{\mathfrak{b}}:H_{n}\hookrightarrow\mathbf{B}_{[1],n}(\varSigma_{g,1})$
such that $i_{(1,n)}^{\mathfrak{b}}\circ\textrm{Push}_{p_{1}}^{\mathfrak{b}}=\textrm{Push}_{p_{1}}\circ\alpha_{n}^{x_{1}}$.
\end{lem}

\begin{proof}
It follows from the definitions of the morphism $\textrm{Forget}$
of the exact sequence (\ref{eq:Birman}), of its splitting $s_{n}:\mathbf{\Gamma}_{g,1}^{n}\hookrightarrow\mathbf{\Gamma}_{g,1}^{[1],n}$
introduced in Lemma \ref{lem:isosemidirectproduct} and of the morphisms
$p_{(1,n)}^{\mathfrak{b}}$ and $p_{n}^{\mathfrak{b}}$ of (\ref{eq:Birman_2})
that the following diagrams are commutative:

\[
\xymatrix{\mathbf{\Gamma}_{g,1}^{[1],n}\ar@{->>}[dr]_{p_{(1,n)}^{\mathfrak{b}}}\ar@{->>}[rr]^{\textrm{Forget}} &  & \mathbf{\Gamma}_{g,1}^{n}\ar@{->>}[dl]^{p_{n}^{\mathfrak{b}}}\\
 & \mathbf{\Gamma}_{g,1},
}
\textrm{ \,\,\,\, \,\,\,\, }\xymatrix{\mathbf{\Gamma}_{g,1}^{[1],n}\ar@{->>}[dr]_{p_{(1,n)}^{\mathfrak{b}}} &  & \mathbf{\Gamma}_{g,1}^{n}\ar@{->>}[dl]^{p_{n}^{\mathfrak{b}}}\ar@{_{(}->}[ll]_{s_{n}}\\
 & \mathbf{\Gamma}_{g,1}.
}
\]
We deduce from the universal property of the kernel that there exist
morphisms $\textrm{Forget}^{\mathfrak{b}}:\mathbf{B}_{[1],n}(\varSigma_{g,1})\rightarrow\mathbf{B}_{n}(\varSigma_{g,1})$
and $s_{n}^{\mathfrak{b}}:\mathbf{B}_{n}(\varSigma_{g,1})\rightarrow\mathbf{B}_{[1],n}(\varSigma_{g,1})$
such that the following diagrams are commutative:

\[
\xymatrix{\mathbf{B}_{[1],n}(\varSigma_{g,1})\ar@{^{(}->}[d]_{i_{(1,n)}^{\mathfrak{b}}}\ar@{->}[rr]^{\textrm{Forget}^{\mathfrak{b}}} &  & \mathbf{B}_{n}(\varSigma_{g,1})\ar@{^{(}->}[d]^{i_{n}^{\mathfrak{b}}}\\
\mathbf{\Gamma}_{g,1}^{[1],n}\ar@{->>}[rr]^{\textrm{Forget}} &  & \mathbf{\Gamma}_{g,1}^{n},
}
\textrm{ \,\,\,\, \,\,\,\, }\xymatrix{\mathbf{B}_{[1],n}(\varSigma_{g,1})\ar@{^{(}->}[d]_{i_{(1,n)}^{\mathfrak{b}}} &  & \mathbf{B}_{n}(\varSigma_{g,1})\ar@{^{(}->}[d]^{i_{n}^{\mathfrak{b}}}\ar@{->}[ll]_{s_{n}^{\mathfrak{b}}}\\
\mathbf{\Gamma}_{g,1}^{[1],n} &  & \mathbf{\Gamma}_{g,1}^{n}\ar@{_{(}->}[ll]_{s_{n}}.
}
\]
The commutativity of the second square implies that the morphism $s_{n}^{\mathfrak{b}}$
is a monomorphism. Also it is a formal fact that $s_{n}^{\mathfrak{b}}$
defines a section of $\textrm{Forget}^{\mathfrak{b}}$, and the latter
is therefore an epimorphism. Denoting by $K$ the kernel of $\textrm{Forget}^{\mathfrak{b}}$,
we thus obtain that $\mathbf{B}_{[1],n}(\varSigma_{g,1})\cong K\rtimes\mathbf{B}_{n}(\varSigma_{g,1})$.

We deduce from the universal property of $\pi_{1}(\varSigma_{g,1}^{n},p)$
as the kernel of $\textrm{Forget}$ that there exists a unique monomorphism
$K\hookrightarrow\pi_{1}(\varSigma_{g,1}^{n},p)$. Since $p_{(1,n)}^{\mathfrak{b}}\circ(\textrm{Push}_{p_{1}}\circ\alpha_{n}^{x_{1}})=(p_{n}^{\mathfrak{b}}\circ\textrm{Forget})\circ(\textrm{Push}_{p_{1}}\circ\alpha_{n}^{x_{1}})=0$,
the universal property of $\mathbf{B}_{[1],n}(\varSigma_{g,1})$ as
the kernel of $p_{(1,n)}^{\mathfrak{b}}$ provides a unique monomorphism
$\theta:\pi_{1}(\varSigma_{g,1}^{n},p)\hookrightarrow\mathbf{B}_{[1],n}(\varSigma_{g,1})$
such that $\textrm{Push}_{p_{1}}\circ\alpha_{n}^{x_{1}}=i_{(1,n)}^{\mathfrak{b}}\circ\theta$.
Then, the commutativity of the above first square and the universal
property of $K$ provide a unique monomorphism $\pi_{1}(\varSigma_{g,1}^{n},p)\hookrightarrow K$
which is a section of $K\hookrightarrow\pi_{1}(\varSigma_{g,1}^{n},p)$:
hence $K$ is isomorphic to $\pi_{1}(\varSigma_{g,1}^{n},p)$. The
action of $\mathbf{B}_{n}(\varSigma_{g,1})$ on $\pi_{1}(\varSigma_{g,1}^{n},p)$
is formally induced by the restriction $\mathcal{A}_{\varSigma_{g,1}^{n}}^{\mathfrak{b}}$
of $\mathcal{A}_{\varSigma_{g,1}^{n}}$ to the surface braid group,
which ends the proof.
\end{proof}
Furthermore, we consider the short exact sequences of type (\ref{eq:Birman_2})
associated with the morphisms $p_{(1,n)}^{\mathfrak{b}}:\mathbf{\Gamma}_{g,1}^{[1],n}\twoheadrightarrow\mathbf{\Gamma}_{g,1}$
and $p_{n+1}^{\mathfrak{b}}:\mathbf{\Gamma}_{g,1}^{1+n}\twoheadrightarrow\mathbf{\Gamma}_{g,1}$.
We recall that we have a canonical injection $\mathscr{E}_{n}:\mathbf{\Gamma}_{g,1}^{[1],n}\hookrightarrow\mathbf{\Gamma}_{g,1}^{1+n}$
induced by viewing $\mathbf{\Gamma}_{g,1}^{[1],n}$ as a subgroup
of $\mathbf{\Gamma}_{g,1}^{1+n}$ where the first puncture is fixed.
It is clear from the definitions that $p_{n+1}^{\mathfrak{b}}\circ\mathscr{E}_{n}=p_{(1,n)}^{\mathfrak{b}}$.
Therefore the universal property of $\mathbf{B}_{[1],n}(\varSigma_{g,1})$
as the kernel of $p_{(1,n)}^{\mathfrak{b}}$ provides a canonical
embedding $\mathscr{E}_{n}^{\mathfrak{b}}:\mathbf{B}_{[1],n}(\varSigma_{g,1})\hookrightarrow\mathbf{B}_{1+n}(\varSigma_{g,1})$.
Hence we now can define the appropriate morphisms $\{\chi_{n}:\pi_{1}(\varSigma_{g,1}^{n},p)\rightarrow\mathbf{B}_{1+n}(\varSigma_{g,1})\}_{n\in\mathbb{N}}$
to define a Long-Moody functor for surface braid groups:
\begin{defn}
\label{def:defsigmabraid1}Let $n$ be a natural number. We define
the morphism $\chi_{n,1}^{\mathfrak{b}}:H_{n}\rightarrow\mathbf{B}_{1+n}(\varSigma_{g,1})$
to be the composition $\mathscr{E}_{n}^{\mathfrak{b}}\circ\mathscr{B}_{n}^{\mathfrak{b}}\circ\textrm{Push}_{p_{1}}^{\mathfrak{b}}:H_{n}\hookrightarrow\mathbf{B}_{[1],n}(\varSigma_{g,1})\hookrightarrow\mathbf{B}_{1+n}(\varSigma_{g,1})$.

If $g=0$, we note that $\chi_{0,1}^{\mathfrak{b}}:\pi_{1}(\varSigma_{0,1}^{0},p)\rightarrow0_{\mathfrak{Gr}}$
is the trivial morphism and $\chi_{1,1}^{\mathfrak{b}}:\pi_{1}(\varSigma_{0,1}^{1},p)\rightarrow\mathbf{PB}_{2}$
is the morphism sending the generator $f_{1}$ of $\pi_{1}(\varSigma_{0,1}^{1},p)$
to $\sigma_{1}^{2}$ (where $\sigma_{1}$ denotes the Artin generator
of the braid group on two strands $\mathbf{B}_{2}$).
\end{defn}

We now have all the required ingredients to define an appropriate
Long-Moody functor:
\begin{prop}
\label{prop:long-moodysystemsurfacebraidgroups}The setting $\{\pi_{1}(-,p)^{\mathfrak{b}},\mathfrak{B}_{2}^{g},\mathfrak{B}_{2},\chi_{1}^{\mathfrak{b}}\}$
is a coherent Long-Moody system.
\end{prop}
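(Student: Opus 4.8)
The plan is to obtain this statement by restricting, along the inclusion $\mathfrak{U}\mathfrak{B}_{2}^{g}\rightarrow\mathfrak{U}\mathfrak{M}_{2}^{+,g}$, the coherent Long-Moody system $\left\{ \pi_{1}\left(\varSigma_{g,0,1}^{-},p\right),\mathfrak{M}_{2}^{+,g},\mathfrak{M}_{2},\varsigma_{1}\right\} $ produced in Proposition~\ref{lem:sigma1satisfiesCondition}. By construction $\pi_{1}\left(\varSigma_{g,0,1}^{-},p\right)^{\mathfrak{b}}$ is the composite of that functor with the inclusion, $a_{\varSigma_{g,0,1}^{n}}^{\mathfrak{b}}$ is the restriction of $a_{\varSigma_{g,0,1}^{n}}$, and $\varsigma_{n,1}^{\mathfrak{b}}$ is $\varsigma_{n,1}$ followed, through the canonical identifications $\mathscr{B}'_{n}$ and $\mathscr{E}_{n}^{\mathfrak{b}}$, by the inclusion $\mathbf{B}_{1+n}^{g}\hookrightarrow\mathbf{\Gamma}_{g,0,1}^{1+n}$ coming from the short exact sequence $1\rightarrow\mathbf{B}_{1+n}^{g}\rightarrow\mathbf{\Gamma}_{g,0,1}^{1+n}\rightarrow\mathbf{\Gamma}_{g,0,1}^{0}\rightarrow1$. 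In particular this last map is injective, and it is this injectivity that will let me transport the relevant identities from $\mathbf{\Gamma}_{g,0,1}$ back down to the surface braid groups.

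First I would verify that $\left\{ \pi_{1}\left(\varSigma_{g,0,1}^{-},p\right)^{\mathfrak{b}},\mathfrak{B}_{2}^{g},\mathfrak{B}_{2},\varsigma_{1}^{\mathfrak{b}}\right\} $ is a Long-Moody system, i.e. that diagram~(\ref{cond:coherenceconditionsigmanan}) commutes. By Lemma~\ref{lem:equivcond:coherenceconditionsigmanan} this amounts to checking that $\varsigma_{n,1}^{\mathfrak{b}}*\left(1\natural-\right):H_{n}*\mathbf{B}_{n}^{g}\rightarrow\mathbf{B}_{1+n}^{g}$ factors across $H_{n}\rtimes\mathbf{B}_{n}^{g}$. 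I would deduce this exactly as in Lemma~\ref{lem:sigman1satisfiesfirtcond}: the isomorphism $\mathscr{B}'_{n}$ is compatible with $\mathscr{B}_{n}$ via the universal property of the kernel, so that $\mathbf{B}_{n}^{g}\hookrightarrow H_{n}\rtimes\mathbf{B}_{n}^{g}\overset{\mathscr{B}'_{n}}{\rightarrow}\mathbf{B}_{\left[1\right],n}^{g}$ is the restriction of $\overline{id_{\varSigma_{0,0,1}^{1}}\natural-}$, and the required commuting triangle is obtained by restricting the corresponding triangle of Lemma~\ref{lem:sigman1satisfiesfirtcond} along $\mathbf{B}_{1+n}^{g}\hookrightarrow\mathbf{\Gamma}_{g,0,1}^{1+n}$. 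The small cases $g=0$, $n\leq1$ are handled directly as in Definition~\ref{def:defsigmabraid1} and reduce to \cite[Definition 2.14]{soulieLMBilan}.

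It then remains to establish coherence, for which I would invoke Proposition~\ref{cond:conditionstability}: it suffices to check the equality~(\ref{eq:equiva}) for each $n$. The key observation is that the braiding $b_{\varSigma_{0,0,1}^{1},\varSigma_{0,0,1}^{1}}^{\mathfrak{B}_{2}}$ is the restriction of $b_{\varSigma_{0,0,1}^{1},\varSigma_{0,0,1}^{1}}^{\mathfrak{M}_{2}}$, namely the element $\sigma_{1}$ exchanging the two first punctures, and that the $\mathcal{A}$-action $\mathcal{A}\left(\left[1,id\right]\right)$ together with the morphisms $\varsigma_{n,1}^{\mathfrak{b}}$ and $\varsigma_{n+1,1}^{\mathfrak{b}}$ all commute with the injection $\mathbf{B}_{2+n}^{g}\hookrightarrow\mathbf{\Gamma}_{g,0,1}^{2+n}$. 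Hence the instance of~(\ref{eq:equiva}) to be proved is the image, under this injection, of the identity~(\ref{eq:equalitylemma2.54}) already established in the proof of Proposition~\ref{lem:sigma1satisfiesCondition}; injectivity then yields it in $\mathbf{B}_{2+n}^{g}$, and the low cases reduce again to \cite{soulieLMBilan}. The part requiring most care is the bookkeeping that makes the splittings $\mathscr{B}_{n}$, $\mathscr{B}'_{n}$ and the embeddings $\mathscr{E}_{n}$, $\mathscr{E}_{n}^{\mathfrak{b}}$ strictly compatible, so that $\varsigma_{n,1}^{\mathfrak{b}}$ is genuinely $\varsigma_{n,1}$ corestricted to $\mathbf{B}_{1+n}^{g}$; once that is in place the whole proposition follows formally from Proposition~\ref{lem:sigma1satisfiesCondition} by restriction.
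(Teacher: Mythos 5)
Your proposal is correct and follows essentially the same route as the paper: the paper's proof also reduces the proposition to Lemma \ref{lem:sigman1satisfiesfirtcond} and Proposition \ref{lem:sigma1satisfiesCondition} "mutatis mutandis", using precisely the fact that $\mathbf{B}_{n}^{g}$ is a subgroup of $\mathbf{\Gamma}_{g,0,1}^{n}$ so that the semidirect-product factorization and the equality (\ref{eq:equiva}) (via (\ref{eq:equalitylemma2.54})) transport to the surface braid groups through the compatible splittings $\mathscr{B}'_{n}$, $\mathscr{E}_{n}^{\mathfrak{b}}$. Your phrasing in terms of injectivity of $\mathbf{B}_{1+n}^{g}\hookrightarrow\mathbf{\Gamma}_{g,0,1}^{1+n}$ is just a slightly more formal way of saying the same thing, so no substantive difference.
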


\begin{proof}
First, we note from the definitions that the composition of the canonical
injection of $\mathbf{B}_{n}(\varSigma_{g,1})$ into $H_{n}\rtimes_{\mathcal{A}_{\varSigma_{g,1}^{n}}^{\mathfrak{b}}}\mathbf{B}_{n}(\varSigma_{g,1})$
with the composition $\mathscr{E}_{n}^{\mathfrak{b}}\circ\mathscr{B}_{n}^{\mathfrak{b}}:H_{n}\rtimes_{\mathcal{A}_{\varSigma_{g,1}^{n}}^{\mathfrak{b}}}\mathbf{B}_{n}(\varSigma_{g,1})\hookrightarrow\mathbf{B}_{1+n}(\varSigma_{g,1})$
is the canonical morphism $id_{\varSigma_{0,1}^{1}}\natural-:\mathbf{B}_{n}(\varSigma_{g,1})\hookrightarrow\mathbf{B}_{1+n}(\varSigma_{g,1})$
induced by the monoidal structure. Then, following mutatis mutandis
the proof of Lemma \ref{lem:sigman1satisfiesfirtcond}, the setting
$\{\pi_{1}(\varSigma_{g,1}^{-},p)^{\mathfrak{b}},\mathfrak{B}_{2}^{g},\mathfrak{B}_{2},\chi_{1}^{\mathfrak{b}}\}$
is a Long-Moody system.

By Proposition \ref{cond:conditionstability}, to prove that this
system is coherent, it is enough to prove that the morphisms $\chi_{n,1}^{\mathfrak{b}}$
satisfy the equality (\ref{eq:equiva}) for all natural numbers $n$.
It follows from Definitions \ref{def:defsigma1} and \ref{def:defsigmabraid1}
that the composition $i_{n}^{\mathfrak{b}}\circ\chi_{n,1}^{\mathfrak{b}}:H_{n}\hookrightarrow\mathbf{B}_{1+n}(\varSigma_{g,1})\hookrightarrow\mathbf{\Gamma}_{g,1}^{1+n}$
is actually equal to $\chi_{n,1}$: therefore the morphisms $\chi_{n,1}^{\mathfrak{b}}$
satisfy the equality (\ref{eq:equiva}) because the morphisms $\chi_{n,1}$
do; see the proof of Proposition \ref{lem:sigma1satisfiesCondition}.
\end{proof}

\paragraph{Connection with previous work.}

Assuming that $g=0$, we recover the results of an earlier paper of
the author \cite{soulieLMBilan}. Indeed, in this case we consider
the category $\mathfrak{U}\mathfrak{B}_{2}^{0}=\mathfrak{U}\boldsymbol{\beta}$,
which is Quillen's bracket construction on the braid groupoid $\boldsymbol{\beta}$.
The choice $\chi_{n,1}=\chi_{n,1}^{\mathfrak{b}}:\mathbf{F}_{n}\rightarrow\mathbf{B}_{n+1}$
of Definitions \ref{def:defsigma1} and \ref{def:defsigmabraid1}
corresponds to the morphism introduced in \cite[Example 2.7]{soulieLMBilan}.
The actions $\mathcal{A}_{\varSigma_{0,1}^{n}}:\mathbf{B}_{n}\rightarrow\textrm{Aut}_{\mathfrak{Gr}}(\pi_{1}(\varSigma_{0,1}^{n},p))$,
which correspond to Artin's representations for all natural numbers
$n$. We deduce that the Long-Moody functor associated with the coherent
system $\{\pi_{1}(-,p)^{\mathfrak{b}},\mathfrak{B}_{2}^{0},\mathfrak{B}_{2},\chi_{1}^{\mathfrak{b}}\}$
is isomorphic the Long-Moody functor of \cite[Section 2.3.1]{soulieLMBilan}
denoted by $\mathbf{LM}_{1}$. We could also have chosen other actions
$\mathcal{A}_{n}:\mathbf{B}_{n}\rightarrow\textrm{Aut}(\mathbf{F}_{n})$
and morphisms $\chi_{n}:\mathbf{F}_{n}\rightarrow\mathbf{B}_{n+1}$
so that the framework of Section \ref{sec:The-generalized-Long-Moody}
is satisfied. Hence, we recover all the Long-Moody functors introduced
in \cite{soulieLMBilan}.

In addition, the following example shows that the new framework developed
in the present paper recovers even more families of representations
of braid groups that the work of \cite{soulieLMBilan} could not obtain.
\begin{example}
\label{exa:recoverCallegaro} For each natural number $n$, there
is a classical geometric embedding $\mathscr{W}_{n}:\mathbf{B}_{2n+1}\hookrightarrow\mathbf{\Gamma}_{n,1}$
that sends the standard generators of the braid group to Dehn twists
around a fixed system of meridians and longitudes on the surface $\varSigma_{n,1}$;
we refer the reader to \cite[Section 1]{boedigtilman1} for more details
about this embedding. Let $\mathcal{W}$ be the subgroupoid of $\mathfrak{M}_{2}^{+,0}$
defined by the embeddings $\{\mathscr{W}_{n}\}_{n\in\mathbb{N}}$.
We assign $H$ to be the group $\pi_{1}(\varSigma_{n,1},p)$ and $H_{0}$
to be the trivial group. Hence, the restriction of the functor $\pi_{1}(\varSigma_{-,1},p)$
provides $\pi_{1}(\varSigma_{-,1},p)^{\mathfrak{b},2}:\mathfrak{U}\mathcal{W}\rightarrow\mathfrak{U}\mathfrak{M}_{2}^{+,0}\rightarrow\mathfrak{Gr}$.
Using the trivial functor $\chi_{tr}$, we have a coherent Long-Moody
system $\{\pi_{1}(-,p)^{\mathfrak{b},2},\mathcal{W},\mathfrak{M}_{2}^{+,0},\chi_{tr}\}$.
By Proposition \ref{prop:casesigmatrivial} the associated Long-Moody
functor applied to the constant functor $R$ is isomorphic to the
restriction of the functor $H_{1}(\varSigma_{-,1},R)$ to the category
$\mathfrak{U}\mathcal{W}$, denoted by $H_{1}(\varSigma_{-,1},\mathbb{Z})_{\mathfrak{U}\mathcal{W}}$.
The encoded representations are considered by Callegaro and Salvetti
in \cite{callegarosalvetti2017}.

In particular, considering a coherent Long-Moody system $\{\pi_{1}(-,p)^{\mathfrak{b}},\mathfrak{B}_{2},\mathfrak{B}_{2},\chi\}$
fitting in the previous framework of \cite{soulieLMBilan}, the group
$H_{n}$ is the free group on $n$ generators $\mathbf{F}_{n}$. Then
$\mathbf{LM}(M)(n)\cong M^{\oplus n}$ for all objects $M$ of $\mathbf{Fct}(\mathfrak{U}\boldsymbol{\beta},R\textrm{-}\mathfrak{Mod})$
and all natural numbers $n$. Since $H_{1}(\varSigma_{-,1},\mathbb{Z})_{\mathfrak{U}\mathcal{W}}(n)\cong\mathbb{Z}^{\oplus2n}$,
it is impossible to directly recover the functor $H_{1}(\varSigma_{-,1},\mathbb{Z})_{\mathfrak{U}\mathcal{W}}$
applying a Long-Moody functor with this setting.
\end{example}

\paragraph{For non-orientable surfaces.}

As in Section \ref{subsec:Modifyingpunctures}, the above work adapts
verbatim to the case of the braid group on the non-orientable surface
$\mathscr{N\varSigma}_{h,1}^{n}$, with a natural number $h\geq2$.
Let $\mathfrak{B}_{2}^{-,h}$ be the small full subgroupoid of $\mathfrak{B}_{2}$
on the objects $\underline{n}=1^{\natural n}\natural\underline{0}$
where $\underline{0}:=\mathscr{N\varSigma}_{h,1}$ and $1:=D$. The
morphisms $\{\chi_{n,2}\}_{n\in\mathbb{N}}$ induce in the same way
a family of morphisms $\{\chi_{n,2}^{\mathfrak{b}}:\pi_{1}(\mathscr{N\varSigma}_{h,1}^{n},p)\rightarrow\mathbf{B}_{1+n}(\mathscr{N\varSigma}_{h,1})\}_{n\in\mathbb{N}}$.
Then we can analogously prove that:
\begin{prop}
The setting $\{\pi_{1}(\mathscr{N\varSigma}_{h,1}^{n},p)^{\mathfrak{b}},\mathfrak{B}_{2}^{-,h},\mathfrak{B}_{2},\chi_{2}^{\mathfrak{b}}\}$
is a coherent Long-Moody system.
\end{prop}

\subsubsection{Applications\label{subsec:Applications}}

Finally, we briefly state some applications of the Long-Moody functors
defined by Propositions \ref{lem:sigma1satisfiesCondition} and \ref{prop:long-moodysystemsurfacebraidgroups}
to construct new families of representations for mapping class groups
and surface braid groups respectively. We do not detail the proofs
showing that the constructed representations are not certain known
ones: these follow from some standard (but quite lengthy) computations,
which are left to the reader. 

\paragraph{Mapping class groups.}

We fix $g\geq1$ and take the commutative ring $R$ to be $\mathbb{Q}$
for this paragraph. We consider the coherent Long-Moody system of
Proposition \ref{lem:sigma1satisfiesCondition} and denote its associated
functor by $\mathbf{LM}_{\mathfrak{M}_{2}^{+,g}}$ for simplicity.
The surface $\varSigma_{g,1}^{m}$ being a classifying space of $\pi_{1}(\varSigma_{g,1}^{m},p)$,
we denote by $H_{1}(\varSigma_{g,1}^{-},R)$ the composition functor
$H_{1}(-,R)\circ\pi_{1}(\varSigma_{g,1}^{-},p)$. It follows from
Proposition \ref{prop:casesigmatrivial} that $H_{1}(\varSigma_{g,1}^{-},\mathbb{Q})$
is isomorphic to $\mathbf{LM}_{\mathfrak{M}_{2}^{+,g}}(\mathbb{Q})$.
However, some straightforward (but lengthy) matrix computations show
that the functor $\mathbf{LM}_{\mathfrak{M}_{2}^{+,g}}(H_{1}(\varSigma_{g,1}^{-},\mathbb{Q}))$
is not isomorphic to $\mathbf{LM}_{\mathfrak{M}_{2}^{+,g}}(\mathbb{Q})\otimes_{\mathbb{Q}}H_{1}(\varSigma_{g,1}^{-},\mathbb{Q})(1\natural-)$.
The computation of the Long-Moody functor associated with $\{\pi_{1}(\varSigma_{g,1}^{-},p),\mathfrak{M}_{2}^{+,g},\mathfrak{M}_{2},\chi_{1}\}$
on an object $F$ of $\mathbf{Fct}(\mathfrak{U}\mathfrak{M}_{2}^{+,g},R\textrm{-}\mathfrak{Mod})$
is therefore not generally speaking given by Proposition \ref{prop:casesigmatrivial}.
By applying the associated Long-Moody functor, we thus provide new
families of linear representations for the mapping class groups $\{\mathbf{\Gamma}_{g,1}^{n}\}_{n\in\mathbb{N}}$,
which have very few examples of linear representations in the literature.

\paragraph{Surface braid groups.}

We fix $g\geq0$ and consider the coherent Long-Moody system of Proposition
\ref{lem:sigma1satisfiesCondition} with $R=\mathbb{Z}$ as commutative
ring. We denote its associated functor by $\mathbf{LM}_{\mathfrak{B}_{2}^{g}}$
for simplicity.

Instead of applying the Long-Moody functors on the first homology
of the surface as above for the mapping class groups, there is a more
natural and interesting alternative for surface braid groups. For
a group $G$, we denote by $\{\varGamma_{l}(G),l\geq0\}$ its \textit{lower
central series}. When there is no ambiguity, we omit $G$ from the
notations. For $g=0$, we recall that the abelianization $\mathbf{B}_{n}/\Gamma_{2}\mathbf{B}_{n}$
is isomorphic to $\mathbb{Z}$ for $n\geq2$. Let $\mathbb{Z}[\mathbb{Z}]:\boldsymbol{\beta}\rightarrow\textrm{\ensuremath{\mathbb{Z}}-}\mathfrak{Mod}$
be the functor defined by sending a natural number $n$ to the group
ring $\mathbb{Z}[\mathbb{Z}]$ if $n\geq2$, on which the action of
$\mathbf{B}_{n}$ is induced by the left multiplication on the quotient
$\mathbf{B}_{n}/\Gamma_{2}\mathbf{B}_{n}$, or to the trivial group
if $n\leq1$. Then, the iterates of $\mathbf{LM}_{\boldsymbol{\beta}}$
on $\mathbb{Z}[\mathbb{Z}]$ provide new families of representations;
see \cite[Section 2.3]{soulieLMBilan}.

Now we fix $g\geq1$. A direct computation from \cite[Theorem 1.1]{bellingeri2004presentations}
shows that $\mathbf{B}_{n}(\varSigma_{g,1})/\varGamma_{2}(\mathbf{B}_{n}(\varSigma_{g,1}))$
is isomorphic to $\mathbb{Z}/2\mathbb{Z}\times\mathbb{Z}^{2g}$, where
the braid generators are sent to $\mathbb{Z}/2\mathbb{Z}$. A fortiori
the generators $\sigma_{i}^{2}$ act trivially on that quotient: we
thus go one step further in the lower central series to get a module
which has a non-trivial action via $\chi_{n-1,1}^{\mathfrak{b}}:H_{n-1}\rightarrow\mathbf{B}_{n}(\varSigma_{g,1})$.
The third lower central quotient $\mathbf{B}_{n}(\varSigma_{g,1})/\varGamma_{3}(\mathbf{B}_{n}(\varSigma_{g,1}))$
is isomorphic to the semidirect product $(\mathbb{Z}\times\mathbb{Z}^{g})\rtimes\mathbb{Z}^{g}$
for $n\geq3$ by \cite[Corollary 3.14]{bellingeriguaschigodelle}.
Let $\mathbb{Z}[\mathfrak{B}_{2}^{g}/\varGamma_{3}]:\mathfrak{B}_{2}^{g}\rightarrow\mathbb{Z}\textrm{-}\mathfrak{Mod}$
be the functor defined by assigning the group ring $\mathbb{Z}[(\mathbb{Z}\times\mathbb{Z}^{g})\rtimes\mathbb{Z}^{g}]$
to each natural number $n$, on which the action of $\mathbf{B}_{n}(\varSigma_{g,1})$
is induced by the left multiplication on the quotient $\mathbf{B}_{n}(\varSigma_{g,1})/\Gamma_{3}(\mathbf{B}_{n}(\varSigma_{g,1}))$
if $n\geq3$ and the trivial action if $n\leq2$. Hence we deduce
from some direct elementary (but lengthy) computations on the obtained
matrices that the functor $\mathbf{LM}_{\mathfrak{B}_{2}^{g}}^{\circ d+1}(\mathbb{Z}[\mathfrak{B}_{2}^{g}/\varGamma_{3}])$
is not isomorphic to the tensor product functor $\mathbf{LM}_{\mathfrak{B}_{2}^{g}}(\mathbb{Z})\otimes_{\mathbb{Z}}(\mathbf{LM}_{\mathfrak{B}_{2}^{g}}^{\circ d}(\mathbb{Z}[\mathfrak{B}_{2}^{g}/\varGamma_{3}])(1\natural-)$
for $d\in\{0,1\}$. Hence, the iterates of $\mathbf{LM}_{\mathfrak{B}_{2}^{g}}$
are not generally speaking determined by Proposition \ref{prop:casesigmatrivial}
and thus define new representations for surface braid groups. As far
as the author knows, there are very few explicit examples of linear
representations of surface braid groups for $g\geq1$.

\subsection{Modifying the genus\label{subsec:Modifyingthegenus}\label{par:Orientable-surfaces}}

In this section, we construct Long-Moody functors for the mapping
class groups of surfaces when the orientable genus varies. For all
natural numbers $n$, we denote by $\varSigma_{n,1}$ the surface
$T^{\natural n}\natural\mathbb{D}^{2}$. For consistency if $n=0$,
we assign $\varSigma_{0,1}=\mathbb{D}^{2}$. Let $\mathfrak{M}_{2}^{+}$
be the small full subgroupoid of $\mathfrak{M}_{2}$ on the objects
$\{\varSigma_{n,1}\}_{n\in\mathbb{N}}$. The monoidal structure $(\mathfrak{M}_{2},\natural,0)$
clearly restricts to a braided monoidal structure on the subgroupoid
$\mathfrak{M}_{2}^{+}$, denoted in the same way $(\mathfrak{M}_{2}^{+},\natural,0)$.
In particular, in the notations of Section \ref{subsec:Definition-of-theLM},
the groupoid $\mathfrak{M}_{2}^{+}$ plays the role of both $\mathcal{G}'$
and $\mathcal{G}$ where $\underline{0}:=\varSigma_{0,1}$ and $1:=\varSigma_{1,1}$.
We denote the mapping class group $\pi_{0}\textrm{Homeo\ensuremath{^{I}}}(\varSigma_{n,1})$
by $\mathbf{\Gamma}_{n,1}$, for all $n\in\mathbb{N}$.

The key point to define a non-trivial Long-Moody system in the present
situation is the use of \textit{pure surface framed braids}: for completeness,
the definition is recalled below and we refer the reader to \cite{BellingeriGervais}
for further details. We fix a natural number $n\geq0$ and choose
a marked point $x$ in the interior of $\varSigma_{n,1}$. There is
no loss of generality in endowing the surface $\varSigma_{n,1}$ with
an arbitrary smooth structure and a Riemannian metric. We denote by
$U\varSigma_{n,1}$ the total space of the unit tangent bundle and
fix a unit tangent vector $v$ of $\varSigma_{n,1}$ at a point $x$
in the interior of $\varSigma_{n,1}$. Using the long exact sequence
in homotopy for the fibration $\mathbb{S}^{1}\rightarrow U\varSigma_{n,1}\rightarrow\varSigma_{n,1}$
(where $\mathbb{S}^{1}$ is the $1$-dimensional unit sphere) and
the contractibility results of \cite[Théorème 5]{gramain1973type},
we deduce that the fundamental group $\pi_{1}(U\varSigma_{n,1},(x,v))$
is isomorphic to $\mathbb{Z}\times\pi_{1}(\varSigma_{n,1},x)$ as
a group if $n\geq1$. As $\varSigma_{n,1}$ is a path-connected space,
the group $\pi_{1}(U\varSigma_{n,1},(x,v))$ is independent of the
base $(x,v)$ up to an isomorphism (in particular, if $x$ is fixed
then the isomorphism is unique). The \textit{pure framed braid group}
$FP_{1}(\varSigma_{n,1})$ is defined as the fundamental group of
$U\varSigma_{n,1}$ (see \cite[Definition 1]{BellingeriGervais})
and is isomorphic to $\mathbb{Z}\times\pi_{1}(\varSigma_{n,1},x)$
(see \cite[Theorem 5, 1)]{BellingeriGervais}) if $n\geq1$. The natural
action of the diffeomorphisms of $\varSigma_{n,1}$ on the unit tangent
bundle induce a canonical action of $\mathbf{\Gamma}_{n,1}$ on $FP_{1}(\varSigma_{n,1})$,
denoted by $\mathcal{A}_{U\varSigma_{n,1}}$. Then we define a functor
\[
FP_{1}:(\mathfrak{M}_{2}^{+},\natural,\mathbb{D}^{2})\rightarrow\mathfrak{Gr}
\]
assigning the pure framed braid group $FP_{1}(\varSigma_{n,1})$ to
each object $\varSigma_{n,1}$ and the corresponding natural action
of the mapping class group on morphisms. The functor $FP_{1}$ plays
the role of the parameter $\mathcal{A}:\mathcal{G}\rightarrow\mathfrak{Gr}$
for the Long-Moody functor of this section.

The remaining ingredient to be introduced to define an appropriate
Long-Moody system is the set of suitable morphisms $\{\chi_{n}:FP_{1}(\varSigma_{n,1})\rightarrow\mathbf{\Gamma}_{1+n,1}\}_{n\in\mathbb{N}}$.
Let $\varSigma_{n,2}$ be a smooth connected compact surface with
two boundary components: one of the two is marked by the parametrized
interval $I$ and the other one is denoted by $\partial'\varSigma_{n,2}$.
Denoting by $\textrm{Diff}^{+,\partial}(\varSigma_{n,2})$ the group
of orientation preserving diffeomorphisms of $\varSigma_{n,2}$ which
fix the boundary pointwise, we denote by $\mathbf{\Gamma}_{n,2}$
the mapping class group $\pi_{0}\textrm{Diff}^{+,\partial}(\varSigma_{n,2})$.
We recall from \cite[Corollary 3]{BellingeriGervais} that we have
a short exact sequence for each $n\geq1$:
\begin{equation}
\xymatrix{1\ar@{->}[r] & FP_{1}(\varSigma_{n,1})\ar@{->}[r]^{\,\,\,\,\,\,\,\,\overrightarrow{\textrm{Push}}_{n}} & \mathbf{\Gamma}_{n,2}\ar@{->}[r]^{\textrm{Cap}} & \mathbf{\Gamma}_{n,1}\ar@{->}[r] & 1.}
\label{eq:sesframe}
\end{equation}
Namely, the short exact sequence (\ref{eq:sesframe}) is the long
exact sequence in homotopy of a locally trivial fibration $\textrm{Diff}^{\textrm{\ensuremath{\partial_{0}}}}(\varSigma_{n,1})\rightarrow U\varSigma_{n,1}$
which fiber identifies with the group $\textrm{Diff}^{+,\partial}(\varSigma_{n,2})$;
see \cite[Proposition 2]{BellingeriGervais} for further details.
In particular, the morphism $\textrm{Cap}$ is the map induced in
homotopy by the associated fiber inclusion $\varpi_{n}$, corresponding
to the capping of the boundary component $\partial'\varSigma_{n,2}$
with a closed disc. Moreover, we prove that:
\begin{lem}
The sequence (\ref{eq:sesframe}) splits, inducing an isomorphism
$\mathscr{C}_{n}:\mathbf{\Gamma}_{n,2}\cong FP_{1}(\varSigma_{n,1})\rtimes_{\mathcal{A}_{U\varSigma_{n,1}}}\mathbf{\Gamma}_{n,1}$.
\end{lem}

\begin{proof}
Identifying $\varSigma_{n,2}$ with the boundary connected sum $\varSigma_{0,2}\natural\varSigma_{n,1}$
along the marked intervals, the surface $\varSigma_{n,1}$ can be
viewed as a subsurface of $\varSigma_{n,2}$ as the complement of
the cylinder $\varSigma_{0,2}$ with the extra boundary component
$\partial'\varSigma_{n,2}$. Let $\nu_{n}:\textrm{Diff}^{\textrm{\ensuremath{\partial_{0}}}}(\varSigma_{n,1})\to\textrm{Diff}^{+,\partial}(\varSigma_{n,2})$
be the inclusion map defined by sending each $\varphi\in\textrm{Diff}^{+,\partial}(\varSigma_{n,1})$
to $id_{\varSigma_{0,2}}\natural\varphi$, thus inducing a morphism
$\pi_{0}(\nu_{n}):\mathbf{\Gamma}_{n,1}\rightarrow\mathbf{\Gamma}_{n,2}$
in homotopy. The composition $\varpi_{n}\circ\nu_{n}(\varphi)$ is
clearly isotopic to $\varphi$ and a fortiori $\pi_{0}(\nu_{n})$
is a section of $\textrm{Cap}$. Hence the short exact sequence (\ref{eq:sesframe})
splits, thus providing the required isomorphism.
\end{proof}
On another note, we define a morphism $id_{\mathring{\mathbb{T}}}\sharp-:\mathbf{\Gamma}_{n,2}\rightarrow\mathbf{\Gamma}_{1+n,1}$
by gluing a torus with a boundary component $\mathring{\mathbb{T}}$
along the boundary component $\partial'\varSigma_{n,2}$, and by extending
the diffeomorphisms of $\varSigma_{n,2}$ by the identity on $\mathring{\mathbb{T}}$.
We now are able to define the appropriate morphisms $\{\chi_{n}:FP_{1}(\varSigma_{n,1})\rightarrow\mathbf{\Gamma}_{1+n,1}\}_{n\in\mathbb{N}}$
in the present context:
\begin{defn}
We assign $\chi_{0,fr}:FP_{1}(\varSigma_{0,1})\rightarrow\mathbf{\Gamma}_{1,1}$
to be the trivial morphism. For $n\geq1$ a natural number, we define
the morphism $\chi_{n,fr}:FP_{1}(\varSigma_{n,1})\rightarrow\mathbf{\Gamma}_{1+n,1}$
to be the composite:
\[
\xymatrix{FP_{1}(\varSigma_{n,1})\ar@{^{(}->}[r] & FP_{1}(\varSigma_{n,1})\rtimes_{\mathcal{A}_{U\varSigma_{n,1}}}\mathbf{\Gamma}_{n,1}\overset{\mathscr{C}_{n}}{\cong}\mathbf{\Gamma}_{n,2}\ar@{->}[r]^{\,\,\,\,\,\,\,\,\,\,\,\,\,\,\,\,\,\,\,\,\,\,\,\,\,\,\,\,\,\,\,\,\,\,\,\,id_{\mathring{\mathbb{T}}}\sharp-} & \mathbf{\Gamma}_{1+n,1}.}
\]
\end{defn}

We now have all the required ingredients to define a Long-Moody functor:
\begin{prop}
\label{prop:LM_mcg_genus}The setting $\{FP_{1},\mathfrak{M}_{2}^{+},\mathfrak{M}_{2},\chi_{fr}\}$
is a Long-Moody system.
\end{prop}

\begin{proof}
If $n\geq1$, note that the injection $\pi_{0}(\nu_{n}):\mathbf{\Gamma}_{n,1}\hookrightarrow FP_{1}(\varSigma_{n,1})\rtimes\mathbf{\Gamma}_{n,1}\cong\mathbf{\Gamma}_{n,2}$
is equal to $id_{\varSigma_{0,2}}\natural-$. Let $id_{\varSigma_{1,1}}\natural-:\mathbf{\Gamma}_{n,1}\rightarrow\mathbf{\Gamma}_{1+n,1}$
be the morphism induced by extending the diffeomorphisms of $\varSigma_{n,1}$
by the identity on $\varSigma_{1+n,1}$. It is straightforward from
the above definitions that the composition $(id_{\mathring{\mathbb{T}}}\sharp-)\circ(id_{\varSigma_{0,2}}\natural-)$
is equal to $id_{\varSigma_{1,1}}\natural-$. Hence, the following
diagram is commutative:
\[
\xymatrix{FP_{1}(\varSigma_{n,1})\ar@{^{(}->}[r]\ar@{->}[dr]_{\chi_{n,fr}} & FP_{1}(\varSigma_{n,1})\rtimes_{\mathcal{A}_{U\varSigma_{n,1}}}\mathbf{\Gamma}_{n,1}\cong\mathbf{\Gamma}_{n,2}\ar@{->}[d]^{(id_{\mathring{\mathbb{T}}}\sharp-)} & \mathbf{\Gamma}_{n,1}\ar@{_{(}->}[l]\ar@{->}[dl]^{id_{\varSigma_{1,1}}\natural-}\\
 & \mathbf{\Gamma}_{1+n,1}.
}
\]
For $n=0$, the commutativity of the analogous diagram is trivially
checked. Hence it follows Lemma \ref{lem:equivcond:coherenceconditionsigmanan}
that the diagram (\ref{cond:coherenceconditionsigmanan}) of Definition
\ref{def:longmoodysystem} is commutative, which ends the proof.
\end{proof}
For simplicity, we denote by $\mathbf{LM}_{\mathfrak{M}_{2}^{+}}$
the functor associated to the Long-Moody system of Proposition \ref{prop:LM_mcg_genus}
and take the commutative ring $R$ to be $\mathbb{Z}$. We recall
that the framed pure braid group $FP_{1}(\varSigma_{n,1})$ corresponds
to the fundamental group of the unit tangent bundle $U\varSigma_{n,1}$.
Recalling that the framed pure braid groups define a functor $FP_{1}:(\mathfrak{M}_{2}^{+},\natural,\mathbb{D}^{2})\rightarrow\mathfrak{Gr}$,
we denote by $H_{1}(U\varSigma_{-,1};\mathbb{Z})$ the composition
functor $H_{1}(-,\mathbb{Z})\circ FP_{1}$. It follows from Proposition
\ref{prop:casesigmatrivial} that $\mathbf{LM}_{\mathfrak{M}_{2}^{+}}(\mathbb{Z})(\underline{n})$
is isomorphic to $H_{1}(U\Sigma_{n,1};\mathbb{Z})$ as an $\mathbb{Z}$-module
for each $n\in\mathbb{N}$. The $\Gamma_{n,1}$-module structure of
$H_{1}(U\Sigma_{n,1};\mathbb{Z})$ has been described in details by
Trapp in \cite{Trapp}. The following result shows that the evaluation
of the functor $\mathbf{LM}_{\mathfrak{M}_{2}^{+}}$ is not determined
by Proposition \ref{prop:casesigmatrivial} generally speaking. Identifying
$\varSigma_{1,1}\natural\varSigma_{n,1}$ with $\varSigma_{n+1,1}$,
we fix a point $x$ in the interior of $\varSigma_{1,1}$ in $\varSigma_{n+1,1}$.
\begin{prop}
\label{prop:iteration_Usigma}Let $M$ be an object of $\mathbf{Fct}(\mathfrak{M}_{2}^{+},\mathbb{Z}\textrm{-}\mathfrak{Mod})$
such that $M(1\natural\underline{n})$ is a free abelian group for
some $n\geq1$. Using the group isomorphism $FP_{1}(\varSigma_{n,1})\cong\mathbb{Z}\times\pi_{1}(\varSigma_{n,1},x)$,
we consider $\pi_{1}(\varSigma_{n,1},x)$ as a subgroup of $FP_{1}(\varSigma_{n,1})$.
We assume that the $\pi_{1}(\varSigma_{n,1},x)$-action induced by
$\overrightarrow{\textrm{Push}}_{n}:FP_{1}(\varSigma_{n,1})\hookrightarrow\mathbf{\Gamma}_{n+1,1}$
on $M(1\natural\underline{n})$ is not trivial. Then the functor $\mathbf{LM}_{\mathfrak{M}_{2}^{+,g}}(M)$
is not isomorphic to $\mathbf{LM}_{\mathfrak{M}_{2}^{+}}(\mathbb{Z})\otimes_{\mathbb{Z}}M(1\natural-)$.
\end{prop}

\begin{proof}
Since $H_{1}(U\Sigma_{n,1};\mathbb{Z})$ is a free abelian group of
rank $2n+1$, the module $\mathbf{LM}_{\mathfrak{M}_{2}^{+}}(\mathbb{Z})(\underline{n})\otimes_{\mathbb{Z}}M(1\natural\underline{n})$
is isomorphic to $M(1\natural\underline{n})^{\oplus(2n+1)}$. Also,
we have a surjection $\mathbb{Z}[FP_{1}(\varSigma_{n,1})]\twoheadrightarrow\mathbb{Z}[\pi_{1}(\varSigma_{n,1},x)]$
(induced from $\pi_{1}(U\varSigma_{n,1},(x,v))\twoheadrightarrow\pi_{1}(\varSigma_{n,1},x)$)
which defines a right $\mathbb{Z}[FP_{1}(\varSigma_{n,1})]$-module
surjection 
\[
p_{FP_{1}(\varSigma_{n,1})}:\mathcal{I}_{\mathbb{Z}[FP_{1}(\varSigma_{n,1})]}\twoheadrightarrow\mathcal{I}_{\mathbb{Z}[\pi_{1}(\varSigma_{n,1},x)]}\otimes_{\mathbb{Z}[\pi_{1}(\varSigma_{n,1},x)]}\mathbb{Z}[FP_{1}(\varSigma_{n,1})].
\]
We recall from Lemma \ref{lem:Swan} that $\mathcal{I}_{\mathbb{Z}[\pi_{1}(\varSigma_{n,1},x)]}$
is a free $\mathbb{Z}[\pi_{1}(\varSigma_{n,1},x)]$-module of rank
$2n$. Therefore, the tensor product morphism $p_{FP_{1}(\varSigma_{n,1})}\otimes_{\mathbb{Z}[FP_{1}(\varSigma_{n,1})]}id_{M(1\natural\underline{n})}$
provides a surjection of abelian groups from $\mathbf{LM}_{\mathfrak{M}_{2}^{+,g}}(M)(\underline{n})$
to $M(1\natural\underline{n})^{\oplus2n}$. We denote by $K$ the
kernel of this surjection. Also, since the target of this surjection
is a projective abelian group, this surjection has a section which
associated quotient is isomorphic to $K$.

Now suppose that $\mathbf{LM}_{\mathfrak{M}_{2}^{+,g}}(M)(\underline{n})$
is isomorphic to $H_{1}(U\varSigma_{n,1},\mathbb{Z})\otimes_{\mathbb{Z}}M(1\natural\underline{n})$
as an abelian group. Then the kernel $K$ is isomorphic to $M(1\natural\underline{n})$.
Using the decomposition $\mathbb{Z}\times\pi_{1}(\varSigma_{n,1},x)$
of $FP_{1}(\varSigma_{n,1})$, an element of $\mathbf{LM}_{\mathfrak{M}_{2}^{+,g}}(H_{1}(U\varSigma_{-,1},\mathbb{Z}))(\underline{n})$
is of the form $(\sigma\alpha-1)\otimes v$ where $\sigma\in\mathbb{Z}$,
$\alpha\in\pi_{1}(\varSigma_{n,1},x)$ and $v\in M(1\natural\underline{n})$.
Since $\sigma\alpha=\alpha\sigma$ in $FP_{1}(\varSigma_{n,1})$,
we compute that $\sigma\alpha-1=(\alpha-1)+(\sigma-1)\alpha=(\alpha-1)\sigma+(\sigma-1).$
Hence we have $(\sigma-1)\alpha\otimes v=(\sigma-1)\otimes v$ in
$K$ (seen as a quotient of $\mathbf{LM}_{\mathfrak{M}_{2}^{+,g}}(M)(\underline{n})$).
However, since the abelian group $M(1\natural\underline{n})$ is a
non-trivial $\pi_{1}(\varSigma_{n,1},x)$-module, we deduce that there
cannot exist an injective abelian group morphism from $M(1\natural\underline{n})$
to $K$: this contradicts the fact that $K$ is isomorphic to $M(1\natural\underline{n})$
and ends the proof.
\end{proof}
\begin{example}
The kernel of the $\Gamma_{n,1}$-module $H_{1}(U\Sigma_{n,1};\mathbb{Z})$
is called the \textit{Chillingworth subgroup} and denoted by $\mathfrak{C}_{n,1}$;
see \cite[Definition 2.6]{Trapp}. We recall that equipping the surface
$\Sigma_{n,1}$ with a continuous non-vanishing vector field $X$,
the winding number $\omega_{X}(c)$ with respect to $X$ of an oriented
regular curve $c$ is the number of times its tangent rotates with
respect to the framing induced by $X$; see \cite{Chillingworth,Chillingworth2}.
Then $\mathfrak{C}_{n,1}$ is the group of all elements $f$ in the
\textit{Torelli subgroup} such that $\omega_{X}(f(c))=\omega_{X}(c)$
for all oriented regular curve $c$.

We identify $\varSigma_{1+n,1}$ with $\varSigma_{1,1}\natural\varSigma_{n,1}$
and we consider the standard basis $\{\alpha_{i},\beta_{i}\mid i\in\{1,\ldots,n\}\}$
of generators of $\pi_{1}(\varSigma_{n,1},p)$ associated with the
system of meridians and longitudes $\{a_{1},b_{1},\ldots,a_{n},b_{n}\}$
fixed in Section \ref{subsec:Modifyingpunctures}; see Figure \ref{fig:Generators-and-paths}.
The action of $\overrightarrow{\textrm{Push}}_{n}(\beta_{1})$ on
$a_{1}$ is analogous to the one pictured in Figure \ref{fig:push_map_first}:
the difference is that the puncture $p_{1}$ is replaced by a glued
handle $\varSigma_{1,1}$ and that we act on the meridian curve. In
particular, the winding number of $\overrightarrow{\textrm{Push}}_{n}(\beta_{1})(a_{1})$
is clearly different from the one of $a_{1}$, and so $\overrightarrow{\textrm{Push}}_{n}(\beta_{1})$
does not belong to the Chillingworth subgroup $\mathfrak{C}_{n+1,1}$.
Hence $H_{1}(U\Sigma_{n+1,1};\mathbb{Z})$ is not a trivial $\pi_{1}(\varSigma_{n,1},x)$-module
and thus $H_{1}(U\varSigma_{-,1};\mathbb{Z})$ satisfies the assumptions
of Proposition \ref{prop:iteration_Usigma}. The functor $\mathbf{LM}_{\mathfrak{M}_{2}^{+,g}}(H_{1}(U\varSigma_{-,1};\mathbb{Z}))$
thus provides a family of representations of the mapping class groups
which (as far as the author knows) does not appear in the literature. 
\end{example}

\section{Strong and weak polynomial functors\label{sec:Strong-and-weak}}

This section introduces the notions of (very) strong and weak polynomial
functors with respect to the framework of this paper. Namely, the
first subsection presents strong and very strong polynomial functors
and their basic properties. In the second subsection, we introduce
weak polynomial functors for some subcategories of pre-braided monoidal
categories with an initial object, generalizing the previous notion
of \cite[Section 1]{DV3}. We also detail some first properties of
these functors and present their use to organize families of representations.

\subsection{Strong and very strong polynomial functors\label{subsec:Prerequisite-on-strong}}

\textbf{For the remainder of Section \ref{subsec:Prerequisite-on-strong},
$(\mathfrak{M}',\natural,0)$ is a pre-braided strict monoidal category
where the unit $0$ is an initial object. We consider $\mathfrak{M}$
a small full subcategory of $(\mathfrak{M}',\natural,0)$. Finally,
we fix $\mathscr{A}$ an abelian category.}

In this section, we introduce the notions of strong and very strong
polynomiality for objects in the functor category $\mathbf{Fct}(\mathfrak{M},\mathscr{A})$.
In \cite[Section 3]{soulieLMBilan}, a framework is given for defining
these notions in the category $\mathbf{Fct}(M,\mathscr{A})$, where
$M$ is a small pre-braided monoidal category where the unit is an
initial object. It generalizes the previous work of Djament and Vespa
in \cite[Section 1]{DV3}. We also refer to \cite{palmer2017comparison}
for a comparison of the various instances of the notions of twisted
coefficient system and polynomial functor. This section thus extends
the definitions and properties of \cite[Section 3]{soulieLMBilan}
to the present larger framework, the various proofs being direct generalizations
of this previous work.
\begin{notation}
\label{nota:goodsetobj}We denote by $\textrm{Obj}(\mathfrak{M})_{\natural}$
the set of objects $m$ of $\mathfrak{M}'$ such that $m\natural n$
is an object of $\mathfrak{M}$ for all objects $n$ of $\mathfrak{M}$.
\end{notation}

Let $m$ be an element of $\textrm{Obj}(\mathfrak{M})_{\natural}$.
We denote by $\tau_{m}:\mathbf{Fct}(\mathfrak{M},\mathscr{A})\rightarrow\mathbf{Fct}(\mathfrak{M},\mathscr{A})$
the \textit{translation} functor defined by $\tau_{m}(F)=F(m\natural-)$,
$i_{m}:Id\rightarrow\tau_{m}$ the natural transformation of $\mathbf{Fct}(\mathfrak{M},\mathscr{A})$
induced by the unique morphism $\iota_{m}:0\rightarrow m$. We define
$\delta_{m}=\textrm{coker}(i_{m})$ the \textit{difference} functor
and $\kappa_{m}=\textrm{ker}(i_{m})$ the \textit{evanescence} functor.
The following basic properties are direct generalizations of \cite[Propositions 3.2 and 3.5]{soulieLMBilan}:
\begin{prop}
\label{prop:lemmecaract} Let $m,m'\in\textrm{Obj}(\mathfrak{M})_{\natural}$.
Then the translation functor $\tau_{m}$ is exact and we have the
following exact sequence of endofunctors of $\mathbf{Fct}(\mathfrak{M},\mathscr{A})$:
\begin{equation}
0\longrightarrow\kappa_{m}\overset{\Omega_{m}}{\longrightarrow}Id\overset{i_{m}}{\longrightarrow}\tau_{m}\overset{\varDelta_{m}}{\longrightarrow}\delta_{m}\longrightarrow0.\label{eq:ESCaract-1}
\end{equation}
Moreover, for a short exact sequence $0\longrightarrow F\longrightarrow G\longrightarrow H\longrightarrow0$
in the category $\mathbf{Fct}(\mathfrak{M},\mathscr{A})$, there is
a natural exact sequence in the category $\mathbf{Fct}(\mathfrak{M},\mathscr{A})$:
\begin{equation}
0\longrightarrow\kappa_{m}(F)\longrightarrow\kappa_{m}(G)\longrightarrow\kappa_{m}(H)\longrightarrow\delta_{m}(F)\longrightarrow\delta_{m}(G)\longrightarrow\delta_{m}(H)\longrightarrow0.\label{eq:LESkappadelta}
\end{equation}
In addition, the functors $\tau_{m}$ and $\tau_{m'}$ commute up
to natural isomorphism and they commute with limits and colimits;
the difference functors $\delta_{m}$ and $\delta_{m'}$ commute up
to natural isomorphism and they commute with colimits; the functors
$\kappa_{m}$ and $\kappa_{m'}$ commute up to natural isomorphism
and they commute with limits; the functor $\tau_{m}$ commute with
the functors $\delta_{m'}$ and $\kappa_{m'}$ up to natural isomorphism.
\end{prop}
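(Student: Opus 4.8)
The plan is to transport the proofs of \cite[Propositions 3.2 and 3.5]{soulieLMBilan} to the present setting; the only genuinely new point is to verify that the constructions stay inside $\mathbf{Fct}(\mathfrak{M},\mathscr{A})$ now that $\mathfrak{M}$ is merely a full subcategory of $\mathfrak{M}'$ and the translating object $m$ lies in $Obj(\mathfrak{M}')_{\natural}$. Since $\mathscr{A}$ is abelian, the category $\mathbf{Fct}(\mathfrak{M},\mathscr{A})$ is abelian with kernels, cokernels, images and exact sequences all computed objectwise, and those limits and colimits that exist are likewise computed objectwise. The functor $\tau_m$ is precomposition with the endofunctor $m\natural-:\mathfrak{M}\rightarrow\mathfrak{M}$ (well defined on objects precisely because $m\in Obj(\mathfrak{M}')_{\natural}$, and on a morphism $g$ by $id_m\natural g$); precomposition preserves objectwise exactness and objectwise (co)limits, so $\tau_m$ is exact and commutes with all limits and colimits. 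One also notes that $Obj(\mathfrak{M}')_{\natural}$ is closed under $\natural$, so that iterated translations such as $\tau_m\tau_{m'}=\tau_{m'\natural m}$ are again translation functors and cause no trouble.

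The exact sequence (\ref{eq:ESCaract-1}) is nothing but the canonical factorisation of the morphism $i_m:Id\rightarrow\tau_m$ through its image in the abelian category $\mathbf{Fct}(\mathfrak{M},\mathscr{A})$, with $\kappa_m=\textrm{ker}(i_m)$, $\delta_m=\textrm{coker}(i_m)$ by definition and $\Omega_m$, $\varDelta_m$ the structural maps. The six-term sequence (\ref{eq:LESkappadelta}) follows by applying the snake lemma to the morphism of short exact sequences given by the natural transformation $i_m$ from $0\rightarrow F\rightarrow G\rightarrow H\rightarrow0$ to its translate $0\rightarrow\tau_m(F)\rightarrow\tau_m(G)\rightarrow\tau_m(H)\rightarrow0$, the bottom row being exact because $\tau_m$ is exact; the connecting homomorphism supplies the natural map $\kappa_m(H)\rightarrow\delta_m(F)$, and its naturality is inherited from that of the snake lemma.

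For the commutation statements the additional ingredient is the (pre-)braiding. Given $m,m'\in Obj(\mathfrak{M}')_{\natural}$, the isomorphism $b_{m',m}^{\mathscr{G}\mathfrak{r}(\mathfrak{M}')}:m'\natural m\overset{\sim}{\rightarrow}m\natural m'$ induces, by $\natural$-whiskering, a natural isomorphism between the endofunctors $m'\natural m\natural-$ and $m\natural m'\natural-$ of $\mathfrak{M}$, hence a natural isomorphism $\tau_m\tau_{m'}\cong\tau_{m'}\tau_m$. In the case $m=m'$, the pre-braided axiom $b_{m,m}^{\mathscr{G}\mathfrak{r}(\mathfrak{M}')}\circ(id_m\natural\iota_m)=\iota_m\natural id_m$ shows that this isomorphism intertwines the two natural transformations $\tau_m\rightarrow\tau_m\tau_m$ obtained from $i_m$ by inserting the new factor $m$ in the inner, respectively outer, position; passing $\tau_m$ through the cokernel and the kernel (which is legitimate since $\tau_m$ is exact) then yields $\tau_m\delta_m\cong\delta_m\tau_m$ and $\tau_m\kappa_m\cong\kappa_m\tau_m$. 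As $\delta_m=\textrm{coker}(i_m)$ and $Id$, $\tau_m$ and cokernels all commute with colimits, $\delta_m$ commutes with colimits; dually, $\kappa_m$ commutes with limits. Finally, exactness of $\tau_m$ lets one identify $\delta_m\delta_{m'}(F)$ with the cokernel of a canonical map $\tau_m(F)\oplus\tau_{m'}(F)\rightarrow\tau_m\tau_{m'}(F)$ which, once one uses the braiding isomorphism $\tau_m\tau_{m'}\cong\tau_{m'}\tau_m$ together with the pre-braided compatibilities, is symmetric in $m$ and $m'$; this gives $\delta_m\delta_{m'}\cong\delta_{m'}\delta_m$, and the dual argument with pullbacks gives $\kappa_m\kappa_{m'}\cong\kappa_{m'}\kappa_m$.

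The hard part is not conceptual but organisational: one must keep careful track of which object of $\mathfrak{M}'$ each iterated translation and each braiding morphism actually involves, so as to be certain that every functor constructed genuinely has source $\mathfrak{M}$ and that the braiding isomorphisms one whiskers with are available. Once it is checked that $Obj(\mathfrak{M}')_{\natural}$ is closed under $\natural$ and that the braidings $b_{m,-}^{\mathscr{G}\mathfrak{r}(\mathfrak{M}')}$ restrict appropriately along $\mathfrak{M}\hookrightarrow\mathfrak{M}'$, the arguments of \cite[Section 3]{soulieLMBilan} carry over verbatim.
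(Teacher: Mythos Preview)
Your proposal is correct and follows exactly the approach the paper indicates: the paper does not give a proof but simply states that the result is a direct generalization of \cite[Propositions~3.2 and~3.5]{soulieLMBilan}, and your sketch unpacks precisely that argument, with the only new bookkeeping being that $m\in Obj(\mathfrak{M}')_{\natural}$ ensures $m\natural-$ is an endofunctor of $\mathfrak{M}$ and that $Obj(\mathfrak{M}')_{\natural}$ is closed under $\natural$.
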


We can define the notions of strong and very strong polynomial functors
using Proposition \ref{prop:lemmecaract}. Namely:
\begin{defn}
\label{def:strongandverystrong}We recursively define on $d\in\mathbb{N}$
the categories $\mathcal{P}ol_{d}^{strong}(\mathfrak{M},\mathscr{A})$
and $\mathcal{VP}ol_{d}(\mathfrak{M},\mathscr{A})$ of \textit{strong
}and\textit{ very strong polynomial} functors of degree less than
or equal to $d$ to be the full subcategories of $\mathbf{Fct}(\mathfrak{M},\mathscr{A})$
as follows:

\begin{enumerate}
\item If $d<0$, $\mathcal{P}ol_{d}^{strong}(\mathfrak{M},\mathscr{A})=\mathcal{VP}ol_{d}(\mathfrak{M},\mathscr{A})=\{0\}$;
\item if $d\geq0$, the objects of $\mathcal{P}ol_{d}^{strong}(\mathfrak{M},\mathscr{A})$
are the functors $F$ such that for all $m\in\textrm{Obj}(\mathfrak{M})_{\natural}$,
the functor $\delta_{m}(F)$ is an object of $\mathcal{P}ol_{d-1}^{strong}(\mathfrak{M},\mathscr{A})$;
the objects of $\mathcal{VP}ol_{d}(\mathfrak{M},\mathscr{A})$ are
the objects $F$ of $\mathcal{P}ol_{d}(\mathfrak{M},\mathscr{A})$
such that $\kappa_{m}(F)=0$ and the functor $\delta_{m}(F)$ is an
object of $\mathcal{\mathcal{VP}}ol_{d-1}(\mathfrak{M},\mathscr{A})$
for all $m\in\textrm{Obj}(\mathfrak{M})_{\natural}$.
\end{enumerate}
For an object $F$ of $\mathbf{Fct}(\mathfrak{M},\mathscr{A})$ which
is strong polynomial of degree less than or equal to $n\in\mathbb{N}$,
the \textit{smallest} natural number $d\leq n$ for which $F$ is
an object of $\mathcal{P}ol_{d}^{strong}(\mathfrak{M},\mathscr{A})$
is called the \textit{strong degree} of $F$. If in addition $F$
is very strong polynomial, its strong degree is also the smallest
natural number $d\leq n$ for which $F$ is an object of $\mathcal{VP}ol_{d}(\mathfrak{M},\mathscr{A})$
and is then also called the \textit{very strong degree} of $F$.
\end{defn}

Finally, we recall useful properties of the categories associated
with strong and very strong polynomial functors. Beforehand, for a
strict monoidal category $(\mathfrak{C},\natural,0)$, we say that
a full subcategory $\mathfrak{D}$ of $\mathfrak{C}$ is \textit{finitely
generated by the monoidal structure} if there exists a finite set
$E$ of objects of the category $\mathfrak{C}$ such that all objects
$d$ of $\mathfrak{D}$ is isomorphic to a finite monoidal product
of objects of $E$. The following properties are direct generalizations
of \cite[Propositions 3.9 and 3.19]{soulieLMBilan}, the proofs carry
over mutatis mutandis to the present framework.
\begin{prop}
\label{prop:proppoln} We assume that the category $\mathfrak{M}$
is finitely generated by the monoidal structure in $(\mathfrak{M}',\natural,0)$.
We denote by $E$ a finite generating set of $\mathfrak{M}$. Let
$d$ be a natural number. Then:

\begin{enumerate}
\item The category $\mathcal{P}ol_{d}^{strong}(\mathfrak{M},\mathscr{A})$
is closed under the translation functor, under quotient, under extension
and under colimits. The category $\mathcal{VP}ol_{d}(\mathfrak{M},\mathscr{A})$
is closed under the translation functor, under normal subobjects and
under extensions.
\item An object $F$ of $\mathbf{Fct}(\mathfrak{M},\mathscr{A})$ belongs
to $\mathcal{P}ol_{d}^{strong}(\mathfrak{M},\mathscr{A})$ or to $\mathcal{VP}ol_{d}(\mathfrak{M},\mathscr{A})$
if and only if the conditions on the evanescence and difference functors
of Definition \ref{def:strongandverystrong} are satisfied (only)
by the objects of $E\cap\textrm{Obj}(\mathfrak{M})_{\natural}$.
\item An object $F$ of $\mathbf{Fct}(\mathfrak{M},\mathscr{A})$ belongs
to $\mathcal{P}ol_{0}^{strong}(\mathfrak{M},\mathscr{A})$ if and
only if it is the quotient of a constant object of $\mathbf{Fct}(\mathfrak{M},\mathscr{A})$.
\end{enumerate}
\end{prop}

\subsection{Weak polynomial functors\label{subsec:Weak-polynomiality}}

We deal here with the concept of \textit{weak} polynomial functor.
It is introduced by Djament and Vespa in \cite[Section 2]{DV3} in
the category $\mathbf{Fct}(S,A)$ where $S$ is a symmetric monoidal
category where the unit is an initial object, and $A$ is a Grothendieck
category. Weak polynomial functors form a \textit{localizing} subcategory
of $\mathbf{Fct}(S,A)$; see Proposition \ref{prop:thickweak}. In
particular, this notion happens to be more appropriate to study the
stable behaviour for objects of the category $\mathbf{Fct}(S,A)$
(see for example \cite[Section 6]{DV3}, \cite{djamentcongruence}
and Section \ref{subsec:Weak-polynomial-functors-1}) and provide
a new organizing tool for polynomial functors and a fortiori representations
of families of groups; see Section \ref{subsec:Weak-polynomial-functors-1}.

We introduce the definition and properties of weak polynomial functors
to the present larger setting. In particular, considering a reliable
Long-Moody system\textit{ }$\{\mathcal{A},\mathcal{G},\mathcal{G}',\chi\}$,
the notion of weak polynomial functor is well-defined for the category
$\mathbf{Fct}(\mathfrak{U}\mathcal{G},R\textrm{-}\mathfrak{Mod})$
where $\mathfrak{U}\mathcal{G}$ is the full subcategory of Quillen's
bracket construction $\mathfrak{U}\mathcal{G}'$ on the objects of
$\mathcal{G}$; see Definition \ref{def:UGfullsbcatUG'}. We refer
the reader to \cite[Chapitres II et III]{gabriel} for general notions
on abelian categories and quotient abelian category which are required
for this section. We recall that a \textit{Grothendieck category}
is a cocomplete abelian category, which admits a generator, and in
which filtered colimits of exact sequences are exact.

\textbf{For the remainder of Section \ref{subsec:Weak-polynomiality},
we assume that the abelian category $\mathscr{A}$ is a Grothendieck
category. We recall that we consider $(\mathfrak{M}',\natural,0)$
a strict pre-braided monoidal small category where the unit $0$ is
an initial object and $\mathfrak{M}$ a small full subcategory of
$(\mathfrak{M}',\natural,0)$ finitely generated by the monoidal structure.
}Therefore, the functor category $\mathbf{Fct}(\mathfrak{M},\mathscr{A})$
is a Grothendieck category; see \cite{gabriel}. We recall that we
defined a particular set of objects of $\mathfrak{M}'$ denoted $\textrm{Obj}(\mathfrak{M})_{\natural}$
in Notation \ref{nota:goodsetobj}.

Let $F$ be an object of $\mathbf{Fct}(\mathfrak{M},\mathscr{A})$.
We denote the subfunctor $\sum_{m\in\textrm{Obj}(\mathfrak{M})_{\natural}}\kappa_{m}F$
of $F$ by $\kappa(F)$. Let $K(\mathfrak{M},\mathscr{A})$ be the
full subcategory of $\mathbf{Fct}(\mathfrak{M},\mathscr{A})$ of the
objects $F$ such that $\kappa(F)=F$. We have the following basic
properties:
\begin{lem}
\label{rem:fourre-toutfilteredcolim} The functor $\kappa$ is left
exact. Moreover, the functor $\kappa(F)$ is an object of $K(\mathfrak{M},\mathscr{A})$
for all objects $F$ of $\mathbf{Fct}(\mathfrak{M},\mathscr{A})$.
\end{lem}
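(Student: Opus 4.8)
The plan is to prove the two assertions of Lemma~\ref{rem:fourre-toutfilteredcolim} separately, starting with left exactness of $\kappa$.

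\textbf{Left exactness of $\kappa$.} First I would recall that for each fixed $m\in Obj\left(\mathfrak{M}'\right)_{\natural}$, the functor $\kappa_{m}=\ker\left(i_{m}\right)$ is left exact, since a kernel of a natural transformation between exact functors (indeed $Id$ and $\tau_{m}$ are exact by Proposition~\ref{prop:lemmecaract}) is left exact. Now $\kappa\left(F\right)=\sum_{m}\kappa_{m}F$ is the image in $F$ of the canonical map $\bigoplus_{m\in Obj\left(\mathfrak{M}'\right)_{\natural}}\kappa_{m}F\to F$; equivalently, since $\mathscr{A}$ (hence $\mathbf{Fct}\left(\mathfrak{M},\mathscr{A}\right)$) is a Grothendieck category in which filtered colimits are exact, $\kappa\left(F\right)$ is the filtered colimit of the finite partial sums $\sum_{m\in S}\kappa_{m}F$ over finite subsets $S$. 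Given a monomorphism $F\hookrightarrow G$, each $\kappa_{m}F\to\kappa_{m}G$ is a monomorphism by left exactness of $\kappa_{m}$, and these are compatible with the inclusions into $F$ and $G$; passing to the sum (a colimit of subobjects) preserves monomorphisms in a Grothendieck category, so $\kappa\left(F\right)\hookrightarrow\kappa\left(G\right)$. For a left exact sequence $0\to F\to G\to H$, one checks that $\kappa\left(F\right)=\kappa\left(G\right)\cap F$ inside $G$: the inclusion $\supseteq$ is immediate from $\kappa_{m}\left(G\right)\cap F=\kappa_{m}\left(F\right)$ (again by left exactness of $\kappa_{m}$ applied to $0\to F\to G$), and summing yields the claim. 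Hence $\kappa$ sends $0\to F\to G\to H$ to $0\to\kappa\left(F\right)\to\kappa\left(G\right)\to\kappa\left(H\right)$, i.e. $\kappa$ is left exact.

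\textbf{$\kappa\left(F\right)$ is stably null.} Here the task is to show $\kappa\bigl(\kappa\left(F\right)\bigr)=\kappa\left(F\right)$, i.e. that $\kappa\left(F\right)=\sum_{m}\kappa_{m}\bigl(\kappa\left(F\right)\bigr)$; since $\sum_{m}\kappa_{m}\bigl(\kappa\left(F\right)\bigr)\subseteq\kappa\left(F\right)$ automatically, it suffices to show $\kappa_{m}\left(F\right)\subseteq\kappa_{m}\bigl(\kappa\left(F\right)\bigr)$ for each $m$, and for this it is enough to prove $\kappa_{m}\left(F\right)\subseteq\kappa\left(F\right)$ is carried inside $\kappa_{m}$ of the subfunctor, i.e. that $\kappa_{m}\left(F\right)$ is already a subfunctor of $\kappa_{m}\bigl(\kappa\left(F\right)\bigr)$. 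The key point is that $\kappa_{m}$ is \emph{idempotent up to this inclusion}: applying $\kappa_{m}$ to the monomorphism $\kappa_{m}\left(F\right)\hookrightarrow F$ and using left exactness of $\kappa_{m}$ gives $\kappa_{m}\bigl(\kappa_{m}\left(F\right)\bigr)=\kappa_{m}\left(F\right)\cap\kappa_{m}\left(F\right)=\kappa_{m}\left(F\right)$ inside $F$ --- more carefully, $i_{m}$ restricted to $\kappa_{m}\left(F\right)$ is already zero by definition of $\kappa_{m}$, so $\kappa_{m}\bigl(\kappa_{m}\left(F\right)\bigr)=\kappa_{m}\left(F\right)$. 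Then since $\kappa_{m}\left(F\right)\hookrightarrow\kappa\left(F\right)$, left exactness of $\kappa_{m}$ (just proven to behave well on subobjects) gives $\kappa_{m}\left(F\right)=\kappa_{m}\bigl(\kappa_{m}\left(F\right)\bigr)\hookrightarrow\kappa_{m}\bigl(\kappa\left(F\right)\bigr)$. Summing over all $m\in Obj\left(\mathfrak{M}'\right)_{\natural}$ yields $\kappa\left(F\right)=\sum_{m}\kappa_{m}\left(F\right)\subseteq\sum_{m}\kappa_{m}\bigl(\kappa\left(F\right)\bigr)=\kappa\bigl(\kappa\left(F\right)\bigr)$, whence equality and $\kappa\left(F\right)\in\mathcal{S}n\left(\mathfrak{M},\mathscr{A}\right)$.

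\textbf{Main obstacle.} The routine-looking part --- manipulating the sum $\sum_{m}\kappa_{m}$ --- is where care is genuinely needed, because $Obj\left(\mathfrak{M}'\right)_{\natural}$ may be infinite, so one must legitimately invoke that in a Grothendieck category an arbitrary sum of subobjects is a filtered colimit of finite sums and that filtered colimits are exact; without this the commutation of $\kappa$ with the monomorphisms and the idempotency argument do not immediately go through. Everything else reduces to the elementary fact that each $\kappa_{m}$ is a left exact idempotent-on-subobjects functor, which follows directly from Proposition~\ref{prop:lemmecaract}. I expect the proof in the paper to be short, essentially citing \cite[Lemme 1.11]{DV3} or its analogue, with the Grothendieck-category hypothesis doing the work for the infinite sum.
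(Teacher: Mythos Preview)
Your argument for the second assertion is correct and essentially matches the paper's: once you know $\kappa_{m}(\kappa_{m}F)=\kappa_{m}F$ and $\kappa_{m}F\hookrightarrow\kappa(F)$, left exactness of $\kappa_{m}$ gives $\kappa_{m}F\subseteq\kappa_{m}(\kappa(F))$, and summing over $m$ finishes.

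The first part, however, has a genuine gap. You need $\kappa(G)\cap F=\kappa(F)$ for a monomorphism $F\hookrightarrow G$, and your justification --- ``$\kappa_{m}(G)\cap F=\kappa_{m}(F)$, and summing yields the claim'' --- only produces the easy inclusion
\[
\kappa(F)=\sum_{m}\kappa_{m}(F)=\sum_{m}\bigl(\kappa_{m}(G)\cap F\bigr)\subseteq\Bigl(\sum_{m}\kappa_{m}(G)\Bigr)\cap F=\kappa(G)\cap F.
\]
The reverse inclusion requires commuting the intersection with the sum, and your filtration by finite subsets $S\subseteq Obj(\mathfrak{M}')_{\natural}$ does not deliver this: for finite $S$ the functor $G\mapsto\sum_{m\in S}\kappa_{m}(G)$ is an image of a map out of a finite direct sum and is \emph{not} left exact in general, so applying AB5 to the filtered system of finite subsets still leaves you with a non--left-exact functor at each stage.

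The paper's fix is the observation you are missing: the family $\{\kappa_{m}\}_{m\in Obj(\mathfrak{M}')_{\natural}}$ is \emph{itself} directed, via the monoidal product. From the factorization $i_{m'\natural m}=F(\iota_{m'}\natural id_{m\natural-})\circ i_{m}$ (and symmetrically) one obtains inclusions $\kappa_{m},\kappa_{m'}\hookrightarrow\kappa_{m'\natural m}$. Hence $\kappa=\underset{m}{\mathrm{colim}}\,\kappa_{m}$ is a filtered colimit of the left exact functors $\kappa_{m}$ directly, with no finite sums in sight, and left exactness follows because filtered colimits commute with finite limits in a Grothendieck category. In your write-up you correctly anticipated that ``the Grothendieck-category hypothesis does the work for the infinite sum'', but the specific mechanism is the directedness of $Obj(\mathfrak{M}')_{\natural}$ under $\natural$, not the poset of finite subsets.
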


\begin{proof}
A filtration on the evanescence functors $\{\kappa_{m}\}_{m\in\textrm{Obj}(\mathfrak{M})_{\natural}}$
is given by the canonical inclusions $\kappa_{n'}\hookrightarrow\kappa_{n'\natural n}$
and $\kappa_{n}\hookrightarrow\kappa_{n'\natural n}$ induced by the
morphisms $n\to n'\natural n$ and $n'\to n'\natural n$. Hence, since
filtered colimits are exact in $\mathbf{Fct}(\mathfrak{M},\mathscr{A})$
as it is a Grothendieck category, the functor $\kappa$ is left exact
as the filtered colimit of the left exact functors $\{\kappa_{m}\}_{m\in\textrm{Obj}(\mathfrak{M})_{\natural}}$.
Also, we note that the functor $\kappa$ commutes with filtered colimits
since it is a filtered colimit of finite limits; see \cite[Chapter IX, section 2]{MacLane1}.

Let $x$ be an object of $\textrm{Obj}(\mathfrak{M})_{\natural}$.
By definition, there is a canonical inclusion $\kappa_{m}(\kappa_{x}F)\hookrightarrow\kappa_{x}F$
for each $m\in\textrm{Obj}(\mathfrak{M})_{\natural}$. Since $\kappa_{x}$
is the kernel of a natural transformation between the identity functor
and a left exact functor, we note that the inclusion $\kappa_{x}(\kappa_{x}F)\hookrightarrow\kappa_{x}F$
is an isomorphism. Hence, as the functor $\kappa(\kappa_{x}F)$ is
the filtered colimit of the functors $\{\kappa_{m}(\kappa_{x}F)\}_{m\in\textrm{Obj}(\mathfrak{M})_{\natural}}$,
we deduce from the universal property of a colimit that $\kappa(\kappa_{x}F)=\kappa_{x}F$.
Therefore, viewing the functor $\kappa(F)$ as the filtered colimit
of the functors $\{\kappa_{x}F\}_{x\in\textrm{Obj}(\mathfrak{M})_{\natural}}$,
the second result follows from the commutation of $\kappa$ with filtered
colimits.
\end{proof}
The following proposition is the key property to define weak polynomial
functors. It extends the result \cite[Corollary 2.15]{DV3}, although
its proof is quite different. We recall that we follow the terminology
of \cite{gabriel}: a subcategory is \textit{thick }if it closed under
subobjects, quotients and extensions; this notion is also known as
a \textit{Serre subcategory}.
\begin{prop}
\label{prop:Snthick} The category $K(\mathfrak{M},\mathscr{A})$
is a thick subcategory of $\mathbf{Fct}(\mathfrak{M},\mathscr{A})$
and it is closed under colimits.
\end{prop}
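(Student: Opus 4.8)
The goal is to show $\mathcal{S}n\left(\mathfrak{M},\mathscr{A}\right)$ is a thick subcategory of $\mathbf{Fct}\left(\mathfrak{M},\mathscr{A}\right)$ and that it is closed under colimits. Recall that "thick" means closed under subobjects, quotients, and extensions. The strategy is to reduce everything to the behaviour of the single functor $\kappa$, which by Lemma \ref{rem:fourre-toutfilteredcolim} is left exact and which, crucially, satisfies $\kappa\left(F\right)\in\mathcal{S}n\left(\mathfrak{M},\mathscr{A}\right)$ for every $F$, so that $F$ is stably null precisely when the inclusion $\kappa\left(F\right)\hookrightarrow F$ is an isomorphism. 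The key technical input I would establish first is that $\kappa$ behaves well with respect to short exact sequences in a one-sided way, and that $\kappa$ is idempotent in the appropriate sense (i.e. $\kappa\left(\kappa\left(F\right)\right)=\kappa\left(F\right)$, which already follows from the second clause of Lemma \ref{rem:fourre-toutfilteredcolim}).

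\textbf{Step 1 (subobjects and quotients).} Suppose $0\to F\to G\to H\to 0$ is exact with $G$ stably null. For each $m\in Obj\left(\mathfrak{M}'\right)_{\natural}$, the exact sequence (\ref{eq:LESkappadelta}) of Proposition \ref{prop:lemmecaract} gives $0\to\kappa_m F\to\kappa_m G\to\kappa_m H\to\delta_m F\to\cdots$; taking the sum over $m$ (a filtered colimit, hence exact in the Grothendieck category $\mathbf{Fct}\left(\mathfrak{M},\mathscr{A}\right)$) yields a surjection $\kappa\left(G\right)\twoheadrightarrow\kappa\left(H\right)\cap(\text{image of something})$ — more carefully, one checks directly from the definition $\kappa\left(F\right)=\sum_m\kappa_m F$ that $\kappa$ preserves surjections, since $\kappa_m$ does after summing (the connecting map to $\delta_m$ is annihilated in the colimit because $\delta$ also commutes with the filtered colimit and the composite $\kappa\to\delta$ vanishes on $\kappa(G)$ when $G$ is already killed by $\kappa$). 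Thus $\kappa(H)=H$, so $H$ is stably null. For subobjects $F\subseteq G$: since $\kappa$ is left exact, $\kappa(F)=F\cap\kappa(G)=F\cap G=F$ when $\kappa(G)=G$. So $F$ is stably null.

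\textbf{Step 2 (extensions).} Suppose $0\to F\to G\to H\to 0$ with $F$ and $H$ stably null; we want $G$ stably null. Apply $\kappa$: left-exactness gives $0\to\kappa(F)\to\kappa(G)\to\kappa(H)$, i.e. $0\to F\to\kappa(G)\to H$. If we can show the last map is surjective, then $\kappa(G)=G$ by the five lemma. Surjectivity onto $H$: given a local section over a suitable translate, one uses that $H=\kappa(H)=\sum_m\kappa_m H$, lifts elements through the surjection $G\twoheadrightarrow H$, and corrects by elements of $F=\kappa(F)$; the compatibility is exactly the statement that the obstruction lives in $\delta_m F$, which vanishes because $F$ is stably null (so $\delta_m$ applied after the identification... one must be a little careful here). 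This is the step I expect to be the main obstacle: making the diagram chase precise at the level of the filtered colimit $\kappa=\mathrm{colim}_m\kappa_m$, rather than at a fixed $m$, since a section over a translate by $m$ and a section over a translate by $m'$ need not agree until one passes to $m\natural m'$. The cleanest route is probably to argue that $G/\kappa(G)$ has $\kappa(G/\kappa(G))=0$ (idempotency), then observe that $F\to G\to G/\kappa(G)$ is zero since $F$ is stably null and $G/\kappa(G)$ has no stably null subobjects, so $G/\kappa(G)$ is a quotient of $H$, hence stably null by Step 1, hence zero.

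\textbf{Step 3 (closure under colimits).} Since $\mathscr{A}$ is Grothendieck, so is $\mathbf{Fct}\left(\mathfrak{M},\mathscr{A}\right)$, and colimits can be built from coproducts and cokernels. Cokernels are handled by Step 1. For coproducts: each $\kappa_m$ commutes with filtered colimits (shown in Lemma \ref{rem:fourre-toutfilteredcolim}), and more importantly $\kappa_m=\ker(i_m)$ with $i_m:Id\to\tau_m$, where $\tau_m=F(m\natural-)$ evidently commutes with all colimits; hence $\kappa_m$ commutes with coproducts, and therefore so does $\kappa=\sum_m\kappa_m$. Thus if $\{F_i\}$ are stably null, $\kappa(\bigoplus_i F_i)=\bigoplus_i\kappa(F_i)=\bigoplus_i F_i$, so the coproduct is stably null. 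Combining with Step 1 (cokernels) gives closure under all colimits. Finally one assembles Steps 1 and 2 into the statement that $\mathcal{S}n\left(\mathfrak{M},\mathscr{A}\right)$ is thick, completing the proof.
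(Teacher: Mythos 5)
Your Steps 1 and 3 are essentially sound (though the aside in Step 1 that ``$\kappa$ preserves surjections'' is not a correct general claim and is not needed: for a quotient $f:G\twoheadrightarrow Q$ with $G$ stably null one simply notes $f$ factors as $G=\kappa(G)\rightarrow\kappa(Q)\hookrightarrow Q$, forcing $\kappa(Q)=Q$, which is exactly the paper's argument). The genuine gap is in Step 2, which you yourself identify as the main obstacle. Your ``cleanest route'' rests on the claim $\kappa\left(G/\kappa(G)\right)=0$, which you label ``idempotency''. But idempotency is the statement $\kappa\left(\kappa(G)\right)=\kappa(G)$, which is what Lemma \ref{rem:fourre-toutfilteredcolim} gives; the vanishing of $\kappa$ on the quotient $G/\kappa(G)$ is the strictly stronger \emph{radical} property, and it does not follow formally from left exactness plus idempotency (the socle functor on modules is a standard counterexample of a left exact idempotent preradical that is not a radical). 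Worse, given closure under subobjects and quotients, the radical property is \emph{equivalent} to closure under extensions: if $S=\kappa\left(G/\kappa(G)\right)$ and $P\subseteq G$ is its preimage, then $0\rightarrow\kappa(G)\rightarrow P\rightarrow S\rightarrow0$ with $\kappa(G)$ and $S$ stably null, so extension-closure gives $P$ stably null, hence $P\subseteq\kappa(G)$ and $S=0$; conversely your own chase derives extension-closure from it. So as written, Step 2 assumes (under a misleading name) a statement equivalent to what is to be proved.

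The route can be repaired, but only by actually proving $\kappa\left(G/\kappa(G)\right)=0$, and that requires real input beyond the cited lemma: one must use the exactness of $\tau_{m}$ to identify $\ker\left(\tau_{m}(G)\rightarrow\tau_{m}\left(G/\kappa(G)\right)\right)$ with $\tau_{m}\left(\kappa(G)\right)$, the commutation of $\tau_{m}$ with the filtered colimit defining $\kappa$ and with the functors $\kappa_{n}$ (Proposition \ref{prop:lemmecaract}), and then conclude that anything killed by $i_{m}$ in the quotient lifts into $\kappa_{n\natural m}(G)$ for some $n$. This is in substance what the paper does, except that it argues directly on an extension $0\rightarrow B\rightarrow F\rightarrow Q\rightarrow0$: it writes $F$ as the filtered colimit of the pullbacks $F_{m}$ along $\kappa_{m}(Q)\hookrightarrow Q$, uses $i_{m}\left(\kappa_{m}(Q)\right)=0$ and the universal property of the kernel to produce $\varphi_{m}:F_{m}\rightarrow\tau_{m}(B_{m})$, and shows $\varphi_{m}^{-1}\left(\tau_{m}\left(\kappa_{n}(B_{m})\right)\right)\subseteq\kappa_{n\natural m}(F_{m})$ before passing to colimits. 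Either version of this diagram work must appear; your proposal currently omits it.
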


\begin{proof}
Let us prove that $K(\mathfrak{M},\mathscr{A})$ is a thick subcategory
of $\mathbf{Fct}(\mathfrak{M},\mathscr{A})$. First, let $B$ be a
subfunctor of an object $F$ of $K(\mathfrak{M},\mathscr{A})$. As
$\mathbf{Fct}(\mathfrak{M},\mathscr{A})$ is a Grothendieck category,
we denote by $F/B$ the quotient. Since $\kappa$ is left exact, the
following diagram, where the rows are exact and the vertical arrows
are the inclusions, is commutative:

\[
\xymatrix{0\ar[r] & \kappa(B)\ar[r]\ar@{^{(}->}[d] & \kappa(F)\ar[r]\ar@{^{(}->}[d] & \kappa(F/B)\ar@{^{(}->}[d]\\
0\ar[r] & B\ar[r] & F\ar[r] & F/B\ar[r] & 0.
}
\]
Since $\kappa(F)=F$, it follows from the $4$-lemma that the inclusion
$\kappa\left(B\right)\hookrightarrow B$ is an equality: $K(\mathfrak{M},\mathscr{A})$
is thus closed under subobject.

Now, let $f:F\rightarrow Q\rightarrow0$ be an epimorphism of $\mathbf{Fct}(\mathfrak{M},\mathscr{A})$
such that $F$ is an object of $K(\mathfrak{M},\mathscr{A})$. We
consider the following commutative diagram where the vertical arrows
are the inclusions:
\[
\xymatrix{\kappa(F)\ar[r]^{\kappa(f)}\ar@{^{(}->}[d] & \kappa(Q)\ar@{^{(}->}[d]\\
F\ar@{->}[r]^{f} & Q\ar[r] & 0.
}
\]
Thus, since $\kappa(F)=F$, then the arrow $\kappa(Q)\hookrightarrow Q$
is also an epimorphism and a fortiori an equality. Hence, $K\left(\mathfrak{M},\mathscr{A}\right)$
is closed under quotient.

Finally, let $0\to B\rightarrow F\to Q\to0$ be a short exact sequence
of $\mathbf{Fct}(\mathfrak{M},\mathscr{A})$ such that $B$ and $Q$
are objects $K(\mathfrak{M},\mathscr{A})$. Let $m$ be an object
of $\textrm{Obj}(\mathfrak{M})_{\natural}$. Let $F_{m}$ be the pullback
of the morphisms $F\twoheadrightarrow Q$ and $\kappa_{m}(Q)\hookrightarrow Q$:
the functor $F$ is thus the filtered colimit (with respect to the
inclusions) of the pullbacks $\{F_{m}\}_{m\in\textrm{Obj}(\mathfrak{M})_{\natural}}$.
We recall that the functor $\kappa$ is the filtered colimit with
respect to the inclusions of the evanescence functors $\{\kappa_{x}\}_{x\in\textrm{Obj}(\mathfrak{M})_{\natural}}$
and that filtered colimits in $\mathbf{Fct}(\mathfrak{M},\mathscr{A})$
are exact since it is a Grothendieck category. Hence, the functor
$\kappa$ commutes with filtered colimits. Therefore, it is enough
to prove that each $F_{m}$ is in $K(\mathfrak{M},\mathscr{A})$ for
all $m\in\textrm{Obj}(\mathfrak{M})_{\natural}$ to show that $K(\mathfrak{M},\mathscr{A})$
is closed under extension.

Let $B_{m}$ be the kernel of $F_{m}\twoheadrightarrow\kappa_{m}(Q)$.
It follows from the universal property of a kernel and the four lemma
that the canonical inclusions $\kappa_{m}(Q)\hookrightarrow Q$ and
$F_{m}\hookrightarrow F$ induce an inclusion $B_{m}\hookrightarrow B$.
We deduce from the closure $K(\mathfrak{M},\mathscr{A})$ under subobjects
(proved above) that $B_{m}$ is an object of $K(\mathfrak{M},\mathscr{A})$.
Furthermore, we recall that, since $\kappa_{m}$ is the kernel of
a natural transformation between the identity functor and a left exact
functor, the composition $\kappa_{m}\circ\kappa_{m}$ is isomorphic
to $\kappa_{m}$ and therefore $i_{m}(\kappa_{m}(Q))=0$. By the universal
property of the kernel, there exists a unique morphism $\varphi_{m}$
such that the following diagram is commutative:
\[
\xymatrix{0\ar[r] & B_{m}\ar[d]_{i_{m}(B_{m})}\ar[r]^{\alpha} & F_{m}\ar[d]^{i_{m}(F_{m})}\ar[r]\ar@{-->}[dl]^{\varphi_{m}} & \kappa_{m}(Q)\ar[d]^{i_{m}(\kappa_{m}(Q))=0}\ar[r] & 0\\
0\ar[r] & \tau_{m}(B_{m})\ar[r] & \tau_{m}(F_{m})\ar[r] & \tau_{m}(\kappa_{m}(Q))\ar[r] & 0.
}
\]
For all $n\in\textrm{Obj}(\mathfrak{M})_{\natural}$, let $\varphi_{m}^{-1}(\tau_{m}(\kappa_{n}(B_{m})))$
be the pullback of the morphisms $\varphi_{m}:F_{m}\rightarrow\tau_{m}(B_{m})$
and $\tau_{m}(\kappa_{n}(B_{m}))\hookrightarrow\tau_{m}(B_{m})$.
As a pullback commutes with a filtered colimit in an abelian category
and since $\tau_{m}$ commutes with filtered colimits and since $\kappa(B_{m})=B_{m}$,
we deduce that
\[
\underset{n\in\textrm{Obj}(\mathfrak{M})_{\natural}}{\textrm{Colim}}\left(\varphi_{m}^{-1}(\tau_{m}(\kappa_{n}(B_{m})))\right)=F_{m}.
\]
In addition, since $\mathfrak{M}'$ is pre-braided monoidal, we deduce
from the relation (\ref{eq:defbraid}) that $(b_{m,n}^{\mathfrak{M}'})^{-1}\circ(\iota_{n}\natural id_{m})=id_{m}\natural\iota_{n}$.
Hence the precomposition by $(b_{m,n}^{\mathfrak{M}'})^{-1}\natural id_{\mathfrak{M}}$
defines a natural isomorphism $((b_{m,n}^{\mathfrak{M}'})^{-1}\natural id_{\mathfrak{M}})^{*}:\tau_{m}\circ\tau_{n}\overset{\sim}{\rightarrow}\tau_{n}\circ\tau_{m}$
for all $n\in\textrm{Obj}(\mathfrak{M})_{\natural}$, such that
\[
((b_{m,n}^{\mathfrak{M}'})^{-1}\natural id_{\mathfrak{M}})^{*}\circ(\tau_{m}(i_{n}F))=((b_{m,n}^{\mathfrak{M}'})^{-1}\natural id_{\mathfrak{M}})^{*}\circ F(\iota_{n}\natural id_{m}\natural-)=F(id_{m}\natural\iota_{n}\natural-)=i_{n}(\tau_{m}F)
\]
for each object $F$ of $\mathbf{Fct}(\mathfrak{M},\mathscr{A})$.
Hence, the following diagram is commutative for all $n\in\textrm{Obj}(\mathfrak{M})_{\natural}$:
\[
\xymatrix{ &  &  &  & F_{m}\ar[ld]_{\varphi_{m}}\ar[d]^{i_{m}(F_{m})}\ar[rrrd]^{i_{n\natural m}(F_{m})}\\
 &  &  & \tau_{m}(B_{m})\ar[d]^{i_{n}(\tau_{m}(B_{m}))}\ar[dlll]_{\tau_{m}(i_{n}(B_{m}))}\ar[r]^{\tau_{m}(\alpha)} & \tau_{m}(F_{m})\ar[rrr]^{i_{n}(\tau_{m}(F_{m}))} &  &  & \tau_{n}\tau_{m}(F_{m})\cong\tau_{n\natural m}(F_{m}).\\
\tau_{m}\tau_{n}(B_{m})\ar[rrr]_{((b_{m,n}^{\mathfrak{M}'})^{-1}\natural id_{\mathfrak{M}})^{*}}^{\cong} &  &  & \tau_{n}\tau_{m}(B_{m})\ar[rrrru]_{\tau_{n}\tau_{m}(\alpha)}
}
\]
We deduce from the previous commutative diagram and the universal
property of the kernel that there exists an inclusion morphism $\varphi_{m}^{-1}(\tau_{m}(\kappa_{n}(B_{m})))\hookrightarrow\kappa_{n\natural m}(F_{m})$
for all $n\in\textrm{Obj}(\mathfrak{M})_{\natural}$. Using the definition
of $\kappa$ as a filtered colimit, we deduce that $\textrm{Colim}_{n\in\textrm{Obj}(\mathfrak{M})_{\natural}}(\varphi_{m}^{-1}(\tau_{m}(\kappa_{n}(B_{m}))))$
is a subobject of $\kappa(F_{m})$. Hence, we have $\kappa(F_{m})=F_{m}$
and thus $K(\mathfrak{M},\mathscr{A})$ is closed under extension.

Let us now prove that $K(\mathfrak{M},\mathscr{A})$ is closed under
colimits. We recall that coproducts are exact in $\mathbf{Fct(\mathfrak{M},\mathscr{A})}$
since it is a Grothendieck category; see \cite[Corollary 2.8.9]{Popescu}
for instance. Hence, for each object $m$ in $\textrm{Obj}(\mathfrak{M})_{\natural}$,
the evanescence functor $\kappa_{m}$ commutes with the coproducts
of $\mathbf{Fct(\mathfrak{M},\mathscr{A})}$. Since the functor $\kappa$
is the filtered colimit of the functors $\{\kappa_{m}\}_{m\in\textrm{Obj}(\mathfrak{M})_{\natural}}$,
we deduce that $\kappa$ commutes with the coproducts of $\mathbf{Fct(\mathfrak{M},\mathscr{A})}$:
therefore, the category $K(\mathfrak{M},\mathscr{A})$ is closed under
coproducts. As $\mathbf{Fct(\mathfrak{M},\mathscr{A})}$ is a Grothendieck
category, any colimit of $K(\mathfrak{M},\mathscr{A})$ may be expressed
as a quotient of a coproduct. Since $K(\mathfrak{M},\mathscr{A})$
is closed under quotient (see above), this category is thus closed
under colimits.
\end{proof}
\begin{rem}
We see in the proof of Proposition \ref{prop:Snthick} why we require
the category $\mathscr{A}$ to have more properties than just being
an abelian category: it is necessary to assume that the filtered colimits
in the category $\mathscr{A}$ are exact, which is the case for a
Grothendieck category. Actually, we could only assume that the category
$\mathscr{A}$ is an $AB5$-category, ie a cocomplete abelian category
in which filtered colimits of exact sequences are exact, and all the
work of Section \ref{subsec:Weak-polynomiality} extends mutatis mutandis.
However, this more general notion is less standard than the one of
Grothendieck category that we chose to use for sake of simplicity.
\end{rem}

The thickness property of Proposition \ref{prop:Snthick} ensures
that we can consider the quotient category of $\mathbf{Fct(\mathfrak{M},\mathscr{A}})$
by $K(\mathfrak{M},\mathscr{A})$; see \cite[Chapitre III]{gabriel}.
Let $\mathbf{St}(\mathfrak{M},\mathscr{A})$ denote the quotient category
$\mathbf{Fct}(\mathfrak{M},\mathscr{A})/K(\mathfrak{M},\mathscr{A})$.
We consider the canonical functor associated with this quotient by
$\pi_{\mathfrak{M}}:\mathbf{Fct}(\mathfrak{M},\mathscr{A})\rightarrow\mathbf{St}(\mathfrak{M},\mathscr{A})$.
It is exact, essentially surjective and commutes with all colimits.
Since the functor category $\mathbf{Fct}(\mathfrak{M},\mathscr{A})$
is a Grothendieck category, it follows from Proposition \ref{prop:Snthick}
that the subcategory $K(\mathfrak{M},\mathscr{A})$ is \textit{localizing}
(see \cite[Proposition III.4.8]{gabriel} or \cite[4.7.14, p.315]{Popescu2}),
in the sense that the functor $\pi_{\mathfrak{M}}$ admits a right
adjoint functor denoted by $s_{\mathfrak{M}}:\mathbf{Fct}(\mathfrak{M},\mathscr{A})/K(\mathfrak{M},\mathscr{A})\rightarrow\mathbf{Fct}(\mathfrak{M},\mathscr{A})$
and called the \textit{section functor}. 

The following proposition introduces the induced translation and difference
functors on the category $\mathbf{St}(\mathfrak{M},\mathscr{A})$.
Its proof is verbatim that of \cite[Proposition 2.19]{DV3}, using
Proposition \ref{prop:lemmecaract}.
\begin{prop}
\label{prop:piMcommutetauetdelta} Let $m\in\textrm{Obj}(\mathfrak{M})_{\natural}$.
The translation functor $\tau_{m}$ and the difference functor $\delta_{m}$
of $\mathbf{Fct}(\mathfrak{M},\mathscr{A})$ respectively induce an
exact endofunctor of $\mathbf{St}(\mathfrak{M},\mathscr{A})$ which
commute with colimits, again called the translation functor $\tau_{m}$
and the difference functor $\delta_{m}$ respectively. In addition:

\begin{enumerate}
\item The following relations hold: $\delta_{m}\circ\pi_{\mathfrak{M}}=\pi_{\mathfrak{M}}\circ\delta_{m}$
and $\tau_{m}\circ\pi_{\mathfrak{M}}=\pi_{\mathfrak{M}}\circ\tau_{m}$.
\item The exact sequence (\ref{eq:ESCaract-1}) induces a short exact sequence
of endofunctors of $\mathbf{St}(\mathfrak{M},\mathscr{A})$:
\begin{equation}
0\longrightarrow Id\overset{i_{m}}{\longrightarrow}\tau_{m}\overset{\varDelta_{m}}{\longrightarrow}\delta_{m}\longrightarrow0.\label{eq:ESCaract}
\end{equation}
\item For another object $m'$ of $\mathfrak{M}$, the endofunctors $\delta_{m}$,
$\delta_{m'}$, $\tau_{m}$ and $\tau_{m'}$ of $\mathbf{St}(\mathfrak{M},\mathscr{A})$
pairwise commute up to natural isomorphism.
\end{enumerate}
\end{prop}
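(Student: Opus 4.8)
The plan is to descend the endofunctors $\tau_m$ and $\delta_m$ of $\mathbf{Fct}(\mathfrak{M},\mathscr{A})$ to the Serre quotient $\mathbf{St}(\mathfrak{M},\mathscr{A})=\mathbf{Fct}(\mathfrak{M},\mathscr{A})/\mathcal{S}n(\mathfrak{M},\mathscr{A})$ by showing that they preserve the thick subcategory $\mathcal{S}n(\mathfrak{M},\mathscr{A})$, and then to read off assertions (1)--(3) by transporting the content of Proposition \ref{prop:lemmecaract} through the exact, essentially surjective, colimit-preserving localization functor $\pi_\mathfrak{M}$. This follows the line of argument of \cite[Proposition 1.19]{DV3}.

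The crux is the preservation of $\mathcal{S}n(\mathfrak{M},\mathscr{A})$. First I would note that for every $n\in Obj(\mathfrak{M}')_\natural$ there is a natural isomorphism $\kappa_n\circ\tau_m\cong\tau_m\circ\kappa_n$: since $\tau_m$ and $\tau_n$ commute up to natural isomorphism by Proposition \ref{prop:lemmecaract}, the component of $i_n$ at $\tau_m F$ is carried to $\tau_m$ applied to the component of $i_n$ at $F$, and $\tau_m$, being exact, preserves the kernel. As $\tau_m$ also commutes with all colimits, it commutes with the formation of the sum $\kappa=\sum_{n}\kappa_n$, whence $\kappa(\tau_m F)\cong\tau_m(\kappa F)$; in particular $\tau_m$ sends stably null functors to stably null functors. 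For $\delta_m$ I would use that $\delta_m(F)=\mathrm{coker}\bigl(i_m\colon F\to\tau_m F\bigr)$ is a quotient of $\tau_m F$ and that $\mathcal{S}n(\mathfrak{M},\mathscr{A})$ is closed under quotients (Proposition \ref{prop:Snthick}). Since $\mathcal{S}n(\mathfrak{M},\mathscr{A})$ is thick (Proposition \ref{prop:Snthick}), the exact functor $\tau_m$ and the right-exact functor $\delta_m$ therefore induce endofunctors of $\mathbf{St}(\mathfrak{M},\mathscr{A})$ satisfying $\tau_m\circ\pi_\mathfrak{M}=\pi_\mathfrak{M}\circ\tau_m$ and $\delta_m\circ\pi_\mathfrak{M}=\pi_\mathfrak{M}\circ\delta_m$; the induced $\tau_m$ is exact, and that both induced functors commute with colimits follows from the corresponding property of $\pi_\mathfrak{M}$, $\tau_m$, $\delta_m$ and the section functor $s_\mathfrak{M}$, exactly as in \cite[Proposition 1.19]{DV3}. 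This gives assertion (1).

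For assertion (2), the point is that $\kappa_m(F)$ is a subobject of $\kappa(F)$, which is stably null by Lemma \ref{rem:fourre-toutfilteredcolim}, so by closure of $\mathcal{S}n(\mathfrak{M},\mathscr{A})$ under subobjects (Proposition \ref{prop:Snthick}) we obtain $\pi_\mathfrak{M}\circ\kappa_m=0$. Applying the exact functor $\pi_\mathfrak{M}$ to the four-term exact sequence (\ref{eq:ESCaract-1}) then kills the $\kappa_m$ term, making $i_m$ a monomorphism on $\mathbf{St}(\mathfrak{M},\mathscr{A})$ and yielding the short exact sequence (\ref{eq:ESCaract}); exactness of the induced functor $\delta_m$ follows from (\ref{eq:ESCaract}) by the snake lemma, since $Id$ and $\tau_m$ are exact on $\mathbf{St}(\mathfrak{M},\mathscr{A})$. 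Finally, for assertion (3) I would transport the commutation isomorphisms of $\tau_m,\tau_{m'},\delta_m,\delta_{m'}$ from Proposition \ref{prop:lemmecaract} along $\pi_\mathfrak{M}$: for instance $\tau_m\tau_{m'}\circ\pi_\mathfrak{M}=\pi_\mathfrak{M}\circ\tau_m\tau_{m'}\cong\pi_\mathfrak{M}\circ\tau_{m'}\tau_m=\tau_{m'}\tau_m\circ\pi_\mathfrak{M}$, and since $\pi_\mathfrak{M}$ is essentially surjective this produces a natural isomorphism $\tau_m\tau_{m'}\cong\tau_{m'}\tau_m$ of endofunctors of $\mathbf{St}(\mathfrak{M},\mathscr{A})$; the same argument handles the remaining three pairs.

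The step I expect to be the main obstacle is the preservation of $\mathcal{S}n(\mathfrak{M},\mathscr{A})$ by $\tau_m$ and $\delta_m$, together with the bookkeeping needed to see that $\delta_m$ descends to an \emph{exact} functor on $\mathbf{St}(\mathfrak{M},\mathscr{A})$ although it is only right exact on $\mathbf{Fct}(\mathfrak{M},\mathscr{A})$; both rest on the closure properties of $\mathcal{S}n(\mathfrak{M},\mathscr{A})$ from Proposition \ref{prop:Snthick} and on the exactness and commutation properties of the translation functors recorded in Proposition \ref{prop:lemmecaract}, after which the remaining assertions are formal consequences of the universal property of the Gabriel quotient.
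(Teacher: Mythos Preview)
Your proposal is correct and is essentially the argument the paper intends: the paper states that the proof is analogous to that of \cite[Proposition 1.19]{DV3} using Proposition \ref{prop:lemmecaract}, and you have spelled out exactly that argument --- preservation of $\mathcal{S}n(\mathfrak{M},\mathscr{A})$ by $\tau_m$ and $\delta_m$ via the commutation and closure properties, vanishing of $\pi_{\mathfrak{M}}\circ\kappa_m$, and transport of the remaining structure through the exact localization $\pi_{\mathfrak{M}}$.
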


We can now introduce the notion of a weak polynomial functor, extending
that of \cite[Definition 2.22]{DV3}.
\begin{defn}
\label{def:defweakpoly} We recursively define on $d\in\mathbb{N}$
the category $\mathcal{P}ol_{d}(\mathfrak{M},\mathscr{A})$ of polynomial
functors of degree less than or equal to $d$ to be the full subcategory
of $\mathbf{St}\left(\mathfrak{M},\mathscr{A}\right)$ as follows:

\begin{enumerate}
\item If $d<0$, the objects of $\mathcal{P}ol_{d}(\mathfrak{M},\mathscr{A})$
are those of $K(\mathfrak{M},\mathscr{A})$;
\item if $d\geq0$, the objects of $\mathcal{P}ol_{d}(\mathfrak{M},\mathscr{A})$
are the functors $F$ such that the functor $\delta_{x}(F)$ is an
object of $\mathcal{P}ol_{d-1}(\mathfrak{M},\mathscr{A})$ for all
$x\in\textrm{Obj}(\mathfrak{M})_{\natural}$.
\end{enumerate}
For an object $F$ of $\mathbf{St}(\mathfrak{M},\mathscr{A})$ which
is polynomial of degree less than or equal to $d\in\mathbb{N}$, the
smallest natural number $n\leq d$ for which $F$ is an object of
$\mathcal{P}ol_{d}(\mathfrak{M},\mathscr{A})$ is called the degree
of $F$. An object $F$ of $\mathbf{Fct}(\mathfrak{M},\mathscr{A})$
is \textit{weak polynomial} of degree at most $d$ if its image $\pi_{\mathfrak{M}}(F)$
is an object of $\mathcal{P}ol_{d}(\mathfrak{M},\mathscr{A})$. The
degree of polynomiality of $\pi_{\mathfrak{M}}(F)$ is called the
(weak) degree of $F$.
\end{defn}

Let us give some important properties of the categories of weak polynomial
functors used in Sections \ref{sec:Behaviour-of-the} and \ref{sec:Examples-and-applications}.
Their proofs follow verbatim their analogues in \cite[Section 2]{DV3}.
\begin{prop}
\cite[Propositions 2.24-2.26]{DV3} \label{prop:thickweak}\label{prop:weakpolydeg0}
For $d$ a natural number, the subcategory $\mathcal{P}ol_{d}(\mathfrak{M},\mathscr{A})$
of $\mathbf{St}(\mathfrak{M},\mathscr{A})$ is thick and closed under
limits and colimits. Furthermore, there is an equivalence of categories
$\mathscr{A}\simeq\mathcal{P}ol_{0}(\mathfrak{M},\mathscr{A})$.

\label{enough1} We assume that the category $\mathfrak{M}$ is finitely
generated by the monoidal structure in $(\mathfrak{M}',\natural,0)$
and we denote by $E$ a finite generating set of $\mathfrak{M}$.
Let $F$ be an object of $\mathbf{St}(\mathfrak{M},\mathscr{A})$.
Then, the functor $F$ is an object of $\mathcal{P}ol_{d}(\mathfrak{M},\mathscr{A})$
if and only if the functor $\delta_{e}(F)$ is an object of $\mathcal{P}ol_{d-1}(\mathfrak{M},\mathscr{A})$
for all objects $e$ of $E\cap\textrm{Obj}(\mathfrak{M})_{\natural}$.
\end{prop}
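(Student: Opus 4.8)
The plan is to establish all three assertions simultaneously by induction on $d$, the essential input being Proposition \ref{prop:piMcommutetauetdelta}: on $\mathbf{St}\left(\mathfrak{M},\mathscr{A}\right)$ each translation functor $\tau_{m}$ and difference functor $\delta_{m}$ (for $m\in Obj\left(\mathfrak{M}'\right)_{\natural}$) is exact and commutes with all colimits, the short exact sequence $0\rightarrow Id\rightarrow\tau_{m}\rightarrow\delta_{m}\rightarrow0$ of (\ref{eq:ESCaract}) holds, and the $\tau$'s and $\delta$'s pairwise commute up to natural isomorphism. Before starting the induction I would record that $\mathbf{St}\left(\mathfrak{M},\mathscr{A}\right)$ is itself a Grothendieck category: by Proposition \ref{prop:Snthick} the subcategory $\mathcal{S}n\left(\mathfrak{M},\mathscr{A}\right)$ is thick and stable under colimits, hence localizing, so the Gabriel quotient $\mathbf{Fct}\left(\mathfrak{M},\mathscr{A}\right)/\mathcal{S}n\left(\mathfrak{M},\mathscr{A}\right)$ is Grothendieck; in particular it has all limits and colimits, $\pi_{\mathfrak{M}}$ is exact and commutes with colimits, and $s_{\mathfrak{M}}$ preserves limits. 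I would also note, using Lemma \ref{rem:fourre-toutfilteredcolim} together with the sequence (\ref{eq:ESCaract-1}), that each $\kappa_{m}$ takes stably null values and hence vanishes in $\mathbf{St}\left(\mathfrak{M},\mathscr{A}\right)$, so that $i_{m}$ is always a monomorphism there.

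For the first assertion, I would argue that if $\mathcal{C}$ is a thick subcategory of $\mathbf{St}\left(\mathfrak{M},\mathscr{A}\right)$ closed under all colimits and under products, then $\delta_{m}^{-1}\left(\mathcal{C}\right)$ has the same properties: thickness and closure under colimits are immediate from exactness and colimit-preservation of $\delta_{m}$; closure under products uses the observation that, since $\tau_{m}$ commutes with products and $i_{m}$ is a monomorphism, $\delta_{m}\left(\prod_{i}F_{i}\right)$ is canonically a subobject of $\prod_{i}\delta_{m}\left(F_{i}\right)$ (compare the exact sequences $0\rightarrow\prod_{i}F_{i}\rightarrow\prod_{i}\tau_{m}F_{i}\rightarrow\delta_{m}\left(\prod_{i}F_{i}\right)\rightarrow0$ and $0\rightarrow\prod_{i}F_{i}\rightarrow\prod_{i}\tau_{m}F_{i}\rightarrow\prod_{i}\delta_{m}F_{i}$). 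Since $\mathcal{P}ol_{d}\left(\mathfrak{M},\mathscr{A}\right)=\bigcap_{m}\delta_{m}^{-1}\left(\mathcal{P}ol_{d-1}\left(\mathfrak{M},\mathscr{A}\right)\right)$ by Definition \ref{def:defweakpoly}, the inductive step follows, the base case $\mathcal{P}ol_{-1}=\left\{0\right\}$ being trivial; closure under arbitrary limits is then deduced from closure under products together with thickness, since a limit is a subobject of a product of the objects of the diagram.

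For the generator criterion, I would prove by induction on $d$ that $\delta_{e}\left(F\right)\in\mathcal{P}ol_{d-1}\left(\mathfrak{M},\mathscr{A}\right)$ for all $e\in E\cap Obj\left(\mathfrak{M}'\right)_{\natural}$ forces $\delta_{m}\left(F\right)\in\mathcal{P}ol_{d-1}\left(\mathfrak{M},\mathscr{A}\right)$ for all $m\in Obj\left(\mathfrak{M}'\right)_{\natural}$. The key relation is that, from the factorization $i_{a\natural b}=\tau_{a}\left(i_{b}\right)\circ i_{a}$ (both monomorphisms in $\mathbf{St}$), taking the cokernel of a composite produces a short exact sequence in $\mathbf{St}\left(\mathfrak{M},\mathscr{A}\right)$
\[
0\longrightarrow\delta_{a}\left(F\right)\longrightarrow\delta_{a\natural b}\left(F\right)\longrightarrow\tau_{a}\left(\delta_{b}\left(F\right)\right)\longrightarrow0.
\]
Because $\mathcal{P}ol_{d-1}\left(\mathfrak{M},\mathscr{A}\right)$ is stable under the translation functors (as $\tau_{a}$ commutes with every $\delta_{m}$ up to isomorphism) and under extensions (thickness, from the first assertion), writing $m$ as a finite monoidal product of generators and inducting on the length of the decomposition gives $\delta_{m}\left(F\right)\in\mathcal{P}ol_{d-1}\left(\mathfrak{M},\mathscr{A}\right)$, whence $F\in\mathcal{P}ol_{d}\left(\mathfrak{M},\mathscr{A}\right)$; the converse is the definition. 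I expect the only delicate point here to be checking that every generator entering such a decomposition of an element of $Obj\left(\mathfrak{M}'\right)_{\natural}$ again lies in $Obj\left(\mathfrak{M}'\right)_{\natural}$, so that the displayed sequence is available at each stage — this is exactly where the hypothesis that $\mathfrak{M}$ is finitely generated by the monoidal structure of $\left(\mathfrak{M}',\natural,0\right)$, together with the precise definition of $Obj\left(\mathfrak{M}'\right)_{\natural}$ from Notation \ref{nota:goodsetobj}, is used, as in Proposition \ref{prop:proppoln}.

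Finally, for the identification $\mathscr{A}\simeq\mathcal{P}ol_{0}\left(\mathfrak{M},\mathscr{A}\right)$: by the sequence (\ref{eq:ESCaract}) and the vanishing of $\kappa_{m}$ in $\mathbf{St}$, an object $F$ belongs to $\mathcal{P}ol_{0}\left(\mathfrak{M},\mathscr{A}\right)$ precisely when $i_{m}\colon F\rightarrow\tau_{m}F$ is an isomorphism in $\mathbf{St}\left(\mathfrak{M},\mathscr{A}\right)$ for every $m\in Obj\left(\mathfrak{M}'\right)_{\natural}$. I would then exhibit a quasi-inverse pair: the constant-functor construction $\mathscr{A}\rightarrow\mathbf{Fct}\left(\mathfrak{M},\mathscr{A}\right)$ composed with $\pi_{\mathfrak{M}}$ lands in $\mathcal{P}ol_{0}\left(\mathfrak{M},\mathscr{A}\right)$, and in the other direction a functor $\mathcal{P}ol_{0}\left(\mathfrak{M},\mathscr{A}\right)\rightarrow\mathscr{A}$ is obtained by evaluating on (equivalently, passing to the colimit over) the finitely generated category $\mathfrak{M}$, the point being that the isomorphisms $i_{m}$ pin down such an $F$, modulo the stably null ideal, to a single object of $\mathscr{A}$. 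Verifying that these two constructions are mutually inverse — in particular that the evaluation functor descends to the quotient $\mathbf{St}\left(\mathfrak{M},\mathscr{A}\right)$ — is the step I expect to be the main obstacle, and it is here that one follows \cite[Proposition 1.26]{DV3} most closely.
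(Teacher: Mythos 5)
Your route is the one the paper itself takes: it invokes the arguments of \cite[Section 1]{DV3} mutatis mutandis, and most of your steps are exactly those. Thickness and closure under colimits of $\mathcal{P}ol_{d}\left(\mathfrak{M},\mathscr{A}\right)$ do follow by induction from the exactness and colimit-commutation of the induced $\delta_{m}$ on $\mathbf{St}\left(\mathfrak{M},\mathscr{A}\right)$; the generator criterion follows from the short exact sequence $0\rightarrow\delta_{a}F\rightarrow\delta_{a\natural b}F\rightarrow\tau_{a}\delta_{b}F\rightarrow0$ (valid in $\mathbf{St}$ because every $i_{m}$ is a monomorphism there, $\kappa_{m}$ being stably null), together with stability of $\mathcal{P}ol_{d-1}$ under translation and extensions; and for $\mathscr{A}\simeq\mathcal{P}ol_{0}\left(\mathfrak{M},\mathscr{A}\right)$ you, like the paper, essentially defer the delicate verification to \cite[Proposition 1.26]{DV3}, so that part is at the same level of detail as the paper's own proof.

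The one place where your argument as written has a genuine gap is closure under limits. You assume that the induced translation functor $\tau_{m}$ on $\mathbf{St}\left(\mathfrak{M},\mathscr{A}\right)$ commutes with infinite products, writing the exact sequence $0\rightarrow\prod_{i}F_{i}\rightarrow\prod_{i}\tau_{m}F_{i}\rightarrow\delta_{m}\left(\prod_{i}F_{i}\right)\rightarrow0$ in $\mathbf{St}$. But Proposition \ref{prop:piMcommutetauetdelta} only records that the induced $\tau_{m}$ and $\delta_{m}$ are exact and commute with colimits, and commutation with products is not automatic: products in the quotient $\mathbf{St}\left(\mathfrak{M},\mathscr{A}\right)$ are not obtained by applying $\pi_{\mathfrak{M}}$ to products in $\mathbf{Fct}\left(\mathfrak{M},\mathscr{A}\right)$, because $\mathcal{S}n\left(\mathfrak{M},\mathscr{A}\right)$ is closed under colimits but not under products (a product of stably null functors need not be stably null, since the filtration stages are not uniform), so $\pi_{\mathfrak{M}}$ does not preserve products. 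To justify your step one needs an actual argument, for instance that $\tau_{m}$ preserves $\mathcal{S}n$-closed objects so that $s_{\mathfrak{M}}$ intertwines the translation functors and products in $\mathbf{St}$ can be computed through $s_{\mathfrak{M}}$, or equivalently that the left Kan extension adjoint to $\tau_{m}$ sends $\mathcal{S}n\left(\mathfrak{M},\mathscr{A}\right)$ to itself and hence descends to a left adjoint of the induced $\tau_{m}$; this is precisely the content that must be imported from \cite{DV3} for the ``closed under limits'' half of the statement, and your proposal treats it as an obvious observation, which it is not.
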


\section{Behaviour of the Long-Moody functors on polynomial functors\label{sec:Behaviour-of-the}}

In this section, we study the effect of some generalized Long-Moody
functors on (very) strong and weak polynomial functors. Indeed, under
some additional assumptions, they have the property to increase by
one both the very strong and the weak polynomial degrees; see Theorems
\ref{Thm:emairesult} and \ref{thm:ResultWeakpoly}.

For all the work of this section, we fix a \textit{coherent} Long-Moody
system $\{\mathcal{A},\mathcal{G},\mathcal{G}',\chi\}$; see Section
\ref{subsec:Functoriality-of-the}. We recall that $\mathfrak{Gr}$
denotes the category of groups and that the free product is denoted
by $*$. Let $\mathcal{G}'_{(0,1)}$ be the small full subgroupoid
of $(\mathcal{G}',\natural,0_{\mathcal{G}'})$ of the finite monoidal
products on the objects $0_{\mathcal{G}'}$, $0$ and $1$ of $\mathcal{G}'$.
Note that the monoidal structure $\natural$ restricts to give $\mathcal{G}'_{(0,1)}$
a braided monoidal structure. We assume that the functors $\mathcal{A}$
and $\chi$ satisfy the following additional properties:
\begin{assumption}
\label{ass:coherenceconditionbnautfn2et3}\label{assu:decomposeAfreeproduct}The
functor $\mathcal{A}:\mathfrak{U}\mathcal{G}\to\mathfrak{Gr}$ is
the restriction of a functor $\mathfrak{U}\mathcal{G}'\to\mathfrak{Gr}$
(that we also denote by $\mathcal{A}$) along the canonical inclusion
$\mathfrak{U}\mathcal{G}\hookrightarrow\mathfrak{U}\mathcal{G}'$,
and there exist two groups $H_{0}$ and $H$ (with $H$ non-trivial),
such that:

\begin{itemize}
\item for all objects $\text{\ensuremath{\underline{n}}}$ of $\mathcal{G}$,
$\mathcal{A}(\underline{n})=H^{*n}*H_{0}$ that we denote by $H_{n}$,
and $\mathcal{A}(1^{\natural m})=H^{*m}$ for all $m\in\mathbb{N}$.
\item for all objects $X$ and $Y$ of $\mathcal{G}'_{(0,1)}$, $\mathcal{A}(\iota_{X}\natural id_{Y})=\iota_{\mathcal{A}(X)}*id_{\mathcal{A}(Y)}$
and $\mathcal{A}(id_{Y}\natural\iota_{X})=id_{\mathcal{A}(Y)}*\iota_{\mathcal{A}(X)}$,
where $\iota_{G}:0_{\mathfrak{Gr}}\rightarrow G$ denotes the unique
morphism from $0_{\mathfrak{Gr}}$ to a group $G$.
\end{itemize}
Moreover, the family of group morphisms $\{\chi_{n}:H_{n}\rightarrow G_{n+1}\}_{n\in\mathbb{N}}$
induced by $\chi$ satisfies the equality (\ref{eq:equiva}) of Proposition
\ref{cond:conditionstability}. Namely we assume that in $G_{n+2}$,
for all elements $h\in H_{n}$:
\begin{eqnarray}
((b_{1,1}^{\mathcal{G}'})^{-1}\natural id_{\underline{n}})\circ(id_{1}\natural\chi_{n}(h)) & = & \chi_{n+1}((\iota_{H}\ast id_{H_{n}})(h))\circ((b_{1,1}^{\mathcal{G}'})^{-1}\natural id_{\underline{n}}).\label{eq:equiva'}
\end{eqnarray}
\end{assumption}

\begin{defn}
\label{def:reliableLongMoody}A coherent Long-Moody system $\{\mathcal{A},\mathcal{G},\mathcal{G}',\chi\}$
is said to be \textit{reliable} if Assumption \ref{assu:decomposeAfreeproduct}
is satisfied.
\end{defn}

Consequences of Assumption \ref{assu:decomposeAfreeproduct} are heavily
used in our study; see the proofs of Lemmas \ref{lem:explicit_def_xsi}
and \ref{lem:balanced_2} and of Proposition \ref{prop:key_relation}.
Some of the results presented in Section \ref{subsec:Relation-with-evanescence}
still hold without the hypotheses of Assumption \ref{assu:decomposeAfreeproduct}.
However, these additional properties are necessary to prove some of
the further results. In particular, we point out some relations which
are used in the proof of Proposition \ref{prop:key_relation}.
\begin{cor}
Let $m$ be a natural number and $X$ be an object of $\mathcal{G}'_{(0,1)}$.
We have:

\begin{equation}
\mathcal{A}((b_{m,X}^{\mathcal{G}'})^{-1})\circ(\iota_{\mathcal{A}(X)}*id_{H^{*m}})=id_{H^{*m}}*\iota_{\mathcal{A}(X)}.\label{eq:key_equality_A_2}
\end{equation}
Also, for $n$ another natural number and for all $g\in G_{n}$, we
have 
\begin{equation}
\mathcal{A}(id_{m}\natural g)\circ(id_{H^{*m}}*\iota_{H_{n}})=id_{H^{*m}}*\iota_{H_{n}}.\label{eq:key_equality_A_1}
\end{equation}
\end{cor}

\begin{proof}
We first recall from Assumption \ref{ass:coherenceconditionbnautfn2et3}
that $\mathcal{A}(\iota_{X}\natural id_{m})=\iota_{\mathcal{A}(X)}*id_{H^{*m}}$.
Since $\mathfrak{U}\mathcal{G}'$ is a pre-braided monoidal category,
we deduce from the relation (\ref{eq:defbraid}) that $(b_{m,X}^{\mathcal{G}'})^{-1}\circ(\iota_{X}\natural id_{m})=id_{m}\natural\iota_{X}$.
Therefore, the equality (\ref{eq:key_equality_A_2}) follows from
the functoriality of $\mathcal{A}$ over the category $\mathfrak{U}\mathcal{G}'$.
Furthermore, we recall that $0_{\mathcal{G}'}$ is a initial object
in the category $\mathfrak{U}\mathcal{G}'$ and that $\iota_{\underline{n}}$
is the unique morphism from $0_{\mathcal{G}'}$ to $\underline{n}$
in $\mathfrak{U}\mathcal{G}'$. We deduce that $(id_{m}\natural g)\circ(id_{m}\natural\iota_{\underline{n}})=id_{m}\natural(g\circ\iota_{\underline{n}})=id_{m}\natural\iota_{\underline{n}}$.
Hence, since $id_{H^{*m}}*\iota_{H_{n}}=\mathcal{A}(id_{m}\natural\iota_{\underline{n}})$,
the functoriality of $\mathcal{A}$ over the category $\mathfrak{U}\mathcal{G}'$
gives the equality (\ref{eq:key_equality_A_1}).
\end{proof}
\textbf{We assume that the fixed coherent Long-Moody system $\{\mathcal{A},\mathcal{G},\mathcal{G}',\chi\}$
is reliable.} Note that such functors $\mathcal{A}$ and $\chi$ always
exist: we can at least consider the functor $\mathcal{A}_{id}$ defined
assigning $\mathcal{A}(g)=id_{\mathcal{A}(\underline{n})}$ to all
$g\in G_{n}$ and all natural numbers $n$, and the trivial functor
$\chi_{tr}$. Also, we show in Section \ref{sec:Examples-and-applications}
that most of the coherent Long-Moody systems introduced in Section
\ref{subsec:Examples} are reliable.

We consider the Long-Moody functor $\mathbf{LM}_{\{\mathcal{A},\mathcal{G},\mathcal{G}',\chi\}}$
associated with the reliable Long-Moody system $\{\mathcal{A},\mathcal{G},\mathcal{G}',\chi\}$,
which is fixed throughout this section. In particular, we omit $\{\mathcal{A},\mathcal{G},\mathcal{G}',\chi\}$
from the notation most of the time. Since the category $\mathfrak{U}\mathcal{G}$
is generated by the objects $\underline{0}$ and $1$ using the monoidal
product $\natural$, it is enough for our work to only consider the
translation functor $\tau_{1}$ by Propositions \ref{prop:proppoln}
and \ref{enough1}.

\subsection{Relation with evanescence and difference functors\label{subsec:Relation-with-evanescence}}

In this section, we describe the decomposition of the Long-Moody functor
$\mathbf{LM}_{\{\mathcal{A},\mathcal{G},\mathcal{G}',\chi\}}$ with
respect to the translation functor $\tau_{1}$; see Corollary \ref{cor:splittingtranslation}.
We then establish the crucial results stated in Theorem \ref{thm:Splitting LM},
describing the behaviour of the Long-Moody functor $\mathbf{LM}_{\{\mathcal{A},\mathcal{G},\mathcal{G}',\chi\}}$
with respect to the evanescence and difference functors.

\subsubsection{Factorization of the natural transformation $i_{1}\mathbf{LM}$ by
$\mathbf{LM}(i_{1})$\label{subsec:Factorisation-of-the}}

Recall from Proposition \ref{prop:lemmecaract} the exact sequence
in the category of endofunctors of $\mathbf{Fct}(\mathfrak{U}\mathcal{G},R\textrm{-}\mathfrak{Mod})$,
which defines the natural transformation $i_{1}$:
\begin{equation}
\xymatrix{0\ar@{->}[r] & \kappa_{1}\ar@{->}[r]^{\Omega_{1}} & Id\ar@{->}[r]^{i_{1}} & \tau_{1}\ar@{->}[r]^{\varDelta_{1}} & \delta_{1}\ar@{->}[r] & 0}
.\label{eq:definges}
\end{equation}
As we are interested in the effect of the considered Long-Moody functor
$\mathbf{LM}$ on (very) strong and weak polynomial functors, our
objective is to study the cokernel of the natural transformation $i_{1}\mathbf{LM}:\mathbf{LM}\rightarrow\tau_{1}\circ\mathbf{LM}$.
We recall from the relation (\ref{eq:def_monoidal_UG}) that $\iota_{1}\natural id_{\underline{n}}=[1,id_{\underline{n+1}}]$.
Then, for $F$ an object of $\mathbf{Fct}(\mathfrak{U}\mathcal{G},R\textrm{-}\mathfrak{Mod})$
and for all natural numbers $n$, $i_{1}\mathbf{LM}$ is defined by
the morphisms:
\[
(i_{1}\mathbf{LM})(F)_{\underline{n}}=\mathbf{LM}(F)(\iota_{1}\natural id_{\underline{n}})=\mathbf{LM}(F)([1,id_{\underline{n+1}}]):\mathbf{LM}(F)(\underline{n})\rightarrow\tau_{1}\mathbf{LM}(F)(\underline{n}).
\]
Observe that, since the associated Long-Moody functor is right-exact
by Proposition \ref{prop:exactnessLM}, we have the following exact
sequence:
\begin{equation}
\xymatrix{\mathbf{LM}\ar@{->}[rr]^{\mathbf{LM}(i_{1})} &  & \mathbf{LM}\circ\tau_{1}\ar@{->}[rr]^{\mathbf{LM}(\varDelta_{1})} &  & \mathbf{LM}\circ\delta_{1}\ar@{->}[r] & 0}
.\label{eq:SESLMi1}
\end{equation}
Moreover, if the groups $H_{0}$ and $H$ are free, as the associated
Long-Moody functor is then exact by Corollary \ref{cor:exactnessLM},
we note that the following sequence is exact:
\begin{equation}
\xymatrix{0\ar@{->}[r] & \mathbf{LM}\circ\kappa_{1}\ar@{->}[rr]^{{\color{white}ooo}\mathbf{LM}(\Omega_{1})} &  & \mathbf{LM}\ar@{->}[rr]^{\mathbf{LM}(i_{1})} &  & \mathbf{LM}\circ\tau_{1}\ar@{->}[rr]^{\mathbf{LM}(\varDelta_{1})} &  & \mathbf{LM}\circ\delta_{1}\ar@{->}[r] & 0}
.\label{eq:LESLMi1}
\end{equation}

First, we prove that the functor $i_{1}\mathbf{LM}$ factors through
$\mathbf{LM}(i_{1})$ via a natural transformation $\xi$ defined
as follows. Let $F$ be an object of $\mathbf{Fct}(\mathfrak{U}\mathcal{G},R\textrm{-}\mathfrak{Mod})$
and $n$ be a natural number. We recall from the definition of a Long-Moody
functor and of the translation endofunctor $\tau_{1}$ that 
\[
(\mathbf{LM}\circ\tau_{1})(F)(\underline{n})=\mathcal{I}_{R[H_{n}]}\underset{R[H_{n}]}{\otimes}F(2\natural\underline{n})\textrm{ and }(\tau_{1}\circ\mathbf{LM})(F)(\underline{n})=\mathcal{I}_{R[H_{n+1}]}\underset{R[H_{n+1}]}{\otimes}F(2\natural\underline{n}).
\]
In particular, for $(\tau_{1}\circ\mathbf{LM})(F)(\underline{n})$,
the $R[G_{n+2}]$-module $F(2\natural\underline{n})$ is an $R[H_{n+1}]$-module
via the composition $F\circ\chi_{n+1}:H_{n+1}\rightarrow G_{n+2}\rightarrow\textrm{Aut}_{R}(F(2\natural\underline{n}))$.
We also note that $F(2\natural\underline{n})$ is an $R[H_{n}]$-module
using $F\circ(id_{1}\natural\chi_{n}(-)):H_{n}\rightarrow G_{n+2}\rightarrow\textrm{Aut}_{R}(F(2\natural\underline{n}))$
for $(\mathbf{LM}\circ\tau_{1})(F)(\underline{n})$. 

By Assumption \ref{assu:decomposeAfreeproduct}, the augmentation
ideal functor $\mathcal{I}_{R[\mathcal{A}]}$ defines the monomorphism
$\mathcal{I}_{R[\mathcal{A}]}(\iota_{1}\natural id_{\underline{n}}):\mathcal{I}_{R[H_{n}]}\hookrightarrow\mathcal{I}_{R[H_{n+1}]}$.
We also consider the automorphism $F((b_{1,1}^{\mathfrak{\mathcal{G}'}})^{-1}\natural id_{\underline{n}})$
of $F(2\natural\underline{n})$. Let $\otimes_{R[H_{n+1}]}$ denote
the canonical $R[H_{n+1}]$-balanced projection from $\mathcal{I}_{R[H_{n+1}]}\times F(2\natural\underline{n})$
to $\mathcal{I}_{R[H_{n+1}]}\otimes_{R[H_{n+1}]}F(2\natural\underline{n})$.
We denote by $\hat{\xi}(F)_{\underline{n}}$ the composition $\otimes_{R[H_{n+1}]}\circ(\mathcal{I}_{R[\mathcal{A}]}(\iota_{1}\natural id_{\underline{n}})\times F((b_{1,1}^{\mathfrak{\mathcal{G}'}})^{-1}\natural id_{\underline{n}}))$.
\begin{lem}
\label{lem:explicit_def_xsi} The morphism $\hat{\xi}(F)_{\underline{n}}$
is $R[H_{n}]$-balanced.
\end{lem}

\begin{proof}
We first note that it is enough to check that $\hat{\xi}(F)_{\underline{n}}$
is $H_{n}$-balanced, the result for $R[H_{n}]$ following by linearity.
We fix $h\in H_{n}$, $i\in\mathcal{I}_{R[H_{n}]}$ and $v\in F(2\natural\underline{n})$.
We deduce from the condition on $\mathcal{A}(\iota_{1}\natural id_{\underline{n}})$
of Assumption \ref{assu:decomposeAfreeproduct} that
\[
\mathcal{I}_{R[\mathcal{A}]}(\iota_{1}\natural id_{\underline{n}})(i\cdot h)=\mathcal{I}_{R[\mathcal{A}]}(\iota_{1}\natural id_{\underline{n}})(i)\cdot(\iota_{H}\ast id_{H_{n}})(h).
\]
Then, it follows from the functoriality of $F$ and from the relation
(\ref{eq:equiva'}) of Assumption \ref{assu:decomposeAfreeproduct}
that:
\begin{eqnarray*}
\hat{\xi}(F)_{\underline{n}}(i\cdot h\underset{R[H_{n}]}{\otimes}v) & = & \mathcal{I}_{R[\mathcal{A}]}(\iota_{1}\natural id_{\underline{n}})(i)\underset{R[H_{n+1}]}{\otimes}F(\chi_{n+1}((\iota_{H}\ast id_{H_{n}})(h)))\circ F((b_{1,1}^{\mathfrak{\mathcal{G}'}})^{-1}\natural id_{\underline{n}})(v)\\
 & = & \mathcal{I}_{R[\mathcal{A}]}(\iota_{1}\natural id_{\underline{n}})(i)\underset{R[H_{n+1}]}{\otimes}F((b_{1,1}^{\mathfrak{\mathcal{G}'}})^{-1}\natural id_{\underline{n}})\circ F(id_{1}\natural\chi_{n}(h))(v)\\
 & = & \hat{\xi}(F)_{\underline{n}}(i\underset{R[H_{n}]}{\otimes}\tau_{1}F(\chi_{n}(h))(v)),
\end{eqnarray*}
which ends the proof.
\end{proof}
\begin{rem}
\label{rem:adduseful0}We stress that the condition (\ref{eq:equiva'})
of Assumption \ref{assu:decomposeAfreeproduct} is required for Lemma
\ref{lem:explicit_def_xsi}.
\end{rem}

It follows from Lemma \ref{lem:explicit_def_xsi} that the universal
property of the tensor product over $R[H_{n}]$ defines a unique morphism
$\xi(F)_{\underline{n}}$ from $(\mathbf{LM}\circ\tau_{1})(F)(\underline{n})$
to $\tau_{1}\mathbf{LM}(F)(\underline{n})$ for each $n\in\mathbb{N}$.
Actually, there is a more explicit description of the morphism $\xi(F)_{\underline{n}}$:
it is a routine to check that the conjugation of $\xi(F)_{\underline{n}}$
by the canonical isomorphism from $\mathcal{I}_{R[H_{n}]}\otimes_{R[H_{n}]}F(2\natural\underline{n})$
to $(\mathcal{I}_{R[H_{n}]}\otimes_{R[H_{n}]}R[H*H_{n}])\otimes_{R[H_{n+1}]}F(2\natural\underline{n})$
is equal to the morphism
\begin{equation}
(\mathcal{I}_{R[\mathcal{A}]}(\iota_{1}\natural id_{\underline{n}})\underset{R[H_{n}]}{\otimes}id_{R[H*H_{n}]})\underset{R[H_{n+1}]}{\otimes}F((b_{1,1}^{\mathfrak{\mathcal{G}'}})^{-1}\natural id_{\underline{n}}).\label{eq:alternative_xsi_n}
\end{equation}
This explicit definition allows in particular to prove the following
property:
\begin{lem}
\label{lem:xsi_mono}The morphism $\xi(F)_{\underline{n}}$ is a monomorphism
for each $n$.
\end{lem}

\begin{proof}
We recall that $t_{G}:G\rightarrow0_{\mathfrak{Gr}}$ denotes the
unique morphism from the group $G$ to $0_{\mathfrak{Gr}}$. We denote
by $\mathcal{I}_{R[\mathcal{A}]}^{-1}(\iota_{1}\natural id_{\underline{n}}):\mathcal{I}_{R[H_{n+1}]}\twoheadrightarrow\mathcal{I}_{R[H_{n}]}$
the $R$-module surjection induced by the group surjection $t_{H}*id_{H_{n}}:H*H_{n}\twoheadrightarrow H_{n}$.
Since $\mathcal{I}_{R[\mathcal{A}]}^{-1}(\iota_{1}\natural id_{\underline{n}})$
is a left inverse for $\mathcal{I}_{R[\mathcal{A}]}(\iota_{1}\natural id_{\underline{n}})$,
we deduce that the morphism $(\mathcal{I}_{R[\mathcal{A}]}^{-1}(\iota_{1}\natural id_{\underline{n}})\otimes_{R[H_{n}]}id_{R[H*H_{n}]})\otimes_{R[H_{n+1}]}F(b_{1,1}^{\mathfrak{\mathcal{G}'}}\natural id_{\underline{n}})$
defines a left inverse for $\xi(F)_{\underline{n}}$.
\end{proof}
The morphisms of the form $\xi(F)_{\underline{n}}$ are the key to
define the natural transformation $\xi$:
\begin{prop}
\label{prop:subfunclmtm}The morphisms $\{\xi(F)_{\underline{n}}\}$
define a natural transformation $\xi(F):(\mathbf{LM}\circ\tau_{1})(F)\rightarrow(\tau_{1}\circ\mathbf{LM})(F)$.
Then this yields a natural transformation $\xi:\mathbf{LM}\circ\tau_{1}\rightarrow\tau_{1}\circ\mathbf{LM}$.
\end{prop}

\begin{proof}
Let $n$ and $n'$ be natural numbers such that $n'\geq n$, and $[n'-n,g]\in\textrm{Hom}_{\mathfrak{U}\mathcal{G}}(\underline{n},\underline{n'})$.
Since $\mathfrak{s}_{\mathcal{A}}^{*}\circ\chi^{*}=1\natural-$ as
endofunctors of $\mathfrak{U}\mathcal{G}$, we recall from Lemma \ref{lem:explicit_def_T}
that for all $i\in\mathcal{I}_{R[H_{n}]}$ and $v\in F(2\natural\underline{n})$:
\[
(\mathbf{LM}\circ\tau_{1})(F)([n'-n,g])(i\underset{R[H_{n}]}{\otimes}v)=\mathcal{I}_{R[\mathcal{A}]}([n'-n,g])(i)\underset{R[H_{n'}]}{\otimes}F(id_{1}\natural id_{1}\natural[n'-n,g])(v),
\]
\[
(\tau_{1}\circ\mathbf{LM})(F)([n'-n,g])(i\underset{R[H_{n}]}{\otimes}v)=\mathcal{I}_{R[\mathcal{A}]}(id_{1}\natural[n'-n,g])(i)\underset{R[H_{n'+1}]}{\otimes}F(id_{1}\natural id_{1}\natural[n'-n,g])(v).
\]
It follows from the defining equivalence relation (\ref{eq:equivalence relation})
of $\mathfrak{U}\mathcal{G}'$ that $[1+n'-n,(b_{1,n'-n}^{\mathfrak{\mathcal{G}'}})^{-1}\natural id_{\underline{n}}]=[1+n'-n,id_{\underline{n'+1}}]$.
We then compute that $(id_{1}\natural[n'-n,g])\circ(\iota_{1}\natural id_{\underline{n}})=[1+n'-n,id_{1}\natural g]$.
Since $\mathcal{I}_{R[\mathcal{A}]}$ is a functor over $\mathfrak{U}\mathcal{G}'$,
we deduce that:
\[
\mathcal{I}_{R[\mathcal{A}]}(id_{1}\natural[n'-n,g])\circ(\iota_{1}\natural id_{\underline{n}})=\mathcal{I}_{R[\mathcal{A}]}([1+n'-n,id_{1}\natural g])=\mathcal{I}_{R[\mathcal{A}]}(\iota_{1}\natural id_{\underline{n'}})\circ\mathcal{I}_{R[\mathcal{A}]}([n'-n,g]).
\]
Also, as a consequence of the properties of a monoidal structure,
we compute that:
\[
(id_{2}\natural[n'-n,g])\circ((b_{1,1}^{\mathfrak{\mathcal{G}'}})^{-1}\natural id_{\underline{n}})=(b_{1,1}^{\mathfrak{\mathcal{G}'}})^{-1}\natural[n'-n,g]=((b_{1,1}^{\mathfrak{\mathcal{G}'}})^{-1}\natural id_{\underline{n'}})\circ(id_{2}\natural[n'-n,g]).
\]
Hence, we deduce from the definitions that for all $i\in\mathcal{I}_{R[H_{n}]}$
and $v\in F(2\natural\underline{n})$
\begin{eqnarray*}
((\tau_{1}\circ\mathbf{LM})(F)([n'-n,g]))\circ\xi(F)_{\underline{n}}(i\underset{R[H_{n}]}{\otimes}v) & = & \mathcal{I}_{R[\mathcal{A}]}([1+n'-n,id_{1}\natural g])(i)\underset{R[H_{n'+1}]}{\otimes}F((b_{1,1}^{\mathfrak{\mathcal{G}'}})^{-1}\natural[n'-n,g])(v)\\
 & = & \xi(F)_{\underline{n'}}\circ((\mathbf{LM}\circ\tau_{1})(F)([n'-n,g]))(i\underset{R[H_{n}]}{\otimes}v).
\end{eqnarray*}
Therefore, $\xi(F)$ is a natural transformation from $(\tau_{1}\circ\mathbf{LM})(F)$
to $(\mathbf{LM}\circ\tau_{1})(F)$. Let us now check that assembling
all these natural transformations for all the objects of $\mathbf{Fct}(\mathfrak{U}\mathcal{G},R\textrm{-}\mathfrak{Mod})$
defines a natural transformation from $\tau_{1}\circ\mathbf{LM}$
to $\mathbf{LM}\circ\tau_{1}$. Let $M$ and $N$ be two objects of
$\mathbf{Fct}(\mathfrak{U}\mathcal{G},R\textrm{-}\mathfrak{Mod})$
and $\eta:M\rightarrow N$ be a natural transformation. It follows
from Lemma \ref{lem:explicit_def_T} that the natural transformations
$(\mathbf{LM}\circ\tau_{1})(\eta)$ and $(\tau_{1}\circ\mathbf{LM})(\eta)$
are respectively given by:
\[
(\mathbf{LM}\circ\tau_{1})(\eta)_{\underline{n}}=id_{\mathcal{I}_{R[H_{n}]}}\underset{R[H_{n}]}{\varotimes}\eta_{2\natural\underline{n}}\textrm{ and }(\tau_{1}\circ\mathbf{LM})(\eta)_{\underline{n}}=id_{\mathcal{I}_{R[H_{1+n}]}}\underset{R[H_{1+n}]}{\varotimes}\eta_{2\natural\underline{n}}
\]
for all natural numbers $n$. Since $\eta$ is a natural transformation,
we have $N((b_{1,1}^{\mathfrak{\mathcal{G}'}})^{-1}\natural id_{\underline{n}})\circ\eta_{2\natural\underline{n}}=\eta_{2\natural\underline{n}}\circ M((b_{1,1}^{\mathfrak{\mathcal{G}'}})^{-1}\natural id_{\underline{n}})$
and we deduce that $\xi(N)_{\underline{n}}\circ(\mathbf{LM}\circ\tau_{1})(\eta)_{\underline{n}}=(\tau_{1}\circ\mathbf{LM})(\eta)_{\underline{n}}\circ\xi(M)_{\underline{n}}$.
Hence $\xi(M)\circ(\mathbf{LM}\circ\tau_{1})(\eta)$ and $(\tau_{1}\circ\mathbf{LM})(\eta)\circ\xi(N)$
are equal as natural transformations from $(\mathbf{LM}\circ\tau_{1})(M)$
to $(\mathbf{LM}\circ\tau_{1})(N)$, which ends the proof.
\end{proof}
Finally, we prove that the natural transformation $i_{1}\mathbf{LM}$
factors across $\mathbf{LM}(i_{1})$. As a result, we obtain that
the endofunctor $\mathbf{LM}\circ\delta_{1}$ is a subfunctor of $\delta_{1}\circ\mathbf{LM}$;
see diagram (\ref{eq:diagram_Xsi}).
\begin{prop}
\label{lem:relationLMi1andi1LM}As natural transformations from $\mathbf{LM}$
to $\tau_{1}\circ\mathbf{LM}$, the following equality holds:
\[
\xi\circ(\mathbf{LM}(i_{1}))=i_{1}\mathbf{LM}.
\]
Moreover, there exists a unique natural transformation $\mathbf{LM}\circ\delta_{1}\rightarrow\delta_{1}\circ\mathbf{LM}$
such that the following diagram is commutative and the rows are exact
sequences in the category of endofunctors of $\mathbf{Fct}(\mathfrak{U}\mathcal{G},R\textrm{-}\mathfrak{Mod})$:
\begin{equation}
\xymatrix{0\ar@{->}[r] & \mathbf{LM}\circ\tau_{1}\ar@{->}[r]^{\xi}\ar@{->>}[d] & \tau_{1}\circ\mathbf{LM}\ar@{->}[r]^{\varUpsilon}\ar@{->>}[d] & \textrm{Coker}(\xi)\ar@{->}[r]\ar@{=}[d] & 0\\
0\ar@{->}[r] & \mathbf{LM}\circ\delta_{1}\ar@{->}[r] & \delta_{1}\circ\mathbf{LM}\ar@{->}[r] & \textrm{Coker}(\xi)\ar@{->}[r] & 0.
}
\label{eq:diagram_Xsi}
\end{equation}
\end{prop}

\begin{proof}
Let $F$ be an object of $\mathbf{Fct}(\mathfrak{U}\mathcal{G},R\textrm{-}\mathfrak{Mod})$
and $n$ be a natural number. It follows from the definition of a
Long-Moody functor on morphisms and natural transformations (see Lemma
\ref{lem:explicit_def_T}) and the definition of the natural transformation
$i_{1}$ (see Section \ref{subsec:Prerequisite-on-strong}) that

\[
\mathbf{LM}(i_{1}F)_{\underline{n}}=id_{\mathcal{I}_{R[H_{n}]}}\underset{R[H_{1+n}]}{\varotimes}F(\iota_{1}\natural id_{1}\natural id_{\underline{n}}),
\]

\[
(i_{1}\mathbf{LM})(F)_{\underline{n}}=\mathbf{LM}(F)(\iota_{1}\natural id_{\underline{n}})=\mathcal{I}_{\mathcal{A}}(\iota_{1}\natural id_{\underline{n}})\underset{R[H_{1+n}]}{\varotimes}F(id_{1}\natural\iota_{1}\natural id_{\underline{n}}).
\]
Since $\mathfrak{U}\mathcal{G}'$ is pre-braided, we have $(b_{1,1}^{\mathfrak{\mathcal{G}'}})^{-1}\circ(\iota_{1}\natural id_{1})=id_{1}\natural\iota_{1}$
by the relation (\ref{eq:defbraid}) and we deduce from the definition
of $\xi$ that $\xi(F)_{\underline{n}}\circ\mathbf{LM}(i_{1}F)_{\underline{n}}=(i_{1}\mathbf{LM})(F)_{\underline{n}}$.
Hence the following diagram is commutative for any object $F$ of
$\mathbf{Fct}(\mathfrak{U}\mathcal{G},R\textrm{-}\mathfrak{Mod})$
\[
\xymatrix{\mathbf{LM}(F)\ar@{->}[d]_{(i_{1}\mathbf{LM})(F)}\ar@{->}[dr]^{\mathbf{LM}(i_{1}F)}\\
(\mathbf{LM}\circ\tau_{1})(F)\ar@{->}[r]_{\xi(F)} & (\tau_{1}\circ\mathbf{LM})(F),
}
\]
which proves the first statement. Then, we recall that for two morphisms
$f$ and $g$ of an abelian category such that the target of $f$
is the source of $g$ and $g$ is a monomorphism, the snake lemma
provides a short exact sequence for the cokernels $0\rightarrow\textrm{Coker}(f)\rightarrow\textrm{Coker}(g\circ f)\rightarrow\textrm{Coker}(g)\rightarrow0$.
Since $\mathbf{Fct}(\mathfrak{U}\mathcal{G},R\textrm{-}\mathfrak{Mod})$
is an abelian category, the bottom short exact sequence of (\ref{eq:diagram_Xsi})
is an instance of that result assigning $f=\mathbf{LM}(i_{1})$ and
$g=\xi$ by using the equality $\xi\circ(\mathbf{LM}(i_{1}))=i_{1}\mathbf{LM}$
and Lemma (\ref{lem:xsi_mono}). Finally, in the diagram (\ref{eq:diagram_Xsi}),
the top short exact sequence is obtained from the definition of $\xi$
and Lemma (\ref{lem:xsi_mono}), and the commutativities for the two
squares formally follow from the definitions of the arrows of the
bottom short exact sequence.
\end{proof}

\subsubsection{Study of $\textrm{Coker}(\xi)$\label{subsec:Studycoker}}

By Proposition \ref{lem:relationLMi1andi1LM}, we know that the endofunctor
$\tau_{1}\circ\mathbf{LM}$ is an extension of $\textrm{Coker}(\xi)$
by $\mathbf{LM}\circ\tau_{1}$; see diagram (\ref{eq:diagram_Xsi}).
In this section, we prove that this extension is actually trivial;
see Corollary \ref{cor:splittingtranslation}. For this purpose, we
first make the following observations.

Let\textit{ $F$} be an object of $\mathbf{Fct}(\mathfrak{U}\mathcal{G},R\textrm{-}\mathfrak{Mod})$
and $n$ be a natural number. We recall a classical decomposition
for the augmentation ideal of a free group, which may be found in
\cite[Proposition 6.2.9]{Weibel1} (stated for $R=\mathbb{Z}$, but
which can be straightforwardly generalized to any commutative ring
$R$) or in \cite[Section 4, Lemma 4.3 and Theorem 4.7]{cohencohomo}
(we note that these statements are done for groups which are not necessarily
of cohomological dimension at most $1$). For two groups $A$ and
$B$, there is a right $R[A*B]$-module isomorphism:
\begin{equation}
(\mathcal{I}_{R[A]}\underset{R[A]}{\otimes}R[A*B])\oplus(\mathcal{I}_{R[B]}\underset{R[B]}{\otimes}R[A*B])\cong\mathcal{I}_{R[A*B]}.\label{eq:general_decomposition_ideal}
\end{equation}
where $R[A*B]$ is an $R[A]$-module via the canonical inclusion $id_{A}*\iota_{B}:A\rightarrow A*B$
and an $R[B]$-module via $\iota_{A}\ast id_{B}:B\rightarrow A*B$.
We recall that $t_{G}:G\rightarrow0_{\mathfrak{Gr}}$ denotes the
unique morphism from the group $G$ to $0_{\mathfrak{Gr}}$. We denote
by $\mathcal{I}(id_{A}*\iota_{B}):\mathcal{I}_{R[A]}\hookrightarrow\mathcal{I}_{R[A*B]}$
and $\mathcal{I}(\iota_{A}*id_{B}):\mathcal{I}_{R[B]}\hookrightarrow\mathcal{I}_{R[A*B]}$
the $R$-module injections induced by the group injections $id_{A}*\iota_{B}:A\hookrightarrow A*B$
and $\iota_{A}*id_{B}:B\hookrightarrow A*B$. The isomorphism (\ref{eq:general_decomposition_ideal})
is explicitly defined by the direct sum of the injections $\mathcal{I}(id_{A}*\iota_{B})\varotimes_{R[A]}id_{R[A*B]}$
and $\mathcal{I}(\iota_{A}*id_{B})\varotimes_{R[B]}id_{R[A*B]}$;
we refer the reader to \cite[Proposition 6.2.9]{Weibel1} for further
details.

Applying the isomorphism (\ref{eq:general_decomposition_ideal}) provides
the following Lemma \ref{lem:application_ideal}. Before this, we
recall and introduce some notations:
\begin{notation}
\label{nota:complement_IA}Let $n$ and $n'$ be natural numbers such
that $n'\geq n$. We recall from Assumption \ref{assu:decomposeAfreeproduct}
that the functor $\mathcal{A}$ is defined over the category $\mathfrak{U}\mathcal{G}'$
and that $\mathcal{A}(id_{n}\natural\iota_{\underline{n'-n}})$ is
equal to $id_{H^{*n}}*\iota_{H_{n'-n}}$. Also, by definition (see
Section \ref{subsec:Framework-of-the}), the augmentation ideal functor
$\mathcal{I}_{R[\mathcal{A}]}$ is defined over the category $\mathfrak{U}\mathcal{G}'$
and in particular defines the $R$-module injection $\mathcal{I}_{R[\mathcal{A}]}(id_{n}\natural\iota_{\underline{n'-n}}):\mathcal{I}_{R[H^{*n}]}\hookrightarrow\mathcal{I}_{R[H_{n'}]}$
induced by the group injection $id_{H^{*n}}*\iota_{H_{n'-n}}$. We
denote by $\mathcal{I}_{R[\mathcal{A}]}^{-1}(id_{n}\natural\iota_{\underline{n'-n}}):\mathcal{I}_{R[H_{n'}]}\twoheadrightarrow\mathcal{I}_{R[H^{*n}]}$
the $R$-module surjection induced by the group surjection $id_{H^{*n}}*t_{H_{n'-n}}:H_{n'}\twoheadrightarrow H^{*n}$.
\end{notation}

\begin{lem}
\label{lem:application_ideal}The direct sum of $(\mathcal{I}_{R[\mathcal{A}]}(id_{1}\natural\iota_{\underline{n}})\varotimes_{R[H]}id_{R[H*H_{n}]})\varotimes_{R[H_{1+n}]}id_{F(2\natural\underline{n})}$
and $\xi(F)_{\underline{n}}$ provides an $R$-module isomorphism:
\begin{eqnarray}
((\mathcal{I}_{R[H]}\underset{R[H]}{\otimes}R[H*H_{n}])\underset{R[H_{n+1}]}{\otimes}F(2\natural\underline{n}))\oplus(\mathbf{LM}\circ\tau_{1})(F)(\underline{n}) & \overset{\cong}{\longrightarrow} & \tau_{1}\mathbf{LM}(F)(\underline{n}).\label{eq:(*)-1}
\end{eqnarray}
\end{lem}

\begin{proof}
We deduce from the isomorphism (\ref{eq:general_decomposition_ideal}),
from the definition of $\tau_{1}\mathbf{LM}(F)(\underline{n})$ and
from the distributivity of the tensor product that the direct sum
of the morphisms $(\mathcal{I}_{R[\mathcal{A}]}(id_{1}\natural\iota_{\underline{n}})\otimes_{R[H]}id_{R[H*H_{n}]})\otimes_{R[H_{n+1}]}id_{F(2\natural\underline{n})}$
and $(\mathcal{I}_{R[\mathcal{A}]}(\iota_{1}\natural id_{\underline{n}})\otimes_{R[H_{n}]}id_{R[H*H_{n}]})\otimes_{R[H_{n+1}]}id_{F(2\natural\underline{n})}$
defines an $R$-module isomorphism:
\[
\left((\mathcal{I}_{R[H]}\underset{R[H]}{\otimes}R[H*H_{n}])\underset{R[H_{n+1}]}{\otimes}F(2\natural\underline{n})\right)\oplus\left((\mathcal{I}_{R[H_{n}]}\underset{R[H_{n}]}{\otimes}R[H*H_{n}])\underset{R[H_{n+1}]}{\otimes}F(2\natural\underline{n})\right)\overset{\cong}{\longrightarrow}\tau_{1}\mathbf{LM}(F)(\underline{n}).
\]
Also, precomposing the right-hand summand with the automorphism $(id_{R[H_{n}]}\otimes_{R[H_{n}]}id_{R[H*H_{n}]})\otimes_{R[H_{n+1}]}F((b_{1,1}^{\mathfrak{\mathcal{G}'}})^{-1}\natural id_{\underline{n}})$,
the injection associated with the right hand summand is equal to the
morphism (\ref{eq:alternative_xsi_n}). Hence, using the canonical
isomorphism between $\mathcal{I}_{R[H_{n}]}\otimes_{R[H_{n}]}F(2\natural\underline{n})$
and $(\mathcal{I}_{R[H_{n}]}\otimes_{R[H_{n}]}R[H*H_{n}])\otimes_{R[H_{n+1}]}F(2\natural\underline{n})$,
we identify the right-hand summand with the $R$-module $(\mathbf{LM}\circ\tau_{1})(F)(\underline{n})$
and associated injection $\xi(F)_{\underline{n}}$.
\end{proof}
It follows from Lemma \ref{lem:application_ideal} and from the universal
property of a cokernel that the $R$-module $\textrm{Coker}(\xi)(F(\underline{n}))$
is isomorphic to the left hand summand of (\ref{eq:(*)-1}). We consider
the natural transformation $\varUpsilon:\tau_{1}\circ\mathbf{LM}\rightarrow\textrm{Coker}(\xi)$
of Proposition \ref{lem:relationLMi1andi1LM}. We deduce from the
isomorphism (\ref{eq:(*)-1}) that the morphism $\varUpsilon(F)_{\underline{n}}$
is equal to $(\mathcal{I}_{R[\mathcal{A}]}^{-1}(id_{1}\natural\iota_{\underline{n}})\varotimes_{R[H]}id_{R[H*H_{n}]})\varotimes_{R[H_{1+n}]}id_{F(2\natural\underline{n})}$.
This leads ineluctably to wonder if the left-hand side of the isomorphism
(\ref{eq:(*)-1}) is a direct sum of two endofunctors of $\mathbf{Fct}(\mathfrak{U}\mathcal{G},R\textrm{-}\mathfrak{Mod})$.

\paragraph{Identification with a translation functor.}

We aim at identifying the left-hand summand the isomorphism (\ref{eq:(*)-1})
as the defining input of a certain endofunctor of $\mathbf{Fct}(\mathfrak{U}\mathcal{G},R\textrm{-}\mathfrak{Mod})$
for each object $F$ of $\mathbf{Fct}(\mathfrak{U}\mathcal{G},R\textrm{-}\mathfrak{Mod})$
and each natural number $n$. Beforehand, we highlight the following
technical relation, which is used in the proof of Proposition \ref{prop:I_tens_tau_well_defined}.
We recall that $e_{G}$ denotes the unit element of a group $G$.
\begin{prop}
\label{prop:key_relation}Let $n$ and $n'$ be natural numbers such
that $n'\geq n$, let $[n'-n,g]\in\textrm{Hom}_{\mathfrak{U}\mathcal{G}}(\underline{n},\underline{n'})$
and let $h\in H$. Then:
\begin{equation}
\chi_{n'+1}(h*e_{H_{n'}})\circ(id_{2}\natural[n'-n,g])=(id_{2}\natural[n'-n,g])\circ\chi_{n+1}(h*e_{H_{n}}).\label{eq:highlighted_relation}
\end{equation}
\end{prop}

\begin{proof}
We deduce from the relation (\ref{eq:equiva'}) that the following
square of the groupoid $\mathcal{G}$ is commutative:
\[
\xymatrix{2\natural\underline{n'}\ar@{->}[rrrr]^{id_{n'-n-k}\natural\chi_{n+k}(e_{H^{*k-1}}*h*e_{H_{n}})}\ar@{->}[d]_{id_{n'-n-k}\natural(b_{1,1}^{\mathcal{G}'})^{-1}\natural id_{\underline{k+n}}} &  &  &  & 2\natural\underline{n'}\ar@{->}[d]^{id_{n'-n-k}\natural(b_{1,1}^{\mathcal{G}'})^{-1}\natural id_{\underline{k+n}}}\\
2\natural\underline{n'}\ar@{->}[rrrr]_{id_{n'-n-k}\natural\chi_{n+k+1}(e_{H^{*k}}*h*e_{H_{n}})} &  &  &  & 2\natural\underline{n'}
}
\]
for each $1\leq k\leq n'-n$. Composing the above squares for all
$1\leq k\leq n'-n$ and recalling that we have
\begin{eqnarray*}
b_{1,n'-n}^{\mathcal{G}'} & = & (id_{n'-n-1}\natural b_{1,1}^{\mathcal{G}'})\circ(id_{n'-n-2}\natural b_{1,1}^{\mathcal{G}'}\natural id_{1})\circ\cdots\circ(id_{1}\natural b_{1,1}^{\mathcal{G}'}\natural id_{n'-n-2})\circ(b_{1,1}^{\mathcal{G}'}\natural id_{n'-n-1})
\end{eqnarray*}
by the definition of a braiding, we deduce the following equality:
\begin{equation}
((b_{1,n'-n}^{\mathcal{G}'})^{-1}\natural id_{1}\natural id_{\underline{n}})\circ(id_{n'-n}\natural\chi_{n+1}(h*e_{H_{n}}))=\chi_{n'+1}(e_{H^{*n'-n}}*h*e_{H_{n}})\circ((b_{1,n'-n}^{\mathcal{G}'})^{-1}\natural id_{1}\natural id_{\underline{n}})\label{eq:proofcompatibilitytensor0}
\end{equation}
Furthermore, it follows from the relation (\ref{eq::equivcond:coherenceconditionsigmanan})
induced by Lemma \ref{lem:equivcond:coherenceconditionsigmanan} that
\begin{eqnarray*}
 &  & (id_{1}\natural((b_{1,n'-n}^{\mathcal{G}'})^{-1}\natural id_{\underline{n}}))\circ\chi_{n'+1}(e_{H^{*n'-n}}*h*e_{H_{n}})\\
 & = & \chi_{n'+1}(\mathcal{A}((b_{1,n'-n}^{\mathcal{G}'})^{-1}\natural id_{\underline{n}})(e_{H^{*n'-n}}*h*e_{H_{n}}))\circ(id_{1}\natural((b_{1,n'-n}^{\mathcal{G}'})^{-1}\natural id_{\underline{n}})).
\end{eqnarray*}
We deduce from the relation (\ref{eq:key_equality_A_2}) that $\mathcal{A}((b_{1,n'-n}^{\mathcal{G}'})^{-1}\natural id_{\underline{n}})(e_{H^{*n'-n}}*h*e_{H_{n}})=h*e_{H_{n'}}$
and thus note that
\begin{equation}
(id_{1}\natural((b_{1,n'-n}^{\mathcal{G}'})^{-1}\natural id_{\underline{n}}))\circ\chi_{n'+1}(e_{H^{*n'-n}}*h*e_{H_{n}})=\chi_{n'+1}(h*e_{H_{n'}})\circ(id_{1}\natural((b_{1,n'-n}^{\mathcal{G}'})^{-1}\natural id_{\underline{n}})).\label{eq:proofcompatibilitytensor-1}
\end{equation}
We recall from the definition of the monoidal structure in $\mathfrak{U}\mathcal{G}$
(see the relation (\ref{eq:def_monoidal_UG})) that:
\[
(id_{2}\natural[n'-n,id_{\underline{n'}}])\circ\chi_{n+1}(h*e_{H_{n}})=[n'-n,((b_{2,n'-n}^{\mathcal{G}'})^{-1}\natural id_{\underline{n}})\circ(id_{n'-n}\natural\chi_{n+1}(h*e_{H_{n}})].
\]
By the definition of a braiding, we have $b_{2,n'-n}^{\mathcal{G}'}=(b_{1,n'-n}^{\mathcal{G}'}\natural id_{1})\circ(id_{1}\natural b_{1,n'-n}^{\mathcal{G}'})$.
Therefore, using the relations (\ref{eq:proofcompatibilitytensor0})
and (\ref{eq:proofcompatibilitytensor-1}), we prove the following
key equality as morphisms in $\mathfrak{U}\mathcal{G}$:
\begin{equation}
(id_{2}\natural[n'-n,id_{\underline{n'}}])\circ\chi_{n+1}(h*e_{H_{n}})=\chi_{n'+1}(h*e_{H_{n'}})\circ(id_{2}\natural[n'-n,id_{\underline{n'}}]).\label{eq:proofcompatibilitytensor1}
\end{equation}
On another note, since $(id_{2}\natural g)\circ\chi_{n'+1}(h*e_{H_{n'}})=\chi_{n'+1}(\mathcal{A}(id_{1}\natural g)(h*e_{H_{n'}}))\circ(id_{2}\natural g)$
by the relation (\ref{eq::equivcond:coherenceconditionsigmanan})
induced from Lemma \ref{lem:equivcond:coherenceconditionsigmanan}
, we deduce from the relation (\ref{eq:key_equality_A_1}) that
\begin{equation}
(id_{2}\natural g)\circ\chi_{n'+1}(h*e_{H_{n'}})=\chi_{n'+1}(h*e_{H_{n'}})\circ(id_{2}\natural g).\label{eq:proofcompatibilitytesnor2}
\end{equation}
Finally, the equality (\ref{eq:highlighted_relation}) follows from
the combination of the relations (\ref{eq:proofcompatibilitytensor1})
and (\ref{eq:proofcompatibilitytesnor2}) since $id_{2}\natural[n'-n,g]=(id_{2}\natural g)\circ[n'-n,id_{\underline{n'}}]$.
\end{proof}
We now introduce the endofunctor $\mathcal{I}_{R[H]}\otimes_{R[H]}\tau_{2}$
of $\mathbf{Fct}(\mathfrak{U}\mathcal{G},R\textrm{-}\mathfrak{Mod})$
to which we will identify $\textrm{Coker}(\xi)$. For each object
$F$ of $\mathbf{Fct}(\mathfrak{U}\mathcal{G},R\textrm{-}\mathfrak{Mod})$,
we assign $(\mathcal{I}_{R[H]}\otimes_{R[H]}\tau_{2})(F)(\underline{n})$
to be $\mathcal{I}_{R[H]}\otimes_{R[H]}(\tau_{2}F)(\underline{n})$
for each $n\in\mathbb{N}$. In particular, $(\tau_{2}F)(\underline{n})=F(2\natural\underline{n})$
is an $R[H]$-module via the composition $H\rightarrow G_{n+2}\rightarrow\textrm{Aut}_{R}(F(2\natural\underline{n}))$
defined by $F\circ\chi_{n+1}\circ(id_{H}\ast\iota_{H_{n}})$. For
each morphism $[n'-n,g]$ of $\mathfrak{U}\mathcal{G}$, we assign
$(\mathcal{I}_{R[H]}\otimes_{R[H]}\tau_{2})(F)([n'-n,g])$ to be the
tensor product morphism $id_{\mathcal{I}_{R[H]}}\otimes_{R[H]}F(id_{2}\natural[n'-n,g])$.
Finally, for each natural transformation $\eta:F\to G$ of $\mathbf{Fct}(\mathfrak{U}\mathcal{G},R\textrm{-}\mathfrak{Mod})$,
we assign $(\mathcal{I}_{R[H]}\otimes_{R[H]}\tau_{2})(\eta)$ to be
the natural transformation defined by $id_{\mathcal{I}_{R[H]}}\otimes_{R[H]}\eta_{2\natural\underline{n}}$
for each $n\in\mathbb{N}$.
\begin{prop}
\label{prop:I_tens_tau_well_defined}The above assignments define
an endofunctor $\mathcal{I}_{R[H]}\otimes_{R[H]}\tau_{2}$ of $\mathbf{Fct}(\mathfrak{U}\mathcal{G},R\textrm{-}\mathfrak{Mod})$.
\end{prop}

\begin{proof}
Let us prove that $\mathcal{I}_{R[H]}\otimes_{R[H]}\tau_{2}F$ is
an object of $\mathbf{Fct}(\mathfrak{U}\mathcal{G},R\textrm{-}\mathfrak{Mod})$
for any functor $F:\mathfrak{U}\mathcal{G}\to R\textrm{-}\mathfrak{Mod}$.
First, we check that the assignment $(\mathcal{I}_{R[H]}\otimes_{R[H]}\tau_{2})(F)([n'-n,g])$
is well-defined with respect to the tensor product over $R[H]$ for
any morphism $[n'-n,g]$ of $\mathfrak{U}\mathcal{G}$ as follows.
We fix $h\in H$, $i\in\mathcal{I}_{R[H_{n}]}$ and $v\in F(2\natural\underline{n})$.
Then, it follows from the relation (\ref{eq:highlighted_relation})
of Proposition \ref{prop:key_relation} and from the functoriality
of $F$ that:
\begin{eqnarray*}
(\mathcal{I}_{R[H]}\underset{R[H]}{\otimes}\tau_{2}F)([n'-n,g])(i\cdot h\underset{R[H]}{\otimes}v) & = & i\underset{R[H]}{\otimes}F(\chi_{n'+1}(h\ast e_{H_{n'}}))\circ F(id_{2}\natural[n'-n,g])(v)\\
 & = & i\underset{R[H]}{\otimes}F(id_{2}\natural[n'-n,g])\circ F(\chi_{n+1}(h\ast e_{H_{n}}))(v)\\
 & = & (\mathcal{I}_{R[H]}\underset{R[H]}{\otimes}\tau_{2}F)([n'-n,g])(i\underset{R[H]}{\otimes}F(\chi_{n+1}(h\ast e_{H_{n}}))(v)),
\end{eqnarray*}
which proves the consistency of the assignment on morphisms with respect
to the tensor product structure. Then, it is clear from the functoriality
of $F$ that the identity and composition axioms for the morphisms
are satisfied by the assignments defining $(\mathcal{I}_{R[H]}\otimes_{R[H]}\tau_{2})(F)$,
which is therefore a functor $\mathfrak{U}\mathcal{G}\rightarrow R\textrm{-}\mathfrak{Mod}$.

Furthermore, the fact that $\eta:F\to G$ is a natural transformation
directly implies that the assignment for $(\mathcal{I}_{R[H]}\otimes_{R[H]}\tau_{2})(\eta)$
defines a natural transformation from $(\mathcal{I}_{R[H]}\otimes_{R[H]}\tau_{2})(F)$
to $(\mathcal{I}_{R[H]}\otimes_{R[H]}\tau_{2})(G)$. Also, the identity
and composition axioms for the natural transformations (ie the morphisms
of the category $\mathbf{Fct}(\mathfrak{U}\mathcal{G},R\textrm{-}\mathfrak{Mod})$)
are straightforwardly checked from the definitions.
\end{proof}
Let us now introduce a natural transformation from $\mathcal{I}_{R[H]}\otimes_{R[H]}\tau_{2}$
to $\tau_{1}\circ\mathbf{LM}$. We fix an object $F$ of $\mathbf{Fct}(\mathfrak{U}\mathcal{G},R\textrm{-}\mathfrak{Mod})$
and a natural number $n$. We consider the $R$-module injection $\mathcal{I}_{R[\mathcal{A}]}(id_{1}\natural\iota_{\underline{n}}):\mathcal{I}_{R[H]}\hookrightarrow\mathcal{I}_{R[H_{n+1}]}$
recalled in Notation \ref{nota:complement_IA}. We recall that $\otimes_{R[H_{n+1}]}$
denotes the canonical $R[H_{n+1}]$-balanced projection from $\mathcal{I}_{R[H_{n+1}]}\times F(2\natural\underline{n})$
to $\mathcal{I}_{R[H_{n+1}]}\otimes_{R[H_{n+1}]}F(2\natural\underline{n})$.
In particular, we recall that the $R[H_{n+1}]$-module structure of
$F(2\natural\underline{n})$ is defined by the composition $F\circ\chi_{n+1}:H_{n+1}\rightarrow G_{n+2}\rightarrow\textrm{Aut}_{R}(F(2\natural\underline{n}))$.
We denote by $\hat{\upsilon}(F)_{\underline{n}}$ the composition
$\otimes_{R[H_{n+1}]}\circ(\mathcal{I}_{R[\mathcal{A}]}(id_{1}\natural\iota_{\underline{n}})\times id_{F(2\natural\underline{n})})$.
\begin{lem}
\label{lem:balanced_2}The morphism $\hat{\upsilon}(F)_{\underline{n}}$
is $R[H]$-balanced.
\end{lem}

\begin{proof}
Again, it is enough to check that $\hat{\upsilon}(F)_{\underline{n}}$
is $H$-balanced, the result for $R[H]$ following by linearity. We
fix $h\in H$, $i\in\mathcal{I}_{R[H]}$ and $v\in F(2\natural\underline{n})$.
We deduce from the condition on $\mathcal{A}(id_{1}\natural\iota_{\underline{n}})$
of Assumption \ref{assu:decomposeAfreeproduct} that $\mathcal{I}_{R[\mathcal{A}]}(id_{1}\natural\iota_{\underline{n}})(i\cdot h)=\mathcal{I}_{R[\mathcal{A}]}(id_{1}\natural\iota_{\underline{n}})(i)\cdot(h\ast e_{H_{n}})$.
It then follows from the definitions that:
\[
\hat{\upsilon}(F)_{\underline{n}}(i\cdot h\underset{R[H]}{\otimes}v)=\mathcal{I}_{R[\mathcal{A}]}(id_{1}\natural\iota_{\underline{n}})(i)\underset{R[H_{n}]}{\otimes}F(\chi_{n+1}(h\ast e_{H_{n}}))(v)=\hat{\upsilon}(F)_{\underline{n}}(i\underset{R[H]}{\otimes}F(\chi_{n+1}(h\ast e_{H_{n}}))(v))
\]
which ends the proof.
\end{proof}
It follows from Lemma \ref{lem:balanced_2} that the universal property
of the tensor product over $R[H]$ defines a unique morphism $\upsilon(F)_{\underline{n}}:\mathcal{I}_{R[H]}\otimes_{R[H]}F(2\natural\underline{n})\rightarrow\tau_{1}\mathbf{LM}(F)(\underline{n})$
for each $n\in\mathbb{N}$. It is a routine to check that the conjugation
of $\upsilon(F)_{\underline{n}}$ by the canonical isomorphism from
$(\mathcal{I}_{R[H]}\otimes_{R[H]}R[H*H_{n}])\otimes_{R[H_{n+1}]}F(2\natural\underline{n})$
to $\mathcal{I}_{R[H]}\otimes_{R[H]}F(2\natural\underline{n})$ is
equal to the morphism $(\mathcal{I}_{R[\mathcal{A}]}(id_{1}\natural\iota_{\underline{n}})\varotimes_{R[H]}id_{R[H*H_{n}]})\varotimes_{R[H_{1+n}]}id_{F(2\natural\underline{n})}$
(used in Lemma \ref{lem:application_ideal}). We can now introduce
the natural transformation $\upsilon$:
\begin{prop}
\label{prop:identificationtau2} We define a natural transformation
$\upsilon(F):\mathcal{I}_{R[H]}\otimes_{R[H]}\tau_{2}F\rightarrow(\tau_{1}\circ\mathbf{LM})(F)$
from the monomorphisms $\{\upsilon(F)_{\underline{n}}\}_{n\in\mathbb{N}}$.
This yields a natural transformation $\upsilon:\mathcal{I}_{R[H]}\otimes_{R[H]}\tau_{2}\rightarrow\tau_{1}\circ\mathbf{LM}$.
\end{prop}

\begin{proof}
Let $n$ and $n'$ be natural numbers such that $n'\geq n$, and $[n'-n,g]\in\textrm{Hom}_{\mathfrak{U}\mathcal{G}}(\underline{n},\underline{n'})$.
Since $\mathfrak{s}_{\mathcal{A}}^{*}\circ\chi^{*}=1\natural-$ as
endofunctors of $\mathfrak{U}\mathcal{G}$, we recall from the definitions
that for all $i\in\mathcal{I}_{R[H]}$ and $v\in F(2\natural\underline{n})$:
\[
(\mathcal{I}_{R[H]}\otimes_{R[H]}\tau_{2})(F)([n'-n,g])(i\underset{R[H]}{\otimes}v)=i\underset{R[H]}{\otimes}F(id_{1}\natural id_{1}\natural[n'-n,g])(v),
\]
\[
(\tau_{1}\circ\mathbf{LM})(F)([n'-n,g])(i\underset{R[H]}{\otimes}v)=\mathcal{I}_{R[\mathcal{A}]}(id_{1}\natural[n'-n,g])(i)\underset{R[H_{n'+1}]}{\otimes}F(id_{1}\natural id_{1}\natural[n'-n,g])(v).
\]
We recall from the definition of the monoidal structure in $\mathfrak{U}\mathcal{G}$
(see the relation (\ref{eq:def_monoidal_UG})) that $id_{1}\natural[n'-n,g]=(id_{1}\natural g)\circ((b_{1,n'-n}^{\mathfrak{\mathcal{G}'}})^{-1}\natural id_{\underline{n}})\circ(\iota_{n'-n}\natural id_{\underline{n+1}})$.
First, we deduce from the relation (\ref{eq:defbraid}) that $((b_{1,n'-n}^{\mathfrak{\mathcal{G}'}})^{-1}\natural id_{\underline{n}})\circ(\iota_{n'-n}\natural id_{1}\natural\iota_{\underline{n}})=id_{1}\natural\iota_{\underline{n'}}$.
Then, it follows from the fact that $\mathcal{I}_{R[\mathcal{A}]}$
is a functor over $\mathfrak{U}\mathcal{G}'$ that:
\begin{equation}
\mathcal{I}_{R[\mathcal{A}]}((b_{1,n'-n}^{\mathfrak{\mathcal{G}'}})^{-1}\natural id_{\underline{n}})\circ\mathcal{I}_{R[\mathcal{A}]}(\iota_{n'-n}\natural id_{\underline{n+1}})\circ\mathcal{I}_{R[\mathcal{A}]}(id_{1}\natural\iota_{\underline{n}})=\mathcal{I}_{R[\mathcal{A}]}(id_{1}\natural\iota_{\underline{n'}}).\label{eq:I_A_final}
\end{equation}
Furthermore, we recall that $(id_{1}\natural g)\circ(id_{1}\natural\iota_{\underline{n}})=id_{m}\natural(g\circ\iota_{\underline{n}})=id_{1}\natural\iota_{\underline{n}}$
since $0_{\mathcal{G}'}$ is a initial object in the category $\mathfrak{U}\mathcal{G}'$
and that $\iota_{\underline{n}}$ is the unique morphism from $0_{\mathcal{G}'}$
to $\underline{n}$ in $\mathfrak{U}\mathcal{G}'$. We deduce from
the functoriality of $\mathcal{I}_{R[\mathcal{A}]}$ over $\mathfrak{U}\mathcal{G}'$
that:
\begin{equation}
\mathcal{I}_{R[\mathcal{A}]}(id_{1}\natural g)\circ\mathcal{I}_{R[\mathcal{A}]}(id_{1}\natural\iota_{\underline{n}})=\mathcal{I}_{R[\mathcal{A}]}(id_{1}\natural\iota_{\underline{n}}).\label{eq:I_A_final_2}
\end{equation}
Then, combining the relations (\ref{eq:I_A_final}) and (\ref{eq:I_A_final_2}),
we prove the following key equality as morphisms in $\mathfrak{U}\mathcal{G}'$:

\begin{equation}
\mathcal{I}_{R[\mathcal{A}]}(id_{1}\natural[n'-n,g])\circ\mathcal{I}_{R[\mathcal{A}]}(id_{1}\natural\iota_{\underline{n}})=\mathcal{I}_{R[\mathcal{A}]}(id_{1}\natural\iota_{\underline{n}}).\label{eq:last_key}
\end{equation}
Therefore, we deduce from the definitions and from the equality (\ref{eq:last_key})
that for all $i\in\mathcal{I}_{R[H_{n}]}$ and $v\in F(2\natural\underline{n})$:

\begin{eqnarray*}
((\tau_{1}\circ\mathbf{LM})(F)([n'-n,g]))\circ\upsilon(F)_{\underline{n}}(i\underset{R[H_{n}]}{\otimes}v) & = & \mathcal{I}_{R[\mathcal{A}]}(id_{1}\natural\iota_{\underline{n}})(i)\underset{R[H_{n'+1}]}{\otimes}F(id_{1}\natural id_{1}\natural[n'-n,g])(v)\\
 & = & \upsilon(F)_{\underline{n'}}\circ((\mathcal{I}_{R[H]}\otimes_{R[H]}\tau_{2}(F)([n'-n,g]))(i\underset{R[H_{n}]}{\otimes}v).
\end{eqnarray*}
Hence, $\upsilon(F)$ is a natural transformation from $\mathcal{I}_{R[H]}\otimes_{R[H]}\tau_{2}F$
to $(\mathbf{LM}\circ\tau_{1})(F)$.

Let us now check that assembling all these natural transformations
for all the objects of $\mathbf{Fct}(\mathfrak{U}\mathcal{G},R\textrm{-}\mathfrak{Mod})$
defines a natural transformation from $\mathcal{I}_{R[H]}\otimes_{R[H]}\tau_{2}$
to $\mathbf{LM}\circ\tau_{1}$. Let $M$ and $N$ be two objects of
$\mathbf{Fct}(\mathfrak{U}\mathcal{G},R\textrm{-}\mathfrak{Mod})$
and $\eta:M\rightarrow N$ be a natural transformation. We recall
that the natural transformations $(\mathcal{I}_{R[H]}\otimes_{R[H]}\tau_{2})(\eta)$
and $(\tau_{1}\circ\mathbf{LM})(\eta)$ are respectively given by:
\[
(\mathcal{I}_{R[H]}\otimes_{R[H]}\tau_{2})(\eta)_{\underline{n}}=id_{\mathcal{I}_{R[H]}}\underset{R[H]}{\varotimes}\eta_{2\natural\underline{n}}\textrm{ and }(\tau_{1}\circ\mathbf{LM})(\eta)_{\underline{n}}=id_{\mathcal{I}_{R[H_{1+n}]}}\underset{R[H_{1+n}]}{\varotimes}\eta_{2\natural\underline{n}}
\]
for all natural numbers $n$. We straightforwardly deduce from the
definitions that $\upsilon(N)_{\underline{n}}\circ(\mathbf{LM}\circ\tau_{1})(\eta)_{\underline{n}}=(\tau_{1}\circ\mathbf{LM})(\eta)_{\underline{n}}\circ\upsilon(M)_{\underline{n}}$.
Hence $\upsilon(M)\circ(\mathbf{LM}\circ\tau_{1})(\eta)$ and $(\tau_{1}\circ\mathbf{LM})(\eta)\circ\upsilon(N)$
are equal as natural transformations from $(\mathcal{I}_{R[H]}\otimes_{R[H]}\tau_{2})(M)$
to $(\mathbf{LM}\circ\tau_{1})(N)$, which ends the proof. 
\end{proof}
\begin{rem}
\label{def:basisfunctor} Let us assume that $H$ is a free group
which rank is denoted by $r(H)$. Let $M$ be an $R[H]$-module. Since
$H$ is free, $\mathcal{I}_{R[H]}$ is a free $R[H]$-module of rank
$r(H)$ (see \cite[Proposition 6.2.6]{Weibel1} for instance), hence
there are canonical isomorphisms of $R$-modules $\mathcal{I}_{R[H]}\otimes_{R[H]}M\cong(R[H])^{\oplus r(H)}\otimes_{R[H]}M\cong M^{\oplus r(H)}$
by the distributive property of the tensor product.
\end{rem}

\begin{cor}
\label{cor:splittingtranslation}For $\{\mathcal{A},\mathcal{G},\mathcal{G}',\chi\}$
a reliable Long-Moody system, as endofunctors of $\mathbf{Fct}(\mathfrak{U}\mathcal{G},R\textrm{-}\mathfrak{Mod})$,
there is an isomorphism $\tau_{1}\circ\mathbf{LM}\cong(\mathcal{I}_{R[H]}\otimes_{R[H]}\tau_{2})\oplus(\mathbf{LM}\circ\tau_{1})$.
Furthermore, if the group $H$ is free of rank $r(H)$, there is a
natural equivalence $\mathcal{I}_{R[H]}\otimes_{R[H]}\tau_{2}\cong\tau_{2}^{\oplus r(H)}$.
\end{cor}

\begin{proof}
Since $\mathcal{I}_{R[\mathcal{A}]}^{-1}(id_{1}\natural\iota_{\underline{n}})\circ\mathcal{I}_{R[\mathcal{A}]}(id_{1}\natural\iota_{\underline{n}})=id_{\mathcal{I}_{R[H]}}$
for all natural numbers $n$, the natural transformation $\upsilon:\mathcal{I}_{R[H]}\otimes_{R[H]}\tau_{2}\rightarrow\tau_{1}\circ\mathbf{LM}$
is a right inverse of the natural transformation $\varUpsilon:\tau_{1}\circ\mathbf{LM}\rightarrow\textrm{Coker}(\xi)$.

We now assume that the group $H$ is free and consider an object $F$
of $\mathbf{Fct}(\mathfrak{U}\mathcal{G},R\textrm{-}\mathfrak{Mod})$.
The isomorphisms of Remark \ref{def:basisfunctor} provide the $R$-module
isomorphism $\mathcal{I}_{R[H]}\otimes_{R[H]}\tau_{2}F(\underline{n})\cong\tau_{2}^{\oplus r(H)}F(\underline{n})$
for all natural numbers $n$. The naturality with respect to the morphisms
of $\mathfrak{U}\mathcal{G}$ and that we define a natural transformation
in the category of endofunctors of $\mathbf{Fct}(\mathfrak{U}\mathcal{G},R\textrm{-}\mathfrak{Mod})$
both directly follow from the definitions of $\mathcal{I}_{R[H]}\otimes_{R[H]}\tau_{2}$
and $\tau_{2}^{\oplus r(H)}$.
\end{proof}

\subsubsection{Key relations with the difference and evanescence functors\label{sec:A-keystone-relation}}

This section presents the key commutation relations of the generalized
Long-Moody functors with the evanescence and difference functors.
Considering the endofunctor $\delta_{1}\circ\mathbf{LM}$ as an extension
of $\textrm{Coker}(\xi)$ by $\mathbf{LM}\circ\delta_{1}$ (see diagram
(\ref{eq:diagram_Xsi})), we prove that this extension is trivial
by using Corollary \ref{cor:splittingtranslation}.
\begin{thm}
\label{thm:Splitting LM}\label{thm:commutationkappaLM}Let $\{\mathcal{A},\mathcal{G},\mathcal{G}',\chi\}$
be a reliable Long-Moody system. There is a natural equivalence in
the category of endofunctors of $\mathbf{Fct}(\mathfrak{U}\mathcal{G},R\textrm{-}\mathfrak{Mod})$
\begin{equation}
\delta_{1}\circ\mathbf{LM}\cong(\mathcal{I}_{R[H]}\underset{R[H]}{\varotimes}\tau_{2})\oplus(\mathbf{LM}\circ\delta_{1}).\label{eq:decompostiondelta}
\end{equation}
Moreover, if we assume that the groups $H_{0}$ and $H$ are free,
then the evanescence endofunctor $\kappa_{1}$ commutes with the endofunctor
$\mathbf{LM}$ and the isomorphisms of Remark \ref{def:basisfunctor}
provide a natural equivalence:
\begin{equation}
\delta_{1}\circ\mathbf{LM}\cong\tau_{2}^{\oplus r(H)}\oplus(\mathbf{LM}\circ\delta_{1}).\label{eq:decompostiondeltafree}
\end{equation}
\end{thm}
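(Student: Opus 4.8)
The plan is to deduce Theorem \ref{thm:Splitting LM} from the results already established in Section \ref{subsec:Relation-with-evanescence}, chiefly Proposition \ref{lem:relationLMi1andi1LM} and Corollary \ref{cor:splittingtranslation}. First I would recall that by definition $\delta_{1}=\textrm{coker}(i_{1})$, so that $\delta_{1}\circ\mathbf{LM}=\textrm{coker}(i_{1}\mathbf{LM})$. By Proposition \ref{lem:relationLMi1andi1LM}, $i_{1}\mathbf{LM}=\xi\circ\mathbf{LM}(i_{1})$, and the natural transformation $\xi:\mathbf{LM}\circ\tau_{1}\rightarrow\tau_{1}\circ\mathbf{LM}$ is a (split) monomorphism with $\textrm{Coker}(\xi)$ identified, by Corollary \ref{cor:splittingtranslation}, with $\mathcal{I}_{R[H]}\underset{R[H]}{\varotimes}\tau_{2}$; moreover the splitting (\ref{eq:splitting}) exhibits $\tau_{1}\circ\mathbf{LM}\cong(\mathcal{I}_{R[H]}\underset{R[H]}{\varotimes}\tau_{2})\oplus(\mathbf{LM}\circ\tau_{1})$ with $\xi$ the inclusion of the second summand.

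The key step is then a diagram chase in the category of endofunctors of $\mathbf{Fct}(\mathfrak{U}\mathcal{G},R\textrm{-}\mathfrak{Mod})$ using the commutative diagram displayed at the end of the proof of Proposition \ref{lem:relationLMi1andi1LM}, whose rows are exact. From $i_{1}\mathbf{LM}=\xi\circ\mathbf{LM}(i_{1})$ together with the splitting (\ref{eq:splitting}), I would argue that the cokernel of $i_{1}\mathbf{LM}$ decomposes as the direct sum of $\textrm{Coker}(\xi)$ and the cokernel of $\mathbf{LM}(i_{1}):\mathbf{LM}\rightarrow\mathbf{LM}\circ\tau_{1}$. Concretely, since $\mathbf{LM}$ is right-exact (Proposition \ref{prop:exactnessLM}), applying $\mathbf{LM}$ to the defining exact sequence (\ref{eq:definges}) of $i_{1}$ yields the exact sequence (\ref{eq:SESLMi1}), which identifies $\textrm{coker}(\mathbf{LM}(i_{1}))\cong\mathbf{LM}\circ\delta_{1}$. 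Feeding this into the cokernel of $i_{1}\mathbf{LM}=\xi\circ\mathbf{LM}(i_{1})$ and using that $\xi$ is a split injection with complement $\mathbf{LM}\circ\tau_{1}$ (so that post-composing with the quotient onto $\textrm{Coker}(\xi)$ is compatible with taking cokernels), I obtain the natural equivalence (\ref{eq:decompostiondelta}):
\[
\delta_{1}\circ\mathbf{LM}\cong\textrm{Coker}(\xi)\oplus(\mathbf{LM}\circ\delta_{1})\cong\Bigl(\mathcal{I}_{R[H]}\underset{R[H]}{\varotimes}\tau_{2}\Bigr)\oplus(\mathbf{LM}\circ\delta_{1}).
\]

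For the second assertion, assume $H_{0}$ and $H$ are free. Then $\mathbf{LM}$ is exact by Corollary \ref{cor:exactnessLM}, so applying $\mathbf{LM}$ to (\ref{eq:definges}) gives the long exact sequence (\ref{eq:LESLMi1}); in particular $\mathbf{LM}\circ\kappa_{1}=\ker(\mathbf{LM}(i_{1}))$. On the other hand $\kappa_{1}\circ\mathbf{LM}=\ker(i_{1}\mathbf{LM})=\ker(\xi\circ\mathbf{LM}(i_{1}))$, and since $\xi$ is a monomorphism this kernel coincides with $\ker(\mathbf{LM}(i_{1}))$; hence $\kappa_{1}\circ\mathbf{LM}\cong\mathbf{LM}\circ\kappa_{1}$. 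Finally, the natural equivalence $\mathcal{I}_{R[H]}\underset{R[H]}{\varotimes}\tau_{2}\cong\tau_{2}^{\oplus\,rank(H)}$ provided by the isomorphisms $\varLambda_{rank(H),M}$ of Remark \ref{def:basisfunctor} (already recorded in Corollary \ref{cor:splittingtranslation}) turns (\ref{eq:decompostiondelta}) into (\ref{eq:decompostiondeltafree}). The main obstacle is purely bookkeeping: verifying that the splitting of $\xi$ in (\ref{eq:splitting}) is natural and compatible with the passage to cokernels, so that the direct-sum decomposition of $\delta_{1}\circ\mathbf{LM}$ is genuinely natural in the functor argument and not merely a pointwise $R$-module isomorphism — but this follows from the naturality already established for $\xi$, $\upsilon$ and $\varrho$ in Propositions \ref{prop:subfunclmtm}, \ref{lem:relationLMi1andi1LM} and \ref{prop:identificationtau2}.
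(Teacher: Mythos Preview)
Your proposal is correct and follows essentially the same route as the paper's proof: both deduce the decomposition from the factorization $i_{1}\mathbf{LM}=\xi\circ\mathbf{LM}(i_{1})$ (Proposition \ref{lem:relationLMi1andi1LM}) together with the splitting of $\xi$ (Corollary \ref{cor:splittingtranslation}), and both identify $\textrm{coker}(\mathbf{LM}(i_{1}))$ with $\mathbf{LM}\circ\delta_{1}$ via right-exactness of $\mathbf{LM}$. The only cosmetic difference is that the paper packages the cokernel computation as an explicit commutative diagram and invokes the $5$-lemma, whereas you use directly the elementary fact that for a split monomorphism $\xi$ one has $\textrm{coker}(\xi\circ f)\cong\textrm{Coker}(\xi)\oplus\textrm{coker}(f)$; the $\kappa_{1}$ argument via $\ker(\xi\circ\mathbf{LM}(i_{1}))=\ker(\mathbf{LM}(i_{1}))$ is likewise the same as the paper's.
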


\begin{proof}
To prove the first statement, we consider the diagram (\ref{eq:diagram_Xsi}).
Then the isomorphism $\tau_{1}\circ\mathbf{LM}\cong(\mathcal{I}_{R[H]}\otimes_{R[H]}\tau_{2})\oplus(\mathbf{LM}\circ\tau_{1})$
of Corollary \ref{cor:splittingtranslation} provides a splitting
of the top short exact sequence, which automatically induces a splitting
for the bottom short exact sequence and thus proves the relation (\ref{eq:decompostiondelta}).

Furthermore, assuming that the groups $H_{0}$ and $H$ are free,
we have the exact sequence (\ref{eq:LESLMi1}). We recall that for
two morphisms $f$ and $g$ of an abelian category such that the target
of $f$ is the source of $g$ and $g$ is a monomorphism, the snake
lemma provides an isomorphism $\ker(f)\cong\ker(g\circ f)$. Assigning
$f=\mathbf{LM}(i_{1})$ and $g=\xi$ and using the equality $\xi\circ(\mathbf{LM}(i_{1}))=i_{1}\mathbf{LM}$,
we thus conclude that $\mathbf{LM}\circ\kappa_{1}\cong\kappa_{1}\circ\mathbf{LM}$.
Also, the relation (\ref{eq:decompostiondeltafree}) is obtained from
(\ref{eq:decompostiondelta}) using the second statement of Corollary
\ref{cor:splittingtranslation}.
\end{proof}
Finally, the above work on the evanescence endofunctor $\kappa_{1}$
straightforwardly generalizes to any other evanescence endofunctor.
This property is used to prove the further Lemma \ref{lem:LMtau2sn}.
\begin{prop}
\label{rem:LMcommutetoutkappaHfree}Let $m\geq1$ be a natural number
and we assume that the groups $H_{0}$ and $H$ are free. Then the
evanescence endofunctor $\kappa_{m}$ commutes with the Long-Moody
functor.
\end{prop}

\begin{proof}
The result follows repeating verbatim the work on the evanescence
endofunctor of Section \ref{subsec:Relation-with-evanescence} simply
by modifying the index $1$ into any $m\geq1$. Indeed, we define
a natural transformation $\xi(F):(\mathbf{LM}\circ\tau_{m})(F)\rightarrow(\tau_{m}\circ\mathbf{LM})(F)$
analogously Proposition \ref{prop:subfunclmtm} by assigning to each
$n\in\mathbb{N}$ the morphism $\xi(F)_{\underline{n}}$ to be $(\mathcal{I}_{\mathcal{A}}(\iota_{m}\natural id_{\underline{n}})\otimes_{R[H_{m+n}]}F((b_{1,m}^{\mathfrak{\mathcal{G}'}})^{-1}\natural id_{\underline{n}}))$.
Then the analogue of Proposition \ref{lem:relationLMi1andi1LM} is
satisfied, the key point being that $(b_{1,m}^{\mathfrak{\mathcal{G}'}})^{-1}\circ(\iota_{m}\natural id_{1})=id_{1}\natural\iota_{m}$
by Relation (\ref{eq:defbraid}) since $\mathfrak{U}\mathcal{G}'$
is pre-braided. Then the hypotheses of Assumption \ref{ass:coherenceconditionbnautfn2et3}
are sufficient to prove the analogue results of that of Section \ref{subsec:Studycoker}:
in other words we define a natural equivalence $\tau_{m}\circ\mathbf{LM}\cong\tau_{m+1}^{\oplus m\cdot r(H)}\oplus(\mathbf{LM}\circ\tau_{m})$
in the category $\mathbf{Fct}(\mathfrak{U}\mathcal{G},R\textrm{-}\mathfrak{Mod)}$.
The result thus follows repeating mutatis mutandis the proof of Theorem
\ref{thm:Splitting LM}.
\end{proof}

\subsection{Effect on strong polynomial functors\label{subsec:Strong-polynomial-functors}}

In this section, we focus on the behaviour of the generalized Long-Moody
functor on (very) strong polynomial functors. We recover in particular
the results of \cite[Section 4]{soulieLMBilan} when $(\mathfrak{U}\mathcal{G}',\natural,0_{\mathcal{G}'})=(\mathfrak{U}\mathcal{G},\natural,0_{\mathcal{G}})=(\mathfrak{U}\boldsymbol{\beta},\natural,0)$.
First, we have the following property:
\begin{lem}
\label{lem:diffevantau2}The functor $\mathcal{I}_{R[H]}\otimes_{R[H]}\tau_{2}$
commutes with the difference functor $\delta_{1}$. Moreover, if $H$
is free, then $\mathcal{I}_{R[H]}\otimes_{R[H]}\tau_{2}$ commutes
with the evanescence functor $\kappa_{m}$ for all natural numbers
$m\geq1$.
\end{lem}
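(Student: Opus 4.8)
The functor $\mathcal{I}_{R[H]}\otimes_{R[H]}\tau_{2}$ commutes with the difference functor $\delta_{1}$. Moreover, if $H$ is free, then $\mathcal{I}_{R[H]}\otimes_{R[H]}\tau_{2}(F)$ commutes with the evanescence functor $\kappa_{m}$ for all natural numbers $m\geq 1$.

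Let me think about how to prove this.

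First, the structure. We have the endofunctor $\mathcal{I}_{R[H]}\otimes_{R[H]}\tau_2$ on $\mathbf{Fct}(\mathfrak{U}\mathcal{G}, R\text{-}\mathfrak{Mod})$, where $\tau_2 = \tau_1 \circ \tau_1$ (up to natural isomorphism; $\tau_1$ is the translation by $1$, and $\tau_2$ translates by $1\natural 1 = 2$).

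Key point: the $R[H]$-module structure on $\tau_2 F(\underline{n}) = F(\underline{n+2})$ is via $\varsigma_{n+1}\circ(id_H * \iota_{H_n}): H \to G_{n+2}$. This is the structure that was used in Section 4.2.2 (Studycoker). The crucial observation is that this $R[H]$-action is *independent* of the translation direction in a suitable sense: when we translate $F$ by one more copy of $1$ (i.e., apply $\tau_1$ to $F$, giving $F(1\natural -)$), the relevant $\varsigma$ still factors through $H$ in a compatible way, and in fact Lemma \ref{cor:Assumptionideal} / Lemma \ref{lem:lastassumpequivalentprecious} together with relation (\ref{eq:equiva'}) of Assumption \ref{assu:decomposeAfreeproduct} guarantee that the $R[H]$-module structure on $\tau_2 F$ is "stable" under further translation. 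Actually the cleanest route: $\tau_1$ commutes with $\tau_2$ up to natural isomorphism (Proposition \ref{prop:lemmecaract}), and $\delta_1$ is the cokernel of $i_1: \mathrm{Id} \to \tau_1$, $\kappa_m$ the kernel of $i_m: \mathrm{Id} \to \tau_m$.

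**Commutation with $\delta_1$.** The plan is to show that $\mathcal{I}_{R[H]}\otimes_{R[H]}\tau_2$ commutes with $\tau_1$ and with the natural transformation $i_1$, so that it commutes with the cokernel. Concretely: for $F$ an object, $\tau_1(\mathcal{I}_{R[H]}\otimes_{R[H]}\tau_2 F)(\underline{n}) = (\mathcal{I}_{R[H]}\otimes_{R[H]}\tau_2 F)(1\natural\underline{n}) = \mathcal{I}_{R[H]}\otimes_{R[H]} F(\underline{n+3})$, where the $R[H]$-module structure on $F(\underline{n+3})$ is via $\varsigma_{n+2}\circ(id_H * \iota_{H_{n+1}})$. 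On the other hand $(\mathcal{I}_{R[H]}\otimes_{R[H]}\tau_2)(\tau_1 F)(\underline{n}) = \mathcal{I}_{R[H]}\otimes_{R[H]} (\tau_1 F)(\underline{n+2}) = \mathcal{I}_{R[H]}\otimes_{R[H]} F(1\natural\underline{n+2}) = \mathcal{I}_{R[H]}\otimes_{R[H]} F(\underline{n+3})$, where now the $R[H]$-action is the one on $\tau_2(\tau_1 F)$, i.e.\ via $\varsigma_{n+1}\circ(id_H*\iota_{H_n})$ composed with the structure maps of $\tau_1 F$. I would check — using relation (\ref{eq:equiva'}) of Assumption \ref{assu:decomposeAfreeproduct} exactly as in the proof of Proposition \ref{prop:identificationtau2} — that these two $R[H]$-actions on $F(\underline{n+3})$ agree up to the braiding isomorphism, hence give a natural isomorphism $\tau_1\circ(\mathcal{I}_{R[H]}\otimes_{R[H]}\tau_2) \cong (\mathcal{I}_{R[H]}\otimes_{R[H]}\tau_2)\circ\tau_1$. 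Then I would verify that this isomorphism is compatible with $i_1$ on both sides (i.e.\ the square relating $\mathrm{Id}$, $\tau_1$ and the two composites commutes), which follows because $i_1$ is induced by the unique morphism $\iota_1: 0 \to 1$ and $\mathcal{I}_{R[H]}\otimes_{R[H]}-$ is a functor applied pointwise. Taking cokernels of $i_1$ then yields $\delta_1\circ(\mathcal{I}_{R[H]}\otimes_{R[H]}\tau_2) \cong (\mathcal{I}_{R[H]}\otimes_{R[H]}\tau_2)\circ\delta_1$. (Alternatively, and perhaps more simply: $\delta_1$ commutes with $\tau_2$ by Proposition \ref{prop:lemmecaract}, and $\mathcal{I}_{R[H]}\otimes_{R[H]}-$ is applied objectwise and is right exact, so one computes the cokernel levelwise.)

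**Commutation with $\kappa_m$ when $H$ is free.** When $H$ is free, Remark \ref{def:basisfunctor} gives a natural equivalence $\mathcal{I}_{R[H]}\otimes_{R[H]}M \cong M^{\oplus\,rank(H)}$ (via $\varLambda_{rank(H),M}$), hence $\mathcal{I}_{R[H]}\otimes_{R[H]}\tau_2 \cong \tau_2^{\oplus\,rank(H)}$ as endofunctors. Now $\kappa_m$, being a kernel (a limit), commutes with finite direct sums, and $\kappa_m$ commutes with $\tau_2$ by Proposition \ref{prop:lemmecaract}. Therefore $\kappa_m \circ (\tau_2^{\oplus rank(H)}) \cong (\kappa_m\circ\tau_2)^{\oplus rank(H)} \cong (\tau_2\circ\kappa_m)^{\oplus rank(H)} \cong \tau_2^{\oplus rank(H)}\circ\kappa_m \cong (\mathcal{I}_{R[H]}\otimes_{R[H]}\tau_2)\circ\kappa_m$, as required. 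One must check that the natural equivalence $\varLambda$ is compatible with the structure maps of a functor $F$, i.e.\ that $\mathcal{I}_{R[H]}\otimes_{R[H]}\tau_2 F \cong (\tau_2 F)^{\oplus rank(H)}$ is natural in $F$ — this is the content of Remark \ref{def:basisfunctor} combined with the fact that all structure morphisms of $\mathcal{I}_{R[H]}\otimes_{R[H]}\tau_2 F$ are induced by $F$ and by $\mathcal{I}_{\mathcal{A}}$, and $\mathcal{A}(\underline{n})$ acts trivially here (via $\iota_H$) on the relevant pieces, so the identification respects morphisms of $\mathfrak{U}\mathcal{G}$.

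**Main obstacle.** The delicate step is the first one: proving $\tau_1$ commutes with $\mathcal{I}_{R[H]}\otimes_{R[H]}\tau_2$ at the level of the $R[H]$-module structures, rather than merely at the level of underlying $R$-modules. This is where relation (\ref{eq:equiva'}) of Assumption \ref{assu:decomposeAfreeproduct} and the strict braided monoidality of $\mathcal{A}$ (Lemma \ref{cor:Assumptionideal}) are genuinely used, exactly as in the proof of Proposition \ref{prop:identificationtau2}; the argument there transfers essentially verbatim, tracking the extra copy of $H$ introduced by the additional translation. Once that identification is in place, everything else is formal manipulation of kernels/cokernels and finite direct sums using Proposition \ref{prop:lemmecaract}.
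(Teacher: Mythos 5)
Your argument is correct and essentially the paper's: the $\delta_{1}$-statement is deduced from the right-exactness of $\mathcal{I}_{R\left[H\right]}\underset{R\left[H\right]}{\varotimes}-$ together with the exactness and commutation properties of $\tau_{2}$ from Proposition \ref{prop:lemmecaract}, and the identification of the $\varsigma$-twisted $R\left[H\right]$-module structures that you make explicit via relation (\ref{eq:equiva'}) is precisely the point the paper leaves implicit, being already contained in the proof of Proposition \ref{prop:identificationtau2}. For the $\kappa_{m}$-statement the paper invokes the exactness of $\mathcal{I}_{R\left[H\right]}\underset{R\left[H\right]}{\varotimes}-$ for $H$ free (via Lemma \ref{lem:Swan}) rather than your equivalent decomposition $\mathcal{I}_{R\left[H\right]}\underset{R\left[H\right]}{\varotimes}\tau_{2}\cong\tau_{2}^{\oplus rank\left(H\right)}$ of Remark \ref{def:basisfunctor}, but the two arguments are interchangeable.
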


\begin{proof}
The commutation result with the difference functor $\delta_{1}$ is
a consequence of the right-exactness of the functor $\mathcal{I}_{R[H]}\otimes_{R[H]}-:R\textrm{-}\mathfrak{Mod}\rightarrow R\textrm{-}\mathfrak{Mod}$
and of the exactness and the commutation property of the translation
functor $\tau_{2}$ of Proposition \ref{prop:lemmecaract}. Assuming
that the group $H$ is free, the functor $\mathcal{I}_{R[H]}\otimes_{R[H]}-:R\textrm{-}\mathfrak{Mod}\rightarrow R\textrm{-}\mathfrak{Mod}$
is exact as a consequence of Lemma \ref{lem:Swan}. Hence, the claim
follows from the commutation of the evanescence functor $\kappa_{m}$
with the translation functor $\tau_{2}$ of Proposition \ref{prop:lemmecaract}.
\end{proof}
\begin{thm}
\label{Thm:emairesult}Let $d$ be a natural number and $F$ be an
object of $\mathbf{Fct}(\mathfrak{U}\mathcal{G},R\textrm{-}\mathfrak{Mod})$.
We recall that we consider a reliable Long-Moody system $\{\mathcal{A},\mathcal{G},\mathcal{G}',\chi\}$.
If the functor $F$ is strong polynomial of degree $d$, then:
\begin{itemize}
\item the functor $\mathcal{I}_{R[H]}\otimes_{R[H]}\tau_{2}(F)$ belongs
to $\mathcal{P}ol_{d}^{strong}(\mathfrak{U}\mathcal{G},R\textrm{-}\mathfrak{Mod})$;
\item the functor $\mathbf{LM}(F)$ belongs to $\mathcal{P}ol_{d+1}^{strong}(\mathfrak{U}\mathcal{G},R\textrm{-}\mathfrak{Mod})$.
\end{itemize}
Moreover, if the groups $H_{0}$ and $H$ are free and $F$ is very
strong polynomial of degree $d$, then the functor $\mathbf{LM}(F)$
is a very strong polynomial functor of degree equal to $d+1$.
\end{thm}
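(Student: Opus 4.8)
The plan is to argue by induction on $d$, first establishing the two bullet points and then the ``moreover'' clause; throughout I use that, by the generation remark together with Propositions \ref{prop:proppoln} and \ref{enough1}, it is enough to work with $\tau_{1}$, $\delta_{1}$ and $\kappa_{1}$, and that $\mathcal{P}ol_{d}^{strong}\left(\mathfrak{U}\mathcal{G},R\textrm{-}\mathfrak{Mod}\right)$ is closed under quotients, extensions and colimits while $\mathcal{VP}ol_{d}\left(\mathfrak{U}\mathcal{G},R\textrm{-}\mathfrak{Mod}\right)$ is closed under translations, normal subobjects and extensions (hence both under finite direct sums). For the first bullet, I would show by induction on $d$ that $F\in\mathcal{P}ol_{d}^{strong}$ implies $\mathcal{I}_{R\left[H\right]}\underset{R\left[H\right]}{\varotimes}\tau_{2}\left(F\right)\in\mathcal{P}ol_{d}^{strong}$: by Lemma \ref{lem:diffevantau2} the endofunctor $\mathcal{I}_{R\left[H\right]}\underset{R\left[H\right]}{\varotimes}\tau_{2}$ commutes with $\delta_{1}$, so $\delta_{1}\bigl(\mathcal{I}_{R\left[H\right]}\underset{R\left[H\right]}{\varotimes}\tau_{2}\left(F\right)\bigr)\cong\mathcal{I}_{R\left[H\right]}\underset{R\left[H\right]}{\varotimes}\tau_{2}\left(\delta_{1}F\right)$, which is $0$ when $d=0$ and lies in $\mathcal{P}ol_{d-1}^{strong}$ by the inductive hypothesis applied to $\delta_{1}F\in\mathcal{P}ol_{d-1}^{strong}$.

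For the second bullet I again induct on $d$ using the natural equivalence $\delta_{1}\circ\mathbf{LM}\cong\bigl(\mathcal{I}_{R\left[H\right]}\underset{R\left[H\right]}{\varotimes}\tau_{2}\bigr)\oplus\left(\mathbf{LM}\circ\delta_{1}\right)$ of Theorem \ref{thm:Splitting LM}. Evaluated at a strong polynomial $F$ of degree $d$, the first summand belongs to $\mathcal{P}ol_{d}^{strong}$ by the first bullet, and the second, $\mathbf{LM}\left(\delta_{1}F\right)$, belongs to $\mathcal{P}ol_{d}^{strong}$ by the inductive hypothesis since $\delta_{1}F\in\mathcal{P}ol_{d-1}^{strong}$ (when $d=0$ this summand is $\mathbf{LM}\left(0\right)=0$, as $\mathbf{LM}$ is additive by Proposition \ref{prop:exactnessLM}); hence $\delta_{1}\left(\mathbf{LM}\,F\right)\in\mathcal{P}ol_{d}^{strong}$, i.e. $\mathbf{LM}\left(F\right)\in\mathcal{P}ol_{d+1}^{strong}$.

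For the ``moreover'' clause assume $H_{0}$ and $H$ free and $F$ very strong of degree $d$. Then $\mathbf{LM}\left(F\right)\in\mathcal{P}ol_{d+1}^{strong}$ by the second bullet, and Theorem \ref{thm:Splitting LM} now also gives $\kappa_{1}\circ\mathbf{LM}\cong\mathbf{LM}\circ\kappa_{1}$ and $\delta_{1}\circ\mathbf{LM}\cong\tau_{2}^{\oplus rank\left(H\right)}\oplus\left(\mathbf{LM}\circ\delta_{1}\right)$. From the first, $\kappa_{1}\left(\mathbf{LM}\,F\right)\cong\mathbf{LM}\left(\kappa_{1}F\right)=\mathbf{LM}\left(0\right)=0$. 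From the second, $\delta_{1}\left(\mathbf{LM}\,F\right)\cong\tau_{2}\left(F\right)^{\oplus rank\left(H\right)}\oplus\mathbf{LM}\left(\delta_{1}F\right)$, where $\tau_{2}\left(F\right)\in\mathcal{VP}ol_{d}$ by closure of $\mathcal{VP}ol_{d}$ under translation and $\mathbf{LM}\left(\delta_{1}F\right)\in\mathcal{VP}ol_{d}$ by the inductive hypothesis (as $\delta_{1}F\in\mathcal{VP}ol_{d-1}$); by closure under finite direct sums $\delta_{1}\left(\mathbf{LM}\,F\right)\in\mathcal{VP}ol_{d}$, so $\mathbf{LM}\left(F\right)\in\mathcal{VP}ol_{d+1}$. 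To see the degree is exactly $d+1$, apply $\delta_{1}^{\circ d}$ to the displayed decomposition and commute it past $\tau_{2}^{\oplus rank\left(H\right)}$ (Proposition \ref{prop:lemmecaract}) to obtain $\delta_{1}^{\circ\left(d+1\right)}\left(\mathbf{LM}\,F\right)\cong\tau_{2}^{\oplus rank\left(H\right)}\bigl(\delta_{1}^{\circ d}F\bigr)\oplus\delta_{1}^{\circ d}\bigl(\mathbf{LM}\left(\delta_{1}F\right)\bigr)$. Since $F$ has very strong degree exactly $d$, unwinding Definition \ref{def:strongandverystrong} (and using that $\kappa_{1}$ vanishes on each $\delta_{1}^{\circ j}F$ with $j\leq d$) shows that $\delta_{1}^{\circ d}F$ is a non-zero object of $\mathcal{VP}ol_{0}$; in particular $\kappa_{2}\left(\delta_{1}^{\circ d}F\right)=0$, so $\delta_{1}^{\circ d}F$ embeds into $\tau_{2}\left(\delta_{1}^{\circ d}F\right)$, whence $\tau_{2}\left(\delta_{1}^{\circ d}F\right)\neq 0$, and as $rank\left(H\right)\geq 1$ (the group $H$ being non-trivial and free) the direct summand $\tau_{2}^{\oplus rank\left(H\right)}\bigl(\delta_{1}^{\circ d}F\bigr)$ of $\delta_{1}^{\circ\left(d+1\right)}\left(\mathbf{LM}\,F\right)$ is non-zero. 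Therefore $\mathbf{LM}\left(F\right)\notin\mathcal{P}ol_{d}^{strong}\supseteq\mathcal{VP}ol_{d}$, so its very strong degree equals $d+1$.

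The main obstacle is the final non-vanishing argument: it is where both the sharpened free-case form of Theorem \ref{thm:Splitting LM} and the bookkeeping on $\delta_{1}^{\circ d}F$ (that it remains non-zero and lies in $\mathcal{VP}ol_{0}$, so that $\tau_{2}$ does not annihilate it) are genuinely needed; everything else is a formal two-step induction built on Theorem \ref{thm:Splitting LM}, Lemma \ref{lem:diffevantau2} and the closure properties of Proposition \ref{prop:proppoln}.
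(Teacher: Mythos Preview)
Your proof is correct and follows essentially the same approach as the paper: induction on $d$ using Lemma \ref{lem:diffevantau2} and the decomposition of Theorem \ref{thm:Splitting LM}. The paper's proof is much terser; in particular, for the ``moreover'' clause it simply asserts that $\tau_{2}^{\oplus rank(H)}(F)$ is very strong of degree exactly $d$ and leaves the $\kappa_{1}$-vanishing and the non-vanishing of $\delta_{1}^{\circ(d+1)}(\mathbf{LM}\,F)$ implicit, whereas you spell these out carefully via $\kappa_{1}\circ\mathbf{LM}\cong\mathbf{LM}\circ\kappa_{1}$ and the embedding $\delta_{1}^{\circ d}F\hookrightarrow\tau_{2}(\delta_{1}^{\circ d}F)$.
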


\begin{proof}
The result on \textit{$\mathcal{I}_{R[H]}\otimes_{R[H]}\tau_{2}(F)$}
follows from Lemma \ref{lem:diffevantau2}. By induction on the polynomial
degree, we deduce the first result on \textit{$\mathbf{LM}(F)$} using
the relation (\ref{eq:decompostiondelta}).

Assume now that the groups $H_{0}$ and $H$ are free. We recall that
$H$ is non-trivial. For a very strong polynomial functor $F$ of
degree $d$, we deduce from Lemma \ref{lem:diffevantau2} that $\mathcal{I}_{R[H]}\otimes_{R[H]}\tau_{2}F\cong\tau_{2}^{\oplus r(H)}F$
is also very strong polynomial of degree $d$. A fortiori, the result
follows from a clear induction using the relation (\ref{eq:decompostiondeltafree})
and the commutation property of the evanescence endofunctor $\kappa_{1}$
with the endofunctor $\mathbf{LM}$ of Theorem \ref{thm:Splitting LM}.
\end{proof}

\subsection{Effect on weak polynomial functors\label{subsec:Weak-polynomial-functors}}

We investigate the effect on weak polynomial functors of the Long-Moody
functor associated with the \textit{reliable} Long-Moody system $\{\mathcal{A},\mathcal{G},\mathcal{G}',\chi\}$.
The first step of this study consists in defining the Long-Moody functor
on the quotient category $\mathbf{St}(\mathfrak{U}\mathcal{G},R\textrm{-}\mathfrak{Mod})$.
First, we note the following property.
\begin{lem}
\label{lem:LMtau2sn}Let $F$ be an object of $\mathbf{Fct}(\mathfrak{U}\mathcal{G},R\textrm{-}\mathfrak{Mod})$.
Assume that the groups $H_{0}$ and $H$ are free. If the functor
$F$ is in $K(\mathfrak{U}\mathcal{G},R\textrm{-}\mathfrak{Mod})$,
then the functors $\mathbf{LM}(F)$ and $\mathcal{I}_{R[H]}\otimes_{R[H]}\tau_{2}(F)$
are in $K(\mathfrak{U}\mathcal{G},R\textrm{-}\mathfrak{Mod})$.
\end{lem}

\begin{proof}
We recall that the functor $\kappa$ is the colimit $\sum_{m\in\textrm{Obj}(\mathfrak{U}\mathcal{G})_{\natural}}\kappa_{m}$.
By Proposition \ref{rem:LMcommutetoutkappaHfree} and Lemma \ref{lem:diffevantau2},
the endofunctors $\mathbf{LM}$ and $\mathcal{I}_{R[H]}\otimes_{R[H]}\tau_{2}$
commute with the evanescence functor $\kappa_{m}$ for all natural
numbers $m\geq1$. We recall from Proposition \ref{prop:exactnessLM}
that the endofunctor $\mathbf{LM}$ commutes with all colimits, and
thus commutes with $\kappa$. Furthermore, we deduce the commutation
with all colimits of the functor $\mathcal{I}_{R[H]}\otimes_{R[H]}\tau_{2}(-)$
from the exactness of the translation functor $\tau_{2}$ (see Proposition
\ref{prop:lemmecaract}) and the exactness of the functor $\mathcal{I}_{R[H]}\otimes_{R[H]}-:R\textrm{-}\mathfrak{Mod}\rightarrow R\textrm{-}\mathfrak{Mod}$
(since the augmentation ideal $\mathcal{I}_{R[H]}$ is a projective
$R[H]$-module by Lemma \ref{lem:Swan}). Therefore, the functor $\mathcal{I}_{R[H]}\otimes_{R[H]}\tau_{2}(-)$
also commutes with $\kappa$.
\end{proof}
\textbf{From now until the end of Section \ref{subsec:Weak-polynomial-functors},
we assume that the groups $H_{0}$ and $H$ are free.} By Lemma \ref{lem:LMtau2sn},
the endofunctors $\mathbf{LM}$ and $\mathcal{I}_{R[H]}\otimes_{R[H]}\tau_{2}$
induce two functors on the quotient category $\mathbf{St}(\mathfrak{U}\mathcal{G},R\textrm{-}\mathfrak{Mod})$,
denoted by
\[
\mathbf{LM}_{\mathbf{St}}:\mathbf{St}(\mathfrak{U}\mathcal{G},R\textrm{-}\mathfrak{Mod})\rightarrow\mathbf{St}(\mathfrak{U}\mathcal{G},R\textrm{-}\mathfrak{Mod})\,\textrm{ and }\,(\mathcal{I}_{R[H]}\underset{R[H]}{\varotimes}\tau_{2})_{\mathbf{St}}:\mathbf{St}(\mathfrak{U}\mathcal{G},R\textrm{-}\mathfrak{Mod})\rightarrow\mathbf{St}(\mathfrak{U}\mathcal{G},R\textrm{-}\mathfrak{Mod}).
\]

\begin{prop}
\label{lem:behaviourdeltaLMweak} The induced functor $(\mathcal{I}_{R[H]}\varotimes_{R[H]}\tau_{2})_{\mathbf{St}}$
is equivalent to the functor $\tau_{2}^{\oplus r(H)}$, where the
last $\tau_{2}$ is the translation endofunctor of $\mathbf{St}(\mathfrak{U}\mathcal{G},R\textrm{-}\mathfrak{Mod})$.

Furthermore, for $F$ an object of $\mathbf{St}(\mathfrak{U}\mathcal{G},R\textrm{-}\mathfrak{Mod})$,
there are natural equivalences as objects of $\mathbf{St}(\mathfrak{U}\mathcal{G},R\textrm{-}\mathfrak{Mod})$:

\begin{equation}
\delta_{1}(\mathcal{I}_{R[H]}\underset{R[H]}{\varotimes}\tau_{2}(F))_{\mathbf{St}}\cong(\mathcal{I}_{R[H]}\underset{R[H]}{\varotimes}\tau_{2})_{\mathbf{St}}(\delta_{1}F),\label{eq:deltatau2}
\end{equation}

\begin{equation}
\delta_{1}\mathbf{LM}_{\mathbf{St}}(F)\cong(\mathcal{I}_{R[H]}\underset{R[H]}{\varotimes}\tau_{2}(F))_{\mathbf{St}}\oplus\mathbf{LM}_{\mathbf{St}}(\delta_{1}F).\label{eq:deltapiLM}
\end{equation}
\end{prop}

\begin{proof}
By Corollary \ref{cor:splittingtranslation}, we have a natural equivalence
$\mathcal{I}_{R\left[H\right]}\varotimes_{R[H]}\tau_{2}\cong\tau_{2}^{\oplus r(H)}$
and that $K(\mathfrak{U}\mathcal{G},R\textrm{-}\mathfrak{Mod})$ is
closed under colimits by Proposition \ref{prop:Snthick}. Let $G$
be an object of $\mathbf{Fct}(\mathfrak{U}\mathcal{G},R\textrm{-}\mathfrak{Mod})$.
We recall that $\kappa$ is left-exact and that $\kappa(\kappa_{2}G)=\kappa_{2}G$
(see Lemma \ref{rem:fourre-toutfilteredcolim} and its proof). We
deduce that $G$ is in $K(\mathfrak{U}\mathcal{G},R\textrm{-}\mathfrak{Mod})$
if the functor $(\mathcal{I}_{R[H]}\varotimes_{R[H]}\tau_{2})(G)$
is in $K(\mathfrak{U}\mathcal{G},R\textrm{-}\mathfrak{Mod})$. This
fact and Lemma \ref{lem:LMtau2sn} prove the first statement.

As a consequence of the definitions of the induced difference functor
of Proposition \ref{prop:piMcommutetauetdelta} and of the induced
functors $(\mathcal{I}_{R[H]}\otimes_{R[H]}\tau_{2})_{\mathbf{St}}$
and $\mathbf{LM}_{\mathbf{St}}$, we obtain the following two natural
equivalences $\delta_{1}(\mathcal{I}_{R[H]}\otimes_{R[H]}\tau_{2})_{\mathbf{St}}\cong(\delta_{1}(\mathcal{I}_{R[H]}\otimes_{R[H]}\tau_{2}))_{\mathbf{St}}$
and $\delta_{1}\mathbf{LM}_{\mathbf{St}}\cong(\delta_{1}\circ\mathbf{LM})_{\mathbf{St}}$.
The result then follows from Lemma \ref{lem:diffevantau2} and Theorem
\ref{thm:Splitting LM}.
\end{proof}
Then we can now prove:
\begin{thm}
\label{thm:ResultWeakpoly}Let $d$ be a natural number and recall
that the groups $H_{0}$ and $H$ are assumed to be free. Let $F$
be an object of $\mathbf{Fct}(\mathfrak{U}\mathcal{G},R\textrm{-}\mathfrak{Mod})$
which is weak polynomial of degree $d$. Then the functor $\mathcal{I}_{R[H]}\varotimes_{R[H]}\tau_{2}(F)$
is a weak polynomial functor of degree $d$ and the functor $\mathbf{LM}(F)$
is a weak polynomial functor of degree $d+1$.
\end{thm}

\begin{proof}
We proceed by induction on the degree of polynomiality of $F$. If
$F$ is weak polynomial of degree $0$, then there exists a constant
functor $C$ of \textit{$\mathbf{St}(\mathfrak{U}\mathcal{G},R\textrm{-}\mathfrak{Mod})$}
such that $\pi_{\mathfrak{U}\mathcal{G}}(F)\cong C$ by Proposition
\ref{prop:weakpolydeg0}. Hence, we deduce from Proposition \ref{lem:behaviourdeltaLMweak}
that $(\mathcal{I}_{R[H]}\varotimes_{R[H]}\tau_{2})_{\mathbf{St}}(C)\cong C^{\oplus r(H)}$,
which is a degree $0$ weak polynomial functor. Now, assume that $F$
is weak polynomial functor of degree $d\geq0$. Then, the result follows
from the relation (\ref{eq:deltatau2}) and the inductive hypothesis.

For the endofunctor $\mathbf{LM}$, we also proceed by induction as
follows. Assume that $F$ is a weak polynomial functor of degree $0$.
By the equivalence (\ref{eq:deltapiLM}), we obtain that $\delta_{1}(\pi_{\mathfrak{U}\mathcal{G}}(\mathbf{LM}(F)))\cong(\mathcal{I}_{R\left[H\right]}\varotimes_{R[H]}\tau_{2})(\pi_{\mathfrak{U}\mathcal{G}}(F))$.
Following the result on $\mathcal{I}_{R\left[H\right]}\varotimes_{R[H]}\tau_{2}$,
the functor $\delta_{1}(\pi_{\mathfrak{U}\mathcal{G}}(\mathbf{LM}(F)))$
is polynomial of degree $0$. Therefore, the functor $\mathbf{LM}\left(F\right)$
is therefore weak polynomial of degree $1$. Now, assume that $F$
is a weak polynomial functor of degree $d\geq1$. By the equivalence
(\ref{eq:deltapiLM}):\textit{
\[
\delta_{1}(\pi_{\mathfrak{U}\mathcal{G}}(\mathbf{LM}(F)))\cong(\mathcal{I}_{R\left[H\right]}\underset{R[H]}{\varotimes}\tau_{2})(\pi_{\mathfrak{U}\mathcal{G}}(F))\oplus\mathbf{LM}_{\mathbf{St}}(\delta_{1}(\pi_{\mathfrak{U}\mathcal{G}}(F))).
\]
}The result follows from the inductive hypothesis and the result on
$\mathcal{I}_{R\left[H\right]}\varotimes_{R[H]}\tau_{2}$.
\end{proof}

\section{Examples and applications\label{sec:Examples-and-applications}}

This last section presents applications of the results of Section
\ref{sec:Behaviour-of-the}. Namely, the generalized Long-Moody functors
provide very strong and weak polynomial functors in any degree for
the families of groups of Section \ref{subsec:Examples}. In particular,
they give twisted coefficients for which homological stability is
satisfied (see Section \ref{subsec:Strong-polynomial-functors-1})
and introduce a tool for classifying weak polynomial functors with
$\mathfrak{U}\mathcal{G}$ as source category.

\subsection{Strong polynomial functors\label{subsec:Strong-polynomial-functors-1}}
\begin{prop}
\label{prop:coherent_reliable}The coherent Long-Moody systems of
Sections \ref{subsec:Modifyingpunctures} and \ref{subsec:Surface-braid-groups:}
are reliable.
\end{prop}

\begin{proof}
We recall from Lemma \ref{lem:symplecticstrongmon} that the functor
$\pi_{1}(-,p)$ is strict monoidal. Hence, in particular, the assignments
conditions of Assumption \ref{assu:decomposeAfreeproduct} on the
functor $\mathcal{A}$ are satisfied. Also, for the families of morphisms
$\{\chi_{n,1}\}_{n\in\mathbb{N}}$, $\{\chi_{n,2}\}_{n\in\mathbb{N}}$,
$\{\chi_{n,1}^{\mathfrak{b}}\}_{n\in\mathbb{N}}$ and $\{\chi_{n,2}^{\mathfrak{b}}\}_{n\in\mathbb{N}}$,
the equality (\ref{eq:equalitylemma2.54}) of Proposition \ref{lem:sigma1satisfiesCondition}
implies that the  relation (\ref{eq:equiva'}) of Assumption \ref{assu:decomposeAfreeproduct}
is satisfied. A fortiori, it follows from the definition of the functor
$\pi_{1}(-,p)$ that Assumption \ref{assu:decomposeAfreeproduct}
is satisfied.
\end{proof}
Hence, applying a Long-Moody functor on the constant functor $\mathbb{Z}$,
we prove:
\begin{cor}
\label{cor:example strong}For each $d\geq0$, the functors $\mathbf{LM}_{\mathfrak{M}_{2}^{+,g}}^{\circ d}(H_{1}(\varSigma_{g,1}^{-},\mathbb{Z}))$,
$\mathbf{LM}_{\mathfrak{B}_{2}^{g}}^{\circ d}(H_{1}(\varSigma_{g,1}^{-},\mathbb{Z}))$,
$\text{\ensuremath{\mathbf{LM}_{\mathfrak{M}_{2}^{-,h}}^{\circ d}}(\ensuremath{H_{1}}(\ensuremath{\mathscr{N\varSigma}_{h,1}^{-}},\ensuremath{\mathbb{Z}}))}$
and $\text{\ensuremath{\mathbf{LM}_{\mathfrak{B}_{2}^{-,h}}^{\circ d}}(\ensuremath{H_{1}}(\ensuremath{\mathscr{N\varSigma}_{h,1}^{-}},\ensuremath{\mathbb{Z}}))}$
are very strong polynomial functors of degree $d+1$.
\end{cor}

Moreover, we have the following application for the Long-Moody functor
associated with surface braid group of Section \ref{subsec:Applications}.
Although $\mathbf{LM}_{\mathfrak{B}_{2}^{g}}(\mathbb{Z}[\mathfrak{B}_{2}^{g}/\varGamma_{3}])$
is not an object of $\mathbf{Fct}(\mathfrak{UB}_{2}^{g},\mathbb{Z}\textrm{-}\mathfrak{Mod})$,
the following slight modification of that functor defines a new functor
with $\mathfrak{UB}_{2}^{g}$ as source category. An analogous manipulation
was actually made in \cite[Section 2.3.1]{soulieLMBilan}. Let $\mathbb{Z}[\mathfrak{B}_{2}^{g}/\varGamma_{3}]^{*}:\mathfrak{B}_{2}^{g}\rightarrow\textrm{\ensuremath{\mathbb{Z}}-}\mathfrak{Mod}$
be the functor induced by the dual representations of that defined
by $\mathbb{Z}[\mathfrak{B}_{2}^{g}/\varGamma_{3}]$. Let $\widetilde{\mathbf{LM}}_{\mathfrak{B}_{2}^{g}}(\mathbb{Z}[\mathfrak{B}_{2}^{g}/\varGamma_{3}])$
denote the object $\mathbf{LM}_{\mathfrak{B}_{2}^{g}}(\mathbb{Z}[\mathfrak{B}_{2}^{g}/\varGamma_{3}])\otimes_{\mathbb{Z}[(\mathbb{Z}\times\mathbb{Z}^{g})\rtimes\mathbb{Z}^{g}]}\mathbb{Z}[\mathfrak{B}_{2}^{g}/\varGamma_{3}]^{*}$
of the category $\mathbf{Fct}(\mathfrak{B}_{2}^{g},\mathbb{Z}\textrm{-}\mathfrak{Mod})$,
where $\otimes_{\mathbb{Z}[(\mathbb{Z}\times\mathbb{Z}^{g})\rtimes\mathbb{Z}^{g}]}$
is the pointwise tensor product for the functor category. It is a
routine to check that, by assigning to all natural numbers $n$ and
$n'$ such that $n'\geq n$ the morphism $\widetilde{\mathbf{LM}}_{\mathfrak{B}_{2}^{g}}(\mathbb{Z}[\mathfrak{B}_{2}^{g}/\varGamma_{3}])([\varSigma_{g,1}^{n'-n},id_{\varSigma_{g,1}^{n'}}])$
to be the embedding
\[
\mathcal{I}_{\pi_{1}(\varSigma_{g,1}^{-},p)^{\mathfrak{b}}}([\varSigma_{g,1}^{n'-n},id_{\varSigma_{g,1}^{n'}}])\underset{\pi_{1}(\varSigma_{g,1}^{n'},p)}{\otimes}id_{\mathbb{Z}[\mathfrak{B}_{2}^{g}/\varGamma_{3}](\varSigma_{g,1}^{n'+1})},
\]
the relations (\ref{eq:criterion}) and (\ref{eq:criterion'}) are
then satisfied. It follows from Lemma \ref{lem:criterionfamilymorphismsfunctor}
that the functor $\widetilde{\mathbf{LM}}_{\mathfrak{B}_{2}^{g}}(\mathbb{Z}[\mathfrak{B}_{2}^{g}/\varGamma_{3}]):\mathfrak{B}_{2}^{g}\rightarrow\mathbb{Z}\textrm{-}\mathfrak{Mod}$
thus defines an object of $\mathbf{Fct}(\mathfrak{UB}_{2}^{g},\mathbb{Z}\textrm{-}\mathfrak{Mod})$.
Hence, we deduce from Proposition \ref{prop:coherent_reliable}:
\begin{prop}
\label{prop:Bur(S)_polynomial_one}For each $d\geq0$, the functor
$\mathbf{LM}_{\mathfrak{B}_{2}^{g}}^{\circ d}(\widetilde{\mathbf{LM}}_{\mathfrak{B}_{2}^{g}}(\mathbb{Z}[\mathfrak{B}_{2}^{g}/\varGamma_{3}]))$
is very strong polynomial of degree $d+1$.
\end{prop}

\begin{proof}
Let $n$ be a natural number. We note that the application $i_{1}\widetilde{\mathbf{LM}}_{\mathfrak{B}_{2}^{g}}(\mathbb{Z}[\mathfrak{B}_{2}^{g}/\varGamma_{3}])([0,id_{\underline{n}}])$
is a monomorphism which cokernel is $\mathbb{Z}[(\mathbb{Z}\times\mathbb{Z}^{g})\rtimes\mathbb{Z}^{g}]$.
Hence $\kappa_{1}\widetilde{\mathbf{LM}}_{\mathfrak{B}_{2}^{g}}(\mathbb{Z}[\mathfrak{B}_{2}^{g}/\varGamma_{3}])$
is the null functor. For a natural number $n'\geq n$ and $[\varSigma_{g,1}^{n'-n},\sigma]\in\textrm{Hom}_{\mathfrak{U}\mathfrak{B}_{2}^{g}}(\varSigma_{g,1}^{n},\varSigma_{g,1}^{n'})$.
It follows from the universal property of the cokernel and naturality
that $\delta_{1}\widetilde{\mathbf{LM}}_{\mathfrak{B}_{2}^{g}}(\mathbb{Z}[\mathfrak{B}_{2}^{g}/\varGamma_{3}])([\varSigma_{g,1}^{n'-n},\sigma])=id_{\mathbb{Z}[(\mathbb{Z}\times\mathbb{Z}^{g})\rtimes\mathbb{Z}^{g}]}.$
Hence, $\delta_{1}\widetilde{\mathbf{LM}}_{\mathfrak{B}_{2}^{g}}(\mathbb{Z}[\mathfrak{B}_{2}^{g}/\varGamma_{3}])$
is the constant functor equal to $\mathbb{Z}[(\mathbb{Z}\times\mathbb{Z}^{g})\rtimes\mathbb{Z}^{g}]$,
which is very strong polynomial of degree $0$. Hence the functor
$\widetilde{\mathbf{LM}}_{\mathfrak{B}_{2}^{g}}(\mathbb{Z}[\mathfrak{B}_{2}^{g}/\varGamma_{3}])$
is very strong polynomial of degree one. The result is thus a consequence
of Theorem \ref{Thm:emairesult} and Proposition \ref{prop:coherent_reliable}.
\end{proof}
In \cite[Section 5]{WahlRandal-Williams}, Randal-Williams and Wahl
prove homological stability for the families of mapping class groups
of surfaces and surface braid groups considered in Section \ref{subsec:Examples},
with twisted coefficients given by very strong polynomial functors.
Namely, for all the groupoids $\mathcal{G}$ and $\mathcal{G}'$ introduced
in the examples of Section \ref{subsec:Examples}, they show that
if $F:\mathfrak{U}\mathcal{G}\rightarrow\mathbb{Z}\textrm{-}\mathfrak{Mod}$
is a very strong polynomial functor of degree $d$, then the canonical
maps
\[
H_{*}(G_{n},F(\underline{n}))\rightarrow H_{*}(G_{n+1},F(\underline{n+1}))
\]
are isomorphisms for $N(*,d)\leq n$ with $N(*,d)\in\mathbb{N}$ depending
on $*$ and $d$; see \cite[Theorem A]{WahlRandal-Williams}. The
representation theory of mapping class groups of surfaces is wild
and an active research topic; see \cite[Section 4.6]{BirmanBrendlesurvey}
or \cite{Margalit} for example. Hence there are very few known examples
of functors appropriate for homological stability. Using Long-Moody
functors, we thus construct very strong polynomial functors in any
degree for these families of groups.

\subsection{Weak polynomial functors\label{subsec:Weak-polynomial-functors-1}}

By Proposition \ref{prop:weakpolydeg0}, the constant functor $\mathbb{Z}$
is weak polynomial of degree $0$. Also, it is clear that $\widetilde{\mathbf{LM}}_{\mathfrak{B}_{2}^{g}}(\mathbb{Z}[\mathfrak{B}_{2}^{g}/\varGamma_{3}])$
is weak polynomial of degree $1$ since all its evanescence functors
are trivial. It follows from Theorem \ref{thm:ResultWeakpoly} that:
\begin{prop}
For each $d\geq0$, the functors of Corollary \ref{cor:example strong}
and Proposition \ref{prop:Bur(S)_polynomial_one} are weak polynomial
of degree $d+1$.
\end{prop}

A strong polynomial functor of degree $d$ is always weak polynomial
of degree less than or equal to $d$ by the first property of Proposition
\ref{prop:piMcommutetauetdelta}. The converse is false; see \cite[Example 5.4]{DV3}
for a counterexample. Also, the weak polynomial degree of a strong
polynomial functor can be strictly smaller than its strong polynomial
degree as the following example shows. We recall from \cite[Section 1.3]{soulieLMBilan}
the functor $\overline{\mathfrak{Bur}}:\mathfrak{U}\boldsymbol{\beta}\rightarrow\mathbb{C}[t^{\pm1}]\textrm{-}\mathfrak{Mod}$
which encodes the family of reduced Burau representations.
\begin{prop}
The functor $\overline{\mathfrak{Bur}}:\mathfrak{U}\boldsymbol{\beta}\rightarrow\mathbb{C}[t^{\pm1}]\textrm{-}\mathfrak{Mod}$
is a strong polynomial functor of degree $2$ and weak polynomial
of degree $1$.
\end{prop}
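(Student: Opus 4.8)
Since $\mathfrak{U}\boldsymbol{\beta}$ is finitely generated for $\natural$ by the single object $1$, Propositions \ref{prop:proppoln} and \ref{prop:thickweak} reduce both assertions to computing the iterated difference functors $\delta_{1}^{\circ k}\overline{\mathfrak{Bur}}$ and to deciding when such a functor vanishes (for the strong degree) or lies in $\mathcal{S}n(\mathfrak{U}\boldsymbol{\beta},\mathbb{C}[t^{\pm1}]\textrm{-}\mathfrak{Mod})$ (for the weak degree). Moreover $\boldsymbol{\beta}$ is symmetric monoidal, so by Lemma \ref{lem:equivalencesymmetricmonoidal} a functor $F$ is stably null precisely when $\underset{n}{\mathrm{colim}}\,F(\underline{n})=0$ along the stabilization morphisms; this colimit criterion will be the main tool for the weak degree. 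The plan therefore begins by recalling from \cite[Section 1.3]{soulieLMBilan} the explicit description of $\overline{\mathfrak{Bur}}$: its values $\overline{\mathfrak{Bur}}(\underline{n})$, the reduced Burau matrices giving the $\mathbf{B}_{n}$-action, and above all the $\mathbb{C}[t^{\pm1}]$-linear maps $\overline{\mathfrak{Bur}}\big([1,\mathrm{id}_{\underline{n+1}}]\big)$ by which $\overline{\mathfrak{Bur}}$ evaluates the stabilization morphisms. This last datum is genuinely subtle, since the classical sub/quotient relation between reduced and unreduced Burau is not compatible with stabilization: indeed $\mathfrak{Bur}$ is only strong polynomial of degree $1$, being $\mathbf{LM}_{1}$ of a constant functor via the identification with $\mathfrak{Bur}_{t^{2}}$ and Theorem \ref{Thm:emairesult}, so the extra degree of $\overline{\mathfrak{Bur}}$ cannot be read off formally from $\mathfrak{Bur}$.

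For the strong polynomial degree, I would compute $\delta_{1}\overline{\mathfrak{Bur}}$, then $\delta_{1}^{\circ 2}\overline{\mathfrak{Bur}}$ and $\delta_{1}^{\circ 3}\overline{\mathfrak{Bur}}$ from this presentation. The plan is to show $\delta_{1}^{\circ 3}\overline{\mathfrak{Bur}}=0$ — equivalently that all stabilization maps of $\delta_{1}^{\circ 2}\overline{\mathfrak{Bur}}$ are surjective — which by the recursive Definition \ref{def:strongandverystrong} and the reduction in Proposition \ref{prop:proppoln} puts $\overline{\mathfrak{Bur}}$ in $\mathcal{P}ol_{2}^{strong}(\mathfrak{U}\boldsymbol{\beta},\mathbb{C}[t^{\pm1}]\textrm{-}\mathfrak{Mod})$; together with an explicit check that $\delta_{1}^{\circ 2}\overline{\mathfrak{Bur}}\neq 0$ (it suffices to exhibit one object $\underline{n}$ at which it is nonzero), this shows the strong degree is exactly $2$.

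For the weak polynomial degree, I would pass to the quotient category $\mathbf{St}(\mathfrak{U}\boldsymbol{\beta},\mathbb{C}[t^{\pm1}]\textrm{-}\mathfrak{Mod})$ and use Proposition \ref{prop:piMcommutetauetdelta}, namely $\delta_{1}\circ\pi_{\mathfrak{U}\boldsymbol{\beta}}=\pi_{\mathfrak{U}\boldsymbol{\beta}}\circ\delta_{1}$. The key point is that $\delta_{1}^{\circ 2}\overline{\mathfrak{Bur}}$, although nonzero, is stably null: by Lemma \ref{lem:equivalencesymmetricmonoidal} this amounts to checking $\underset{n}{\mathrm{colim}}\,\delta_{1}^{\circ 2}\overline{\mathfrak{Bur}}(\underline{n})=0$, i.e. that every element of $\delta_{1}^{\circ 2}\overline{\mathfrak{Bur}}(\underline{n})$ is annihilated by a composite of finitely many stabilization morphisms, which follows from the same presentation-level computation. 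Hence $\pi_{\mathfrak{U}\boldsymbol{\beta}}(\delta_{1}\overline{\mathfrak{Bur}})$ has degree $\leq 0$ in $\mathbf{St}$, so by Definition \ref{def:defweakpoly} and Proposition \ref{prop:thickweak} the functor $\overline{\mathfrak{Bur}}$ is weak polynomial of degree $\leq 1$. Conversely $\delta_{1}\overline{\mathfrak{Bur}}\notin\mathcal{S}n$, because $\underset{n}{\mathrm{colim}}\,\delta_{1}\overline{\mathfrak{Bur}}(\underline{n})$ is a nonzero $\mathbb{C}[t^{\pm1}]$-module (of rank one), so $\pi_{\mathfrak{U}\boldsymbol{\beta}}(\overline{\mathfrak{Bur}})\notin\mathcal{P}ol_{0}$ and the weak degree is exactly $1$.

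The main obstacle is the first step — pinning down the correct functorial stabilization maps of $\overline{\mathfrak{Bur}}$ on $\mathfrak{U}\boldsymbol{\beta}$ — and then the bookkeeping making $\delta_{1}^{\circ 2}\overline{\mathfrak{Bur}}$ come out simultaneously nonzero (so that the strong degree jumps to $2$) and stably null (so that the weak degree stays at $1$). This discrepancy is the whole content of the proposition, and it exhibits concretely a functor whose weak polynomial degree is strictly smaller than its strong polynomial degree.
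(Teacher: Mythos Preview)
Your outline has a factual error that undermines the weak-degree argument as written: the braid groupoid $\boldsymbol{\beta}$ is \emph{braided} but \emph{not} symmetric monoidal (this is the archetypal example of the distinction), so you cannot invoke Lemma~\ref{lem:equivalencesymmetricmonoidal}. The colimit criterion for stably null functors, as stated in the paper, requires the ambient monoidal structure to be symmetric, and $\mathfrak{U}\boldsymbol{\beta}$ does not satisfy this hypothesis. Your plan can be repaired: once you compute $\delta_{1}\overline{\mathfrak{Bur}}$ explicitly you will find it is isomorphic to the functor $R_{\geq 1}$ (equal to $R$ on $\underline{n}$ for $n\geq 1$ and zero at $\underline{0}$), and then one checks directly from Definition~\ref{def:defstablynull} that $\kappa_{m}(R_{\geq 1})=0$ for all $m$, so $R_{\geq 1}$ is not stably null, while $\delta_{1}(R_{\geq 1})$ is concentrated at $\underline{0}$ and hence equals its own $\kappa_{1}$, so it is stably null. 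No colimit criterion is needed.

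The paper's proof is shorter and more conceptual than your computational plan. For the strong degree it simply cites \cite[Proposition~3.28]{soulieLMBilan}. For the weak degree it uses the short exact sequence $0\to\overline{\mathfrak{Bur}}\to\tau_{1}\overline{\mathfrak{Bur}}\to R_{\geq 1}\to 0$ (also from \cite{soulieLMBilan}) to identify $\delta_{1}\big(\pi_{\mathfrak{U}\boldsymbol{\beta}}(\overline{\mathfrak{Bur}})\big)\cong\pi_{\mathfrak{U}\boldsymbol{\beta}}(R_{\geq 1})$, and then argues that $R_{\geq 1}$ is weak polynomial of degree $0$ because it is a subfunctor of the constant functor $R$ and $\mathcal{P}ol_{0}$ is thick (Proposition~\ref{prop:thickweak}); since $R_{\geq 1}$ is not stably null, the weak degree is exactly $1$. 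This bypasses computing $\delta_{1}^{\circ 2}$ altogether. Your approach would ultimately recover the same identification $\delta_{1}\overline{\mathfrak{Bur}}\cong R_{\geq 1}$, but reaches the conclusion via direct verification rather than via thickness.
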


\begin{proof}
Let $\mathbb{C}[t^{\pm1}]_{\geq1}$ be the subfunctor of the constant
functor $\mathbb{C}[t^{\pm1}]$ which is null at $0$ and equal to
$R$ elsewhere. The result on the strong polynomial degree is proved
in \cite[Proposition 3.28]{soulieLMBilan}, using the short exact
sequence $0\rightarrow\overline{\mathfrak{Bur}}_{t}\rightarrow\tau_{1}\overline{\mathfrak{Bur}}_{t}\rightarrow\mathbb{C}[t^{\pm1}]_{\geq1}\rightarrow0$
of $\mathbf{Fct}(\mathfrak{U}\boldsymbol{\beta},\mathbb{C}[t^{\pm1}]\textrm{-}\mathfrak{Mod})$.
Since $\pi_{\mathfrak{U}\mathcal{G}}$ is exact, we deduce that $\delta_{1}(\pi_{\mathfrak{U}\mathcal{G}}(\overline{\mathfrak{Bur}}_{t}))\cong\pi_{\mathfrak{U}\mathcal{G}}(\mathbb{C}[t^{\pm1}]_{\geq1})$.
The functor $\mathbb{C}[t^{\pm1}]_{\geq1}$ is a subfunctor of a weak
polynomial functor of degree $0$ and $\kappa(\mathbb{C}[t^{\pm1}]_{\geq1})\neq\mathbb{C}[t^{\pm1}]_{\geq1}$.
So, we deduce from Proposition \ref{prop:thickweak} that $\mathbb{C}[t^{\pm1}]_{\geq1}$
is weak polynomial of degree $0$ and then the functor $\overline{\mathfrak{Bur}}_{t}$
is weak polynomial of degree $1$.
\end{proof}
On the contrary, it is clear that the translation $\tau_{1}\overline{\mathfrak{Bur}}$
is both very strong and weak polynomial of degree $1$. This exemplifies
a benefit of the notion of weak polynomiality compared to the strong
one: it reflects more accurately the behaviour of functors, in particular
for large values.

Another fundamental reason for the notion of weak polynomial functors
to be introduced is that, contrary to the category $\mathcal{P}ol_{d}^{strong}(\mathfrak{M},\mathscr{A})$,
the category $\mathcal{P}ol_{d}(\mathfrak{M},\mathscr{A})$ is \textit{localizing}
by Proposition \ref{prop:thickweak}. This allows one to define the
quotient categories $\mathcal{P}ol_{d+1}(\mathfrak{M},\mathscr{A})/\mathcal{P}ol_{d}(\mathfrak{M},\mathscr{A})$.
Generally speaking, a refined description of the category $\mathcal{P}ol_{d}^{strong}(\mathfrak{M},\mathscr{A})$
is difficult and seems out of reach even for small $d$. On the contrary,
understanding the quotient categories $\mathcal{P}ol_{d+1}(\mathfrak{M},\mathscr{A})/\mathcal{P}ol_{d}(\mathfrak{M},\mathscr{A})$
is more attainable: for example, when $\mathfrak{M}$ is the category
$FI$ of finite sets and bijections, \cite[Proposition 5.9]{DV3}
gives a general equivalence of these quotients in terms of module
categories.

Also, considering $\mathfrak{M}=\mathfrak{U}\mathcal{G}$, these quotients
thus provide a new classifying tool for families of representations
of the groups $\{G_{n}\}_{n\in\mathbb{N}}$. Even if such a description
seems generally speaking out of reach for this kind of pre-braided
monoidal categories, the Long-Moody functors give a new implement
to understand these quotients. Indeed, as a consequence of Theorem
\ref{thm:ResultWeakpoly}, we obtain:
\begin{prop}
\label{prop:classificationweakpoly} For a reliable Long-Moody system
$\{\mathcal{A},\mathcal{G},\mathcal{G}',\chi\}$, if the groups $H_{0}$
and $H$ are free, the associated Long-Moody functor induces a functor:
\[
\mathcal{P}ol_{d}(\mathfrak{U}\mathcal{G},R\textrm{-}\mathfrak{Mod})/\mathcal{P}ol_{d-1}(\mathfrak{U}\mathcal{G},R\textrm{-}\mathfrak{Mod})\rightarrow\mathcal{P}ol_{d+1}(\mathfrak{U}\mathcal{G},R\textrm{-}\mathfrak{Mod})/\mathcal{P}ol_{d}(\mathfrak{U}\mathcal{G},R\textrm{-}\mathfrak{Mod}),
\]
\end{prop}

The first quotient category $\mathcal{P}ol_{1}(\mathfrak{U}\mathcal{G},R\textrm{-}\mathfrak{Mod})/\mathcal{P}ol_{0}(\mathfrak{U}\mathcal{G},R\textrm{-}\mathfrak{Mod})$
being the most reasonably computable directly (it is for instance
achievable for $\mathcal{G}=\boldsymbol{\beta}$), the Long-Moody
functors thus allow to describe subcategories of the further quotients
and can therefore be keys to understand them.

\bibliographystyle{plain}
\bibliography{bibliographiethese}

\lyxaddress{\textit{E-mail address: }\texttt{artsou@hotmail.fr}}
\end{document}